\newcommand{\dive}{\operatorname{div}}
\def\var{\varepsilon}
\newtheorem{theorem}{Theorem}[section]
\newtheorem{lemma}{Lemma}[section]
\newtheorem{prop}{Proposition}[section]
\newtheorem{cor}[theorem]{Corollary}
\newtheorem{remark}{Remark}[section]
\newtheorem{defn}{Definition}[section]
\definecolor{red}{rgb}{1.00,0.00,0.00}
\definecolor{blue}{rgb}{0.00,0.00,0.63}
\definecolor{black}{rgb}{0.00,0.00,0.00}
\definecolor{purple}{rgb}{0.00,1.00,0.00}
\definecolor{pink}{rgb}{0.95,0.01,0.08}
\def\bma#1\ema{{\allowdisplaybreaks\begin{aligned}#1\end{aligned}}}
\numberwithin{equation}{section}
\begin{document}
	
\title{\bf Large-friction and incompressible limits for pressureless Euler-Navier-Stokes flows}

\author{Hai-Liang Li, Ling-Yun Shou, and Yue Zhang}

\date{}
\maketitle


\begin{abstract}

We study the global macroscopic limits associated with kinetic-fluid interaction models for sprays. Motivated by the Vlasov-Navier-Stokes system under the monokinetic ansatz, we consider the pressureless Euler-Navier-Stokes (Euler-NS) system in $\mathbb{R}^{d}$ ($d\geq2$) coupled through the singular drag force $\frac{1}{\tau} \rho (u-v)$, where $\tau$ is the Stokes relaxation time. For initial data uniformly close to equilibrium in critical Besov spaces, we establish global-in-time regularity estimates of solutions to the Cauchy problem for the Euler-NS system, uniformly with respect to $\tau$. These estimates yield the global strong convergence of the Euler-NS system toward a one-velocity two-phase drift-flux (DF) model as $\tau\to0$, with an explicit convergence rate of order $\sqrt{\tau}$. A key point in the analysis is the introduction of an effective mixed velocity, which allows us to handle the singular relative-velocity relaxation and obtain global error estimates for ill-prepared data. We also derive large-time asymptotic estimates for the Euler-NS system, uniformly in $\tau$, including the improved decay of the relative velocity and the convergence of the non-dissipative density toward an asymptotic profile. Furthermore, after introducing the Mach number $\varepsilon>0$, we justify the incompressible limit of the DF model toward the Transport-Navier-Stokes (TNS) system as $\varepsilon\to0$, and prove the combined large-friction and incompressible limit from the Euler-NS system to the TNS system in the regime $\tau=\varepsilon\to0$ in an ill-prepared setting.  These results provide a unified and quantitative macroscopic picture connecting the Euler-NS, DF, and TNS systems through the large-friction and incompressible regimes.
\end{abstract}

\noindent{\textbf{Keywords:} Two-phase flow, large-friction limit, incompressible limit, large-time behavior, critical regularity, ill-prepared data.}

\noindent{\textbf{MSC (2020):} 35Q35; 76N10; 76T17.}

\section{Introduction}

\subsection{Models}
Spray flows describe the motion of dispersed particles, such as droplets or dust, suspended in a surrounding gas. The interactions between particles and the fluid can be modeled by two-phase kinetic-fluid systems, which play an important role in various applications, including biotechnology, medical bioaerosols, chemical engineering, and combustion theory. We refer to \cite{williams85,orourke81,boudin03,baranger05,berres03,desvillettes10} and the references therein for more details on the physical background, modeling, and applications. A typical kinetic-fluid model is the Vlasov-Navier-Stokes (Vlasov-NS) system: 
\begin{equation}\label{VNS}
\left\{
\begin{aligned}
&\partial_{t}f+\xi\cdot\nabla_{x}f+\frac{1}{\tau}\operatorname{div}_{\xi}\bigl((v-\xi)f\bigr)=0,\\
&\partial_{t}n+\operatorname{div}(nv)=0,\\
&\partial_{t}(nv)+\operatorname{div}(nv\otimes v)+\nabla P(n)=\mu\Delta v+(\mu+\lambda)\nabla\operatorname{div}v+\frac{1}{\tau}\int_{\mathbb R^d}(\xi-v)f\,\mathrm{d}\xi.
\end{aligned}
\right.
\end{equation}
Here, $f=f(t,x,\xi)\geq 0$ denotes the number density of particles located at time $t\geq 0$, position $x\in\mathbb{R}^{d}$, and velocity $\xi\in\mathbb{R}^{d}$, while $n=n(t,x)\geq 0$ and $v=v(t,x)\in\mathbb{R}^{d}$ denote the density and velocity of the fluid, respectively. The pressure function $P(n)$ satisfies
\begin{equation*}
P(n)\in C^{\infty}(\mathbb{R}_{+}),\qquad P'(n)>0\quad\hbox{for}~~ n>0,
\end{equation*}
and the constants $\mu$ and $\lambda$ are the shear and bulk viscosity coefficients, satisfying
\begin{equation*}
\mu>0,\qquad 2\mu+\lambda>0.
\end{equation*}
The parameter $\tau>0$ is the so-called Stokes relaxation time. In system \eqref{VNS}, the particle phase is described by a Vlasov-type kinetic equation, while the carrier fluid is governed by the compressible Navier-Stokes equations, and the two phases are coupled through a drag force. On one hand, it can be formally derived as the mean-field limit of a $N$-particle system interacting with a fluid; refer to 
\cite{MR2398959,MR3984746,MR4050096,MR3795188} and the references therein. On the other hand, it gives rise to various macroscopic fluid systems through hydrodynamic limits in different regimes, cf. \cite{hankwanmichel24,choijungJMFM21, figallikang19, choijungM3AS21}.

In this paper, we consider the system \eqref{VNS} in the \emph{monokinetic} regime
\begin{equation}\label{monokinetic}
f(t,x,\xi)=\rho(t,x)\delta_{\xi=u(t,x)},
\end{equation}
where $\rho(t,x)$ and $u(t,x)$ are the local density and velocity of particles, satisfying
\begin{equation*}
\rho(t,x)=\int_{\mathbb{R}^d} f(t,x,\xi)\,\mathrm{d}\xi,\qquad u(t,x)=\frac{\int_{\mathbb{R}^d} \xi f(t,x,\xi)\,\mathrm{d}\xi}{\int_{\mathbb{R}^d} f(t,x,\xi)\,\mathrm{d}\xi}.
\end{equation*}
Under the monokinetic ansatz \eqref{monokinetic}, the kinetic-fluid model \eqref{VNS} formally reduces to the following two-phase flow model coupling the pressureless Euler equations and the isentropic compressible Navier-Stokes equations (Euler-NS):
\begin{equation}\label{pENS}
\left\{
\begin{aligned}
& \partial_{t}\rho+\operatorname{div}(\rho u)=0,\\
& \partial_{t}(\rho u)+\operatorname{div}(\rho u\otimes u)=-\frac{1}{\tau}\rho(u-v),\\
& \partial_{t}n+\operatorname{div}(nv)=0,\\
& \partial_{t}(nv)+\operatorname{div}(nv\otimes v)+\nabla P(n)=\mu\Delta v+(\mu+\lambda)\nabla\operatorname{div}v+\frac{1}{\tau}\rho(u-v).
\end{aligned}
\right.
\end{equation}
We consider the Cauchy problem subject to the initial data
\begin{equation}\label{indt}
(\rho,u,n,v)(0,x)=(\rho_{0},u_{0},n_{0},v_{0})(x)\rightarrow (0,0,1,0),\qquad \text{as}~~ |x|\rightarrow\infty .
\end{equation}

At the formal level, in the large-friction regime, let $(\rho^{\tau}, u^{\tau}, n^{\tau}, v^{\tau})$ be the global solution of the system \eqref{pENS}, and assume that it converges to $(\rho, u, n, v)$ as $\tau\rightarrow 0$. Due to the relaxation effect of the friction drag force acting on the relative velocity $u^\tau-v^\tau$, we deduce that $u=v$. Passing to the limit as $\tau\rightarrow0$ and summing $\eqref{pENS}_2$ and $\eqref{pENS}_4$ together, we derive the following one-velocity two-phase drift-flux (DF) model:
\begin{equation}\label{DF}
\left\{
\begin{aligned}
&\partial_{t}\rho+\operatorname{div} (\rho v)=0,\\
&\partial_{t}n+\operatorname{div} (nv)=0,\\
&\partial_{t}((\rho+n) v)+\operatorname{div} ( (\rho+n) v\otimes v)+\nabla P(n)=\mu\Delta v+(\mu+\lambda)\nabla\operatorname{div} v,
\end{aligned}
\right.
\end{equation}
subject to the initial data
\begin{equation}\label{DFindt}
(\rho,n,v)(0,x)=(\rho_{0},n_{0},v_{0})(x)\rightarrow (0,1,0), \quad\text{as}~~ |x|\rightarrow\infty.
\end{equation}
Similarly, one can formally derive \eqref{DF} from the Vlasov-NS system \eqref{VNS} as $\tau\rightarrow 0$.
Note that a similar two-phase flow model to \eqref{DF}, with an additional pressure term $\nabla \rho$ in $\eqref{DF}_{3}$, has been derived by Mellet and Vasseur \cite{melletvasseur08} from a kinetic-fluid system corresponding to strong drag force and Brownian motion.

We are also interested in studying the dynamics of the two-phase flow models \eqref{pENS} and \eqref{DF} in the low-Mach-number regime. Let $\var>0$ be the Mach number, which is a non-dimensional quantity. As in \cite{schochet07,lionsmasmoudi98}, we first consider the transformations
\begin{equation*}
\rho(t,x)=\var \rho^\var(\var t,x),~~ n(t,x)= n^\var(\var t,x),~~ v(t,x)=\var v^\var(\var t,x),~~ \mu=\var \bar{\mu},~~ \lambda=\var \bar{\lambda},
\end{equation*}
where $\bar{\mu}$ and $\bar{\lambda}$ are two constants independent of $\var$, and we further set $n^\var=1+\var a^\var$.
Then, the DF model \eqref{DF} can be rewritten as
\begin{equation}\label{DFvar}
\left\{
\begin{aligned}
&\partial_{t}\rho^\var+\operatorname{div} (\rho^\var v^\var)=0,\\
&\var\partial_{t}a^\var+\operatorname{div}  v^\var+\var\operatorname{div} (a^\var v^\var)=0,\\
&\partial_{t}\big((1+\var\rho^\var+\var a^\var)v^\var\big)+\operatorname{div}\big((1+\var \rho^\var+\var a^\var)v^\var\otimes v^\var\big)+\frac{1}{\var^2}\nabla P(1+\var a^\var)\\
&\qquad\qquad=\bar{\mu} \Delta v^\var+(\bar{\mu}+\bar{\lambda})\nabla \operatorname{div} v^\var.
\end{aligned}
\right.
\end{equation}
Assume that $(\rho^\var,a^\var)$ have uniform bounds with respect to $\var$ in a suitable sense. Consequently, $(\rho^\var,v^\var)$ converges to a limit $(\varrho,w)$, which satisfies a transport equation driven by a velocity field governed by the incompressible Navier-Stokes equations, forming the following Transport-Navier-Stokes (TNS) system:
\begin{equation}\label{TNS}
\left\{
\begin{aligned}
&\partial_{t}\varrho+\operatorname{div} (\varrho w)=0,\\
&\partial_{t}w+ \operatorname{div}(w\otimes w)+\nabla \Pi=\bar{\mu}\Delta w,\\
&\operatorname{div}w=0,
\end{aligned}
\right.
\end{equation}
with the initial data
\begin{equation}\label{TNSindt}
(\varrho,w)(0,x)=(\varrho_0,w_0)(x)\rightarrow (0,0),\quad \text{as}~~|x|\rightarrow \infty. 
\end{equation}

Furthermore, it is natural to investigate the Euler-NS system \eqref{pENS} in both large-friction and low-Mach-number regimes. Similarly, by introducing the transformations 
\begin{equation*}
\rho(t,x)=\var \rho^\var(\var t,x),~~u(t,x)=\var u^\var(\var t,x),~~ n(t,x)= n^\var(\var t,x),~~ v(t,x)=\var v^\var(\var t,x),~~ \mu=\var \bar{\mu},~~ \lambda=\var \bar{\lambda},
\end{equation*}
and setting $n^\var=1+\var a^\var$, the Euler-NS system \eqref{pENS} can be reformulated into
\begin{equation}\label{pENSvar}
\left\{
\begin{aligned}
&\partial_{t}\rho^\var+\operatorname{div} (\rho^\var u^\var)=0,\\
&\var \partial_{t}  (\rho^\var u^\var)+\var\operatorname{div} ( \rho^\var u^\var \otimes u^\var)=- \frac{1}{\tau}\rho^\var (u^\var-v^\var),\\
&\var\partial_{t}a^\var+\operatorname{div} v^\var +\var \operatorname{div} (a^\var v^\var)=0,\\
&\partial_{t}\big( (1+\var a^\var) v^\var\big)+ \operatorname{div} \big( (1+\var a^\var)v^\var\otimes v^\var\big)+\frac{1}{\var^2}\nabla P(1+\var a^\var)\\
&\qquad\qquad=\bar{\mu} \Delta v^\var+(\bar{\mu}+\bar{\lambda})\nabla \operatorname{div} v^\var+ \frac{1}{\tau}\rho^\var(u^\var-v^\var).
\end{aligned}
\right.
\end{equation}
We focus on the case where the Stokes relaxation time is proportional to the Mach number, e.g., $\tau=\var$. Assume that $(\rho^\var, u^\var, v^\var)$ converges to a limit $(\varrho,w,w)$ as $\var\rightarrow 0$. Adding $\eqref{pENSvar}_2$ and $\eqref{pENSvar}_4$ together and passing to the limit as $\var\rightarrow0$, we derive that $(\varrho,w)$ is a solution to the TNS system \eqref{TNS}.

The mathematical study of the Vlasov-NS system goes back to the work \cite{hamdache1,Anoshchenko}. The local well-posedness of classical solutions was investigated by Baranger and Desvillettes \cite{MR2214160}. Global weak solutions were constructed by Boudin et al.~\cite{boudin3} on the torus, and later extended to bounded domains by Yu \cite{Yu1}. The large-time behavior of such weak solutions has been the subject of several recent works. It is now known that the dynamics may exhibit \emph{monokinetic} concentration in velocity, together with velocity alignment. Bae et al. \cite{MR3223813} and Choi \cite{MR3518989} obtained exponential stability results in spatially periodic domains under additional uniform bounds on the solutions. Han-Kwan, Mossa and Moyano \cite{han1} justified these bounds and proved an unconditional exponential stability result for the incompressible Vlasov-NS system with small initial data. In the one-dimensional compressible setting, Li and Shou \cite{MR4213662,LiShou2021SciSinMath} established the global well-posedness and unconditional exponential convergence to equilibrium for large data. 
In \cite{LSZ1}, the authors introduced a new class of Lyapunov functionals capturing the dissipation rate of kinetic-fluid interactions, and applied them to study the exponential stability of the inhomogeneous incompressible Vlasov-NS system in the whole space. Using a related method, Danchin \cite{MR5041103} obtained the global stability and optimal algebraic decay rates for Fujita-Kato solutions in the homogeneous case. 

Regarding the hydrodynamic limits of the Vlasov-NS system, Han-Kwan and Michel \cite{hankwanmichel24} introduced a general framework in high-friction regimes, where the dispersed particles are light, or both light and fast, compared with the fluid phase. In particular, they showed that the TNS system \eqref{TNS} arises as the hydrodynamic limit of the Vlasov-NS system in the light or light and fast particle regimes.


Let us provide an overview of existing research on related systems of the Euler-NS system \eqref{pENS}, i.e., the Vlasov-NS system in the monokinetic regime. In the absence of interactions between the fluid, the system described by $\eqref{pENS}_1$-$\eqref{pENS}_2$ reduces to the pressureless Euler equations, which are widely used in plasma physics. In this case, it is well-known that classical solutions develop singularities (e.g., shock waves) in finite time, even for small and smooth initial data (refer to \cite{dafermos16}). Therefore, it is natural to investigate the dynamics of global weak solutions, cf. \cite{bouchut94, breniergrenier98, huangwang01, weinan96} and the references therein.

When the coupling with a viscous fluid through the drag force is considered, the interactions may prevent the blow-up phenomenon and ensure global well-posedness.
In the case where the fluid is incompressible (e.g., $n\equiv 1$ in \eqref{pENS}), Choi and Jung \cite{choijungJMFM21} proved the global-in-time existence and uniqueness of classical solutions for small initial data in $H^s(\mathbb{R}^d)\cap L^1(\mathbb{R}^d)$ with $d\geq 3$ and sufficiently large $s$. They further obtained the optimal time-decay rates of solutions in \cite{choijungkim24}. Danchin \cite{danchin2026elementary} employed an elementary method to prove the global existence of strong solutions in the whole space, without assuming the initial density to be small and regular. As for the compressible Euler-NS system \eqref{pENS}, Choi and Kwon \cite{choikwonJDE16} established a unique global strong solution and obtained exponential convergence rates over time in a spatial periodic domain. The global existence and algebraic time-decay rates of solutions close to the equilibrium state $(\bar{\rho},0,1,0)$ with constant $\bar{\rho}\geq0$ have been investigated by Guo et al. \cite{guoetal24} without vacuum ($\bar{\rho} > 0$) and by Li et al. \cite{litangzhang25} with far-field vacuum ($\bar{\rho} = 0$). Additionally, one can refer to \cite{dongetal19, liwangzhang25} for the blow-up or non-existence phenomena of classical solutions.

To the best of our knowledge, there are very few results on the uniform-in-time derivation of such macroscopic equations from kinetic-fluid interaction models. In particular, uniform-in-time derivations of models \eqref{pENS}, \eqref{DF}, and \eqref{TNS} in the present setting seem to be open. Moreover, although these limiting systems arise from related asymptotic regimes, the links between these macroscopic models do not seem to have been investigated in the literature.

\subsection{Our goals}

The first goal of this paper is to provide a novel perspective on the {\emph{global, purely macroscopic relationships}} between the three fluid-dynamical systems \eqref{pENS}, \eqref{DF}, and \eqref{TNS}. More precisely, we perform a quantitative analysis of the Cauchy problem \eqref{pENS}-\eqref{indt}, with particular emphasis on how the friction parameter $\tau$ and the Mach number $\varepsilon$ affect the global regularity and uniform estimates of solutions. This allows us to rigorously justify, for {\emph{ill-prepared}} initial data, the global-in-time strong convergence of the Euler-NS system \eqref{pENS} toward the DF model \eqref{DF} as $\tau\to0$, and toward the TNS model \eqref{TNS} in the joint limit $\tau=\varepsilon\to0$. The limiting procedures investigated in this paper are schematically summarized in Figure~\ref{fig:tri-limits} below.

\begin{figure}[!ht]
\centering
\begin{tikzpicture}[
system/.style={draw, rectangle, minimum width=3cm, minimum height=1cm, inner sep=8pt, align=center, font=\small},
arrow/.style={-{Stealth[length=3mm,width=2mm]}, thick}
]

\node[system] (EulerNS) at (0,4) {Euler-NS system};
\node[system] (DF)      at (0,0) {DF system};
\node[system] (TNS)     at (5.2,2) {TNS system};
\node[system] (VNS)     at (-5.2,2) {Vlasov-NS system};

\draw[arrow] (EulerNS.south) -- (DF.north)
node[midway, right=10pt, font=\small, fill=white, inner sep=1pt] {\(\tau\to0\)};

\draw[arrow] (EulerNS.south east) -- (TNS.north west)
node[midway, above=10pt, xshift=6pt, font=\small, fill=white, inner sep=1pt]
{\(\tau=\varepsilon\to0\)};

\draw[arrow] (DF.north east) -- (TNS.south west)
node[midway, below=10pt, xshift=6pt, font=\small, fill=white, inner sep=1pt]
{\(\varepsilon\to0\)};

\draw[arrow] (VNS.north east) -- (EulerNS.south west)
node[midway, above=10pt, xshift=-6pt, font=\small, fill=white, inner sep=1pt]
{\(f=\rho\delta_{\xi=u}\)};

\draw[arrow] (VNS.south east) -- (DF.north west)
node[midway, below=10pt, xshift=-6pt, font=\small, fill=white, inner sep=1pt]
{\(\tau\to0\)};

\end{tikzpicture}
\caption{Large-friction and incompressible limits}
\label{fig:tri-limits}
\end{figure}
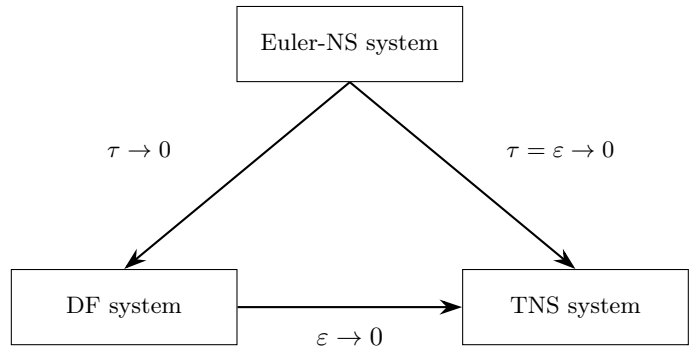

In addition to the singular limit analysis, we aim to study the global dynamics for the Euler-NS system \eqref{pENS} with minimal regularity assumptions. A fundamental challenge in studying the global well-posedness for \eqref{pENS} arises from ensuring the $L^1$ time integrability of $\nabla u$ and $\nabla v$, namely,
\begin{align}
\int_0^\infty \|(\nabla u,\nabla v)(t)\|_{L^{\infty}}\,{\rm d}t<\infty,\label{L1Lip}
\end{align}
which is crucial for controlling the transport terms in system \eqref{pENS} and preventing the finite-time blow-up phenomenon. We mention that, to achieve the bound \eqref{L1Lip}, the authors in \cite{litangzhang25, guoetal24} combined spectral analysis with the weighted energy method to obtain sufficient time-decay properties of the velocities, under an additional $L^1$-assumption on the initial perturbation $(u_0, n_0 - 1, v_0)$. Indeed, it is a possible way to overcome the difficulty in \eqref{L1Lip} by using the framework of homogeneous Besov spaces of $\dot{B}^s_{2,1}$-type, where the factor `1' ensures the maximal $L^1$ time integrability. According to the spectral structure of system \eqref{pENS} (see \hyperref[sectionlinear]{Appendix A}), the key estimate \eqref{L1Lip} determines the minimal regularity requirements for the initial data, that is, $(\rho_0, u_0)\in (\dot{B}^{\frac{d}{2}-1}_{2,1} \cap \dot{B}^{\frac{d}{2}}_{2,1}) \times (\dot{B}^{\frac{d}{2}-1}_{2,1} \cap \dot{B}^{\frac{d}{2}+1}_{2,1})$ and $(n_0 - 1, v_0)\in (\dot{B}^{\frac{d}{2}-1}_{2,1} \cap \dot{B}^{\frac{d}{2}}_{2,1}) \times \dot{B}^{\frac{d}{2}-1}_{2,1}$. These spaces, known as the {\emph{critical}} functional spaces (refer to \cite{danchinIM00, bahourietal11}), are carefully chosen to preserve the scaling invariance of the viscous flow while ensuring the essential Lipschitz bound for the velocity of the pressureless Euler flow.
Moreover, if the initial perturbation $(\rho_{0},u_{0},n_{0}-1,v_{0})$ is further uniformly bounded in $\dot{B}^{\sigma_1}_{2,\infty}$ for $\sigma_{1}\in[-\frac{d}{2},\frac{d}{2}-1)$, we provide quantitative and qualitative descriptions of the large-time behavior of global solutions to the Cauchy problem \eqref{pENS}-\eqref{indt} for the Euler-NS system, with explicit dependence on $\tau\in(0,1)$.
Furthermore, due to the transport nature of $\eqref{pENS}_1$, we establish the asymptotic convergence behavior of the non-dissipative component $\rho$ to the non-constant equilibrium state $\rho_\infty = \rho_\infty(x)$.
In particular, we deduce improved decay estimates of the relative velocity $u-v$ and obtain the optimal time-decay rates of the velocities $u$ and $v$ for all spatial regularity indices up to $\frac{d}{2}+1$.

\subsection{Outline}

The rest of the paper is organized as follows. In Section \ref{sectionresults}, we state the main theorems and provide strategies of the proofs.
In Section \ref{sectionexistence}, we derive the uniform \emph{a priori} estimates and prove the global existence result stated in Theorem \ref{theorem1}.
In Section \ref{sectionlimit}, we establish the error estimates between global solutions of the Euler-NS system \eqref{pENS} and the limiting DF model \eqref{DF}, which leads to the proof of Theorem \ref{theorem3}.
In Section \ref{sectiondecay}, we establish the optimal time-decay estimates of solutions and prove Theorem \ref{theorem2}.
In Section \ref{section:incompressible}, we rigorously justify the incompressible limits of systems \eqref{DFvar} and \eqref{pENSvar} established in Theorems \ref{Theoremincompressible1} and \ref{Theoremincompressible2}.
In \hyperref[sectionlinear]{Appendix A}, we consider the linear problem and present a detailed spectral analysis with respect to the parameter $\tau$, which indicates the optimal dissipation structures and time-decay estimates. The definitions of functional spaces and essential technical lemmas are provided in \hyperref[sectionpre]{Appendix B}.

\section{Main results}\label{sectionresults}

\subsection{Large-friction limit}

First, we establish the global existence of strong solutions to the Cauchy problem \eqref{pENS}-\eqref{indt} for $\tau>0$ near the equilibrium state $(0,0,1,0)$ in critical Besov spaces, without requiring the additional $L^1$-assumption. Moreover, we derive the uniform-in-$\tau$ regularity estimates of the solution.
\begin{theorem}\label{theorem1}
Let $\tau\in(0,1)$ be a constant. For any $d\geq2$, there exists a positive constant $\delta_{0}>0$ independent of the parameter $\tau$ such that if the initial data $(\rho^\tau_{0},u^\tau_{0},n^\tau_{0},v^\tau_{0})$ satisfies $\rho^\tau_{0}(x)>0$ for $x\in\mathbb{R}^d$ and
\begin{equation}\label{idcd1}
\begin{aligned}
\|(\rho^\tau_{0}, n^\tau_{0}-1)\|_{\dot{B}^{\frac{d}{2}-1}_{2,1}\cap\dot{B}^{\frac{d}{2}}_{2,1}}+\|(u^\tau_{0},v^\tau_{0})\|_{\dot{B}^{\frac{d}{2}-1}_{2,1}}+\tau\|u^\tau_{0}\|_{\dot{B}^{\frac{d}{2}+1}_{2,1}}\leq \delta_{0},
\end{aligned}
\end{equation}
then the Cauchy problem \eqref{pENS}-\eqref{indt} admits a unique global strong solution $(\rho^\tau,u^\tau,n^\tau,v^\tau)$ satisfying
\begin{equation}\label{soluspce1}
\left\{
\begin{aligned}
&\rho^\tau(t,x)>0~~\text{for}~~(t,x)\in\mathbb{R}_+\times\mathbb{R}^d,\quad \rho^\tau\in \mathcal{C}(\mathbb{R}_{+};\dot{B}^{\frac{d}{2}-1}_{2,1}\cap \dot{B}^{\frac{d}{2}}_{2,1}),\\
&(u^\tau,v^\tau)\in \mathcal{C}(\mathbb{R}_{+};\dot{B}^{\frac{d}{2}-1}_{2,1})\cap L^{1}(\mathbb{R}_{+};\dot{B}^{\frac{d}{2}+1}_{2,1}),\quad \tau u^\tau\in \mathcal{C}(\mathbb{R}_{+};\dot{B}^{\frac{d}{2}+1}_{2,1}),\\
&(n^\tau-1)\in \mathcal{C}(\mathbb{R}_{+};\dot{B}^{\frac{d}{2}-1}_{2,1}\cap\dot{B}^{\frac{d}{2}}_{2,1})\cap  L^{1}(\mathbb{R}_{+};\dot{B}^{\frac{d}{2}+1}_{2,1}+\dot{B}^{\frac{d}{2}}_{2,1}),\\
&\frac{1}{\sqrt{\tau}}(u^\tau-v^\tau)\in  L^{1}(\mathbb{R}_{+};\dot{B}^{\frac{d}{2}}_{2,1})\cap L^{2}(\mathbb{R}_{+};\dot{B}^{\frac{d}{2}-1}_{2,1}),\\
&\frac{1}{\tau}(u^\tau-v^\tau)\in L^{1}(\mathbb{R}_{+};\dot{B}^{\frac{d}{2}}_{2,1}+\dot{B}^{\frac{d}{2}-1}_{2,1}).
\end{aligned}
\right.
\end{equation}
Furthermore, there exists a positive constant $C_{1}>0$ independent of $\tau$ such that
\begin{align}
&\label{es:rho}
\|\rho^\tau\|_{\widetilde{L}^{\infty}(\mathbb{R}_+;\dot{B}^{\frac{d}{2}-1}_{2,1})}\leq C_{1}\|\rho^\tau_0\|_{\dot{B}^{\frac{d}{2}-1}_{2,1}},\quad \|\rho^\tau\|_{\widetilde{L}^{\infty}(\mathbb{R}_+;\dot{B}^{\frac{d}{2}}_{2,1})}\leq C_{1}\|\rho^\tau_0\|_{\dot{B}^{\frac{d}{2}}_{2,1}},
\end{align}
and
\begin{align}\label{result1}
\begin{aligned}
&\|(u^\tau,v^\tau)\|_{\widetilde{L}^{\infty}(\mathbb{R}_+;\dot{B}^{\frac{d}{2}-1}_{2,1})}+\tau\|u^\tau\|_{\widetilde{L}^{\infty}(\mathbb{R}_+;\dot{B}^{\frac{d}{2}+1}_{2,1})}+\|n^\tau-1\|_{\widetilde{L}^{\infty}(\mathbb{R}_+;\dot{B}^{\frac{d}{2}-1}_{2,1}\cap\dot{B}^{\frac{d}{2}}_{2,1})}\\
&\quad+\|(u^\tau,v^\tau)\|_{L^{1}(\mathbb{R}_+;\dot{B}^{\frac{d}{2}+1}_{2,1})}+\|n^\tau-1\|_{L^{1}(\mathbb{R}_+;\dot{B}^{\frac{d}{2}+1}_{2,1}+\dot{B}^{\frac{d}{2}}_{2,1})}\\
&\quad \leq C_1\Big(\|(u^\tau_{0},v^\tau_{0})\|_{\dot{B}^{\frac{d}{2}-1}_{2,1}}+\tau\|u^\tau_{0}\|_{\dot{B}^{\frac{d}{2}+1}_{2,1}}+\|(n^\tau_{0}-1)\|_{\dot{B}^{\frac{d}{2}-1}_{2,1}\cap\dot{B}^{\frac{d}{2}}_{2,1}}\Big).
\end{aligned}
\end{align}
In addition, the relative velocity $u^\tau-v^\tau$ has the following estimates
\begin{equation}\label{uvcon}
\left\{
\begin{aligned}
&\|u^\tau-v^\tau\|_{L^{1}(\mathbb{R}_+;\dot{B}^{\frac{d}{2}}_{2,1})}+\|u^\tau-v^\tau\|_{\widetilde{L}^{2}(\mathbb{R}_+;\dot{B}^{\frac{d}{2}-1}_{2,1})}\leq C_{1} \sqrt{\tau},\\
&\|u^\tau-v^\tau\|_{L^{1}(\mathbb{R}_+;\dot{B}^{\frac{d}{2}}_{2,1}+\dot{B}^{\frac{d}{2}-1}_{2,1})}\leq C_{1} \tau.
\end{aligned}
\right.
\end{equation}
\end{theorem}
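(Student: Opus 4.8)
The plan is to run a standard-but-delicate Friedrichs/iteration scheme to construct local solutions, and then close uniform-in-$\tau$ \emph{a priori} estimates in the $\widetilde{L}^\infty_t$--$L^1_t$ scale of critical Besov spaces; the global existence then follows from a continuation argument once the smallness of the initial data is propagated. The crucial point is that the functional $X(t)$ to be controlled must group the unknowns according to the singular structure revealed by the spectral analysis of Appendix A: one should take $X(t)$ to be the sum of all the norms appearing on the left-hand sides of \eqref{es:rho}, \eqref{result1}, and \eqref{uvcon}, i.e. $\|\rho^\tau\|_{\widetilde L^\infty_t(\dot B^{d/2-1}_{2,1}\cap\dot B^{d/2}_{2,1})}$, $\|(u^\tau,v^\tau)\|_{\widetilde L^\infty_t(\dot B^{d/2-1}_{2,1})\cap L^1_t(\dot B^{d/2+1}_{2,1})}$, $\tau\|u^\tau\|_{\widetilde L^\infty_t(\dot B^{d/2+1}_{2,1})}$, $\|n^\tau-1\|_{\widetilde L^\infty_t(\dot B^{d/2-1}_{2,1}\cap\dot B^{d/2}_{2,1})\cap L^1_t(\dot B^{d/2+1}_{2,1}+\dot B^{d/2}_{2,1})}$, together with the relative-velocity quantities $\tau^{-1/2}(u^\tau-v^\tau)$ in $L^1_t(\dot B^{d/2}_{2,1})\cap\widetilde L^2_t(\dot B^{d/2-1}_{2,1})$ and $\tau^{-1}(u^\tau-v^\tau)$ in $L^1_t(\dot B^{d/2}_{2,1}+\dot B^{d/2-1}_{2,1})$. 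The $\sqrt\tau$ and $\tau$ factors in \eqref{uvcon} are then \emph{forced} by the estimate: the relative velocity is driven by the source $\partial_t v^\tau$ and pressure/convection terms through the (damped) equation $\partial_t(u^\tau-v^\tau)+\tfrac1\tau(1+\tfrac{\rho^\tau}{n^\tau})(u^\tau-v^\tau)=(\text{terms of size }O(1))$, so resolving the damping gives $u^\tau-v^\tau=O(\tau)$ in the weakest norm and $O(\sqrt\tau)$ after paying half a derivative, exactly as in \eqref{uvcon}.

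The key steps, in order, are: (i) \textbf{Linear estimates.} Using Appendix A, establish the Lyapunov/effective-unknown estimates for the linearized system with source terms — the parabolic smoothing for $v^\tau$ and $n^\tau-1$ (giving the $L^1_t(\dot B^{d/2+1}_{2,1})$ gains), the damped-transport estimate for $u^\tau$ with the $\tau$-weighted $\dot B^{d/2+1}_{2,1}$ control, and the crucial relative-velocity estimate that produces the $\tau$-powers. One introduces an effective velocity (a Hoff-type unknown, e.g. a combination of $v^\tau$ and $(-\Delta)^{-1}\nabla(n^\tau-1)$, plus a correction incorporating $\rho^\tau u^\tau$) to diagonalize the pressure-drag-viscosity block and read off the dissipation. (ii) \textbf{Nonlinear/paraproduct estimates.} Bound all the nonlinear terms — $\dive(\rho^\tau u^\tau)$, $\dive(\rho^\tau u^\tau\otimes u^\tau)$, $\dive(n^\tau v^\tau\otimes v^\tau)$, the pressure nonlinearity $\nabla(P(n^\tau)-P(1))$ via composition estimates, and the drag term $\tfrac1\tau\rho^\tau(u^\tau-v^\tau)$ — using Bony decomposition, product laws $\dot B^{d/2}_{2,1}\times\dot B^{s}_{2,1}\hookrightarrow\dot B^s_{2,1}$, and commutator estimates for the transport terms. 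The drag term is handled by writing $\tfrac1\tau\rho^\tau(u^\tau-v^\tau)=\rho^\tau\cdot\tfrac1\tau(u^\tau-v^\tau)$ and feeding in the $\tau^{-1}(u^\tau-v^\tau)$-norm already in $X(t)$, so it costs only $\|\rho^\tau\|_{\dot B^{d/2}_{2,1}}\cdot(\text{a piece of }X(t))$. (iii) \textbf{Closing the bootstrap.} Combine (i)--(ii) to get $X(t)\le C X(0)+C X(t)^2$ (or $+C\tau^{1/2}$ contributions, which are absorbed), whence $X(t)\le 2CX(0)$ as long as $X(0)\le\delta_0$ is small enough, uniformly in $\tau\in(0,1)$. (iv) \textbf{Transport of positivity and persistence of regularity} for $\rho^\tau$ from $\rho^\tau_0>0$ and the $L^1_t$-Lipschitz bound on $u^\tau$ coming from $\dot B^{d/2+1}_{2,1}$-control of $\tau u^\tau$ rescaled, then a standard uniqueness argument in a lower-regularity norm. (v) \textbf{Global continuation.} Since the a priori bound is uniform in time, the local solution extends to $\mathbb{R}_+$.

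I expect the main obstacle to be step (i), specifically the simultaneous bookkeeping of the $\tau$-powers: the pressureless Euler block $\eqref{pENS}_1$--$\eqref{pENS}_2$ has \emph{no} dissipation of its own, so the only way $u^\tau$ gains any time integrability is through the drag, and the only way the Lipschitz bound \eqref{L1Lip} is achieved is by the $\tau$-weighted high-frequency estimate $\tau\|u^\tau\|_{\widetilde L^\infty_t(\dot B^{d/2+1}_{2,1})}+\|u^\tau\|_{L^1_t(\dot B^{d/2+1}_{2,1})}\lesssim X(0)+X(t)^2$. Proving this requires carefully splitting into low and high frequencies relative to a $\tau$-dependent threshold: at low frequencies the system behaves like a damped wave/parabolic system and one gets uniform smoothing, while at high frequencies $u^\tau$ is essentially governed by $\partial_t u^\tau+\tfrac1\tau u^\tau\approx \tfrac1\tau v^\tau+(\text{convection})$, and one must extract exactly one $\tau$ from the damping to upgrade $v^\tau$'s $L^1_t(\dot B^{d/2+1}_{2,1})$ bound into a bound for $u^\tau$. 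Getting the frequency-threshold analysis and the matching of low/high estimates to be clean \emph{and} $\tau$-uniform — so that constants like $C_1$ genuinely do not blow up as $\tau\to0$ — is the technical heart; the relative-velocity estimates \eqref{uvcon} are then corollaries of the same splitting, and everything else (product laws, persistence, uniqueness, continuation) is routine in the Danchin critical framework.
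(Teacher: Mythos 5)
Your overall plan is faithful to the structure of the paper: you identify the same solution functional (all the $\tau$-weighted norms in \eqref{result1} and \eqref{uvcon} together), the same mechanism for the $\sqrt\tau$ and $\tau$ gains (strong damping in the relative-velocity equation), the same observation that the pressureless Euler block has no intrinsic smoothing so the $L^1_t(\dot B^{d/2+1}_{2,1})$-bound for $u^\tau$ must be pried out of the $\tau^{-1}$-damping by multiplying by $\tau$, and the same treatment of $\rho^\tau$ as a pure transport variable controlled by the Lipschitz integrability of $u^\tau$. Your handling of the drag term by writing $\tau^{-1}\rho^\tau(u^\tau-v^\tau)=\rho^\tau\cdot\tau^{-1}(u^\tau-v^\tau)$ and feeding in the $\tau^{-1}(u^\tau-v^\tau)$ norm already placed in $X(t)$ is exactly the paper's estimate \eqref{ns9}.

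However, two technical choices in step (i) deviate from the paper and, as stated, are either unnecessary or questionable. First, you propose a Hoff-type effective unknown ``plus a correction incorporating $\rho^\tau u^\tau$'' to diagonalize the pressure--drag--viscosity block. The paper does not introduce any such unknown in the existence proof: it treats the $(a^\tau,v^\tau)$ pair as an isentropic Navier--Stokes block via a standard Matsumura--Nishida/Lyapunov functional (the $\mathcal{E}_j$, $\mathcal{D}_j$ in Lemma 3.1) with the drag as a nonlinear source, and then estimates $u^\tau-v^\tau$, $u^\tau$, and $\rho^\tau$ \emph{sequentially} in Lemmas 3.2--3.4. Mixing $\rho^\tau u^\tau$ into a diagonalizing unknown is awkward at this level of regularity because $\rho^\tau$ has no parabolic gain whatsoever; the paper's effective mixed velocity $V^\tau$ (which does contain $\rho^\tau u^\tau$) appears only in the proof of Theorem \ref{theorem3}, where the analysis is carried out two derivatives lower so the loss on $\rho^\tau$ is harmless. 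If you insist on a Hoff unknown, it should only involve $v^\tau$ and $a^\tau$, and the drag must stay a source term. Second, the $\tau$-dependent frequency threshold you flag as the ``technical heart'' is not needed: the paper's Lemma 3.3 simply multiplies the dyadic $u^\tau$-estimate by $\tau\,2^{j(d/2+1)}$ and sums over \emph{all} $j\in\mathbb{Z}$, converting the $\tau^{-1}$-damping into the $L^1_t(\dot B^{d/2+1}_{2,1})$-bound uniformly, without matching regimes; the $\tau$-threshold $|\xi|\sim\tau^{-1/2}$ enters only the \emph{linear} decay analysis in Appendix A. A third, minor, point: the closure is not literally $X(t)\le CX_0+CX(t)^2$ because the $\rho^\tau$-estimate gives $\|\rho^\tau\|\le e^{C\mathcal{D}(t)}\|\rho_0^\tau\|$ by Gr\"onwall; the paper first closes $(u^\tau,a^\tau,v^\tau,u^\tau-v^\tau)$ under the a priori smallness of $\rho^\tau$ and only then feeds the resulting small $\mathcal{D}(t)$ into the exponential for $\rho^\tau$.
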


\begin{remark}\normalfont
Theorem \ref{theorem1} provides a global stability result for the Euler-NS system \eqref{pENS} with critical regularity, uniformly with respect to the Stokes relaxation time $\tau$. The uniform $L^1$ time regularity \eqref{L1Lip} is established, crucially avoiding any use of decay estimates under additional lower-order assumptions.
\end{remark}

\begin{remark}\normalfont
It should be noted that the far-field vacuum condition $\rho^\tau(t,x)\rightarrow 0$ as $|x|\rightarrow\infty$ ensures the purely transport structure of $\eqref{pENS}_1$, and one needs $\rho^\tau(t,x)>0$ for every $(t,x)\in \mathbb{R}_+\times \mathbb{R}^d$ to derive a Burgers-type equation of $u^\tau$ in  $\eqref{pENS}_2$.
\end{remark}

\begin{remark}\normalfont
One of the main challenges arises from the highly singular nonlinear term $\frac{1}{\tau} \rho^\tau (u^\tau - v^\tau)$ in the large-friction regime. To address this, we establish the improved regularity estimates \eqref{uvcon} for the relative velocity $u^\tau - v^\tau$ with explicit convergence rates depending on $\tau$. In particular, the regularity $L^1(\mathbb{R}_{+}; \dot{B}^{\frac{d}{2}}_{2,1}) \cap L^2(\mathbb{R}_{+}; \dot{B}^{\frac{d}{2}-1}_{2,1})$ for $u^\tau - v^\tau$ is stronger than the regularity $L^1(\mathbb{R}_{+}; \dot{B}^{\frac{d}{2}+1}_{2,1}) \cap L^2(\mathbb{R}_{+}; \dot{B}^{\frac{d}{2}}_{2,1})$ for $(u^\tau, v^\tau)$ in low frequencies, where $(u^\tau, v^\tau)$ does not exhibit any convergence rate. Note that the estimates in \eqref{uvcon}, revealing the alignment phenomena between the velocities of two fluids, are crucial to derive the convergence rate in the large-friction limit.
\end{remark}

Due to the consistency of the estimates in Theorem \ref{theorem1} with respect to the parameter $\tau$, we have the following global existence result of strong solutions to the Cauchy problem \eqref{DF}-\eqref{DFindt} for the DF model as the large-friction limit of the Euler-NS system.

\begin{cor}\label{corollary1}
For any $d\geq 2$, there exists a positive constant $\delta_{0}^{*}>0$ such that if the initial data $(\rho_{0},n_{0},v_{0})$ satisfies $\rho_{0}(x)\geq 0$ for $x\in\mathbb{R}^d$ and
\begin{equation}\label{idcd4}
\begin{aligned}
\|(\rho_{0}, n_{0}-1)\|_{\dot{B}^{\frac{d}{2}-1}_{2,1}\cap\dot{B}^{\frac{d}{2}}_{2,1}}+\|v_{0}\|_{\dot{B}^{\frac{d}{2}-1}_{2,1}}\leq \delta_{0}^{*},
\end{aligned}
\end{equation}
then the Cauchy problem \eqref{DF}-\eqref{DFindt} admits a unique global strong solution $(\rho,n,v)$ satisfying
\begin{equation}\label{soluspce2}
\left\{
\begin{aligned}
&\rho(t,x)\geq 0 ~~\text{for}~~ (t,x)\in\mathbb{R}_+\times\mathbb{R}^d,\quad  \rho\in \mathcal{C}(\mathbb{R}_{+};\dot{B}^{\frac{d}{2}-1}_{2,1}\cap \dot{B}^{\frac{d}{2}}_{2,1}),\\
&n-1\in \mathcal{C}(\mathbb{R}_{+};\dot{B}^{\frac{d}{2}-1}_{2,1}\cap\dot{B}^{\frac{d}{2}}_{2,1})\cap  L^{1}(\mathbb{R}_{+};\dot{B}^{\frac{d}{2}+1}_{2,1}+\dot{B}^{\frac{d}{2}}_{2,1}),\\
&v\in \mathcal{C}(\mathbb{R}_{+};\dot{B}^{\frac{d}{2}-1}_{2,1})\cap L^{1}(\mathbb{R}_{+};\dot{B}^{\frac{d}{2}+1}_{2,1}).
\end{aligned}
\right.
\end{equation}
Moreover, there exists a positive constant $C_{1}^{*}>0$ such that
\begin{align}\label{rhoDF}
\|\rho\|_{\widetilde{L}^{\infty}(\mathbb{R}_+;\dot{B}^{\frac{d}{2}-1}_{2,1})}\leq C_{1}^*\|\rho_0\|_{\dot{B}^{\frac{d}{2}-1}_{2,1}},\quad \|\rho\|_{\widetilde{L}^{\infty}(\mathbb{R}_+;\dot{B}^{\frac{d}{2}}_{2,1})}\leq  C_{1}^*\|\rho_0\|_{\dot{B}^{\frac{d}{2}}_{2,1}},
\end{align}
and
\begin{equation}\label{result5}
\begin{aligned}
&\|n-1\|_{\widetilde{L}^{\infty}(\mathbb{R}_+;\dot{B}^{\frac{d}{2}-1}_{2,1}\cap\dot{B}^{\frac{d}{2}}_{2,1})}+\|v\|_{\widetilde{L}^{\infty}(\mathbb{R}_+;\dot{B}^{\frac{d}{2}-1}_{2,1})}\\
&\quad\quad+\|n-1\|_{L^{1}(\mathbb{R}_+;\dot{B}^{\frac{d}{2}+1}_{2,1}+\dot{B}^{\frac{d}{2}}_{2,1})}+\|v\|_{L^{1}(\mathbb{R}_+;\dot{B}^{\frac{d}{2}+1}_{2,1})}\\
&\quad \leq C_{1}^{*}\Big(\|(n_{0}-1)\|_{\dot{B}^{\frac{d}{2}-1}_{2,1}\cap\dot{B}^{\frac{d}{2}}_{2,1}}+\|v_{0}\|_{\dot{B}^{\frac{d}{2}-1}_{2,1}}\Big).
\end{aligned}
\end{equation}
\end{cor}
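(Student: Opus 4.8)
The phrasing of the statement (``due to the consistency of the estimates in Theorem~\ref{theorem1} with respect to $\tau$'') points to deriving the corollary from Theorem~\ref{theorem1} by passing to the large-friction limit $\tau\to0$, and that is the route I would take. Given data $(\rho_0,n_0,v_0)$ as in \eqref{idcd4}, fix a positive Gaussian $\phi$ (which lies in $\dot B^{\frac d2-1}_{2,1}\cap\dot B^{\frac d2}_{2,1}$ for $d\ge2$ and vanishes at infinity) and, for $\tau\in(0,1)$, set
\[
\rho^\tau_0:=\rho_0+\tau\phi,\qquad n^\tau_0:=n_0,\qquad v^\tau_0:=v_0,\qquad u^\tau_0:=\dot S_{j(\tau)}v_0,
\]
where $\dot S_j$ is the low-frequency truncation and $j(\tau)\to\infty$ is chosen so slowly that $\tau\,2^{2j(\tau)}\to0$ (e.g. $2^{j(\tau)}=\tau^{-1/4}$). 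Then $\rho^\tau_0(x)>0$ on $\mathbb{R}^d$, $\|u^\tau_0\|_{\dot B^{\frac d2-1}_{2,1}}\le\|v_0\|_{\dot B^{\frac d2-1}_{2,1}}$, $\tau\|u^\tau_0\|_{\dot B^{\frac d2+1}_{2,1}}\lesssim\tau\,2^{2j(\tau)}\|v_0\|_{\dot B^{\frac d2-1}_{2,1}}\to0$, and $(\rho^\tau_0,u^\tau_0)\to(\rho_0,v_0)$ in $(\dot B^{\frac d2-1}_{2,1}\cap\dot B^{\frac d2}_{2,1})\times\dot B^{\frac d2-1}_{2,1}$; hence, choosing $\delta_0^*$ small enough relative to the threshold $\delta_0$ of Theorem~\ref{theorem1} and to these fixed constants, condition \eqref{idcd4} implies \eqref{idcd1} for all sufficiently small $\tau$, with a $\tau$-uniform bound.

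Theorem~\ref{theorem1} then provides, for each such $\tau$, a global solution $(\rho^\tau,u^\tau,n^\tau,v^\tau)$ of \eqref{pENS} obeying the $\tau$-uniform bounds \eqref{es:rho}, \eqref{result1}, \eqref{uvcon}. In particular $\|u^\tau-v^\tau\|_{L^1(\mathbb{R}_+;\dot B^{\frac d2}_{2,1})}\lesssim\sqrt\tau\to0$, so the limits of $u^\tau$ and of $v^\tau$ must coincide. By Banach--Alaoglu I would extract a subsequence with $(\rho^\tau,n^\tau)\rightharpoonup(\rho,n)$ weakly-$*$ in $\widetilde L^\infty(\mathbb{R}_+;\dot B^{\frac d2-1}_{2,1}\cap\dot B^{\frac d2}_{2,1})$ and $(u^\tau,v^\tau)\rightharpoonup(v,v)$ in $\widetilde L^\infty(\mathbb{R}_+;\dot B^{\frac d2-1}_{2,1})\cap L^1(\mathbb{R}_+;\dot B^{\frac d2+1}_{2,1})$; lower semicontinuity of these norms yields \eqref{rhoDF}--\eqref{result5} and the regularity claimed in \eqref{soluspce2}, while $\rho^\tau>0$ forces $\rho\ge0$. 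To pass to the limit in the quadratic terms I would bound $\partial_t$ of each unknown uniformly in a negative-index space through the equations \eqref{pENS}, and use an Aubin--Lions argument (local-in-$x$ compactness plus the uniform tightness supplied by the Besov regularity) to upgrade weak convergence to strong convergence of $(\rho^\tau,n^\tau,u^\tau,v^\tau)$ in $\mathcal C_{\mathrm{loc}}([0,T];L^2_{\mathrm{loc}})$ on each slab. Crucially, adding $\eqref{pENS}_2$ and $\eqref{pENS}_4$ \emph{before} letting $\tau\to0$ cancels the singular drag $\frac1\tau\rho^\tau(u^\tau-v^\tau)$, and $u=v$ turns $\rho u\otimes u+n v\otimes v$ into $(\rho+n)v\otimes v$; hence $(\rho,n,v)$ solves \eqref{DF}--\eqref{DFindt}. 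Time-continuity of $\rho$ and $n-1$ in the stated spaces follows from the transport/continuity equations with $v\in L^1(\mathbb{R}_+;\dot B^{\frac d2+1}_{2,1})$ and standard commutator estimates, and that of $v$ from the momentum equation together with maximal regularity for the Lam\'e operator.

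For uniqueness in the class \eqref{soluspce2} I would argue directly on \eqref{DF}: given two such solutions with the same data, estimate the velocity difference in $\widetilde L^\infty_t\dot B^{\frac d2-1}_{2,1}\cap L^1_t\dot B^{\frac d2+1}_{2,1}$ and the density differences one regularity level lower, in $\widetilde L^\infty_t\dot B^{\frac d2-1}_{2,1}$ (to absorb the loss in the transport terms), via Besov product and commutator estimates and Gr\"onwall's inequality, the bound $v\in L^1(\mathbb{R}_+;\dot B^{\frac d2+1}_{2,1})$ being exactly what closes the loop; alternatively, uniqueness---indeed strong convergence with rate $\sqrt\tau$---is a consequence a posteriori of Theorem~\ref{theorem3}. (A fully self-contained variant would simply rerun the \emph{a priori} estimates of Section~\ref{sectionexistence} directly on \eqref{DF}, which is strictly easier since the singular drag term is absent, combined with local well-posedness and a continuation argument.)

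The main obstacle is the passage to the limit in the nonlinear terms on the whole space $\mathbb{R}^d$: the uniform estimates of Theorem~\ref{theorem1} give only weak-$*$ convergence, so one must produce genuine strong compactness for the quadratic terms, which requires both the time-equicontinuity estimate from the equations and care about possible loss of mass at spatial infinity. Everything else is bookkeeping with the $\tau$-uniform bounds of Theorem~\ref{theorem1} and classical Besov calculus, and the compactness step can be bypassed entirely by appealing to the quantitative convergence of Theorem~\ref{theorem3}.
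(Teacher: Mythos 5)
Your proposal is correct and follows essentially the same route as the paper: regularize the data by $\rho_0+\tau(\text{positive Gaussian})$ and a low-frequency truncation of $v_0$ for $u_0^\tau$, invoke the $\tau$-uniform bounds of Theorem~\ref{theorem1}, extract a subsequence via Aubin--Lions, add $\eqref{pENS}_2$ and $\eqref{pENS}_4$ to cancel the singular drag before the limit, and conclude with Fatou/lower semicontinuity and a standard uniqueness argument. The only cosmetic difference is the cutoff scale ($2^{j(\tau)}=\tau^{-1/4}$ versus the paper's $\tau^{-1/2}$); both give the needed $\tau$-uniform bound on $\tau\|u_0^\tau\|_{\dot B^{\frac d2+1}_{2,1}}$, so this is immaterial.
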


\begin{remark}\normalfont
Unlike the condition in Theorem \ref{theorem1} that the density $\rho^\tau$ has only far-field vacuum, Corollary \ref{corollary1} allows a non-negative density $\rho$ for $x\in\mathbb{R}^d$ since the positivity of the sum $\rho+n$ is enough to guarantee the parabolicity of equation $\eqref{DF}_{3}$.
\end{remark}

To prove the global strong convergence of the Euler-NS system \eqref{pENS} towards the DF model \eqref{DF} as $\tau \rightarrow 0$, we establish the following uniform error estimates between the global solutions of the system \eqref{pENS} and its limiting system \eqref{DF}.

\begin{theorem}\label{theorem3}
Let $\tau\in(0,1)$ be a constant, $(\rho^{\tau},u^{\tau},n^{\tau},v^{\tau})$ be the global solution to the Euler-NS system \eqref{pENS} associated with the initial data $(\rho^{\tau}_{0},u^{\tau}_{0},n^{\tau}_{0},v^{\tau}_{0})$ obtained in Theorem \ref{theorem1}, and $(\rho ,n ,v )$ be the global solution to the limiting DF model \eqref{DF} with the initial data $(\rho _{0},n _{0},v _{0})$ given in Corollary \ref{corollary1}.
For any $d\geq 3$, there exists a positive constant $C_{2}>0$ independent of $\tau$ such that
\begin{equation}\label{result6}
\begin{aligned}
&\|(\rho^{\tau}-\rho ,n^{\tau}-n )\|_{\widetilde{L}^{\infty}(\mathbb{R}_+;\dot{B}^{\frac{d}{2}-2}_{2,1}\cap\dot{B}^{\frac{d}{2}-1}_{2,1})}
+\left\|\frac{\rho^{\tau}}{\rho^{\tau}+n^{\tau}}u^{\tau}+\frac{n^{\tau}}{\rho^{\tau}+n^{\tau}}v^{\tau}-v\right\|_{\widetilde{L}^{\infty}(\mathbb{R}_+;\dot{B}^{\frac{d}{2}-2}_{2,1})}\\
&\quad+\|(u^{\tau}-v ,v^{\tau}-v )\|_{L^{1}(\mathbb{R}_+;\dot{B}^{\frac{d}{2}}_{2,1})}+\|n^{\tau}-n \|_{L^{1}(\mathbb{R}_+;\dot{B}^{\frac{d}{2}}_{2,1}+\dot{B}^{\frac{d}{2}-1}_{2,1})} \\
&\quad\leq C_{2}\|(\rho_{0}^{\tau}-\rho _{0},n_{0}^{\tau}-n _{0})\|_{\dot{B}^{\frac{d}{2}-2}_{2,1}\cap\dot{B}^{\frac{d}{2}-1}_{2,1}}+C_{2}\left\|\frac{\rho_0^{\tau}}{\rho_0^{\tau}+n_0^{\tau}}u_0^{\tau}+\frac{n_0^{\tau}}{\rho_0^{\tau}+n_0^{\tau}}v_0^{\tau}-v_0\right\|_{\dot{B}^{\frac{d}{2}-2}_{2,1}}+ C_{2}\sqrt{\tau}.
\end{aligned}
\end{equation}
If the initial data further satisfies
\begin{equation}\label{idcd5}
\begin{aligned}
&\|(\rho_{0}^{\tau}-\rho _{0},n_{0}^{\tau}-n _{0})\|_{\dot{B}^{\frac{d}{2}-2}_{2,1}\cap\dot{B}^{\frac{d}{2}-1}_{2,1}}+\left\|\frac{\rho_0^{\tau}}{\rho_0^{\tau}+n_0^{\tau}}u_0^{\tau}+\frac{n_0^{\tau}}{\rho_0^{\tau}+n_0^{\tau}}v_0^{\tau}-v_0\right\|_{\dot{B}^{\frac{d}{2}-2}_{2,1}}\leq \sqrt{\tau},
\end{aligned}
\end{equation}
then the right-hand side of \eqref{result6} can be bounded by $\mathcal{O}(\sqrt{\tau})$. Consequently, as $\tau\rightarrow0$, $(\rho^{\tau},u^{\tau},n^{\tau},v^{\tau})$ converges to $(\rho ,v ,n ,v )$ strongly in the following sense{\rm{:}}
\begin{equation}\label{conver}
\left\{
\begin{aligned}
&(\rho^{\tau},n^{\tau})\rightarrow (\rho ,n )&&\text{in}\quad L^{\infty}(\mathbb{R}_{+};\dot{B}^{\frac{d}{2}-\eta}_{2,1}),\\
&(u^{\tau},v^{\tau})\rightarrow (v ,v )&&\text{in}\quad  L^1(\mathbb{R}_+; \dot{B}^{\frac{d}{2}+1-\eta}_{2,1}) ,
\end{aligned}
\right.
\end{equation}
for an arbitrarily small constant $\eta\in(0,1]$.
\end{theorem}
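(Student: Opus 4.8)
The strategy is to recast the Euler-NS system \eqref{pENS} as a perturbation of the DF model \eqref{DF}. Introduce the barycentric velocity and the relative velocity
\begin{equation*}
w^\tau:=\frac{\rho^\tau u^\tau+n^\tau v^\tau}{\rho^\tau+n^\tau},\qquad z^\tau:=u^\tau-v^\tau,
\end{equation*}
so that $u^\tau=w^\tau+\frac{n^\tau}{\rho^\tau+n^\tau}z^\tau$ and $v^\tau=w^\tau-\frac{\rho^\tau}{\rho^\tau+n^\tau}z^\tau$. Summing $\eqref{pENS}_2$ and $\eqref{pENS}_4$ cancels the singular drag terms, and a direct algebraic computation gives
\begin{equation*}
\rho^\tau u^\tau\otimes u^\tau+n^\tau v^\tau\otimes v^\tau=(\rho^\tau+n^\tau)\,w^\tau\otimes w^\tau+\frac{\rho^\tau n^\tau}{\rho^\tau+n^\tau}\,z^\tau\otimes z^\tau,
\end{equation*}
the cross terms vanishing identically. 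Hence $(\rho^\tau,n^\tau,w^\tau)$ solves the DF system \eqref{DF} exactly, up to remainders
\begin{equation*}
\mathcal R_1:=-\operatorname{div}\Big(\tfrac{\rho^\tau n^\tau}{\rho^\tau+n^\tau}z^\tau\Big),\quad \mathcal R_2:=-\mathcal R_1,\quad \mathcal R_3:=-\operatorname{div}\Big(\tfrac{\rho^\tau n^\tau}{\rho^\tau+n^\tau}z^\tau\otimes z^\tau\Big)-\big(\mu\Delta+(\mu+\lambda)\nabla\operatorname{div}\big)\Big(\tfrac{\rho^\tau}{\rho^\tau+n^\tau}z^\tau\Big),
\end{equation*}
entering the $\rho$-, $n$-, and momentum equations, respectively. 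Using the algebra property of $\dot B^{\frac d2}_{2,1}$, the product laws of Appendix B, the uniform bounds of Theorem \ref{theorem1}, and crucially the alignment estimates \eqref{uvcon} (together with the interpolated bound $z^\tau\in\widetilde L^2(\mathbb{R}_+;\dot B^{\frac d2}_{2,1})$ of size $\mathcal O(1)$), one checks that $\mathcal R_1,\mathcal R_2,\mathcal R_3$ are $\mathcal O(\sqrt\tau)$ in $L^1(\mathbb{R}_+;\dot B^{\frac d2-2}_{2,1})$, with $\mathcal R_1,\mathcal R_2$ also $\mathcal O(\sqrt\tau)$ in $L^1(\mathbb{R}_+;\dot B^{\frac d2-1}_{2,1})$.

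Next I would introduce the errors $(\delta\rho,\delta n,\delta w):=(\rho^\tau-\rho,\,n^\tau-n,\,w^\tau-v)$ and subtract the DF system for $(\rho,n,v)$ of Corollary \ref{corollary1} from the perturbed one for $(\rho^\tau,n^\tau,w^\tau)$. After dividing the two momentum equations by $\rho^\tau+n^\tau$ and $\rho+n$ and linearizing $P(\cdot)$ around $n=1$, the error system has the structure of a linearized DF system: a transport equation for $\delta\rho$ coupled to a compressible-Navier-Stokes-type block for $(\delta n,\delta w)$ carrying the acoustic coupling and the Lamé dissipation, forced by $(\mathcal R_1,\mathcal R_2,\mathcal R_3)$ and by nonlinear terms that are products of the errors with (derivatives of) the two solutions. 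I would then reproduce, one derivative lower, the a priori scheme used for Theorem \ref{theorem1} and Corollary \ref{corollary1}: transport estimates placing $\delta\rho\in\widetilde L^\infty(\mathbb{R}_+;\dot B^{\frac d2-2}_{2,1}\cap\dot B^{\frac d2-1}_{2,1})$ — licit precisely for $d\ge3$, since then $\frac d2-2>-\frac d2$ — and coupled parabolic--hyperbolic estimates yielding $\delta n\in\widetilde L^\infty(\mathbb{R}_+;\dot B^{\frac d2-2}_{2,1}\cap\dot B^{\frac d2-1}_{2,1})\cap L^1(\mathbb{R}_+;\dot B^{\frac d2}_{2,1}+\dot B^{\frac d2-1}_{2,1})$ and $\delta w\in\widetilde L^\infty(\mathbb{R}_+;\dot B^{\frac d2-2}_{2,1})\cap L^1(\mathbb{R}_+;\dot B^{\frac d2}_{2,1})$. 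Every nonlinear contribution is bounded via the product laws so that it appears with a time-integrable coefficient built from $\|(\rho^\tau,n^\tau-1,\rho,n-1)\|_{\dot B^{\frac d2}_{2,1}}$ and $\|(w^\tau,v)\|_{\dot B^{\frac d2+1}_{2,1}}$, which lie in $L^1(\mathbb{R}_+)$ by Theorem \ref{theorem1} and Corollary \ref{corollary1}, or with a small factor $\delta_0$. A Grönwall argument then closes the combined norm and produces \eqref{result6}: the $C_2\sqrt\tau$ comes from the remainders, while the initial-data terms measure the discrepancy between $(\rho_0^\tau,n_0^\tau,w_0^\tau)$ and $(\rho_0,n_0,v_0)$.

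Finally, under the extra assumption \eqref{idcd5} the right-hand side of \eqref{result6} is $\mathcal O(\sqrt\tau)$, so $(\delta\rho,\delta n)=\mathcal O(\sqrt\tau)$ in $L^\infty(\mathbb{R}_+;\dot B^{\frac d2-1}_{2,1})$ while remaining uniformly bounded in $L^\infty(\mathbb{R}_+;\dot B^{\frac d2}_{2,1})$ by Theorem \ref{theorem1} and Corollary \ref{corollary1}; interpolation gives $(\rho^\tau,n^\tau)\to(\rho,n)$ in $L^\infty(\mathbb{R}_+;\dot B^{\frac d2-\eta}_{2,1})$ for any $\eta\in(0,1]$. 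Since $u^\tau-v=\delta w+\frac{n^\tau}{\rho^\tau+n^\tau}z^\tau$ and $v^\tau-v=\delta w-\frac{\rho^\tau}{\rho^\tau+n^\tau}z^\tau$, the velocity differences are $\mathcal O(\sqrt\tau)$ in $L^1(\mathbb{R}_+;\dot B^{\frac d2}_{2,1})$ and uniformly bounded in $L^1(\mathbb{R}_+;\dot B^{\frac d2+1}_{2,1})$, so interpolating yields $(u^\tau,v^\tau)\to(v,v)$ in $L^1(\mathbb{R}_+;\dot B^{\frac d2+1-\eta}_{2,1})$, which is \eqref{conver}.

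I expect the main obstacle to be the momentum remainder $\mathcal R_3$: its viscous pieces $\Delta\big(\tfrac{\rho^\tau}{\rho^\tau+n^\tau}z^\tau\big)$ are second order in $z^\tau$, and $z^\tau$ only enjoys the $L^1(\mathbb{R}_+;\dot B^{\frac d2}_{2,1})$ control of size $\sqrt\tau$ from \eqref{uvcon}, so $\mathcal R_3$ sits exactly at the $L^1(\mathbb{R}_+;\dot B^{\frac d2-2}_{2,1})$ scaling of the error equation — there is no room to spare, and it is the sharp alignment rate in \eqref{uvcon}, rather than the weaker uniform bounds on $(u^\tau,v^\tau)$, that produces the $\sqrt\tau$ rate and forces the one-derivative loss relative to Theorem \ref{theorem1}. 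A secondary difficulty is that the two momentum equations carry different effective densities ($\rho^\tau+n^\tau$ versus $\rho+n$), so one must track the feedback of $\delta\rho$ into the $(\delta n,\delta w)$ block carefully and verify that all Grönwall coefficients stay integrable uniformly in $\tau\in(0,1)$.
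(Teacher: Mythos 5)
Your proposal is correct and follows essentially the same route as the paper: your barycentric velocity $w^\tau$ coincides with the paper's effective mixed velocity $V^\tau$ defined in \eqref{V0}, and your error system for $(\delta\rho,\delta n,\delta w)$ with $\mathcal{O}(\sqrt\tau)$ remainders built from the relative velocity $z^\tau=u^\tau-v^\tau$ is the paper's system \eqref{errnVe} with sources $\widetilde F_1,\dots,\widetilde F_4$, closed via the same alignment estimates \eqref{uvcon}, product laws, Gr\"onwall and interpolation. The exact cross-cancellation
\begin{equation*}
\rho^\tau u^\tau\otimes u^\tau + n^\tau v^\tau\otimes v^\tau = (\rho^\tau+n^\tau)\,w^\tau\otimes w^\tau + \frac{\rho^\tau n^\tau}{\rho^\tau+n^\tau}\,z^\tau\otimes z^\tau
\end{equation*}
that you isolate is a clean way to see why the singular drag and the mixed convective terms disappear; the paper reaches the equivalent reformulation from the nonconservative momentum sum \eqref{rhousum} and lands on the $v^\tau$-transported equation \eqref{rhousum1}, but the two bookkeepings are equivalent.
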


\begin{remark}\normalfont
Theorem \ref{theorem3} provides the first global strong convergence result for the large-friction limit in terms of relative velocity relaxation, for initial data close to the equilibrium in critical spaces. To address the different velocities in error estimates, the key novelty lies in the introduction of the {\emph{effective mixed velocity}}:
\begin{align}\label{V0}
&V^{\tau}:=\frac{\rho^{\tau}}{\rho^{\tau}+n^{\tau}}u^{\tau}+\frac{n^{\tau}}{\rho^{\tau}+n^{\tau}}v^{\tau},
\end{align}
which enables us to reformulate the equations for three components $(u^\tau, n^\tau, v^\tau)$ to the two-component one in terms of $(n^\tau, V^\tau)$, up to some sources involving the relative velocity $u^\tau-v^\tau$ with an explicit convergence rate of order $\sqrt{\tau}$ in $\eqref{uvcon}_1$. To the best of our knowledge, the effective mixed velocity is entirely new and can offer broader applicability, for example, for the pressure case {\rm\cite{choijungM3AS21}} and the incompressible case {\rm\cite{choijungJMFM21}}.
\end{remark}

\begin{remark}\normalfont
Theorem \ref{theorem3} holds for general {\emph{ill-prepared}} initial data satisfying only the velocity difference condition $u_0^\tau-v_0^\tau=\mathcal{O}(1)$, in contrast to the {\emph{well-prepared}} case requiring $u_0^\tau-v_0^\tau=\mathcal{O}(\tau)$. To achieve it, our idea is to develop a careful analysis of the mixed velocity error term $V^\tau-v$.
\end{remark}

Then, we investigate the uniform time asymptotics in the large-friction regime. Under an additional lower-order regularity assumption, we establish the optimal time-decay rates of global solutions to the Cauchy problem \eqref{pENS}-\eqref{indt}, uniformly with respect to the friction parameter $\tau$.

\begin{theorem}\label{theorem2}
Let $\tau\in(0,1)$ be a constant. Suppose that the assumptions of Theorem \ref{theorem1} hold, and $(\rho^{\tau},u^{\tau},n^{\tau},v^{\tau})$ is the corresponding global strong solution to the Cauchy problem \eqref{pENS}-\eqref{indt} obtained in Theorem \ref{theorem1}. For any $d\geq 2$, there exists a positive constant $\delta_{1}>0$ independent of the parameter $\tau$ such that if the initial data $(\rho^{\tau}_{0}, u^{\tau}_{0}, n^{\tau}_{0}, v^{\tau}_{0})$ further satisfies for $\sigma_{1}\in [-\frac{d}{2},\frac{d}{2}-1)$ that
\begin{equation}\label{idcd2}
\begin{aligned}
\|(\rho^{\tau}_{0},u^{\tau}_{0}, n^{\tau}_{0}-1, v^{\tau}_{0})\|_{ \dot{B}^{\sigma_{1}}_{2,\infty}}^{\ell}\leq \delta_{1},
\end{aligned}
\end{equation}
then it holds for all $t\geq1$ and a generic constant $C_{3}>0$ that
\begin{equation}\label{result2}
\left\{
\begin{aligned}
&\|(u^{\tau},v^{\tau})(t)\|_{\dot{B}^{\sigma}_{2,1}}\leq C_{3}(1+t)^{-\frac{1}{2}(\sigma-\sigma_{1})},&&\sigma\in(\sigma_{1},\frac{d}{2}+1],\\
&\|(n^{\tau}-1)(t)\|_{\dot{B}^{\sigma}_{2,1}}\leq C_{3}(1+t)^{-\frac{1}{2}(\sigma-\sigma_{1})},&&\sigma\in(\sigma_{1},\frac{d}{2}],\\
&\|(u^{\tau}-v^{\tau})(t)\|_{\dot{B}^{\sigma}_{2,1}}\leq C_{3}(1+t)^{-\frac{1}{2}(\sigma-\sigma_{1}+1)},&&\sigma\in(\sigma_{1},\frac{d}{2}],\\
&\|(u^{\tau}-v^{\tau})(t)\|_{\dot{B}^{\sigma}_{2,1}}\leq C_{3}\tau(1+t)^{-\frac{1}{2}(\sigma-\sigma_{1}+1)},\quad &&\sigma\in(\sigma_{1},\frac{d}{2}-1].
\end{aligned}
\right.
\end{equation}
Additionally, for any $d\geq 3$, since $\operatorname{div} (\rho^\tau u^\tau)$ decays in $\dot{B}^{\sigma}_{2,1}$ with the rate $(1+t)^{-\frac{1}{2}(\sigma-\sigma_{1}+1)}$ for $\sigma_1\in[-\frac{d}{2},\frac{d}{2}-2)$ and $\sigma\in(\sigma_{1}+1,\frac{d}{2}-1]$ such that $\frac{1}{2}(\sigma-\sigma_{1}+1)>1$, we have
\begin{equation}\label{result3}
\begin{aligned}
\|(\rho^{\tau}-\rho^{\tau}_{\infty})(t)\|_{\dot{B}^{\sigma}_{2,1}}&\leq C_{3}(1+t)^{-\frac{1}{2}(\sigma-\sigma_{1}-1)},\qquad \sigma\in(\sigma_{1}+1,\frac{d}{2}-1],
\end{aligned}
\end{equation}
where $\rho^{\tau}_{\infty}$ is the asymptotic profile of $\rho^\tau$, satisfying
\begin{equation}\label{aspf}
\rho^{\tau}_{\infty}(x):=\rho^{\tau}_{0}(x)-\int_{0}^{\infty}\operatorname{div} (\rho^{\tau} u^{\tau})(t,x)\, {\rm d}t.
\end{equation}
\end{theorem}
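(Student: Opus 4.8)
The proof runs a time-weighted energy argument in the low-frequency/high-frequency framework of Appendix A, on top of the uniform-in-$\tau$ global bounds \eqref{es:rho}--\eqref{uvcon} furnished by Theorem \ref{theorem1}; one works on $t\ge1$, the range $0\le t\le1$ being covered directly by those bounds. The first step is to propagate the low-order norm: writing Duhamel's formula for the linearization of \eqref{pENS} around $(0,0,1,0)$ and invoking the spectral estimates of Appendix A — parabolic decay $e^{-c|\xi|^2 t}$ in the low-frequency regime for $(\rho^\tau,u^\tau,n^\tau-1,v^\tau)$, with an extra $|\xi|^2$-gain on the relative velocity $u^\tau-v^\tau$, and exponential decay $e^{-ct/\tau}$ for the damped modes in the high-frequency regime — together with Besov product and commutator estimates controlled by \eqref{result1}, one shows
\[
\|(\rho^\tau,u^\tau,n^\tau-1,v^\tau)\|^{\ell}_{\widetilde L^\infty(\mathbb{R}_+;\dot B^{\sigma_1}_{2,\infty})}\lesssim \delta_1,
\]
uniformly in $\tau\in(0,1)$. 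The hypothesis $\sigma_1\ge-\tfrac d2$ is exactly what keeps the quadratic nonlinearities in $\dot B^{\sigma_1}_{2,\infty}$.

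\emph{Time-weighted bootstrap.} Introduce the functional, schematically,
\[
\mathcal D(t):=\sup_{1\le s\le t}\ \sup_{\sigma_1<\sigma\le \frac d2+1}(1+s)^{\frac12(\sigma-\sigma_1)}\|(u^\tau,v^\tau)(s)\|_{\dot B^\sigma_{2,1}}+\cdots,
\]
augmented with $\sup(1+s)^{\frac12(\sigma-\sigma_1)}\|(n^\tau-1)(s)\|_{\dot B^\sigma_{2,1}}$ for $\sigma\le\frac d2$, with $\sup(1+s)^{\frac12(\sigma-\sigma_1+1)}\|(u^\tau-v^\tau)(s)\|_{\dot B^\sigma_{2,1}}$ for $\sigma\le\frac d2$, and with its $\tau^{-1}$-weighted analogue for $\sigma\le\frac d2-1$. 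Plugging the Duhamel representation into the Appendix-A decay kernels, splitting the time integral at $s/2$, and bounding the nonlinear sources by \eqref{result1} and by $\mathcal D(t)$, one closes an absorption inequality $\mathcal D(t)\lesssim \delta_1+\delta_1\mathcal D(t)+\mathcal D(t)^2$, whence $\mathcal D(t)\lesssim \delta_1$ for all $t$ by the smallness of $\delta_0,\delta_1$; this is \eqref{result2}. The additional $(1+t)^{-1/2}$ decay of $u^\tau-v^\tau$ comes from the $|\xi|^2$-gain in the damped-mode estimate, and the further factor $\tau$ in low frequencies from the identity $\rho^\tau(u^\tau-v^\tau)=-\tau\big(\partial_t(\rho^\tau u^\tau)+\operatorname{div}(\rho^\tau u^\tau\otimes u^\tau)\big)$ read off from $\eqref{pENS}_2$, equivalently from the bound on $\tfrac1\tau(u^\tau-v^\tau)$ in \eqref{uvcon}. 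Reaching $\sigma=\tfrac d2+1$ for $(u^\tau,v^\tau)$ with no $\tau$-weight uses the parabolic smoothing of the Navier--Stokes block for $v^\tau$, transferred to $u^\tau$ through the relaxation: for $t\ge1$ the high-frequency part of $u^\tau-v^\tau$ is $\mathcal O(e^{-ct/\tau})$, hence negligible, and $\tau u^\tau\in\widetilde L^\infty(\dot B^{\frac d2+1}_{2,1})$ already by Theorem \ref{theorem1}.

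\emph{Convergence of $\rho^\tau$ to its profile.} For $d\ge3$, the density solves $\partial_t\rho^\tau+\operatorname{div}(\rho^\tau u^\tau)=0$, and, using $\|\rho^\tau\|_{\widetilde L^\infty(\dot B^{\frac d2-1}_{2,1}\cap\dot B^{\frac d2}_{2,1})}\lesssim1$ from \eqref{es:rho}, the decay of $u^\tau$ from \eqref{result2}, and the product law $\dot B^{\sigma+1}_{2,1}\cdot\dot B^{\frac d2}_{2,1}\hookrightarrow\dot B^{\sigma+1}_{2,1}$ for $\sigma+1\le\frac d2$, one gets
\[
\|\operatorname{div}(\rho^\tau u^\tau)(t)\|_{\dot B^\sigma_{2,1}}\lesssim \|u^\tau(t)\|_{\dot B^{\sigma+1}_{2,1}}\lesssim (1+t)^{-\frac12(\sigma-\sigma_1+1)},\qquad \sigma\le\tfrac d2-1,
\]
whose exponent exceeds $1$ precisely when $\sigma_1<\tfrac d2-2$ and $\sigma>\sigma_1+1$. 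Hence the integral in \eqref{aspf} converges in $\dot B^\sigma_{2,1}$, $\rho^\tau_\infty$ is well defined, and $(\rho^\tau-\rho^\tau_\infty)(t)=\int_t^\infty\operatorname{div}(\rho^\tau u^\tau)(s)\,{\rm d}s$; taking the norm and integrating yields $\|(\rho^\tau-\rho^\tau_\infty)(t)\|_{\dot B^\sigma_{2,1}}\lesssim(1+t)^{-\frac12(\sigma-\sigma_1-1)}$, which is \eqref{result3}.

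\emph{Main obstacle.} The delicate point is to keep every decay estimate uniform in $\tau$ while treating the singular drag $\tfrac1\tau\rho^\tau(u^\tau-v^\tau)$: one has to exploit the $\tau$-dependent spectral picture of Appendix A — genuine damping at rate $1/\tau$ for the relative-velocity mode in high frequencies, together with the $|\xi|^2$- (hence $\sqrt\tau$- and $\tau$-) smallness in low frequencies — and at the same time absorb into $\mathcal D(t)$ the pressureless-Euler convection and transport terms $\operatorname{div}(\rho^\tau u^\tau\otimes u^\tau)$ and $\operatorname{div}(\rho^\tau u^\tau)$, for which no smoothing on $u^\tau$ or $\rho^\tau$ is available, without degrading the uniformity.
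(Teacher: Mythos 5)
Your proposal follows essentially the same route as the paper: a time-weighted bootstrap on a functional of the type $\mathcal{Y}(t)$ built over the low-/high-frequency decomposition of Appendix A, with the improved decay and the extra factor $\tau$ for $u^\tau-v^\tau$ extracted from the $1/\tau$-damped equation for the relative velocity (via $e^{-t/\tau}\lesssim\tau\langle t\rangle^{-\alpha}$ and the convolution bound $\int_0^t e^{-(t-s)/\tau}\langle s\rangle^{-\alpha}\,{\rm d}s\lesssim\tau\langle t\rangle^{-\alpha}$), followed by the representation $\rho^\tau-\rho^\tau_\infty=\int_t^\infty\operatorname{div}(\rho^\tau u^\tau)\,{\rm d}s$ for \eqref{result3}. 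One small imprecision: the high-frequency part of $u^\tau-v^\tau$ is not $\mathcal{O}(e^{-ct/\tau})$ for $t\ge1$ — the Duhamel sources ($\nabla a^\tau$, $\nabla^2 v^\tau$, and the drag term carrying the non-decaying $\rho^\tau$) only decay polynomially, so one gets $\tau\langle t\rangle^{-\alpha}$ with $\alpha=\frac12(\frac d2+1-\sigma_1)$, which is still sufficient for \eqref{result2} but is the reason the paper's high-frequency rate is polynomial rather than exponential.
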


\begin{remark}\normalfont
The time convergence rates of $(u^\tau,n^\tau-1,v^\tau)$ in \eqref{result2} are the same as those for the solution to the heat equation, which are optimal in the sense of linear analysis {\rm(}see Lemma \ref{lemmalinearoptimal}{\rm)}. However, for $\sigma\in (\sigma_1,\frac{d}{2}]$, while $(u^\tau,n^\tau-1,v^\tau)$ decays in $\dot{B}^{\sigma}_{2,1}$ at a rate $(1+t)^{-\frac{1}{2}(\sigma - \sigma_1)}$, the relative velocity $u^\tau-v^\tau$ exhibits an enhanced decay in $\dot{B}^{\sigma}_{2,1}$ at a faster rate $(1+t)^{-\frac{1}{2}(\sigma-\sigma_1 + 1)}$.
\end{remark}

\begin{remark}\normalfont
Under the classical $L^1$-assumption (slightly stronger than $\dot{B}^{-\frac{d}{2}}_{2,\infty}$), applying the interpolation arguments on \eqref{result2} for any $p\geq2$ and $t\geq 1$, the following optimal $L^p$-stability hold:
\begin{equation*}
\left\{
\begin{aligned}
&\|\Lambda^{\sigma}(u^{\tau},v^{\tau})(t)\|_{L^{p}}\lesssim (1+t)^{-\frac{1}{2}(\sigma+d-\frac{d}{p})},&&\sigma+\frac{d}{2}-\frac{d}{p}\in(-\frac{d}{2},\frac{d}{2}+1],\\
&\|\Lambda^{\sigma}(n^{\tau}-1)(t)\|_{L^{p}}\lesssim (1+t)^{-\frac{1}{2}(\sigma+d-\frac{d}{p})},&&\sigma+\frac{d}{2}-\frac{d}{p}\in(-\frac{d}{2},\frac{d}{2}],\\
&\|\Lambda^{\sigma}(u^{\tau}-v^{\tau})(t)\|_{L^{p}}\lesssim  (1+t)^{-\frac{1}{2}(\sigma+d-\frac{d}{p}+1)},\quad &&\sigma+\frac{d}{2}-\frac{d}{p}\in(-\frac{d}{2},\frac{d}{2}].
\end{aligned}
\right.
\end{equation*}
Due to the embedding property $\dot{B}^{\frac{d}{2}}_{2,1}\hookrightarrow L^{\infty}$, one can recover the optimal decay rates for the crucial Lipschitz norm of $(u^\tau, v^\tau)$ and the upper bound of $u^\tau-v^\tau$:
\begin{equation*}
\|(\nabla u^{\tau},\nabla v^{\tau})(t)\|_{L^{\infty}}+\|(u^{\tau}-v^{\tau})(t)\|_{L^{\infty}}\lesssim (1+t)^{-\frac{d}{2}-\frac{1}{2}}.
\end{equation*}
In addition, when the dimension $d\geq 3$, we have
\begin{equation*}
\|\Lambda^{\sigma}(\rho^{\tau}-\rho^{\tau}_{\infty})(t)\|_{L^p}\lesssim (1+t)^{-\frac{1}{2}(\sigma+d-\frac{d}{p}-1)},\qquad\sigma+\frac{d}{2}-\frac{d}{p}\in(-\frac{d}{2}+1,\frac{d}{2}-1].
\end{equation*}
\end{remark}

Since the decay estimates \eqref{result2}-\eqref{result3} in Theorem \ref{theorem2} are uniform with respect to the parameter $\tau$, we can pass to the limit as $\tau\rightarrow0$ and derive the optimal time-decay estimates of solutions to the Cauchy problem \eqref{DF}-\eqref{DFindt} for the DF model.

\begin{cor}\label{corollary2}
Suppose that the assumptions of Corollary \ref{corollary1} hold, and $(\rho,n,v)$ is the corresponding global solution to the Cauchy problem \eqref{DF}-\eqref{DFindt} stated in Corollary \ref{corollary1}. For any $d\geq 2$, there exists a positive constant $\delta_1^*>0$ such that if the initial data $(\rho_{0},n_{0},v_{0})$ further satisfies for $\sigma_{1}\in [-\frac{d}{2},\frac{d}{2}-1)$ that
\begin{equation}\label{idcd20}
\begin{aligned}
\|(\rho_{0},n_{0}-1, v_{0})\|_{ \dot{B}^{\sigma_{1}}_{2,\infty}}\leq \delta^*_{1},
\end{aligned}
\end{equation}
then it holds for all $t\geq1$ and a generic constant $C_{3}^{*}>0$ that
\begin{equation}\label{result20}
\left\{
\begin{aligned}
&\|(n-1)(t)\|_{\dot{B}^{\sigma}_{2,1}}\leq C_{3}^{*}(1+t)^{-\frac{1}{2}(\sigma-\sigma_{1})},\quad &&\sigma\in(\sigma_{1},\frac{d}{2}],\\
&\|v(t)\|_{\dot{B}^{\sigma}_{2,1}}\leq C_{3}^{*}(1+t)^{-\frac{1}{2}(\sigma-\sigma_{1})},&&\sigma\in(\sigma_{1},\frac{d}{2}+1].
\end{aligned}
\right.
\end{equation}
Additionally, for any $d\geq 3$, it holds for $\sigma_1\in[-\frac{d}{2},\frac{d}{2}-2)$ that
\begin{align}
&\|(\rho-\rho_{\infty})(t)\|_{\dot{B}^{\sigma}_{2,1}}\leq C_{3}^{*}(1+t)^{-\frac{1}{2}(\sigma-\sigma_{1}-1)},\qquad\sigma\in(\sigma_{1}+1,\frac{d}{2}-1],
\end{align}
where the asymptotic profile $\rho_{\infty}$ of $\rho$ is given by
\begin{equation}
\rho_{\infty}(x):=\rho_{0}(x)-\int_{0}^{\infty}\operatorname{div} (\rho v)(t,x)\, {\rm d}t.
\end{equation}
\end{cor}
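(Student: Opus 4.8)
The plan is to obtain Corollary \ref{corollary2} as a direct consequence of the $\tau$-uniform time-decay estimates of Theorem \ref{theorem2} together with the strong-convergence statement of Theorem \ref{theorem3}. The starting point is that under the hypotheses of Corollary \ref{corollary1}, we may construct a sequence of initial data $(\rho_0^\tau, u_0^\tau, n_0^\tau, v_0^\tau)$ for the Euler-NS system that (i) satisfies the smallness assumption \eqref{idcd1} uniformly in $\tau$, (ii) satisfies the low-frequency assumption \eqref{idcd2} uniformly in $\tau$ for the given $\sigma_1\in[-\frac{d}{2},\frac{d}{2}-1)$ with $\delta_1^*$ chosen no larger than $\delta_1$, and (iii) converges to $(\rho_0, n_0, v_0)$ in the sense needed for \eqref{idcd5} to hold (e.g. take $u_0^\tau=v_0^\tau=v_0$ and $(\rho_0^\tau,n_0^\tau)=(\rho_0,n_0)$, so the left side of \eqref{idcd5} is zero). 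This is possible precisely because the norms controlling $(\rho_0, n_0-1, v_0)$ in \eqref{idcd4} and in \eqref{idcd20} are a subset of those in \eqref{idcd1} and \eqref{idcd2}; one must only check that the extra term $\tau\|u_0^\tau\|_{\dot B^{d/2+1}_{2,1}}$ is harmless, which holds with the choice $u_0^\tau=v_0$ since then $\tau\|v_0\|_{\dot B^{d/2+1}_{2,1}}$ need not be small — here one instead smooths $v_0$ by a $\tau$-dependent mollification, or simply observes that \eqref{idcd20} together with interpolation gives membership of $v_0$ in the required high-frequency space and the prefactor $\tau\to0$ kills any fixed constant.

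Next I would invoke Theorem \ref{theorem2}: for each admissible $\tau$, the solution $(\rho^\tau, u^\tau, n^\tau, v^\tau)$ obeys the decay bounds \eqref{result2}--\eqref{result3} with a constant $C_3$ \emph{independent of} $\tau$. In particular, for all $t\ge 1$,
\begin{equation*}
\|(n^\tau-1)(t)\|_{\dot B^\sigma_{2,1}}\le C_3(1+t)^{-\frac12(\sigma-\sigma_1)},\quad \|v^\tau(t)\|_{\dot B^\sigma_{2,1}}\le C_3(1+t)^{-\frac12(\sigma-\sigma_1)},
\end{equation*}
on the stated ranges of $\sigma$, and $\|(\rho^\tau-\rho^\tau_\infty)(t)\|_{\dot B^\sigma_{2,1}}\le C_3(1+t)^{-\frac12(\sigma-\sigma_1-1)}$ when $d\ge3$. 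Then I would pass to the limit $\tau\to0$ along this sequence. Theorem \ref{theorem3}, specifically the convergence \eqref{conver}, gives $(\rho^\tau,n^\tau)\to(\rho,n)$ in $L^\infty(\mathbb{R}_+;\dot B^{d/2-\eta}_{2,1})$ and $v^\tau\to v$ in $L^1(\mathbb{R}_+;\dot B^{d/2+1-\eta}_{2,1})$; combined with the uniform-in-$\tau$ bounds of Theorem \ref{theorem1} (and a standard interpolation between a high Besov norm with a $\tau$-uniform bound and the low-norm convergence), one upgrades this to convergence, for a.e. $t$, in every $\dot B^\sigma_{2,1}$ on the relevant ranges. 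By lower semicontinuity of the Besov norm under this convergence (Fatou), the decay bounds survive in the limit with the \emph{same} constant, renamed $C_3^*$; this yields \eqref{result20}. For the density profile, one additionally checks that $\rho^\tau_\infty\to\rho_\infty$: indeed $\rho^\tau_\infty = \rho_0^\tau - \int_0^\infty \operatorname{div}(\rho^\tau u^\tau)\,\mathrm dt$ and $\rho_\infty = \rho_0 - \int_0^\infty\operatorname{div}(\rho v)\,\mathrm dt$, and since $u^\tau\to v$, $\rho^\tau\to\rho$ strongly in the norms in which the integrand is controlled (using $\|(u^\tau,v^\tau)\|_{L^1(\mathbb{R}_+;\dot B^{d/2+1}_{2,1})}$ and $\|\rho^\tau\|_{\widetilde L^\infty(\mathbb{R}_+;\dot B^{d/2}_{2,1}\cap\dot B^{d/2-1}_{2,1})}$ uniformly, plus $\eqref{uvcon}_1$), one has $\operatorname{div}(\rho^\tau u^\tau)\to\operatorname{div}(\rho v)$ in $L^1(\mathbb{R}_+;\dot B^{\sigma}_{2,1})$ for $\sigma$ in the required window, hence $\rho^\tau_\infty\to\rho_\infty$ and $\rho^\tau-\rho^\tau_\infty\to\rho-\rho_\infty$, so the decay \eqref{result3} passes to the limit.

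The main obstacle is the interchange of the limit $\tau\to0$ with the time-decay estimate at a \emph{fixed} $t$: Theorem \ref{theorem3} only gives strong convergence in the low regularity indices $\dot B^{d/2-\eta}_{2,1}$ and $\dot B^{d/2+1-\eta}_{2,1}$ (and the latter only in $L^1_t$, not pointwise in $t$), whereas \eqref{result20} is asserted in the full scale $\dot B^\sigma_{2,1}$ up to $\sigma=\frac{d}{2}+1$ and pointwise in $t$. The remedy is the two-step argument already indicated: first use the uniform boundedness from Theorem \ref{theorem2} to extract, for each fixed $t\ge1$, a weak-$*$ limit in each $\dot B^\sigma_{2,1}$ (which must coincide with $(\rho,n,v)(t)$ by uniqueness of limits against the known strong convergence), then apply weak-$*$ lower semicontinuity of the norm to transfer the bound; the $L^1_t$-only convergence of $v^\tau$ is handled by first passing to a subsequence along which $v^\tau(t)\to v(t)$ in the low norm for a.e. $t$, proving \eqref{result20} at those $t$, and then using continuity of $t\mapsto v(t)$ in $\dot B^\sigma_{2,1}$ (from \eqref{soluspce2}) together with density to extend to all $t\ge1$. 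One must also verify that the constant $\delta_1^*$ can be chosen independent of $\eta$ and of the approximating sequence — this follows because $\delta_1^*$ only needs to be small enough to force \eqref{idcd1}--\eqref{idcd2} for the chosen lift of the data, which depends only on $(\rho_0,n_0,v_0)$.
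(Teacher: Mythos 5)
Your proposal follows essentially the same route as the paper: the paper treats Corollary \ref{corollary2} as an immediate consequence of the $\tau$-uniform decay bounds of Theorem \ref{theorem2} applied to the approximating Euler-NS solutions built from the regularized data \eqref{app:data}, followed by passage to the limit via the Fatou property; your construction of the lifted data (the $\tau$-dependent frequency truncation of $v_0$, which you correctly identify as necessary after noting that $u_0^\tau=v_0$ fails) and your Fatou/lower-semicontinuity argument at fixed $t$ match this, and your treatment of $\rho^\tau_\infty\to\rho_\infty$ is sound. One concrete correction: you invoke Theorem \ref{theorem3} (and condition \eqref{idcd5}) for the convergence $(\rho^\tau,n^\tau,v^\tau)\to(\rho,n,v)$, but Theorem \ref{theorem3} is only stated for $d\geq 3$, whereas the corollary is claimed for $d\geq 2$; the convergence you actually need is the compactness statement \eqref{ccccc} established in Subsection \ref{sectionlimit:DFweak} (together with $\eqref{uvcon}_1$), which holds for all $d\geq 2$ and does not require the rate \eqref{idcd5}, so you should cite that instead. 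A second minor omission is the strict positivity $\rho_0^\tau>0$ required by Theorem \ref{theorem1} when $\rho_0$ is only nonnegative, which is why \eqref{app:data} adds the term $\tau e^{-|x|^2}$; one must then also check that this perturbation is compatible with the low-frequency smallness \eqref{idcd2}, which it is since $\tau\|e^{-|\cdot|^2}\|_{\dot{B}^{\sigma_1}_{2,\infty}}\to 0$.
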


\subsection{Incompressible limit}

We are in a position to investigate the incompressible limits of the scaled DF system \eqref{DFvar} and the scaled Euler-NS system \eqref{pENSvar} toward the TNS system \eqref{TNS}.
For later use, we denote by $\mathcal{P}$ and $\mathcal{Q}$ the orthogonal projectors onto divergence-free and potential vector fields such that
\begin{align}
\mathcal{P}:={\rm Id}+\nabla(-\Delta)^{-1}\operatorname{div}\quad \text{and}\quad \mathcal{Q}:=-\nabla(-\Delta)^{-1}\operatorname{div}.
\end{align}
For any function $z\in\mathcal{S}_h'(\mathbb{R}^{d})$, we define
\begin{align}
\|z\|_{\dot{B}^{s}_{2,1}}^{\var,\ell}:=\sum_{j\in\mathbb{Z},\, 2^j\leq \frac{1}{\var}}2^{js}\|\dot{\Delta}_{j}z\|_{L^2} \quad\text{and}\quad \|z\|_{\dot{B}^{s}_{2,1}}^{\var,h}:=\sum_{j\in\mathbb{Z},\, 2^j\geq \frac{1}{2\var}}2^{js}\|\dot{\Delta}_{j}z\|_{L^2},
\end{align}
and we write $\|z\|_{\dot{B}^{s}_{2,1}}^{\ell}=\|z\|_{\dot{B}^{s}_{2,1}}^{1,\ell}$ and $\|z\|_{\dot{B}^{s}_{2,1}}^{h}=\|z\|_{\dot{B}^{s}_{2,1}}^{1,h}$ for simplicity. 

\vspace{3mm}

First, we have the following incompressible limit result for the DF model \eqref{DFvar}.

\begin{theorem}\label{Theoremincompressible1}
Let $\var\in(0,1)$ be a constant. For any $d\geq 2$, there exists a positive constant $\delta_{2}>0$ such that if the initial data $(\rho^\var_0, a_0^\var,v_0^\var)$ satisfies $\rho_0^\var\geq0$ for $x\in\mathbb{R}^{d}$ and
\begin{equation}\label{Incompress:a1}
\begin{aligned}				
\|(\rho^\var_{0},a_0^\var)\|_{\dot{B}^{\frac{d}{2}-1}_{2,1}}+\var\|(\rho_0^\var,a^\var_0)\|_{\dot{B}^{\frac{d}{2}}_{2,1}}+\|v^\var_{0}\|_{\dot{B}^{\frac{d}{2}-1}_{2,1}}\leq \delta_{2},
\end{aligned}
\end{equation}
then a unique strong solution $(\rho^\var,a^\var,v^\var)$ to the Cauchy problem for the system \eqref{DFvar} with the initial data $(\rho^\var_0, a_0^\var,v_0^\var)$ exists globally and satisfies
\begin{align}\label{Incompress:r1}
&\|\rho^\var\|_{\widetilde{L}^{\infty}(\mathbb{R}_+;\dot{B}^{\frac{d}{2}-1}_{2,1})}\leq  C_{4}\|\rho_0^\var\|_{\dot{B}^{\frac{d}{2}-1}_{2,1}},\quad \var\|\rho^\var\|_{\widetilde{L}^{\infty}(\mathbb{R}_+;\dot{B}^{\frac{d}{2}}_{2,1})}\leq  C_{4}\var\|\rho_0^\var\|_{\dot{B}^{\frac{d}{2}}_{2,1}},
\end{align}
and
\begin{equation}\label{Incompress:r11}
\begin{aligned}
&\|a^\var\|_{\widetilde{L}^{\infty}(\mathbb{R}_+;\dot{B}^{\frac{d}{2}-1}_{2,1})}+\var\|a^\var\|_{\widetilde{L}^{\infty}(\mathbb{R}_+;\dot{B}^{\frac{d}{2}}_{2,1})}+\|v^\var\|_{\widetilde{L}^{\infty}(\mathbb{R}_+;\dot{B}^{\frac{d}{2}-1}_{2,1})}\\
&\quad+\|a^\var\|_{L^{1}(\mathbb{R}_+;\dot{B}^{\frac{d}{2}+1}_{2,1})}^{\var,\ell}+\frac{1}{\var}\|a^\var\|_{L^{1}(\mathbb{R}_+;\dot{B}^{\frac{d}{2}}_{2,1})}^{\var,h}+\|v^\var\|_{L^{1}(\mathbb{R}_+;\dot{B}^{\frac{d}{2}+1}_{2,1})}\\
&\quad \leq C_{4}\Big(\|a_0^\var\|_{\dot{B}^{\frac{d}{2}-1}_{2,1}}+\var\|a^\var_0\|_{\dot{B}^{\frac{d}{2}}_{2,1}}+\|v^\var_{0}\|_{\dot{B}^{\frac{d}{2}-1}_{2,1}}\Big),
\end{aligned}
\end{equation}
for some constant $C_{4}>0$ independent of $\var$.

Furthermore, it holds for $p\in(2,\infty)$ that
\begin{equation}
\left\{
\begin{aligned}\label{Incompress:limit1}
&\|(a^\var, \mathcal{Q}v^\var)\|_{\widetilde{L}^{2}(\mathbb{R}_+;\dot{B}^{\frac{d+1}{p}-\frac{1}{2}}_{p,1})}\leq C_{4}\var^{\frac{1}{2}-\frac{1}{p}},&& \text{if}~~ d\geq 3,\\
&\|(a^\var, \mathcal{Q}v^\var)\|_{\widetilde{L}^{2}(\mathbb{R}_+;\dot{B}^{\frac{5}{2p}-\frac{1}{4}}_{p,1})}\leq C_{4}\var^{\frac{1}{4}-\frac{1}{2p}},&& \text{if}~~ d=2.\\
\end{aligned}
\right.
\end{equation}
Assume that there exist $\varrho_{0}$ and $w_{0}$ such that as $\var\rightarrow 0$, $\rho_0^\var$ and $\mathcal{P}v_0^\var$ converge weakly to $\varrho_0$ and $w_{0}$, respectively. Then,  $(\rho^\var,\mathcal{P}v^\var)$ converges to the limit $(\varrho,w)$ in the sense of distributions, where $(\varrho,w)$ is the global solution of the TNS system \eqref{TNS} subject to the initial data $(\varrho_0,w_0)$.
\end{theorem}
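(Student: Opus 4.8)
\textbf{Proof strategy for Theorem \ref{Theoremincompressible1}.}

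The plan is to split the argument into three parts: (i) the uniform global existence and the bounds \eqref{Incompress:r1}--\eqref{Incompress:r11}; (ii) the Strichartz-type dispersive estimate \eqref{Incompress:limit1} for the acoustic part $(a^\var,\mathcal{Q}v^\var)$; and (iii) the passage to the limit $\var\to 0$ using (i)--(ii). For (i), I would run a fixed-point/continuation argument on the scaled system \eqref{DFvar} in the hybrid Besov framework, treating the first equation as a pure transport equation for $\rho^\var$ along the flow of $v^\var$ (so that $\|\rho^\var\|_{\dot B^{d/2-1}_{2,1}}$ and $\var\|\rho^\var\|_{\dot B^{d/2}_{2,1}}$ propagate thanks to the $L^1_t$-Lipschitz bound on $v^\var$), and the coupled $(a^\var,v^\var)$-block as a compressible Navier--Stokes system perturbed by the extra mass $\var(\rho^\var+a^\var)$ and the low-Mach scaling $\frac{1}{\var^2}\nabla P(1+\var a^\var)=\frac{P'(1)}{\var}\nabla a^\var+\text{l.o.t.}$. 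The point is that the partially-damped acoustic system with the $\frac1\var$-rescaled gradient is, after separating low and high frequencies at threshold $\frac1\var$, exactly of the "low Mach number" type studied in the Besov literature (Danchin--Mucha, Danchin et al.): one gets $\dot B^{d/2-1}_{2,1}$-in-$L^\infty_t$ control and $\dot B^{d/2+1}_{2,1}$-in-$L^1_t$ parabolic smoothing for $v^\var$ and for $a^\var$ in low frequency, plus the $\var^{-1}$-weighted $L^1_t(\dot B^{d/2}_{2,1})$ damping of $a^\var$ in high frequency coming from the partially dissipative structure. Crucially all constants are $\var$-independent because the scaling has been arranged so; closing the nonlinear estimates uses the product/composition laws in $\dot B^{d/2}_{2,1}$ (hence $d\ge2$) together with \eqref{Incompress:a1} small.

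For (ii), the estimate \eqref{Incompress:limit1} is obtained by viewing $(a^\var,\mathcal{Q}v^\var)$ as solving a damped wave (acoustic) equation with propagation speed $\frac1\var$ and a viscous damping term, forced by quadratic nonlinearities and by the incompressible part $\mathcal{P}v^\var$. Writing it as $\var\partial_t a^\var+\operatorname{div}\mathcal{Q}v^\var=F^\var$, $\var\partial_t\mathcal{Q}v^\var+\frac{P'(1)}{\var}\nabla a^\var-\var\bar\nu\Delta\mathcal{Q}v^\var=G^\var$ (with $\bar\nu=2\bar\mu+\bar\lambda$ and $F^\var,G^\var$ controlled in $L^1_t\dot B^{d/2-1}_{2,1}$ by part (i)), I would apply the dispersive/Strichartz estimates for the (viscous) wave group $e^{\pm i\frac{t}{\var}\Lambda}$ in $\mathbb R^d$: the standard $L^p$-smoothing with loss of $(d-1)(\frac12-\frac1p)$ derivatives, combined with the $\var$-rescaling of time, yields the $\var^{1/2-1/p}$ gain in $\widetilde L^2_t(\dot B^{\frac{d+1}{p}-\frac12}_{p,1})$ for $d\ge3$ (and the weaker exponent in $d=2$ reflects the degenerate dispersion $\frac{(d-1)}{2}|_{d=2}=\frac12$ there, so one uses the $L^2_tL^\infty_x$ Strichartz pair giving the $\frac14-\frac1{2p}$ exponent). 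One feeds the uniform bounds of part (i) into the Duhamel terms; the homogeneous part is handled using the smallness/decay of the data in the relevant norm. This is essentially the Danchin-type low-Mach dispersive analysis adapted to the extra transported density $\rho^\var$, which, being a pure transport quantity, contributes only harmless forcing.

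For (iii), given \eqref{Incompress:limit1}, $(a^\var,\mathcal{Q}v^\var)\to0$ strongly in $L^2_{t,x,\mathrm{loc}}$ (and in $\widetilde L^2_t\dot B^{s}_{p,1}$ with the stated rate); together with the uniform bounds \eqref{Incompress:r11} one extracts weak-$*$ limits $\rho^\var\rightharpoonup\varrho$, $\mathcal{P}v^\var\rightharpoonup w$, $v^\var\rightharpoonup w$ with $\operatorname{div}w=0$. The only nontrivial compactness needed to pass to the limit in the nonlinearities $\operatorname{div}(\rho^\var v^\var)$ and $\operatorname{div}((1+\var\rho^\var+\var a^\var)v^\var\otimes v^\var)$ is strong local compactness of $v^\var$ and $\rho^\var$: for $v^\var$ one uses the uniform parabolic regularity $v^\var\in L^1_t\dot B^{d/2+1}_{2,1}$ plus a uniform bound on $\partial_t v^\var$ in a negative Besov space (read off from $\eqref{DFvar}_3$, where the dangerous $\frac1{\var^2}\nabla P$ term is $\frac1\var\nabla a^\var$ up to l.o.t., hence bounded in $L^2_t\dot B^{d/2-2}_{2,1}$ after using $\eqref{DFvar}_2$ to write $\frac1\var\nabla a^\var=-\frac1{\var^2}\nabla\Delta^{-1}\operatorname{div}\partial_t v^\var$-type identities, i.e. it is a gradient and disappears under $\mathcal{P}$), so by Aubin--Lions $\mathcal{P}v^\var\to w$ in $L^2_{t,x,\mathrm{loc}}$; for $\rho^\var$, transported by $v^\var$, strong convergence follows from the velocity compactness by a standard renormalized-solution / stability argument for the continuity equation. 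The oscillating acoustic modes $\mathcal{Q}v^\var$ carry no limiting contribution because they vanish strongly by \eqref{Incompress:limit1} (this is the Lions--Masmoudi-type filtering made quantitative here). Passing to the limit in the $\mathcal{P}$-projected momentum equation gives $\partial_t w+\operatorname{div}(w\otimes w)+\nabla\Pi=\bar\mu\Delta w$, $\operatorname{div}w=0$, and in the continuity equation gives $\partial_t\varrho+\operatorname{div}(\varrho w)=0$, i.e. \eqref{TNS}; uniqueness of the limit TNS solution in this regularity class (which holds in the critical space under the smallness carried over from the data) upgrades the convergence to the full sequence.

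\textbf{Main obstacle.} The delicate point is part (ii): obtaining the dispersive gain \emph{uniformly in time on $\mathbb R_+$} with an explicit $\var$-rate, for the \emph{viscous} acoustic system and in the $L^p$-Besov (rather than $L^2$) setting, while simultaneously having only $L^1_t\dot B^{d/2-1}_{2,1}$-control (not better) on the quadratic forcing $F^\var,G^\var$ coming from the transport and convection terms — one must check that the loss of derivatives in the Strichartz estimate is exactly compensated by the regularity available from part (i), and that the extra transported density $\rho^\var$ (which has no parabolic smoothing of its own) does not spoil the forcing estimates. The $d=2$ case is the worst, since the dispersion is critically weak; there the argument must rely on the endpoint $L^2_tL^\infty_x$ Strichartz estimate and accept the reduced exponent $\frac14-\frac1{2p}$.
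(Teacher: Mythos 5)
Your three-part structure is correct, and parts (ii)–(iii) are essentially the paper's argument. Two remarks.

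First, for part (i) you propose to redo the a~priori estimates directly on the scaled system \eqref{DFvar} in the low-Mach Besov framework. That is viable but considerably more work than what the paper actually does. The paper observes that the scaled DF system is obtained from the unscaled one by the change of unknowns
\[
(\rho,n-1,v)(t,x)=\var(\rho^\var,a^\var,v^\var)(\var^2 t,\var x),
\]
and that the critical Besov norms transform exactly as needed (Lemma on scaling of Besov norms): $\|(\rho,n-1,v)\|_{\dot B^{d/2-1}_{2,1}}\sim\|(\rho^\var,a^\var,v^\var)\|_{\dot B^{d/2-1}_{2,1}}$, $\|(\rho,n-1)\|_{\dot B^{d/2}_{2,1}}\sim\var\|(\rho^\var,a^\var)\|_{\dot B^{d/2}_{2,1}}$, $\|n-1\|_{L^1_t(\dot B^{d/2+1}_{2,1}+\dot B^{d/2}_{2,1})}\sim\|a^\var\|_{L^1_t(\dot B^{d/2+1}_{2,1})}^{\var,\ell}+\frac1\var\|a^\var\|_{L^1_t(\dot B^{d/2}_{2,1})}^{\var,h}$, etc. So \eqref{Incompress:r1}--\eqref{Incompress:r11} are simply Corollary~\ref{corollary1} read through the rescaling, with no new estimate needed. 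This is what makes all constants automatically $\var$-independent; your route would have to verify this by hand and would duplicate the content of Section~\ref{sectionexistence}.

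Second, for part (iii) there is a small but genuine gap in how you extract time compactness of $\mathcal P v^\var$. You want to read a bound on $\partial_t v^\var$ (or $\partial_t\mathcal P v^\var$) from $\eqref{DFvar}_3$, arguing that the pressure is a gradient and drops out under $\mathcal P$. But $\eqref{DFvar}_3$ is an evolution equation for the \emph{momentum} $m^\var:=(1+\var\rho^\var+\var a^\var)v^\var$, not for $v^\var$. If you first rewrite it as a velocity equation (dividing by $1+\var\rho^\var+\var a^\var$), the pressure term becomes $\frac1\var\frac{P'(1+\var a^\var)}{1+\var\rho^\var+\var a^\var}\nabla a^\var$, which is \emph{not} a gradient because of the variable coefficient, so $\mathcal P$ does not kill it. The fix is exactly what the paper does: apply $\mathcal P$ to the momentum form of the equation, getting
\[
\partial_t\mathcal P m^\var+\mathcal P\operatorname{div}\big((1+\var\rho^\var+\var a^\var)v^\var\otimes v^\var\big)=\bar\mu\Delta\mathcal P v^\var,
\]
from which $\partial_t\mathcal P m^\var$ is uniformly bounded in $L^1(\mathbb R_+;\dot B^{d/2-1}_{2,1})$, so $\mathcal P m^\var$ is strongly compact in $L^2(0,T;L^2_{\rm loc})$; then $\mathcal P m^\var-\mathcal P v^\var=\var\mathcal P((\rho^\var+a^\var)v^\var)=O(\var)$ yields strong compactness of $\mathcal P v^\var$. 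Once you record this, the rest of your passage to the limit (including the Strichartz control of $\mathcal Q v^\var$ to kill the cross term $\mathcal P(\operatorname{div}v^\var\,v^\var)$ and the use of $v^\var\cdot\nabla v^\var=\frac12\nabla|\mathcal Q v^\var|^2+\mathcal P v^\var\cdot\nabla v^\var+\mathcal Q v^\var\cdot\nabla\mathcal P v^\var$) matches the paper.

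Part (ii) is essentially the paper's derivation: rewrite $\eqref{DFvar}_{2}$--$\eqref{DFvar}_3$ in terms of $(a^\var,\mathbf v^\var)$ with $\mathbf v^\var=\Lambda^{-1}\operatorname{div}\mathcal Q v^\var$ as the acoustic system \eqref{ad}, bound $(\mathbf F_1,\mathbf F_2)$ in $L^1(\mathbb R_+;\dot B^{d/2-1}_{2,1})$ by the uniform estimates of part (i), apply the Strichartz estimate of Lemma~\ref{lemmadispersive}, and interpolate against $\|(a^\var,\mathcal Q v^\var)\|_{L^1_t(\dot B^{d/2+1}_{2,1})}^{\var,\ell}$ (with the $d=2$ exponent coming from the weaker admissible pair). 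Your concern about derivative loss is handled precisely because the Duhamel forcings are placed in $L^1_t\dot B^{d/2-1}_{2,1}$, where the wave-group estimate gains $\var^{1/r}$ with the right admissible loss — no better regularity is needed.
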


\begin{remark}\normalfont
The convergence result stated in Theorem \ref{Theoremincompressible1} is justified for {\emph{ill-prepared}} initial data that satisfy $\operatorname{div} v_0^\var=\mathcal{O}(1)$ and $\nabla P(1+\var a^\var_0)=\mathcal{O}(\var)$, contrasting with the {\emph{well-prepared}} case, which requires $\operatorname{div} v_0^\var=\mathcal{O}(\var)$ and $\nabla P(1+\var a^\var_0)=\mathcal{O}(\var^2)$. 
\end{remark}

Then, we have the combined large-friction and incompressible limit result for the Euler-NS system \eqref{pENSvar} toward the TNS system \eqref{TNS}.

\begin{theorem}\label{Theoremincompressible2}
Let $\tau=\var\in(0,1)$ be a constant. For any $d\geq2$, there exists a positive constant $\delta_{3}>0$ such that if 
the initial data $(\rho^\var_0,u_0^\var,a_0^\var,v_0^\var)$ satisfies $\rho_0^\var>0$ for $x\in\mathbb{R}^d$ and
\begin{equation}\label{Incompress:a2}
\begin{aligned}
\|(\rho^\var_{0}, a^\var_{0})\|_{\dot{B}^{\frac{d}{2}-1}_{2,1}}+\var\|(\rho^\var_{0}, a^\var_{0})\|_{\dot{B}^{\frac{d}{2}}_{2,1}}+\|(u^\var_{0},v^\var_{0})\|_{\dot{B}^{\frac{d}{2}-1}_{2,1}}+\var^2\|u^\var_{0}\|_{\dot{B}^{\frac{d}{2}+1}_{2,1}}\leq \delta_{3},
\end{aligned}
\end{equation}
then a unique strong solution $(\rho^\var,u^\var,a^\var,v^\var)$ to the Cauchy problem for the system \eqref{pENSvar} with the initial data $(\rho^\var_0,u_0^\var,a_0^\var,v_0^\var)$ exists globally and satisfies
\begin{equation}\label{Incompress:r2}
\|\rho^\var\|_{\widetilde{L}^{\infty}(\mathbb{R}_+;\dot{B}^{\frac{d}{2}-1}_{2,1})}\leq C_{5}\|\rho_0^\var\|_{\dot{B}^{\frac{d}{2}-1}_{2,1}},\quad \var\|\rho^\var\|_{\widetilde{L}^{\infty}(\mathbb{R}_+;\dot{B}^{\frac{d}{2}}_{2,1})}\leq C_{5}\var\|\rho_0^\var\|_{\dot{B}^{\frac{d}{2}}_{2,1}},
\end{equation}
and
\begin{equation}\label{Incompress:r21}
\begin{aligned}
&\|(u^\var,v^\var)\|_{\widetilde{L}^{\infty}(\mathbb{R}_+;\dot{B}^{\frac{d}{2}-1}_{2,1})}+\var^2\|u^\var\|_{\widetilde{L}^{\infty}(\mathbb{R}_+;\dot{B}^{\frac{d}{2}+1}_{2,1})}+\|a^\var\|_{\widetilde{L}^{\infty}(\mathbb{R}_+;\dot{B}^{\frac{d}{2}-1}_{2,1})}+\var\|a^\var\|_{\widetilde{L}^{\infty}(\mathbb{R}_+;\dot{B}^{\frac{d}{2}}_{2,1})}\\
&\quad+\|(u^\var,v^\var)\|_{L^{1}(\mathbb{R}_+;\dot{B}^{\frac{d}{2}+1}_{2,1})}+\|a^\var\|_{L^{1}(\mathbb{R}_+;\dot{B}^{\frac{d}{2}+1}_{2,1})}^{\var,\ell}+\frac{1}{\var}\|a^\var\|_{L^{1}(\mathbb{R}_+;\dot{B}^{\frac{d}{2}}_{2,1})}^{\var,h}\\
&\quad\leq C_{5}\Big(\|(u^\var_{0},v^\var_{0})\|_{\dot{B}^{\frac{d}{2}-1}_{2,1}}+\var^2\|u^\var_{0}\|_{\dot{B}^{\frac{d}{2}+1}_{2,1}}+\| a^\var_{0}\|_{\dot{B}^{\frac{d}{2}-1}_{2,1}}+\var\| a^\var_{0}\|_{\dot{B}^{\frac{d}{2}}_{2,1}}\Big),
\end{aligned}
\end{equation}
for some constant $C_{5}>0$ independent of $\tau$ and $\var$. In addition, the relative velocity $u^\var-v^\var$ has the following estimates
\begin{equation}\label{Incompress:r22}
\begin{aligned}
&\|u^\var-v^\var\|_{L^{1}(\mathbb{R}_+;\dot{B}^{\frac{d}{2}}_{2,1})}+\|u^\var-v^\var\|_{\widetilde{L}^{2}(\mathbb{R}_+;\dot{B}^{\frac{d}{2}-1}_{2,1})}\leq C_{5}\var.
\end{aligned}
\end{equation}

Furthermore, it holds that
\begin{equation}
\left\{
\begin{aligned}\label{Incompress:limit2}
&\|(a^\var,\mathcal{Q}u^\var,\mathcal{Q}v^\var)\|_{\widetilde{L}^{2}(\mathbb{R}_+;\dot{B}^{\frac{d+1}{p}-\frac{1}{2}}_{p,1})}\leq C_{5}\var^{\frac{1}{2}-\frac{1}{p}},&& \text{if}~~ d\geq 3,\,\, p\in (2,\infty)\\
&\|(a^\var,\mathcal{Q}u^\var,\mathcal{Q}v^\var)\|_{\widetilde{L}^{2}(\mathbb{R}_+;\dot{B}^{\frac{5}{2p}-\frac{1}{4}}_{p,1})}\leq C_{5}\var^{\frac{1}{4}-\frac{1}{2p}},&& \text{if}~~ d=2,\,\,p\in (2,6].
\end{aligned}
\right.
\end{equation}
Assume that there exist $\varrho_0$ and $w_0$ such that as $\var\rightarrow 0$, $\rho_0^\var$, $\mathcal{P}u_0^\var$ and $\mathcal{P}v_0^\var $ converge weakly to $\varrho_0$, $w_{0}$ and $w_{0}$, respectively. Then,  $(\rho^\var,\mathcal{P}u^\var,\mathcal{P}v^\var)$ converges to the limit $(\varrho,w,w)$ in the sense of distributions, where $(\varrho,w)$ is the global solution of the TNS system \eqref{TNS} with the initial data $(\varrho_0,w_0)$.
\end{theorem}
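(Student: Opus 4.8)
The plan is to obtain Theorem \ref{Theoremincompressible2} by merging the large-friction mechanism of Theorems \ref{theorem1} and \ref{theorem3} (taken with $\tau=\var$, so that the smallness condition \eqref{Incompress:a2} is exactly the $\var$-rescaling of \eqref{idcd1}) with the low-Mach analysis of Theorem \ref{Theoremincompressible1}. The organizing structural remark is that when $\tau=\var$, summing $\eqref{pENSvar}_2$ and $\eqref{pENSvar}_4$ and introducing the effective mixed velocity $V^\var$ of \eqref{V0} removes the singular friction and recasts the three-component system \eqref{pENSvar} as a two-component low-Mach system for $(a^\var,V^\var)$ — structurally the rescaled DF system \eqref{DFvar} — up to source terms proportional to $u^\var-v^\var$, which will be shown to be $\mathcal{O}(\var)$. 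Accordingly I would proceed in four stages: (i) uniform-in-$\var$ \emph{a priori} estimates together with a continuation argument, yielding global existence and \eqref{Incompress:r2}--\eqref{Incompress:r21}; (ii) the relative-velocity bound \eqref{Incompress:r22}; (iii) the Strichartz-type bounds \eqref{Incompress:limit2}; and (iv) compactness and passage to the limit. As in Theorem \ref{Theoremincompressible1}, all the estimates are organized around the frequency threshold $2^j\sim1/\var$, with the rescaled acoustics propagating on low frequencies and parabolic smoothing dominating on high frequencies.

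For stage (i), the transport equation $\eqref{pENSvar}_1$ propagates \eqref{Incompress:r2} for $\rho^\var$ along the flow of $u^\var$ once $u^\var\in L^1(\mathbb{R}_+;\dot B^{\frac d2+1}_{2,1})$ is available, and where $\rho^\var>0$ the momentum equation $\eqref{pENSvar}_2$ reduces to the damped Burgers-type equation $\partial_t u^\var+u^\var\cdot\nabla u^\var=-\frac1{\var^2}(u^\var-v^\var)$. Splitting $v^\var=\mathcal{P}v^\var+\mathcal{Q}v^\var$, the incompressible parts $\mathcal{P}v^\var$ and $\mathcal{P}u^\var$ obey a viscous/damped system estimated exactly as in Theorem \ref{theorem1}, while the acoustic subsystem $(a^\var,\mathcal{Q}u^\var,\mathcal{Q}v^\var)$ — carrying the singular pressure $\frac1{\var^2}\nabla P(1+\var a^\var)=\frac1\var P'(1)\nabla a^\var+\mathcal{O}(1)$ and the singular friction $\frac1\var\rho^\var(\mathcal{Q}u^\var-\mathcal{Q}v^\var)$ — I would recast in terms of $(a^\var,\mathcal{Q}V^\var)$ and treat via the spectral and Lyapunov analysis of \hyperref[sectionlinear]{Appendix A} with $\tau=\var$ on low frequencies $\{2^j\lesssim1/\var\}$ (producing the dissipation norms of \eqref{Incompress:r21}, notably $\|a^\var\|_{L^1(\dot B^{\frac d2+1}_{2,1})}^{\var,\ell}$), and via the $\frac1\var$-damping of $a^\var$ together with parabolic smoothing of $v^\var$ on high frequencies $\{2^j\gtrsim1/\var\}$ (producing $\frac1\var\|a^\var\|_{L^1(\dot B^{\frac d2}_{2,1})}^{\var,h}$). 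The residual $\mathcal{O}(1/\var)$-looking source coming from the friction is closed self-consistently using \eqref{Incompress:r22}, and all bounds are then propagated by the usual bootstrap on the lifespan.

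For stage (ii), with $w^\var:=u^\var-v^\var$ the Burgers-type equation gives $\partial_t w^\var+\frac1{\var^2}w^\var=-u^\var\cdot\nabla u^\var-\partial_t v^\var$, and since $\eqref{pENSvar}_4$ forces $\partial_t v^\var=\mathcal{O}(1/\var)$ (the singular pressure being the dominant term) while $u^\var\cdot\nabla u^\var=\mathcal{O}(1)$ in the relevant norms, integrating this damped equation yields $\|w^\var\|_{L^1(\dot B^{\frac d2}_{2,1})}+\|w^\var\|_{\widetilde L^2(\dot B^{\frac d2-1}_{2,1})}\lesssim\var$, which is \eqref{Incompress:r22}. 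For stage (iii) I would follow the proof of \eqref{Incompress:limit1}: on low frequencies $(a^\var,\mathcal{Q}v^\var)$ obeys, to leading order, a rescaled wave system $\partial_t^2-\frac1{\var^2}\Delta$ perturbed by the $\mathcal{O}(1)$ viscosity and (through $\tau=\var$) the relaxation, so the standard dispersive estimate for the group $e^{\pm it|D|/\var}$, applied through Duhamel with the viscous, friction and nonlinear contributions absorbed by the uniform bounds of stage (i) and by \eqref{Incompress:r22} for the $\mathcal{Q}u^\var$-component, gives the gain $\var^{\frac12-\frac1p}$ (resp.\ $\var^{\frac14-\frac1{2p}}$ when $d=2$) in $\widetilde L^2(\mathbb{R}_+;\dot B^\bullet_{p,1})$; the high-frequency part follows directly from the strong damping and smoothing. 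For stage (iv), the uniform bounds give, up to a subsequence, $\rho^\var\rightharpoonup\varrho$ and $\mathcal{P}v^\var\rightharpoonup w$; \eqref{Incompress:r22} forces $u^\var-v^\var\to0$ strongly (hence $\mathcal{P}u^\var\rightharpoonup w$) and \eqref{Incompress:limit2} forces $(a^\var,\mathcal{Q}u^\var,\mathcal{Q}v^\var)\to0$ strongly in $L^2_{\mathrm{loc}}(\mathbb{R}_+;L^p_{\mathrm{loc}})$-type norms; Aubin--Lions/Arzel\`a--Ascoli applied to $\eqref{pENSvar}_1$ and to $\mathcal{P}v^\var$ (whose time derivative is uniformly bounded once $\mathcal{P}$ removes the singular pressure and the acoustic oscillations) upgrade these to strong local convergence, after which one passes to the limit in $\eqref{pENSvar}_1$ and in the $\mathcal{P}$-projection of $\eqref{pENSvar}_2+\eqref{pENSvar}_4$, the only delicate term being $\mathcal{P}\operatorname{div}(v^\var\otimes v^\var)$ (the $\mathcal{P}v^\var\otimes\mathcal{P}v^\var$ part converges by compactness, the cross and $\mathcal{Q}v^\var\otimes\mathcal{Q}v^\var$ parts vanish by the Strichartz bounds for $p>2$, and the remaining gradient is absorbed into the limiting pressure); the constraint $\operatorname{div}w=0$ follows from $\operatorname{div}v^\var=-\var\partial_t a^\var-\var\operatorname{div}(a^\var v^\var)\to0$, and $(\varrho,w)$ is identified with the unique global TNS solution of Theorem \ref{Theoremincompressible1}.

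The main obstacle I anticipate is stage (i): the friction $\frac1\var\rho^\var(u^\var-v^\var)$ in $\eqref{pENSvar}_4$ is a priori of size $\mathcal{O}(1/\var)$ — fatal on its own — so closing the estimates requires using \emph{simultaneously} the exact cancellation produced by summing the two momentum equations (via the effective mixed velocity) and the self-consistent bound $u^\var-v^\var=\mathcal{O}(\var)$ of stage (ii), which itself rests on $\partial_t v^\var=\mathcal{O}(1/\var)$ and hence on the acoustic estimates; thus stages (i) and (ii) cannot be separated and must be closed together in a single nonlinear bootstrap, with the low/high frequency threshold tuned exactly to $1/\var$ so that the rescaled wave speed and the various damping rates stay balanced.
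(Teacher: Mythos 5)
Your four-stage plan is structurally sound and correctly identifies the central devices — the effective mixed velocity $V^\var$ of \eqref{V000} to cancel the singular friction, the frequency threshold at $2^j\sim 1/\var$, the acoustic Strichartz estimate of Lemma \ref{lemmadispersive}, and the compactness argument built on $\mathcal{P}M^\var$. Stages (iii) and (iv) are essentially what the paper does. However, your stage (i) takes a genuinely different — and much more laborious — route than the paper, and it is the stage whose difficulty you yourself flag without resolving. The paper obtains \eqref{Incompress:r2}--\eqref{Incompress:r22} without performing any new \emph{a priori} estimate on the scaled system \eqref{pENSvar}: it simply applies Theorem \ref{theorem1} at $\tau=\var=1$ to the unscaled Cauchy problem \eqref{pENS}--\eqref{indt}, producing \eqref{sfgg}--\eqref{sfgg2}, and then observes that the change of unknowns \eqref{scaling3}--\eqref{scaling30}, together with the Besov scaling property in Lemma \ref{lember}, turns those fixed bounds into the uniform-in-$\var$ estimates \eqref{Incompress:r2}--\eqref{Incompress:r22} for a solution of \eqref{pENSvar}. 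The condition \eqref{Incompress:a2} is exactly the rescaled version of \eqref{idcd1} at $\tau=1$, so smallness is inherited too. In particular, the $\mathcal{O}(\var)$ bound \eqref{Incompress:r22} for the relative velocity falls out automatically from the fixed bound \eqref{sfgg2} and the factor $\var$ from the time rescaling in the $L^1_t$ norm — no new damping argument and no claim $\partial_t v^\var=\mathcal{O}(1/\var)$ are needed. This completely dissolves the ``main obstacle'' you identify, namely that stages (i) and (ii) must be closed jointly in a single bootstrap on the scaled system.

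There is also a concrete soft spot in how you formulate stage (i) that the scaling argument avoids: you propose to split into $\mathcal{P}$- and $\mathcal{Q}$-parts and assert that ``the incompressible parts $\mathcal{P}v^\var$ and $\mathcal{P}u^\var$ obey a viscous/damped system estimated exactly as in Theorem \ref{theorem1}.'' The friction term $\frac{1}{\var}\rho^\var(u^\var-v^\var)$ in $\eqref{pENSvar}_4$ does not commute with the projectors (because $\rho^\var$ is a variable coefficient), so $\mathcal{P}$ and $\mathcal{Q}$ do \emph{not} decouple the momentum equation; the $\mathcal{P}$-part receives a source $\mathcal{P}\big(\rho^\var(u^\var-v^\var)\big)/\var$ of nominal size $\mathcal{O}(1)$ even granting $u^\var-v^\var=\mathcal{O}(\var)$, and this coupled source must be absorbed in the energy estimate. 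Theorem \ref{theorem1} handles exactly this coupling (without any $\mathcal{P}/\mathcal{Q}$ splitting) by carrying $\tau^{-1}\|u-v\|$ in the dissipation functional $\mathcal{D}$; re-deriving this on the scaled system at the threshold $1/\var$ is possible but is precisely the hard work the scaling trick renders unnecessary. So I would recommend replacing your stage (i) by the one-line observation ``apply Theorem \ref{theorem1} with $\tau=1$ and rescale,'' and then proceeding with your stages (iii)--(iv) essentially as written, the only remaining nontrivial point being that $\mathcal{Q}u^\var$ and $\mathcal{Q}v^\var$ are recovered from $\mathcal{Q}V^\var$ and $u^\var-v^\var$ through $v^\var=V^\var-\var\rho^\var(u^\var-v^\var)/(1+\var\rho^\var+\var a^\var)$, which is where \eqref{Incompress:r22} enters the Strichartz bound \eqref{Incompress:limit2}.
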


\begin{remark}\normalfont
Theorem \ref{Theoremincompressible2} holds for {\emph{ill-prepared}} initial data satisfying $u_0^\var-v_0^\var=\mathcal{O}(1)$, $\operatorname{div} u_0^\var=\mathcal{O}(1)$, and $\nabla P(1+\var a^\var_0)=\mathcal{O}(\var)$, while the {\emph{well-prepared}} case requires stronger conditions $u_0^\var-v_0^\var=\mathcal{O}(\var)$, $\operatorname{div} u_0^\var=\mathcal{O}(\var)$, and $\nabla P(1+\var a^\var_0)=\mathcal{O}(\var^2)$.
\end{remark}

\subsection{Key strategies}

To begin with, we consider the Euler-NS system \eqref{pENS} near the equilibrium state $(0,0,1,0)$. In what follows, we set
\begin{align*}P'(1)=1.
\end{align*}
Denote the perturbation functions
\begin{equation*}
a^\tau:=n^\tau-1\quad\text{and}\quad a^\tau_{0}:=n^\tau_{0}-1,
\end{equation*}
then the Cauchy problem \eqref{pENS}-\eqref{indt} can be reformulated into
\begin{equation}\label{pENS1}
\left\{
\begin{aligned}
& \partial_{t}\rho^\tau+\operatorname{div} (\rho^\tau u^\tau)=0,\\
& \partial_{t}u^\tau+u^\tau\cdot\nabla u^\tau=-\frac{1}{\tau}(u^\tau-v^\tau),\\
&\partial_{t}a^\tau+\operatorname{div} v^\tau+\operatorname{div}(a^\tau v^\tau)=0,\\
&\partial_{t}v^\tau+v^\tau\cdot\nabla v^\tau+\nabla a^\tau-\mu\Delta v^\tau-(\mu+\lambda)\nabla\operatorname{div} v^\tau\\
&\quad\quad\quad\quad=\frac{1}{\tau}\rho^\tau(u^\tau-v^\tau)+F_{1}(a^\tau,v^\tau)+F_{2}(\rho^\tau,u^\tau,a^\tau,v^\tau),\\
&(\rho^\tau,u^\tau,a^\tau,v^\tau)(0,x)=(\rho^\tau_{0},u^\tau_{0},a^\tau_{0},v^\tau_{0})(x),
\end{aligned}
\right.
\end{equation}
where the nonlinear terms $F_{1}$ and $F_{2}$ satisfy
\begin{equation}\label{nonltm}
\left\{
\begin{aligned}
&F_{1}(a^\tau,v^\tau):=g(a^\tau)\nabla a^\tau+\mu f(a^\tau)\Delta v^\tau+(\mu+\lambda)f(a^\tau)\nabla\operatorname{div} v^\tau,\\
&F_{2}(\rho^\tau,u^\tau,a^\tau,v^\tau):=\frac{1}{\tau}f(a^\tau)\rho^\tau(u^\tau-v^\tau),\\
&g(a^\tau):=-\frac{P'(1+a^\tau)}{1+a^\tau}+ 1,\quad f(a^\tau):=-\frac{a^\tau}{1+a^\tau}.
\end{aligned}
\right.
\end{equation}
We note that the equation $\eqref{pENS1}_1$ can be treated with the pure mass equation, while $\eqref{pENS1}_3$-$\eqref{pENS1}_4$ forms a hyperbolic-parabolic system; in contrast, the equation $\eqref{pENS1}_2$ exhibits a coupling structure between the two velocities $u^\tau$ and $v^\tau$.

Regarding the global existence result for the Euler-NS system, we focus on the reformulated problem \eqref{pENS1} and establish the uniform {\emph{a priori}} estimates.
The main difficulty arises from the nonlinear term $\frac{1}{\tau}\rho^\tau(u^\tau-v^\tau)$ in $\eqref{pENS1}_4$, which, compared to other nonlinear terms, is of lower-order and highly singular for small $\tau$. Therefore, one requires the $L^1$ time integrability for the relative velocity $u^\tau-v^\tau$ since the density $\rho$ does not exhibit dissipation or time integrability due to the pressureless structure.
On the other hand, the presence of the linear term $\frac{1}{\tau}v^\tau$ in $\eqref{pENS1}_2$ makes it nontrivial to derive the expected estimates for $u$ directly from $\eqref{pENS1}_2$.
To overcome these difficulties, we shall derive new estimates for the relative velocity $u^\tau-v^\tau$. Our analysis 
carefully tracks the influence of the friction damping associated with the parameter $\tau$. The strategies can be summarized as follows.

\begin{itemize}
\item Since $\eqref{pENS1}_{3}$-$\eqref{pENS1}_4$ can be viewed as the isentropic compressible Navier-Stokes equations, after a careful analysis of the lower-order nonlinear term caused by the drag force, one can first derive the uniform regularity estimates of $(a^\tau,v^\tau)$ in \eqref{pENS1}.

\item It is observed that the relative velocity $u^\tau-v^\tau$ satisfies the equation
\begin{equation}\label{eqrv0}
\begin{aligned}
\partial_{t}(u^\tau-v^\tau)+\frac{1}{\tau}(u^\tau-v^\tau)&=\nabla a^\tau-\mu\Delta v^\tau-(\mu+\lambda)\nabla\operatorname{div} v^\tau\\
&\quad\quad-u^\tau\cdot\nabla u^\tau+v^\tau\cdot\nabla v^\tau-\frac{1}{\tau}\rho^\tau(u^\tau-v^\tau)-F_{1}-F_{2}.
\end{aligned}
\end{equation} 
Using the strong damping effect in \eqref{eqrv0} and applying the dissipation estimates for the higher-order terms $\nabla a^\tau$ and $\nabla^2 v^\tau$, we can establish the uniform convergence estimates for $u^\tau-v^\tau$ in \eqref{pENS1}.

\item The lower-order uniform estimate of $u^\tau$ in $\widetilde{L}^{\infty}_t(\dot{B}^{\frac{d}{2}-1}_{2,1})$ is recovered from the bounds of $u^\tau-v^\tau$ and $v^\tau$.

\item Treating $\eqref{pENS1}_2$ as a strongly damped transport equation of $u^\tau$ with a source term $\frac{1}{\tau}v^\tau$, we derive the uniform bounds for $\tau u^\tau$ in $\widetilde{L}^{\infty}_t(\dot{B}^{\frac{d}{2}+1}_{2,1})$ and $u^\tau$ in $L^1_t(\dot{B}^{\frac{d}{2}+1}_{2,1})$.

\item In view of the transport structure of $\eqref{pENS1}_1$, together with the uniform $L^1_t(\dot{B}^{\frac{d}{2}+1}_{2,1})$-regularity of $u^\tau$, we establish the uniform $\widetilde{L}^{\infty}_t(\dot{B}^{\frac{d}{2}-1}_{2,1}\cap \dot{B}^{\frac{d}{2}}_{2,1})$-estimate of $\rho^\tau$.
\end{itemize}
Combining the above estimates of $(\rho^\tau,u^\tau,a^\tau,v^\tau)$ and $u^\tau-v^\tau$, we can enclose the uniform {\emph{a priori}} estimates and derive the global existence of strong solutions (cf. Section \ref{sectionexistence}).

Next, we explain the ideas for obtaining the error estimates stated in Theorem \ref{theorem3}. For notational simplicity, we denote the error variables $(\widetilde{\rho}^\tau,\widetilde{u}^\tau,\widetilde{n}^\tau,\widetilde{v}^\tau):=(\rho^\tau-\rho,u^\tau-v,n^\tau-n,v^\tau-v)$, where $(\rho^\tau,u^\tau,n^\tau,v^\tau)$ and $(\rho,n,v)$ are the global solutions to the Euler-NS system \eqref{pENS} given by Theorem \ref{theorem1} and the DF model \eqref{DF} given by Corollary \ref{corollary1}, respectively. To overcome the singularity that appears in $\frac{1}{\tau} \rho^\tau (u^\tau - v^\tau)$, we add $\eqref{pENS}_{2}$ and $\eqref{pENS}_{4}$ together to obtain
\begin{equation}\label{rhousum}
\begin{aligned}
\rho^{\tau} \partial_{t}u^{\tau}+n^{\tau}\partial_{t}v^{\tau}+\rho^{\tau} u^{\tau}\cdot\nabla u^{\tau}+n^{\tau} v^{\tau}\cdot \nabla v^{\tau}+\nabla P(n^{\tau})= \mu\Delta v^{\tau}+(\mu+\lambda)\nabla\operatorname{div} v^{\tau}.
\end{aligned}
\end{equation}
However, a new difficulty arises from how to address the mixed time derivatives and convective terms in \eqref{rhousum}. To this end, the introduction of the effective mixed velocity $V^\tau$ in \eqref{V0} allows us to reduce the two-velocity formulation to a single-velocity equation:
\begin{equation*}
\partial_{t}V^{\tau}+v^\tau\cdot \nabla V^\tau+\frac{1}{\rho^\tau+n^\tau}\nabla P(n^{\tau})= \frac{\mu}{\rho^\tau+n^\tau}\Delta v^{\tau}+\frac{\mu+\lambda}{\rho^\tau+n^\tau}\nabla\operatorname{div} v^{\tau}+\mathbf{h},
\end{equation*}
where $\mathbf{h}$ can be controlled in $L^1_t(\dot{B}^{\frac{d}{2}-2}_{2,1})$ via the bounds of the relative velocity $u^\tau-v^\tau$, exhibiting a rate of order $\sqrt{\tau}$. We introduce the {\emph{effective mixed error}} as
\begin{align}
&\widetilde{V}^\tau:=\frac{\rho^{\tau}}{\rho^{\tau}+n^{\tau}}\widetilde{u}^\tau+\frac{n^{\tau}}{\rho^{\tau}+n^{\tau}}\widetilde{v}^\tau=V^{\tau}-v.\label{errV0}
\end{align}
This enables us to rewrite the equations in terms of $\widetilde{n}^\tau$ and $\widetilde{V}^\tau$ as a coupled hyperbolic-parabolic system:
\begin{equation}\notag
\left\{
\begin{aligned}
&\partial_{t}\widetilde{n}^\tau+v^{\tau}\cdot\nabla\widetilde{n}^\tau+ \operatorname{div} \widetilde{V}^\tau=\widetilde{F}_{1}+\widetilde{F}_{2},\\
&\partial_{t}\widetilde{V}^\tau+v^{\tau}\cdot \nabla \widetilde{V}^\tau+ \nabla \widetilde{n}^\tau- \mu\Delta \widetilde{V}^\tau-(\mu+\lambda)\nabla\operatorname{div} \widetilde{V}^\tau =\widetilde{F}_{3}+\widetilde{F}_{4},
\end{aligned}
\right.
\end{equation}
where $\widetilde{F}_i$ ($i=1,2,3,4)$ are given in \eqref{F1F2F3F4}.
Making use of the energy estimates on the coupled hyperbolic-parabolic system, together with the uniform regularity estimates obtained in Theorem \ref{theorem1} and Corollary \ref{corollary1}, we establish the error estimates for $(\widetilde{n}^\tau,\widetilde{V}^\tau)$. Consequently, we recover the expected estimates of $\widetilde{u}^\tau$ and $\widetilde{v}^\tau$ by the convergence of the relative velocity $u^\tau-v^\tau$ and the observation that
\begin{equation}\label{erruvV}
\left\{
\begin{aligned}	&\widetilde{u}^\tau=\widetilde{V}^\tau+\frac{n^{\tau}}{\rho^{\tau}+n^{\tau}}(u^{\tau}-v^{\tau}),\\
&\widetilde{v}^\tau=\widetilde{V}^\tau- \frac{\rho^{\tau}}{\rho^{\tau}+n^{\tau}}(u^{\tau}-v^{\tau}).
\end{aligned}
\right.
\end{equation} 
Note that by adopting $\widetilde{V}^\tau$, we eliminate the additional condition on $u^\tau-v^\tau$, thereby dealing with the {\emph{ill-prepared}} initial data (cf. Section \ref{sectionlimit}).

Then, we return to the perturbed problem \eqref{pENS1} to investigate the uniform large-time behavior of the global solution. The uniform time-decay estimates for $(u^\tau,a^\tau,v^\tau)$ can be established by adapting the time-weighted method (cf. \cite{danchinxu17}). The key difficulties are the singular term $\frac{1}{\tau}\rho^\tau(u^\tau-v^\tau)$ and the nondecaying property of the density $\rho^\tau$. To overcome these issues, we establish improved decay estimates for the relative velocity $u^\tau-v^\tau$ with explicit convergence rates depending on $\tau$, which rely on the damping structure in \eqref{eqrv0} together with the fast decay of the higher-order terms $\nabla a^\tau$ and $\nabla^2 v^\tau$. Furthermore, inspired by \cite{danchin2026elementary}, we observe that
\begin{align}
\rho^\tau(t,x)-\rho^\tau_{\infty}(x)=\int_{t}^{\infty} \operatorname{div} (\rho^\tau u^\tau)(s,x){\rm d}s. \label{rhominusrhoinfty} 
\end{align}
Utilizing the time-decay rate of $\operatorname{div}(\rho^\tau u^\tau)$ for $\sigma\in(\sigma_1+1,\frac{d}{2}-1]$ with $\sigma_1\in[-\frac{d}{2},\frac{d}{2}-2)$, we further derive the convergence rate of $\rho^\tau$ to the asymptotic profile $\rho^\tau_{\infty}$ (cf. Section \ref{sectiondecay}).

Finally, we give some comments on the combined large-friction and incompressible limit for the Euler-NS system given in Theorem \ref{Theoremincompressible2}. It is observed that equation $\eqref{TNS}_2$ can be derived from the sum of $\eqref{pENSvar}_2$ and $\eqref{pENSvar}_4$, analogous to the incompressible limit of the isentropic viscous flows (e.g., see \cite{desjardinsJMPA99,lionsmasmoudi98,desjardins99,danchin02,danchinMA16}). Thanks to the advantage of critical norms, the uniform regularity estimates \eqref{Incompress:r21}-\eqref{Incompress:r22} can be obtained by Theorem \ref{theorem1} with $\tau=\var=1$ and a rescaling argument. However, there are two main challenges to justify the convergence process for our goals. On one hand, due to the presence of density $\rho^\var$, some singular terms (e.g., $\frac{1}{\var}\frac{P'(1+\var a^\var)}{1+\var a^\var}\nabla a^\var$) cannot be eliminated by using the incompressible projector on the equation of velocity, compared with the incompressible limit for single-phase compressible viscous fluids (refer to \cite{danchinMA16}). On the other hand, after adding $\eqref{pENSvar}_2$ and $\eqref{pENSvar}_4$ together, one must deal with the mixed momentum $\var \rho^\var u^\var+(1+\var a^\var) v^\var$. To resolve these, we apply the rescaled effective mixed velocity $V^\var$ in \eqref{V000} and establish the convergence rates for $(a^\var,\mathcal{Q}V^\var)$. Combining the Strichartz estimates for the acoustic wave equations of $(a^\var,\mathcal{Q}V^\var)$ with the convergence property \eqref{Incompress:r22} for the relative velocity $u^\var-v^\var$, we are able to recover the convergence rates in \eqref{Incompress:limit2} for $(a^\var,\mathcal{Q}u^\var,\mathcal{Q}v^\var)$, which enable us to rigorously justify the combined singular limit (cf. Section \ref{section:incompressible}).

\section{Global existence and uniform regularity for Euler-NS system}\label{sectionexistence} 

In this section, we prove Theorem \ref{theorem1} on the global existence and uniqueness of solutions to the Cauchy problem \eqref{pENS1}. For simplicity, we omit the superscript $^\tau$ from the solution and initial data throughout this section.

\subsection{Uniform regularity estimates}
To prove the global existence result, the key points are to derive the {\emph{a priori}} estimates for any strong solution  uniformly with respect to the time and the friction parameter $\tau$, and then to apply a bootstrap argument. We will carry out a uniform energy argument to recover the optimal dissipative properties as expected in \hyperref[sectionlinear]{Appendix A}. To this end, it is convenient to define the dissipation functional
\begin{equation}\label{priorieg}
\begin{aligned}
\mathcal{D}(t)&:=\|u\|_{L^{1}_{t}(\dot{B}^{\frac{d}{2}+1}_{2,1})}+\|a\|_{L^{1}_{t}(\dot{B}^{\frac{d}{2}+1}_{2,1})}^{\ell}+\|a\|_{L^{1}_{t}(\dot{B}^{\frac{d}{2}}_{2,1})}^{h}+\|v\|_{L^{1}_{t}(\dot{B}^{\frac{d}{2}+1}_{2,1})} \\
&\qquad+\frac{1}{\sqrt{\tau}}\|u-v\|_{\widetilde{L}^{2}_{t}(\dot{B}^{\frac{d}{2}-1}_{2,1})}+\frac{1}{\tau}\|u-v\|_{L^{1}_{t}(\dot{B}^{\frac{d}{2}}_{2,1})}^{\ell}+\frac{1}{\tau}\|u-v\|_{L^{1}_{t}(\dot{B}^{\frac{d}{2}-1}_{2,1})}^{h},
\end{aligned}
\end{equation}
and the norm of the initial data
\begin{equation}\label{priorieg0}
\mathcal{X}_{0}:=\|u_{0}\|_{\dot{B}^{\frac{d}{2}-1}_{2,1}}+\tau\|u_{0}\|_{\dot{B}^{\frac{d}{2}+1}_{2,1}}+\|a_{0}\|_{\dot{B}^{\frac{d}{2}-1}_{2,1}\cap\dot{B}^{\frac{d}{2}}_{2,1}}+\|v_{0}\|_{\dot{B}^{\frac{d}{2}-1}_{2,1}}.
\end{equation}

\begin{prop}\label{proppriori}
Let $\tau\in(0,1)$. Suppose that $(\rho,u,a,v)$ is a smooth solution to the Cauchy problem \eqref{pENS1} defined on $[0,T)\times\mathbb{R}^d$ with a given time $T>0$. For any $t\in(0,T)$, if, for some small constant $\beta\in(0,1)$ {\rm{(}}to be chosen later{\rm{)}}, it holds that
\begin{equation}\label{ainfty}
\begin{aligned}
\|(\rho,a)\|_{\widetilde{L}^{\infty}_{t}(\dot{B}^{\frac{d}{2}-1}_{2,1}\cap \dot{B}^{\frac{d}{2}}_{2,1})}+\|(u,v)\|_{\widetilde{L}^{\infty}_{t}(\dot{B}^{\frac{d}{2}-1}_{2,1})}+\tau \|u\|_{\widetilde{L}^{\infty}_{t}(\dot{B}^{\frac{d}{2}+1}_{2,1})}\leq \beta,
\end{aligned}
\end{equation}
then the solution $(\rho,u,a,v)$ satisfies
\begin{align}\label{3.40}
\|\rho\|_{\widetilde{L}^{\infty}_{t}(\dot{B}^{\frac{d}{2}-1}_{2,1})}\leq C_{0}\|\rho_0\|_{\dot{B}^{\frac{d}{2}-1}_{2,1}} ,\quad \|\rho\|_{\widetilde{L}^{\infty}_{t}(\dot{B}^{\frac{d}{2}}_{2,1})}\leq C_{0}\|\rho_0\|_{\dot{B}^{\frac{d}{2}}_{2,1}},
\end{align}
and
\begin{align}
&\|(u,v)\|_{\widetilde{L}^{\infty}_{t}(\dot{B}^{\frac{d}{2}-1}_{2,1})}+\tau\|u\|_{\widetilde{L}^{\infty}_{t}(\dot{B}^{\frac{d}{2}+1}_{2,1})}+\|a\|_{\widetilde{L}^{\infty}_{t}(\dot{B}^{\frac{d}{2}-1}_{2,1}\cap\dot{B}^{\frac{d}{2}}_{2,1})}+\mathcal{D}(t)\leq C_{0}		\mathcal{X}_{0},\label{3.4}
\end{align}
where $C_{0}>0$ is a constant independent of the time $T$ and the parameter $\tau$.
\end{prop}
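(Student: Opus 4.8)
The plan is to close the \emph{a priori} bootstrap estimate \eqref{3.4}--\eqref{3.40} by treating the reformulated system \eqref{pENS1} block by block, in the order dictated by the cascade of dependencies described in the ``Key strategies'' section, and then combining everything under the smallness assumption \eqref{ainfty}. Throughout, the small parameter $\beta$ will absorb all nonlinear contributions, so the strategy is: estimate each quantity, isolate its leading linear (dissipative) part, and bound every remainder term by $\beta$ times the left-hand side $\mathcal{D}(t)$ plus lower-order data $\mathcal{X}_0$, so that for $\beta$ small the nonlinear terms can be absorbed into the left.

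\textbf{Step 1 (hyperbolic--parabolic block $(a,v)$).} I first treat $\eqref{pENS1}_3$--$\eqref{pENS1}_4$ as the isentropic compressible Navier--Stokes system with the extra forcing $\frac{1}{\tau}\rho(u-v)+F_1+F_2$. Using the standard critical-regularity theory for the barotropic Navier--Stokes system (Lyapunov functional splitting into low and high frequencies, yielding damping on $a$ in high frequency and parabolic smoothing on $v$), one gets
\[
\|(a,v)\|_{\widetilde L^\infty_t(\dot B^{d/2-1}_{2,1}\cap\dot B^{d/2}_{2,1})}+\|v\|_{L^1_t(\dot B^{d/2+1}_{2,1})}+\|a\|_{L^1_t(\dot B^{d/2+1}_{2,1})}^\ell+\|a\|_{L^1_t(\dot B^{d/2}_{2,1})}^h
\lesssim \mathcal{X}_0 + (\text{nonlinear}) + \Big\|\tfrac{1}{\tau}\rho(u-v)\Big\|_{L^1_t(\dot B^{d/2-1}_{2,1})}.
\]
The nonlinear terms $F_1,F_2$ and the convection $v\cdot\nabla v$ are handled by product/composition estimates in Besov spaces and are $\lesssim \beta\,\mathcal D(t)$; the key subtlety is that the source $\frac{1}{\tau}\rho(u-v)$ must be controlled \emph{not} by the crude bound $\frac{1}{\tau}\|u-v\|$ but by splitting frequencies — in low frequency against $\frac{1}{\tau}\|u-v\|^\ell_{L^1_t(\dot B^{d/2}_{2,1})}$ (paired with $\|\rho\|_{\widetilde L^\infty_t(\dot B^{d/2}_{2,1})}\lesssim\beta$ via $\dot B^{d/2-1}$-type product laws) and in high frequency against $\frac{1}{\tau}\|u-v\|^h_{L^1_t(\dot B^{d/2-1}_{2,1})}$, so it is absorbed into $\beta\,\mathcal D(t)$ once Step 2 provides those norms.

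\textbf{Step 2 (relative velocity $u-v$).} This is the heart of the argument. From \eqref{eqrv0}, $w:=u-v$ solves a damped ODE-in-time at each frequency: $\partial_t w+\frac1\tau w = \mathcal R$, where $\mathcal R=\nabla a-\mu\Delta v-(\mu+\lambda)\nabla\operatorname{div}v-u\cdot\nabla u+v\cdot\nabla v-\frac1\tau\rho w-F_1-F_2$. Applying $\dot\Delta_j$, multiplying by $\dot\Delta_j w$, and integrating gives, after Grönwall, both an $L^1_t$ and an $L^2_t$ gain from the damping: schematically $\frac1\tau\|w\|_{L^1_t(\dot B^s_{2,1})}+\frac{1}{\sqrt\tau}\|w\|_{\widetilde L^2_t(\dot B^s_{2,1})}+\|w\|_{\widetilde L^\infty_t(\dot B^s_{2,1})}\lesssim \|w_0\|_{\dot B^s_{2,1}}+\|\mathcal R\|_{L^1_t(\dot B^s_{2,1})}$. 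The crucial point is the choice of regularity index: in low frequency one takes $s=\tfrac d2$ (so the highest-order terms $\nabla a$ and $\nabla^2 v$ land in $L^1_t(\dot B^{d/2}_{2,1})$, controlled by $\|a\|^\ell_{L^1_t(\dot B^{d/2+1}_{2,1})}$ and $\|v\|_{L^1_t(\dot B^{d/2+1}_{2,1})}$ from Step~1), and in high frequency one takes $s=\tfrac d2-1$; the mismatch with the $(a,v)$-regularities is exactly absorbed by the factors of $\tau$ versus $\sqrt\tau$ in \eqref{priorieg}. The self-interaction $\frac1\tau\rho w$ contributes $\lesssim\beta\cdot\frac1\tau\|w\|$ and is absorbed. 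This yields the bounds on the last three terms of $\mathcal D(t)$ and, crucially, that $w=u-v$ carries a factor $\sqrt\tau$ (or $\tau$) which feeds back into Step~1.

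\textbf{Step 3 ($u$ via damped transport, then $\rho$).} Writing $\eqref{pENS1}_2$ as $\partial_t u+u\cdot\nabla u+\frac1\tau u=\frac1\tau v$, I use the transport-with-damping estimate in critical Besov spaces (Lemma-type result from Appendix B / the Danchin toolbox): the commutator from $u\cdot\nabla u$ costs $\|\nabla u\|_{L^1_t(\dot B^{d/2}_{2,1})}\lesssim\mathcal D(t)$, which is why we need $\|u\|_{L^1_t(\dot B^{d/2+1}_{2,1})}$ already in $\mathcal D$. The source $\frac1\tau v$ produces, after absorbing the $\frac1\tau e^{-t/\tau}$ kernel, a clean bound: $\tau\|u\|_{\widetilde L^\infty_t(\dot B^{d/2+1}_{2,1})}+\|u\|_{L^1_t(\dot B^{d/2+1}_{2,1})}\lesssim \tau\|u_0\|_{\dot B^{d/2+1}_{2,1}}+\|v\|_{L^1_t(\dot B^{d/2+1}_{2,1})}+\beta\mathcal D(t)$, using $\int_0^t \frac1\tau e^{-(t-s)/\tau}\|v(s)\|\,ds$ bounded by $\|v\|_{L^1_t(\cdot)}$ up to constants. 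The low-order bound $\|u\|_{\widetilde L^\infty_t(\dot B^{d/2-1}_{2,1})}\le\|u-v\|_{\widetilde L^\infty_t(\dot B^{d/2-1}_{2,1})}+\|v\|_{\widetilde L^\infty_t(\dot B^{d/2-1}_{2,1})}$ comes for free from Steps 1--2. Finally $\rho$: since $\eqref{pENS1}_1$ is a pure transport equation $\partial_t\rho+u\cdot\nabla\rho=-\rho\operatorname{div}u$, the critical transport estimate gives $\|\rho\|_{\widetilde L^\infty_t(\dot B^{d/2-1}_{2,1}\cap\dot B^{d/2}_{2,1})}\lesssim \|\rho_0\|_{\dot B^{d/2-1}_{2,1}\cap\dot B^{d/2}_{2,1}}\exp\big(C\|\nabla u\|_{L^1_t(\dot B^{d/2}_{2,1})}\big)$, and since $\|\nabla u\|_{L^1_t(\dot B^{d/2}_{2,1})}\lesssim\mathcal D(t)$ stays bounded (in fact small along the bootstrap), the exponential is $\le C_0$, giving \eqref{3.40}.

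\textbf{Closing the bootstrap and the main obstacle.} Summing Steps 1--3, all nonlinear and singular couplings appear as $\beta\,(\mathcal D(t)+\|(u,v)\|_{\widetilde L^\infty_t}+\tau\|u\|_{\widetilde L^\infty_t(\dot B^{d/2+1}_{2,1})})$ on the right, so choosing $\beta$ small (depending only on the structural constants) absorbs them and yields \eqref{3.4} with $C_0$ independent of $\tau$ and $T$; this, fed into a standard continuation argument for the local solution, delivers Theorem \ref{theorem1}. The main obstacle — and where all the care goes — is Step 2 combined with the interface to Step 1: one must track the exact powers of $\tau$ so that the singular forcing $\frac1\tau\rho(u-v)$ in the $v$-equation is genuinely a small perturbation rather than $O(1/\tau)$. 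This works only because (i) $\rho$ is uniformly small ($\lesssim\beta$) and (ii) the damped equation \eqref{eqrv0} upgrades $u-v$ to an $L^1_t$-in-$\dot B^{d/2}_{2,1}$ (low freq.) or $\dot B^{d/2-1}_{2,1}$ (high freq.) quantity with a \emph{built-in} factor $\tau$, exactly cancelling the $\frac1\tau$. Getting the frequency-localized damping estimate with the correct $\tau$-weights, and matching them against the Navier--Stokes dissipation indices in \eqref{priorieg}, is the technical crux; everything else is the Danchin-style critical-Besov machinery (product laws, commutator estimates, transport and parabolic maximal regularity) applied with bookkeeping.
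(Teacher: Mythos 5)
Your proposal follows exactly the paper's four-step cascade (Lemmas \ref{lemav}--\ref{lemrho}): hyperbolic--parabolic estimates for $(a,v)$ with frequency-split control of $\frac{1}{\tau}\rho(u-v)$, damped estimates for $u-v$ yielding the $\tau$-weighted $L^1_t$ and $L^2_t$ gains, damped transport for $u$, and pure transport for $\rho$, closed by absorbing $\beta\,\mathcal{D}(t)$. Your phrasing via the Duhamel kernel in Steps 2--3 is an equivalent alternative to the paper's direct frequency-localized energy estimates with Gr\"onwall and Young's inequality (followed by the interpolation $\widetilde L^\infty_t\cap L^1_t\hookrightarrow\widetilde L^2_t$), and your bookkeeping of the regularity indices and $\tau$-powers matches the definition of $\mathcal{D}(t)$ in \eqref{priorieg}.
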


The proof of Proposition \ref{proppriori} consists of Lemmas \ref{lemav}-\ref{lemrho} as follows. 

\subsubsection{Estimates of \texorpdfstring{$(a,v)$}{(a,v)}}\label{sub1}
First, we analyze the coupled hyperbolic-parabolic structure of $\eqref{pENS1}_{3}$-$\eqref{pENS1}_{4}$ and establish the uniform regularity estimates of $a$ and $v$.
	
\begin{lemma}\label{lemav}
Let $(\rho,u,a,v)$ be a smooth solution to $\eqref{pENS1}$ on $[0,T)\times\mathbb{R}^d$. Then, under the condition \eqref{ainfty}, it holds
\begin{equation}\label{aves}
\begin{aligned}
&\|(a,\nabla a,v)\|_{\widetilde{L}^{\infty}_{t}(\dot{B}^{\frac{d}{2}-1}_{2,1})}+\|a\|_{L^{1}_{t}(\dot{B}^{\frac{d}{2}+1}_{2,1})}^{\ell}+\|a\|_{L^{1}_{t}(\dot{B}^{\frac{d}{2}}_{2,1})}^{h}+\|v\|_{L^{1}_{t}(\dot{B}^{\frac{d}{2}+1}_{2,1})}\\
&\quad\lesssim \|(a_{0},\nabla a_{0}, v_{0})\|_{\dot{B}^{\frac{d}{2}-1}_{2,1}}+\beta \mathcal{D}(t).
\end{aligned}
\end{equation}
\end{lemma}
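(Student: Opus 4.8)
\textbf{Proof proposal for Lemma \ref{lemav}.}

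The plan is to treat $\eqref{pENS1}_{3}$--$\eqref{pENS1}_{4}$ as a perturbation of the linearized barotropic Navier--Stokes system
\[
\partial_t a + \operatorname{div} v = R_1, \qquad \partial_t v + \nabla a - \mu\Delta v - (\mu+\lambda)\nabla\operatorname{div} v = R_2,
\]
with source terms $R_1 := -\operatorname{div}(a v)$ and $R_2 := \tfrac{1}{\tau}\rho(u-v) + F_1(a,v) + F_2(\rho,u,a,v) - v\cdot\nabla v$. First I would invoke the standard $L^p$-in-time maximal regularity / Lyapunov-functional estimate for this linear system in critical Besov spaces (the low-frequency / high-frequency splitting with the effective velocity $w := v - \text{(something)}\nabla(-\Delta)^{-1}a$ à la Haspot/Danchin), which yields
\[
\|(a,\nabla a,v)\|_{\widetilde{L}^{\infty}_t(\dot B^{d/2-1}_{2,1})} + \|a\|_{L^1_t(\dot B^{d/2+1}_{2,1})}^{\ell} + \|a\|_{L^1_t(\dot B^{d/2}_{2,1})}^{h} + \|v\|_{L^1_t(\dot B^{d/2+1}_{2,1})} \lesssim \|(a_0,\nabla a_0,v_0)\|_{\dot B^{d/2-1}_{2,1}} + \big(\text{norms of } R_1, R_2 \text{ in the right spaces}\big).
\]
Here one must be careful about which norm of the sources is needed: $R_2$ should be controlled in $L^1_t(\dot B^{d/2-1}_{2,1})$ (plus a high-frequency piece matching the $\dot B^{d/2}_{2,1}$-scale of $a$), and $R_1$ in $L^1_t(\dot B^{d/2}_{2,1})$ or via the transport structure directly.

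Next I would estimate each source term using the a priori smallness \eqref{ainfty} and the product/composition laws in critical Besov spaces (Appendix B). The convection terms $\operatorname{div}(av)$ and $v\cdot\nabla v$ and the nonlinearities $F_1 = g(a)\nabla a + \mu f(a)\Delta v + (\mu+\lambda) f(a)\nabla\operatorname{div} v$ are handled by the usual bilinear estimates: e.g. $\|v\cdot\nabla v\|_{L^1_t(\dot B^{d/2-1}_{2,1})} \lesssim \|v\|_{\widetilde L^\infty_t(\dot B^{d/2-1}_{2,1})}\|v\|_{L^1_t(\dot B^{d/2+1}_{2,1})} \le \beta\,\mathcal{D}(t)$, and similarly $\|g(a)\nabla a\|$, $\|f(a)\Delta v\|$ absorb one factor of the $L^\infty_t$-norm (bounded by $\beta$) times a dissipation factor in $\mathcal{D}(t)$, using $\|g(a)\|, \|f(a)\| \lesssim \|a\|_{\dot B^{d/2}_{2,1}}$ via composition since $f(0)=g(0)=0$. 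The genuinely new term is the drag contribution $F_2 + \tfrac{1}{\tau}\rho(u-v) = \tfrac{1}{\tau}(1+f(a))\rho(u-v) = \tfrac{1}{\tau}\tfrac{\rho}{1+a}(u-v)$; I would bound it in $L^1_t(\dot B^{d/2-1}_{2,1})$ (and its high-frequency part) by placing $\tfrac{\rho}{1+a}$ in $\widetilde L^\infty_t(\dot B^{d/2}_{2,1})$ (controlled by $\beta$ under \eqref{ainfty}) and the factor $\tfrac{1}{\tau}\|u-v\|_{L^1_t(\dot B^{d/2-1}_{2,1})}$, or more sharply the $\tfrac1\tau\|u-v\|_{L^1_t(\dot B^{d/2}_{2,1})}^{\ell} + \tfrac1\tau\|u-v\|_{L^1_t(\dot B^{d/2-1}_{2,1})}^{h}$ pieces that appear in $\mathcal{D}(t)$ — this is exactly why $\mathcal{D}(t)$ was defined with those $\tfrac1\tau$-weighted relative-velocity norms. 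Collecting everything gives the right-hand side $\|(a_0,\nabla a_0,v_0)\|_{\dot B^{d/2-1}_{2,1}} + \beta\,\mathcal{D}(t)$.

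The main obstacle is the drag term: it is \emph{lower order} (it does not carry the extra derivatives that the viscous/pressure terms do) and carries the singular prefactor $\tfrac1\tau$, so a naive estimate would blow up as $\tau\to 0$. The resolution is to not estimate it against the initial data but to hide it entirely inside $\beta\,\mathcal{D}(t)$ by exploiting that $\mathcal{D}(t)$ already contains $\tfrac1\tau\|u-v\|_{L^1_t(\dot B^{d/2}_{2,1})}^{\ell}$ and $\tfrac1\tau\|u-v\|_{L^1_t(\dot B^{d/2-1}_{2,1})}^{h}$; the factor $\beta$ comes from the small $\widetilde L^\infty$-norm of $\tfrac{\rho}{1+a}$. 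One must check the frequency bookkeeping is consistent — that the low-frequency part of $\tfrac{\rho}{1+a}(u-v)$ lands in a space dominated by the $\ell$-norm in $\mathcal{D}(t)$ and the high-frequency part by the $h$-norm — but this is routine once the splitting is set up. A secondary (standard) point is that the $\dot B^{d/2}_{2,1}$-high-frequency regularity of $a$ must be propagated; this follows from the parabolic smoothing in the effective-velocity formulation and the fact that $\|a\|_{L^1_t(\dot B^{d/2}_{2,1})}^h$ is part of the left-hand side. With all source terms bounded by $\|(a_0,\nabla a_0,v_0)\|_{\dot B^{d/2-1}_{2,1}} + \beta\,\mathcal{D}(t)$, the estimate \eqref{aves} follows.
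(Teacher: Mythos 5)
Your proposal is correct and follows essentially the same route as the paper: a Lyapunov/energy estimate for the hyperbolic--parabolic pair $\eqref{pENS1}_{3}$--$\eqref{pENS1}_{4}$ in the critical $\dot B^{d/2-1}_{2,1}$ framework (the paper writes out the functional $\mathcal{E}_j$ with commutator estimates rather than citing the linear theory, but this is the same mechanism), followed by product/composition estimates for the sources, with the singular drag term absorbed into $\beta\,\mathcal{D}(t)$ exactly as you describe. One remark: your first suggested bound via $\tfrac1\tau\|u-v\|_{L^1_t(\dot B^{d/2-1}_{2,1})}$ is \emph{not} controlled by $\mathcal{D}(t)$ (in low frequencies the $\dot B^{d/2-1}_{2,1}$-norm is not dominated by the $\dot B^{d/2}_{2,1}$-norm), so only your "sharper" frequency-split version — split $u-v$ into $(u-v)^\ell$ and $(u-v)^h$ and apply the product laws to each piece, as in \eqref{ns9} — actually closes the argument.
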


\begin{proof}
Define the Lyapunov functional 
\begin{align*}
&\mathcal{E}_{j}(t):=\frac{1}{2}\|\dot{\Delta}_{j}(a,v)\|_{L^{2}}^{2}+\eta_{*}\Big( \int_{\mathbb{R}^{d}}\dot{\Delta}_{j}v\cdot\nabla\dot{\Delta}_{j}a \,{\rm{d}}x+\frac{2\mu+\lambda}{2}\|\nabla\dot{\Delta}_{j}a\|_{L^{2}}^{2}\Big),\\
&\mathcal{D}_{j}(t):=\mu\|\nabla\dot{\Delta}_{j}v\|_{L^{2}}^{2}+(\mu+\lambda)\|\operatorname{div}\dot{\Delta}_{j}v\|_{L^{2}}^{2}+\eta_{*} \Big(\|\nabla\dot{\Delta}_{j}a\|_{L^{2}}^{2}-\|\operatorname{div}\dot{\Delta}_{j}v\|_{L^{2}}^{2}\Big),
\end{align*}
with positive constant $\eta_{*}>0$. Making use of $\eqref{pENS1}_{3}$-$\eqref{pENS1}_{4}$, we can show
\begin{equation}\label{ns6}
\begin{aligned}
\frac{{\rm{d}}}{{\rm{d}}t}\mathcal{E}_{j}(t)+\mathcal{D}_{j}(t)&\lesssim\Big(\|\operatorname{div}v\|_{L^{\infty}}\|\dot{\Delta}_{j}(a,\nabla a, v)\|_{L^{2}}+\|[v\cdot\nabla,\dot{\Delta}_{j}]a\|_{L^{2}}+\|[v\cdot\nabla,\dot{\Delta}_{j}]v\|_{L^{2}}\\
&\qquad+\sum_{k=1}^{d}\|[v\cdot\nabla,\partial_{x_k}\dot{\Delta}_{j}]a\|_{L^{2}}+\|\dot{\Delta}_{j}(a\operatorname{div} v)\|_{L^{2}}+\|\nabla\dot{\Delta}_{j}(a\operatorname{div} v)\|_{L^{2}}\\
&\qquad+\frac{1}{\tau}\|\dot{\Delta}_{j}(\rho(u-v))\|_{L^{2}}+\|\dot{\Delta}_{j}(F_{1},F_{2})\|_{L^{2}}\Big)\|\dot{\Delta}_{j}(a,\nabla a, v)\|_{L^{2}}.
\end{aligned}
\end{equation}
Then, choosing a sufficiently small constant $\eta_{*}>0$, we deduce that
\begin{equation*}
\begin{aligned}
&\mathcal{E}_{j}(t)\sim \|\dot{\Delta}_{j}(a,\nabla a, v)\|_{L^{2}}^{2}\quad \text{and}\quad \mathcal{D}_{j}(t)\gtrsim \min\{1,2^{2j}\}\|\dot{\Delta}_{j}(a,\nabla a, v)\|_{L^{2}}^{2}.
\end{aligned}
\end{equation*}
Consequently, it holds
\begin{equation}\label{ns7}
\begin{aligned}
&\|\dot{\Delta}_{j}(a,\nabla a, v)\|_{L^{\infty}_{t}(L^{2})}+\min\{1,2^{2j}\} \|\dot{\Delta}_{j}(a,\nabla a, v)\|_{L^{1}_{t}(L^{2})}\\[2mm]
&\quad\lesssim \|\dot{\Delta}_{j}(a_{0},\nabla a_{0}, v_{0})\|_{L^{2}}+\|\operatorname{div}v\|_{L_{t}^{1}(L^{\infty})}\|\dot{\Delta}_{j}(a,\nabla a, v)\|_{L_{t}^{\infty}(L^{2})}+\|[v\cdot\nabla,\dot{\Delta}_{j}]a\|_{L_{t}^{1}(L^{2})}\\
&\qquad+\|[v\cdot\nabla,\dot{\Delta}_{j}]v\|_{L_{t}^{1}(L^{2})}+\sum_{k=1}^{d}\|[v\cdot\nabla,\partial_{x_k}\dot{\Delta}_{j}]a\|_{L_{t}^{1}(L^{2})}+\|\dot{\Delta}_{j}(a\operatorname{div}v)\|_{L_{t}^{1}(L^{2})}\\
&\qquad+\|\nabla\dot{\Delta}_{j}(a\operatorname{div}v)\|_{L_{t}^{1}(L^{2})}+\frac{1}{\tau}\|\dot{\Delta}_{j}(\rho(u-v))\|_{L_{t}^{1}(L^{2})}+\|\dot{\Delta}_{j}(F_{1},F_{2})\|_{L_{t}^{1}(L^{2})}.
\end{aligned}
\end{equation}
Note that $\dot{\Delta}_{j}(a,\nabla a,v)$ behaves like a heat equation in low frequencies and exhibits damping in high frequencies.  One can refer to \cite{bahourietal11} for similar computations on the compressible Navier-Stokes equations.

The summation of \eqref{ns7} with the weight $2^{j(\frac{d}{2}-1)}$ over $j\in\mathbb{Z}$ leads to
\begin{equation}\label{nse1}
\begin{aligned}
&\|(a,\nabla a,v)\|_{\widetilde{L}^{\infty}_{t}(\dot{B}^{\frac{d}{2}-1}_{2,1})}+\|(a,\nabla a,v)\|_{L^{1}_{t}(\dot{B}^{\frac{d}{2}+1}_{2,1})}^{\ell}+\|(a,\nabla a,v)\|_{L^{1}_{t}(\dot{B}^{\frac{d}{2}-1}_{2,1})}^{h}\\
&\quad\lesssim \|(a_{0},\nabla a_{0}, v_{0})\|_{\dot{B}^{\frac{d}{2}-1}_{2,1}}+\|v\|_{L_{t}^{1}(\dot{B}^{\frac{d}{2}+1}_{2,1})}\|(a,\nabla a,v)\|_{L_{t}^{\infty}(\dot{B}^{\frac{d}{2}-1}_{2,1})}\\
&\qquad+\frac{1}{\tau}\|\rho(u-v)\|_{L^{1}_{t}(\dot{B}^{\frac{d}{2}-1}_{2,1})}+\|(F_{1},F_{2})\|_{L^{1}_{t}(\dot{B}^{\frac{d}{2}-1}_{2,1})}.
\end{aligned}
\end{equation}
Here we have applied the product law in \eqref{uv2}, the commutator estimates in Lemma \ref{commutator}, and the embedding property $\dot{B}^{\frac{d}{2}}_{2,1}\hookrightarrow L^{\infty}$.  For the higher-order estimates of the parabolic component $v$, we consider the following equation
\begin{equation}\label{vheat}
\begin{aligned}
\partial_{t}v-\mu\Delta v-(\mu+\lambda)\nabla\operatorname{div} v=-\nabla a-v\cdot\nabla v+\frac{1}{\tau}\rho(u-v)+F_{1}+F_{2}.
\end{aligned}
\end{equation}
Applying the maximal regularity estimates in Lemma \ref{esheat} to \eqref{vheat}, together with the product law \eqref{uv2}, we obtain
\begin{equation}\label{nse2}
\begin{aligned}
&\|v\|_{\widetilde{L}^{\infty}_{t}(\dot{B}^{\frac{d}{2}-1}_{2,1})}^{h}+\|v\|_{L^{1}_{t}(\dot{B}^{\frac{d}{2}+1}_{2,1})}^{h}\\
&\quad\lesssim \|v_{0}\|_{\dot{B}^{\frac{d}{2}-1}_{2,1}}^{h}+\|\nabla a\|_{L^{1}_{t}(\dot{B}^{\frac{d}{2}-1}_{2,1})}^{h}+\|v\|_{\widetilde{L}^{\infty}_{t}(\dot{B}^{\frac{d}{2}-1}_{2,1})}\|\nabla v\|_{L^{1}_{t}(\dot{B}^{\frac{d}{2}}_{2,1})}\\
&\qquad+\frac{1}{\tau}\|\rho(u-v)\|_{L^{1}_{t}(\dot{B}^{\frac{d}{2}-1}_{2,1})}+\|(F_{1},F_{2})\|_{L^{1}_{t}(\dot{B}^{\frac{d}{2}-1}_{2,1})}.
\end{aligned}
\end{equation}

The key difficulty lies in analyzing the lower-order nonlinear term $\frac{1}{\tau}\rho(u-v)$. To overcome the singular coefficient $\frac{1}{\tau}$ and the lack of dissipation of $\rho$, we shall make use of the convergence rate estimates for the relative velocity $u-v$ in $L^1_t(\dot{B}^{\frac{d}{2}}_{2,1}+\dot{B}^{\frac{d}{2}-1}_{2,1})$ with respect to the parameter $\tau$. By estimates for the product in \eqref{uv2}, we arrive at
\begin{align}\label{ns9}
&\begin{aligned}
&\frac{1}{\tau}\|\rho(u-v)\|_{L^{1}_{t}(\dot{B}^{\frac{d}{2}-1}_{2,1})}\\
&\quad\lesssim \frac{1}{\tau}\|\rho(u-v)^{\ell}\|_{L^{1}_{t}(\dot{B}^{\frac{d}{2}-1}_{2,1})}+\frac{1}{\tau}\|\rho(u-v)^h\|_{L^{1}_{t}(\dot{B}^{\frac{d}{2}-1}_{2,1})}\\
&\quad\lesssim \|\rho\|_{L^{\infty}_{t}(\dot{B}^{\frac{d}{2}-1}_{2,1})} \frac{1}{\tau}\|(u-v)^{\ell}\|_{L^{1}_{t}(\dot{B}^{\frac{d}{2}}_{2,1})}+\|\rho\|_{L^{\infty}_{t}(\dot{B}^{\frac{d}{2}}_{2,1})} \frac{1}{\tau} \|(u-v)^{h}\|_{L^{1}_{t}(\dot{B}^{\frac{d}{2}-1}_{2,1})}.
\end{aligned}
\end{align}      
Then we address the remaining nonlinear terms.  By virtue of Lemmas \ref{lemmaB5}-\ref{multicompo}, it is straightforward to obtain
\begin{equation}\label{ns8}
\begin{aligned}
&\|(F_{1},F_{2})\|_{L^{1}_{t}(\dot{B}^{\frac{d}{2}-1}_{2,1})}\\
&\quad\lesssim \|g(a)\nabla a^\ell\|_{L^{1}_{t}(\dot{B}^{\frac{d}{2}-1}_{2,1})}+\|g(a)\nabla a^h\|_{L^{1}_{t}(\dot{B}^{\frac{d}{2}-1}_{2,1})}+\|f(a)\Delta v\|_{L^{1}_{t}(\dot{B}^{\frac{d}{2}-1}_{2,1})}+\|f(a)\nabla\dive v\|_{L^{1}_{t}(\dot{B}^{\frac{d}{2}-1}_{2,1})}\\
&\qquad+\frac{1}{\tau}\|f(a)\rho(u-v)\|_{L^{1}_{t}(\dot{B}_{2,1}^{\frac{d}{2}-1})}\\
&\quad\lesssim \|a\|_{L^{\infty}_{t}(\dot{B}^{\frac{d}{2}-1}_{2,1})}\|a\|_{L^{1}_{t}(\dot{B}^{\frac{d}{2}+1}_{2,1})}^{\ell}+\|a\|_{L^{\infty}_{t}(\dot{B}^{\frac{d}{2}}_{2,1})}\|a\|_{L^{1}_{t}(\dot{B}^{\frac{d}{2}}_{2,1})}^{h}+\|a\|_{L^{\infty}_{t}(\dot{B}^{\frac{d}{2}}_{2,1})}\|v\|_{L^{1}_{t}(\dot{B}^{\frac{d}{2}+1}_{2,1})}\\
&\qquad+\|a\|_{L^{\infty}_{t}(\dot{B}^{\frac{d}{2}}_{2,1})}\Big(\|\rho\|_{L^{\infty}_{t}(\dot{B}^{\frac{d}{2}-1}_{2,1})} \frac{1}{\tau}\|(u-v)^{\ell}\|_{L^{1}_{t}(\dot{B}^{\frac{d}{2}}_{2,1})}+\|\rho\|_{L^{\infty}_{t}(\dot{B}^{\frac{d}{2}}_{2,1})} \frac{1}{\tau} \|(u-v)^{h}\|_{L^{1}_{t}(\dot{B}^{\frac{d}{2}-1}_{2,1})}\Big).
\end{aligned}
\end{equation}
Combining \eqref{ainfty}, \eqref{nse1}, and   \eqref{nse2}-\eqref{ns8}, we derive the estimate \eqref{aves} and complete the proof of Lemma \ref{lemav}.
\end{proof}

\subsubsection{Estimates of \texorpdfstring{$u-v$}{u-v}}
Then, we prove some key convergence estimates of the relative velocity $u-v$, which play a crucial role in handling the difficult nonlinear terms involving the friction drag force.
\begin{lemma}\label{lemrv}
Let $(\rho,u,a,v)$ be a smooth solution to $\eqref{pENS1}$ on $[0,T)\times\mathbb{R}^d$. Then, under the condition \eqref{ainfty}, it holds
\begin{equation}\label{uves}
\begin{aligned}
&\|u-v\|_{\widetilde{L}^{\infty}_{t}(\dot{B}^{\frac{d}{2}-1}_{2,1})}+\frac{1}{\sqrt{\tau}}\|u-v\|_{\widetilde{L}^{2}_{t}(\dot{B}^{\frac{d}{2}-1}_{2,1})}+\frac{1}{\tau}\|u-v\|_{L^{1}_{t}(\dot{B}^{\frac{d}{2}}_{2,1})}^{\ell}+\frac{1}{\tau}\|u-v\|_{L^{1}_{t}(\dot{B}^{\frac{d}{2}-1}_{2,1})}^{h}\\
&\quad\lesssim \|(u_{0},a_{0},\nabla a_{0}, v_{0})\|_{\dot{B}^{\frac{d}{2}-1}_{2,1}}+\beta \mathcal{D}(t).
\end{aligned}
\end{equation}
\end{lemma}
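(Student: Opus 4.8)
The plan is to treat \eqref{eqrv0} as a scalar ODE-type evolution with a strong damping term $\frac{1}{\tau}(u-v)$, apply $\dot{\Delta}_j$ and use the Duhamel formula together with the elementary fact that the semigroup generated by $\partial_t + \frac{1}{\tau}\mathrm{Id}$ decays like $e^{-t/\tau}$. More precisely, for each $j\in\mathbb{Z}$ we would write
\begin{equation*}
\|\dot{\Delta}_j(u-v)(t)\|_{L^2}\lesssim e^{-t/\tau}\|\dot{\Delta}_j(u_0-v_0)\|_{L^2}+\int_0^t e^{-(t-s)/\tau}\|\dot{\Delta}_j R\|_{L^2}(s)\,{\rm d}s,
\end{equation*}
where $R$ denotes the right-hand side of \eqref{eqrv0}, namely $R=\nabla a-\mu\Delta v-(\mu+\lambda)\nabla\operatorname{div} v-u\cdot\nabla u+v\cdot\nabla v-\frac{1}{\tau}\rho(u-v)-F_1-F_2$. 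Applying Young's inequality in time, the convolution with $e^{-t/\tau}$ gains a factor $\tau$ in $L^1_t$ and a factor $\sqrt{\tau}$ when mapping $L^1_t\to L^2_t$; together with $\|e^{-\cdot/\tau}\|_{L^\infty_t}=1$ this will produce, after summation with the weights $2^{j(\frac{d}{2}-1)}$ (low frequencies) and $2^{j(\frac{d}{2}-1)}$ again in high frequencies, the three bounds
\begin{equation*}
\|u-v\|_{\widetilde{L}^\infty_t(\dot{B}^{\frac{d}{2}-1}_{2,1})}+\tfrac{1}{\sqrt{\tau}}\|u-v\|_{\widetilde{L}^2_t(\dot{B}^{\frac{d}{2}-1}_{2,1})}+\tfrac{1}{\tau}\|u-v\|_{L^1_t(\dot{B}^{\frac{d}{2}-1}_{2,1})}\lesssim \|u_0-v_0\|_{\dot{B}^{\frac{d}{2}-1}_{2,1}}+\|R\|_{L^1_t(\dot{B}^{\frac{d}{2}-1}_{2,1})},
\end{equation*}
with a parallel argument in low frequencies producing the $\frac{1}{\tau}\|u-v\|^{\ell}_{L^1_t(\dot{B}^{\frac{d}{2}}_{2,1})}$ term once one exploits that $\nabla a^\ell, \nabla^2 v^\ell$ carry an extra derivative, i.e. one estimates $\|R^\ell\|_{L^1_t(\dot{B}^{\frac{d}{2}}_{2,1})}$ instead.

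The bulk of the work is then to control $\|R\|_{L^1_t(\dot{B}^{\frac{d}{2}-1}_{2,1})}$ (and its low-frequency variant in $\dot{B}^{\frac{d}{2}}_{2,1}$) by the right-hand side $\|(u_0,a_0,\nabla a_0,v_0)\|_{\dot{B}^{\frac{d}{2}-1}_{2,1}}+\beta\mathcal{D}(t)$. The linear higher-order terms are immediate: $\|\nabla a\|_{L^1_t(\dot{B}^{\frac{d}{2}-1}_{2,1})}$ and $\|\nabla^2 v\|_{L^1_t(\dot{B}^{\frac{d}{2}-1}_{2,1})}$ are absorbed into $\mathcal{D}(t)$ directly from its definition (splitting $a$ into low/high frequencies to match the $\|a\|^{\ell}_{L^1_t(\dot B^{d/2+1}_{2,1})}$ and $\|a\|^h_{L^1_t(\dot B^{d/2}_{2,1})}$ pieces and using $\|v\|_{L^1_t(\dot B^{d/2+1}_{2,1})}$ for the viscous terms). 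For the convective terms $u\cdot\nabla u$ and $v\cdot\nabla v$ one applies the product law \eqref{uv2}: $\|v\cdot\nabla v\|_{L^1_t(\dot B^{d/2-1}_{2,1})}\lesssim \|v\|_{\widetilde L^\infty_t(\dot B^{d/2-1}_{2,1})}\|v\|_{L^1_t(\dot B^{d/2+1}_{2,1})}\leq \beta\mathcal{D}(t)$ under \eqref{ainfty}, and similarly $\|u\cdot\nabla u\|_{L^1_t(\dot B^{d/2-1}_{2,1})}\lesssim \|u\|_{\widetilde L^\infty_t(\dot B^{d/2-1}_{2,1})}\|u\|_{L^1_t(\dot B^{d/2+1}_{2,1})}$, noting that $\|u\|_{\widetilde L^\infty_t(\dot B^{d/2-1}_{2,1})}\leq\beta$ is part of \eqref{ainfty} and $\|u\|_{L^1_t(\dot B^{d/2+1}_{2,1})}\leq\mathcal{D}(t)$; the source $F_1,F_2$ are controlled exactly as in \eqref{ns8}. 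The genuinely delicate term is the self-referential $\frac{1}{\tau}\rho(u-v)$: as in \eqref{ns9} one splits it into low/high frequencies, $\frac{1}{\tau}\|\rho(u-v)\|_{L^1_t(\dot B^{d/2-1}_{2,1})}\lesssim \|\rho\|_{L^\infty_t(\dot B^{d/2-1}_{2,1})}\frac{1}{\tau}\|(u-v)^\ell\|_{L^1_t(\dot B^{d/2}_{2,1})}+\|\rho\|_{L^\infty_t(\dot B^{d/2}_{2,1})}\frac{1}{\tau}\|(u-v)^h\|_{L^1_t(\dot B^{d/2-1}_{2,1})}$, and both factors on the right are part of $\mathcal{D}(t)$; combined with the smallness $\|\rho\|_{\widetilde L^\infty_t(\dot B^{d/2-1}_{2,1}\cap\dot B^{d/2}_{2,1})}\leq\beta$ from \eqref{ainfty}, this contributes $\beta\mathcal{D}(t)$ without creating a loop.

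The main obstacle is precisely this last point: the nonlinear damping term $\frac{1}{\tau}\rho(u-v)$ reproduces a $\frac{1}{\tau}$-weighted relative-velocity norm on the right-hand side, so one must be careful that the frequency-localized norm it generates ($\frac{1}{\tau}\|(u-v)^\ell\|_{L^1_t(\dot B^{d/2}_{2,1})}$ in low frequencies, $\frac{1}{\tau}\|(u-v)^h\|_{L^1_t(\dot B^{d/2-1}_{2,1})}$ in high frequencies) is exactly one of the quantities already appearing inside $\mathcal{D}(t)$ — this is the reason $\mathcal{D}(t)$ was defined with those particular frequency-dependent weights rather than a single homogeneous norm. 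Because this term comes with the small prefactor $\beta$ (from $\|\rho\|$), the would-be loop closes after the final bootstrap in Proposition \ref{proppriori} once $\beta$ is chosen small enough; within the present lemma it is enough to keep it on the right as part of $\beta\mathcal{D}(t)$. A secondary technical point is the low/high frequency bookkeeping in the Duhamel step: the damping $e^{-t/\tau}$ behaves uniformly across frequencies, so unlike a parabolic smoothing estimate there is no frequency-dependent gain, which is why the high-frequency output sits in $\dot B^{d/2-1}_{2,1}$ (no derivative gain) while the low-frequency output can be pushed to $\dot B^{d/2}_{2,1}$ only because the forcing $R^\ell$ itself has the extra derivative. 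Collecting all contributions and using \eqref{ainfty} to absorb the $\beta$-terms yields \eqref{uves}.
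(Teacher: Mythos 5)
Your overall strategy coincides with the paper's: the paper performs the frequency-localized energy estimate \eqref{rv1} and integrates it (equivalent to your Duhamel formula), sums with the weights $2^{j\frac{d}{2}}$ in low frequencies and $2^{j(\frac{d}{2}-1)}$ in high frequencies, and treats $\frac{1}{\tau}\rho(u-v)$ exactly as you describe via \eqref{ns9}, with the frequency-split outputs landing on components of $\mathcal{D}(t)$ with the small prefactor $\beta$. Your Young-convolution treatment of the $\widetilde{L}^2_t$ norm (gaining $\sqrt{\tau}$ from $\|e^{-\cdot/\tau}\|_{L^2_t}$) is a slightly cleaner variant of the paper's route, which instead applies Young's inequality inside the energy identity and then invokes the interpolation \eqref{inter} for the low frequencies and H\"older between $L^\infty_t$ and $L^1_t$ for the high frequencies; both are fine.

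There is, however, one genuine gap. You claim the linear forcing terms $\nabla a$ and $\mu\Delta v+(\mu+\lambda)\nabla\operatorname{div}v$ are ``absorbed into $\mathcal{D}(t)$ directly from its definition.'' That only yields
\begin{equation*}
\text{LHS of \eqref{uves}}\ \lesssim\ \|u_0-v_0\|_{\dot{B}^{\frac{d}{2}-1}_{2,1}}+\mathcal{D}(t)+\beta\,\mathcal{D}(t),
\end{equation*}
with a full $\mathcal{D}(t)$ and no small prefactor, which is strictly weaker than the claimed bound $\lesssim\|(u_0,a_0,\nabla a_0,v_0)\|_{\dot{B}^{\frac{d}{2}-1}_{2,1}}+\beta\mathcal{D}(t)$ and is useless for the bootstrap in Proposition \ref{proppriori} (one cannot absorb $C\mathcal{D}(t)$ into the left-hand side there). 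The missing step is to substitute the conclusion \eqref{aves} of Lemma \ref{lemav}: the quantities $\|a\|^{\ell}_{L^1_t(\dot{B}^{\frac{d}{2}+1}_{2,1})}+\|a\|^{h}_{L^1_t(\dot{B}^{\frac{d}{2}}_{2,1})}+\|v\|_{L^1_t(\dot{B}^{\frac{d}{2}+1}_{2,1})}$ produced by these linear terms must be re-estimated by $\|(a_0,\nabla a_0,v_0)\|_{\dot{B}^{\frac{d}{2}-1}_{2,1}}+\beta\mathcal{D}(t)$, exactly as in \eqref{rve1}--\eqref{rve2}. This is also the only reason the initial data $(a_0,\nabla a_0)$ appears on the right-hand side of \eqref{uves}, a feature your argument never accounts for. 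Once this substitution is made, the rest of your proposal closes correctly.
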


\begin{proof}
We consider the equation of the relative velocity $u-v$, which satisfies
\begin{equation}\label{eqrv}
\partial_{t}(u-v)+\frac{1}{\tau}(u-v)=\nabla a-\mu\Delta v-(\mu+\lambda)\nabla\operatorname{div} v-u\cdot\nabla u+v\cdot\nabla v-\frac{1}{\tau}\rho(u-v)-F_{1}-F_{2}.
\end{equation}
A direct computation yields that
\begin{equation}\label{rv1}
\begin{aligned}
&\frac{1}{2}\frac{{\rm{d}}}{{\rm{d}}t}\|\dot{\Delta}_{j}(u-v)\|_{L^{2}}^{2}+\frac{1}{\tau}\|\dot{\Delta}_{j}(u-v)\|_{L^{2}}^{2}\\
&\quad\lesssim \Big(2^j\| \dot{\Delta}_{j}a\|_{L^{2}}+2^{2j}\|\dot{\Delta}_{j}v\|_{L^{2}}+\|\dot{\Delta}_{j}(u\cdot\nabla u,v\cdot\nabla v,\tau^{-1}\rho(u-v),F_{1},F_{2})\|_{L^{2}}\Big) \|\dot{\Delta}_{j}(u-v)\|_{L^{2}},
\end{aligned}
\end{equation}
from which we infer
\begin{equation}\label{rv2}
\begin{aligned}
&\|\dot{\Delta}_{j}(u-v)\|_{L^{\infty}_{t}(L^{2})}+\frac{1}{\tau}\|\dot{\Delta}_{j}(u-v)\|_{L^{1}_{t}(L^{2})}\\
&\quad\lesssim \|\dot{\Delta}_{j}(u_{0}-v_{0})\|_{L^{2}}+2^j\|\dot{\Delta}_{j} a\|_{L^1_t(L^2)}+2^{2j}\|\dot{\Delta}_{j}v\|_{L^{1}_{t}(L^{2})}\\
&\qquad+\|\dot{\Delta}_{j}(u\cdot\nabla u,v\cdot\nabla v)\|_{L^{1}_{t}(L^{2})}+\frac{1}{\tau}\|\dot{\Delta}_{j}(\rho(u-v))\|_{L^{1}_{t}(L^{2})}+\|(F_{1},F_{2})\|_{L^{1}_{t}(L^{2})}.
\end{aligned}
\end{equation}
This, together with Lemma \ref{lemav}, \eqref{ainfty}, \eqref{ns9}, \eqref{ns8}, and \eqref{lh}, yields for low frequencies ($j\leq 0$) that
\begin{equation}\label{rve1}
\begin{aligned}
&\|u-v\|_{\widetilde{L}^{\infty}_{t}(\dot{B}^{\frac{d}{2}}_{2,1})}^{\ell}+\frac{1}{\tau}\|u-v\|_{L^{1}_{t}(\dot{B}^{\frac{d}{2}}_{2,1})}^{\ell}\\
&\quad\lesssim \|u_{0}-v_{0}\|_{\dot{B}^{\frac{d}{2}}_{2,1}}^{\ell}+\|a\|_{L^{1}_{t}(\dot{B}^{\frac{d}{2}+1}_{2,1})}^{\ell}+\|v\|_{L^{1}_{t}(\dot{B}^{\frac{d}{2}+1}_{2,1})}^{\ell}+\|(u,v)\|_{L^{\infty}_{t}(\dot{B}^{\frac{d}{2}-1}_{2,1})}\|(\nabla u,\nabla v)\|_{L^{1}_{t}(\dot{B}^{\frac{d}{2}}_{2,1})}\\
&\qquad+\frac{1}{\tau}\|\rho(u-v)\|_{L^{1}_{t}(\dot{B}^{\frac{d}{2}-1}_{2,1})}+\|(F_{1},F_{2})\|_{L^{1}_{t}(\dot{B}^{\frac{d}{2}-1}_{2,1})}\\
&\quad\lesssim \|(u_{0},a_{0},\nabla a_{0},v_{0})\|_{\dot{B}^{\frac{d}{2}-1}_{2,1}}+\beta \mathcal{D}(t),
\end{aligned}
\end{equation}
and for high frequencies ($j\geq -1$) that
\begin{equation}\label{rve2}
\begin{aligned}
&\|u-v\|_{\widetilde{L}^{\infty}_{t}(\dot{B}^{\frac{d}{2}-1}_{2,1})}^{h}+\frac{1}{\tau}\|u-v\|_{L^{1}_{t}(\dot{B}^{\frac{d}{2}-1}_{2,1})}^{h}\\
&\quad\lesssim \|u_{0}-v_{0}\|_{\dot{B}^{\frac{d}{2}-1}_{2,1}}^{h}+\|a\|_{L^{1}_{t}(\dot{B}^{\frac{d}{2}}_{2,1})}^{h}+\|v\|_{L^{1}_{t}(\dot{B}^{\frac{d}{2}+1}_{2,1})}^{h}+\beta\mathcal{D}(t)\\
&\quad\lesssim \|(u_{0},a_{0},\nabla a_{0},v_{0})\|_{\dot{B}^{\frac{d}{2}-1}_{2,1}}+\beta \mathcal{D}(t).
\end{aligned}
\end{equation}

Moreover, we devote ourselves to obtaining the estimates of $u-v$ in $\widetilde{L}^2_t(\dot{B}^{\frac{d}{2}-1}_{2,1})$. Employing Young's inequality to \eqref{rv1} and integrating the resultant over $[0,t]$, we derive
\begin{equation*}
\begin{aligned}
&\|\dot{\Delta}_{j}(u-v)\|_{L^{\infty}_{t}(L^{2})}^{2}+\frac{1}{\tau}\|\dot{\Delta}_{j}(u-v)\|_{L^{2}_{t}(L^{2})}^{2}	\\
&\quad\lesssim\|\dot{\Delta}_{j}(u_{0}-v_{0})\|_{L^{2}}^{2}+\tau2^{2j}\|\dot{\Delta}_{j}a\|_{L^{2}_{t}(L^{2})}^{2}+\tau2^{2j}\|\dot{\Delta}_{j}v\|_{L^{2}_{t}(L^{2})}^{2}\\
&\quad+\|\dot{\Delta}_{j}(u\cdot\nabla u,v\cdot\nabla v)\|_{L^{1}_{t}(L^{2})}\|\dot{\Delta}_{j}(u-v)\|_{L^{\infty}_{t}(L^{2})}+\frac{1}{\tau}\|\dot{\Delta}_{j}(\rho(u-v))\|_{L^{2}_{t}(L^{2})}^{2}\\
&\quad+\|\dot{\Delta}_{j}F_{1}\|_{L^{1}_{t}(L^{2})}\|\dot{\Delta}_{j}(u-v)\|_{L^{\infty}_{t}(L^{2})}+\tau\|\dot{\Delta}_{j}F_{2}\|_{L^{2}_{t}(L^{2})}^{2},
\end{aligned}
\end{equation*}
and this gives rise to
\begin{equation}\label{rv3}
\begin{aligned}
&\|\dot{\Delta}_{j}(u-v)\|_{L^{\infty}_{t}(L^{2})}^{2}+\frac{1}{\tau}\|\dot{\Delta}_{j}(u-v)\|_{L^{2}_{t}(L^{2})}^{2}\\
&\quad\lesssim\|\dot{\Delta}_{j}(u_{0},v_{0})\|_{L^{2}}^{2}+\tau2^{2j}\|\dot{\Delta}_{j}a\|_{L^{2}_{t}(L^{2})}^{2}+\tau2^{2j}\|\dot{\Delta}_{j}v\|_{L^{2}_{t}(L^{2})}^{2}+\|\dot{\Delta}_{j}(u\cdot\nabla u,v\cdot\nabla v)\|_{L^{1}_{t}(L^{2})}^{2}\\
&\qquad+\frac{1}{\tau}\|\dot{\Delta}_{j}(\rho(u-v))\|_{L^{2}_{t}(L^{2})}^{2}+\|\dot{\Delta}_{j}F_{1}\|_{L^{1}_{t}(L^{2})}^{2}+\tau\|\dot{\Delta}_{j}F_{2}\|_{L^{2}_{t}(L^{2})}^{2}.
\end{aligned}
\end{equation}
Taking the square root of both sides of \eqref{rv3} and summing it over $j\leq 0$ with the weight $2^{j(\frac{d}{2}-1)}$, we deduce that
\begin{equation}\label{rv4}
\begin{aligned}
&\|u-v\|_{\widetilde{L}^{\infty}_{t}(\dot{B}^{\frac{d}{2}-1}_{2,1})}^{\ell}+\frac{1}{\sqrt{\tau}}\|u-v\|_{\widetilde{L}^{2}_{t}(\dot{B}^{\frac{d}{2}-1}_{2,1})}^{\ell}\\
&\quad\lesssim \|(u_{0},v_{0})\|_{\dot{B}^{\frac{d}{2}-1}_{2,1}}^{\ell}+\sqrt{\tau}\|(a,v)\|_{\widetilde{L}^{2}_{t}(\dot{B}^{\frac{d}{2}}_{2,1})}^{\ell}+\|(u,v)\|_{L^{\infty}_{t}(\dot{B}^{\frac{d}{2}-1}_{2,1})}\|(\nabla u,\nabla v)\|_{L^{1}_{t}(\dot{B}^{\frac{d}{2}}_{2,1})}\\
&\qquad+\frac{1}{\sqrt{\tau}}\|\rho(u-v)\|_{\widetilde{L}^{2}_{t}(\dot{B}^{\frac{d}{2}-1}_{2,1})}^{\ell}+\|F_{1}\|_{L^{1}_{t}(\dot{B}^{\frac{d}{2}-1}_{2,1})}^{\ell}+\sqrt{\tau}\|F_{2}\|_{\widetilde{L}^{2}_{t}(\dot{B}^{\frac{d}{2}-1}_{2,1})}^{\ell}.
\end{aligned}
\end{equation}
It follows from \eqref{inter} and the fact $\tau\in(0,1)$ that 
\begin{equation}\label{mmfff}
\sqrt{\tau}\|(a,v)\|_{\widetilde{L}^{2}_{t}(\dot{B}^{\frac{d}{2}}_{2,1})}^{\ell}\lesssim \|(a,v)\|_{\widetilde{L}^{\infty}_{t}(\dot{B}^{\frac{d}{2}-1}_{2,1})}^{\ell}+\|(a,v)\|_{L^{1}_{t}(\dot{B}^{\frac{d}{2}+1}_{2,1})}^{\ell}.
\end{equation}
Making use of \eqref{ainfty} and the product law in \eqref{uv2}, we get
\begin{equation}
\frac{1}{\sqrt{\tau}}\|\rho(u-v)\|^{\ell}_{\widetilde{L}^{2}_{t}(\dot{B}^{\frac{d}{2}-1}_{2,1})}\lesssim\|\rho\|_{\widetilde{L}^{\infty}_{t}(\dot{B}^{\frac{d}{2}}_{2,1})}\frac{1}{\sqrt{\tau}}\|u-v\|_{\widetilde{L}^{2}_{t}(\dot{B}^{\frac{d}{2}-1}_{2,1})}\lesssim\beta\mathcal{D}(t),
\end{equation}
and similarly, one has
\begin{equation}\label{mmfff1}
\sqrt{\tau}\|F_{2}\|^{\ell}_{\widetilde{L}^{2}_{t}(\dot{B}^{\frac{d}{2}-1}_{2,1})}\lesssim\|a\|_{\widetilde{L}^{\infty}_{t}(\dot{B}^{\frac{d}{2}}_{2,1})}\|\rho\|_{\widetilde{L}^{\infty}_{t}(\dot{B}^{\frac{d}{2}}_{2,1})}\frac{1}{\sqrt{\tau}}\|u-v\|_{\widetilde{L}^{2}_{t}(\dot{B}^{\frac{d}{2}-1}_{2,1})}\lesssim\beta\mathcal{D}(t).
\end{equation}
Combining \eqref{rv4} with \eqref{ns8}, \eqref{mmfff}-\eqref{mmfff1}, and Lemma \ref{lemav}, we verify that
\begin{equation}\label{rve3}
\|u-v\|_{\widetilde{L}^{\infty}_{t}(\dot{B}^{\frac{d}{2}-1}_{2,1})}^{\ell}+\frac{1}{\sqrt{\tau}}\|u-v\|_{\widetilde{L}^{2}_{t}(\dot{B}^{\frac{d}{2}-1}_{2,1})}^{\ell}\lesssim \|(u_{0},a_{0},\nabla a_{0}, v_{0})\|_{\dot{B}^{\frac{d}{2}-1}_{2,1}}+\beta \mathcal{D}(t),
\end{equation}
and this, together with the high-frequency estimate \eqref{rve2} and H\"{o}lder's inequality, leads to
\begin{equation}\label{rve4}
\begin{aligned}
&\|u-v\|_{\widetilde{L}^{\infty}_{t}(\dot{B}^{\frac{d}{2}-1}_{2,1})}+\frac{1}{\sqrt{\tau}}\|u-v\|_{\widetilde{L}^{2}_{t}(\dot{B}^{\frac{d}{2}-1}_{2,1})}\\
&\quad\lesssim \|u-v\|_{\widetilde{L}^{\infty}_{t}(\dot{B}^{\frac{d}{2}-1}_{2,1})}^{\ell}+\frac{1}{\sqrt{\tau}}\|u-v\|_{\widetilde{L}^{2}_{t}(\dot{B}^{\frac{d}{2}-1}_{2,1})}^{\ell}+\|u-v\|_{\widetilde{L}^{\infty}_{t}(\dot{B}^{\frac{d}{2}-1}_{2,1})}^{h}+\frac{1}{\tau}\|u-v\|_{L^{1}_{t}(\dot{B}^{\frac{d}{2}-1}_{2,1})}^{h}\\
&\quad\lesssim \|(u_{0},a_{0},\nabla a_{0}, v_{0})\|_{\dot{B}^{\frac{d}{2}-1}_{2,1}}+\beta \mathcal{D}(t).
\end{aligned}
\end{equation}
Therefore, we complete the proof of Lemma \ref{lemrv}.
\end{proof}

\subsubsection{Estimates of \texorpdfstring{$u$}{u}}
We now recover the expected bounds of $u$ based on the estimates for $v$ and $u-v$. In particular, we establish the uniform $L^{1}$ time integrability of the Lipschitz bound of the velocity $u$ required in \eqref{L1Lip}. 

\begin{lemma}\label{lemau}
Let $(\rho,u,a,v)$ be a smooth solution to $\eqref{pENS1}$ on $[0,T)\times\mathbb{R}^d$. Then, under the condition \eqref{ainfty}, it holds
\begin{equation}\label{ues}
\begin{aligned}
&\|u\|_{\widetilde{L}^{\infty}_{t}(\dot{B}^{\frac{d}{2}-1}_{2,1})}+ \tau\|u\|_{\widetilde{L}^{\infty}_{t}(\dot{B}^{\frac{d}{2}+1}_{2,1})}+\|u\|_{L^{1}_{t}(\dot{B}^{\frac{d}{2}+1}_{2,1})}\\
&\quad\lesssim\|(u_{0},a_{0},\nabla a_{0}, v_{0})\|_{\dot{B}^{\frac{d}{2}-1}_{2,1}}+\tau\|u_{0}\|_{\dot{B}^{\frac{d}{2}+1}_{2,1}}+\beta \mathcal{D}(t).
\end{aligned}
\end{equation}
\end{lemma}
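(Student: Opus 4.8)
\textbf{Proof plan for Lemma \ref{lemau}.}
The plan is to treat $\eqref{pENS1}_2$ as a damped transport equation for $u$ and split the analysis into two parts: a low-order bound in $\widetilde L^\infty_t(\dot B^{\frac d2-1}_{2,1})$ obtained by reconstructing $u$ from $u-v$ and $v$, and a high-order bound obtained from the damped-transport structure directly. For the first part, I would simply write $u=(u-v)+v$ and apply the triangle inequality together with Lemma \ref{lemrv} and Lemma \ref{lemav}: this gives $\|u\|_{\widetilde L^\infty_t(\dot B^{\frac d2-1}_{2,1})}\lesssim \|u-v\|_{\widetilde L^\infty_t(\dot B^{\frac d2-1}_{2,1})}+\|v\|_{\widetilde L^\infty_t(\dot B^{\frac d2-1}_{2,1})}\lesssim \|(u_0,a_0,\nabla a_0,v_0)\|_{\dot B^{\frac d2-1}_{2,1}}+\beta\mathcal D(t)$. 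Similarly, by interpolation (since $\frac d2+1$ for $u$ in $L^1_t$ does not come from $v$), the $L^1_t(\dot B^{\frac d2+1}_{2,1})$ bound for $u$ must also be extracted from the transport equation, so it belongs to the second part.

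For the second part, I rewrite $\eqref{pENS1}_2$ as
\begin{equation*}
\partial_t u + u\cdot\nabla u + \frac1\tau u = \frac1\tau v,
\end{equation*}
a transport equation with a damping term $\frac1\tau u$ and a source $\frac1\tau v$. Localizing with $\dot\Delta_j$ and using the standard commutator estimate for transport equations (Lemma \ref{commutator}) at regularity $s=\frac d2+1$, one obtains, after absorbing the commutator via Gronwall with the factor $\exp\big(C\int_0^t\|\nabla u\|_{\dot B^{\frac d2}_{2,1}}\big)$ which is $\lesssim 1$ under \eqref{ainfty}, the bound
\begin{equation*}
\|u\|_{\widetilde L^\infty_t(\dot B^{\frac d2+1}_{2,1})}+\frac1\tau\|u\|_{L^1_t(\dot B^{\frac d2+1}_{2,1})}\lesssim \|u_0\|_{\dot B^{\frac d2+1}_{2,1}}+\frac1\tau\|v\|_{L^1_t(\dot B^{\frac d2+1}_{2,1})}.
\end{equation*}
Multiplying by $\tau$ gives $\tau\|u\|_{\widetilde L^\infty_t(\dot B^{\frac d2+1}_{2,1})}+\|u\|_{L^1_t(\dot B^{\frac d2+1}_{2,1})}\lesssim \tau\|u_0\|_{\dot B^{\frac d2+1}_{2,1}}+\|v\|_{L^1_t(\dot B^{\frac d2+1}_{2,1})}$, and the last term is controlled by $\|v_0\|_{\dot B^{\frac d2-1}_{2,1}}+\beta\mathcal D(t)$ thanks to Lemma \ref{lemav}. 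Alternatively, one can keep the damping and use the $L^1_t(L^2)$ version of the localized estimate directly: $\|\dot\Delta_j u\|_{L^\infty_t(L^2)}+\frac1\tau\|\dot\Delta_j u\|_{L^1_t(L^2)}\lesssim \|\dot\Delta_j u_0\|_{L^2}+\|[\dot\Delta_j,u\cdot\nabla]u\|_{L^1_t(L^2)}+\frac1\tau\|\dot\Delta_j v\|_{L^1_t(L^2)}$, then sum with weight $2^{j(\frac d2+1)}$.

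The main obstacle I anticipate is the commutator term $\sum_j 2^{j(\frac d2+1)}\|[\dot\Delta_j,u\cdot\nabla]u\|_{L^1_t(L^2)}$: at the top regularity $\frac d2+1$ the standard commutator estimate produces $\|\nabla u\|_{L^\infty}\|u\|_{\dot B^{\frac d2+1}_{2,1}}+\|\nabla u\|_{\dot B^{\frac d2}_{2,1}}\|u\|_{\dot B^{\frac d2}_{2,1}}$-type terms, and one must be careful that the factor multiplying $\|u\|_{\widetilde L^\infty_t(\dot B^{\frac d2+1}_{2,1})}$ is small. Under \eqref{ainfty} we have $\tau\|u\|_{\widetilde L^\infty_t(\dot B^{\frac d2+1}_{2,1})}\le\beta$, so $\|u\|_{L^1_t(\dot B^{\frac d2+1}_{2,1})}$ (appearing in $\mathcal D(t)$) and $\|\nabla u\|_{L^1_t(\dot B^{\frac d2}_{2,1})}$ are the right small quantities to pull out, and the commutator contribution is $\lesssim \beta\mathcal D(t)$ plus lower-order terms handled exactly as in Lemma \ref{lemav}. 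The other terms ($u\cdot\nabla u$ at regularity $\frac d2-1$ for the low-order bound) are routine products estimated via \eqref{uv2}. Assembling the low- and high-frequency pieces then yields \eqref{ues}.
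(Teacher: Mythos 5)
Your proposal follows the same route as the paper: the low-order $\widetilde L^\infty_t(\dot B^{\frac d2-1}_{2,1})$ bound via $u=(u-v)+v$ and Lemmas \ref{lemrv}, \ref{lemav}, and the high-order estimate from the damped transport structure of $\eqref{pENS1}_2$, localized and summed with weight $\tau 2^{j(\frac d2+1)}$.

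One imprecision you should fix: the remark that $\exp\big(C\int_0^t\|\nabla u\|_{\dot B^{\frac d2}_{2,1}}\big)\lesssim1$ ``under \eqref{ainfty}'' is not available at this stage — \eqref{ainfty} controls only $\widetilde L^\infty_t$ norms (in particular $\tau\|u\|_{\widetilde L^\infty_t(\dot B^{\frac d2+1}_{2,1})}\le\beta$), whereas $\|u\|_{L^1_t(\dot B^{\frac d2+1}_{2,1})}$ is precisely what the lemma is establishing, so invoking its smallness inside a Gr\"onwall factor would be circular. Your ``alternative'' in the second half of the paragraph — keep the $\frac1\tau$ damping on the left, do not run Gr\"onwall, sum the localized inequality, multiply by $\tau$, and then bound the commutator contribution by $\tau\|u\|_{\widetilde L^\infty_t(\dot B^{\frac d2+1}_{2,1})}\|u\|_{L^1_t(\dot B^{\frac d2+1}_{2,1})}\le\beta\mathcal D(t)$ — is exactly the paper's argument and is the correct way to close without circularity. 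With that alternative taken as the actual proof, the argument is sound.
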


\begin{proof}
It is easy to verify from \eqref{aves} and \eqref{uves} that
\begin{equation}
\begin{aligned}
\|u\|_{\widetilde{L}^{\infty}_{t}(\dot{B}^{\frac{d}{2}-1}_{2,1})}&\leq \|v\|_{\widetilde{L}^{\infty}_{t}(\dot{B}^{\frac{d}{2}-1}_{2,1})}+\|u-v\|_{\widetilde{L}^{\infty}_{t}(\dot{B}^{\frac{d}{2}-1}_{2,1})}\\
&\lesssim\|(u_{0},a_{0},\nabla a_{0}, v_{0})\|_{\dot{B}^{\frac{d}{2}-1}_{2,1}}+\beta\mathcal{D}(t).
\end{aligned}
\end{equation}
Applying the operator $\dot{\Delta}_{j}$ to $\eqref{pENS1}_{2}$, we have
\begin{equation}\label{veu1}
\partial_{t}\dot{\Delta}_{j}u+\frac{1}{\tau}\dot{\Delta}_{j}u=\frac{1}{\tau}\dot{\Delta}_{j}v-u\cdot\nabla\dot{\Delta}_{j}u+[u\cdot\nabla,\dot{\Delta}_{j}]u.
\end{equation}
Multiplying \eqref{veu1} by $\dot{\Delta}_{j}u$ and integrating the resulting equation over $\mathbb{R}^{d}$, we obtain
\begin{equation}\label{veu2}
\begin{aligned}
&\frac{1}{2}\frac{{\rm{d}}}{{\rm{d}}t}\|\dot{\Delta}_{j}u\|_{L^{2}}^{2}+\frac{1}{\tau}\|\dot{\Delta}_{j}u\|_{L^{2}}^{2}\\
&\quad\leq\Big(\frac{1}{\tau}\|\dot{\Delta}_{j}v\|_{L^{2}}+\frac{1}{2}\|{\rm div}\,u\|_{L^\infty}\|\dot{\Delta}_{j}u\|_{L^{2}}+\|[u\cdot\nabla,\dot{\Delta}_{j}]u\|_{L^{2}}\Big)\|\dot{\Delta}_{j}u\|_{L^{2}},
\end{aligned}
\end{equation}
which yields that
\begin{equation}\label{veu3}
\begin{aligned}
&\|\dot{\Delta}_{j}u\|_{L_{t}^{\infty}(L^{2})}+\frac{1}{\tau}\|\dot{\Delta}_{j}u\|_{L_{t}^{1}(L^{2})}\\
&\quad\lesssim\|\dot{\Delta}_{j}u_{0}\|_{L^{2}}+\frac{1}{\tau}\|\dot{\Delta}_{j}v\|_{L_{t}^{1}(L^{2})}+\|{\rm div}\,u\|_{L_{t}^{1}(L^{\infty})}\|\dot{\Delta}_{j}u\|_{L_{t}^{\infty}(L^{2})}+\|[u\cdot\nabla,\dot{\Delta}_{j}]u\|_{L_{t}^{1}(L^{2})}.
\end{aligned}
\end{equation}
The summation of \eqref{veu3} with the weight $\tau 2^{j(\frac{d}{2}+1)}$ over $j\in\mathbb{Z}$ shows that
\begin{equation}\label{veue1}
\begin{aligned}
&\tau\|u\|_{\widetilde{L}^{\infty}_{t}(\dot{B}^{\frac{d}{2}+1}_{2,1})}+\|u\|_{L^{1}_{t}(\dot{B}^{\frac{d}{2}+1}_{2,1})}\\
&\quad\lesssim \tau \|u_{0}\|_{\dot{B}^{\frac{d}{2}+1}_{2,1}}+\|v\|_{L^{1}_{t}(\dot{B}^{\frac{d}{2}+1}_{2,1})}+\tau\|u\|_{\widetilde{L}_{t}^{\infty}(\dot{B}^{\frac{d}{2}+1}_{2,1})}\|u\|_{L_{t}^{1}(\dot{B}_{2,1}^{\frac{d}{2}+1})}\\
&\quad\lesssim\tau \|u_{0}\|_{\dot{B}^{\frac{d}{2}+1}_{2,1}}+\|(a_{0},\nabla a_{0}, v_{0})\|_{\dot{B}^{\frac{d}{2}-1}_{2,1}}+\beta \mathcal{D}(t),
\end{aligned}
\end{equation}
where we have applied Lemma \ref{lemav}. Thus, we derive the desired estimate \eqref{ues} and complete the proof of Lemma \ref{lemau}.
\end{proof}

\subsubsection{Estimates of \texorpdfstring{$\rho$}{rho}}\label{sub2}
Finally, we establish the estimate of the density $\rho$, making use of the transport nature of $\eqref{pENS1}_{1}$ and the $L^{1}$ time integrability of the Lipschitz bound for $u$.
\begin{lemma}\label{lemrho}
Let $(\rho,u,a,v)$ be a smooth solution to $\eqref{pENS1}$ on $[0,T)\times\mathbb{R}^d$. Then, if $\rho_0(x)>0$ for $x\in\mathbb{R}^d$, we have $\rho(t,x)>0$ for $(t,x)\in [0,T]\times\mathbb{R}^d$ and
\begin{equation}\label{rhoes}
\begin{aligned}
\|\rho\|_{\widetilde{L}^{\infty}_{t}(\dot{B}^{\frac{d}{2}-1}_{2,1})}\leq e^{C\mathcal{D}(t)} \|\rho_0\|_{\dot{B}^{\frac{d}{2}-1}_{2,1}},\quad \|\rho\|_{\widetilde{L}^{\infty}_{t}(\dot{B}^{\frac{d}{2}}_{2,1})}\leq e^{C\mathcal{D}(t)} \|\rho_0\|_{\dot{B}^{\frac{d}{2}}_{2,1}}.
\end{aligned}
\end{equation}
\end{lemma}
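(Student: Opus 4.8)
The plan is to treat $\eqref{pENS1}_1$ purely as a transport equation $\partial_t \rho + u\cdot\nabla\rho = -\rho\,\mathrm{div}\,u$ and to apply the standard logarithmic (in fact here linear, after exponentiating) a priori estimate for transport equations in the homogeneous Besov spaces $\dot B^{s}_{2,1}$ with $s\in\{\frac d2-1,\frac d2\}$; both indices lie in the range $(-\frac d2,\frac d2+1]$ where the transport estimate of Lemma-type results recalled in Appendix B applies, because $\frac d2-1 > -\frac d2$ for $d\geq 2$ and $\frac d2 \leq \frac d2+1$. The positivity of $\rho$ on $[0,T]\times\mathbb R^d$ follows at once from the method of characteristics: along the flow $X(t,x)$ of the velocity field $u$, one has $\rho(t,X(t,x)) = \rho_0(x)\exp\!\big(-\int_0^t \mathrm{div}\,u(s,X(s,x))\,{\rm d}s\big)$, and since $u\in L^1_t(\dot B^{\frac d2+1}_{2,1})\hookrightarrow L^1_t(\mathrm{Lip})$ the exponent is finite, so $\rho_0(x)>0$ forces $\rho(t,\cdot)>0$ everywhere. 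The far-field condition $\rho\to0$ as $|x|\to\infty$ is propagated by the same transport structure.

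For the Besov bounds, I would apply the transport estimate to $\eqref{pENS1}_1$ with transporting field $u$ and right-hand side $-\rho\,\mathrm{div}\,u$. The key point is that the source term is itself of the form $\rho$ times $\mathrm{div}\,u$, so after using the product law $\|\rho\,\mathrm{div}\,u\|_{\dot B^{s}_{2,1}} \lesssim \|\rho\|_{\dot B^{s}_{2,1}}\|\mathrm{div}\,u\|_{\dot B^{d/2}_{2,1}}$ (valid for $s\in(-\frac d2,\frac d2]$, which covers both our indices once one notes $\dot B^{d/2}_{2,1}$ is an algebra and acts on $\dot B^{s}_{2,1}$), the whole right-hand side gets absorbed into a Gr\"onwall-type factor. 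Concretely, the transport estimate gives
\begin{equation*}
\|\rho\|_{\widetilde L^\infty_t(\dot B^{s}_{2,1})} \leq e^{C\int_0^t \|\nabla u(s')\|_{\dot B^{d/2}_{2,1}}\,{\rm d}s'}\Big(\|\rho_0\|_{\dot B^{s}_{2,1}} + \int_0^t e^{-C\int_0^{s'}\|\nabla u\|_{\dot B^{d/2}_{2,1}}}\,\|\rho\,\mathrm{div}\,u(s')\|_{\dot B^{s}_{2,1}}\,{\rm d}s'\Big),
\end{equation*}
and estimating $\|\rho\,\mathrm{div}\,u(s')\|_{\dot B^{s}_{2,1}}\lesssim \|\rho(s')\|_{\dot B^{s}_{2,1}}\|\nabla u(s')\|_{\dot B^{d/2}_{2,1}}$ puts this in a form to which a Gr\"onwall argument in $\|\rho\|_{\widetilde L^\infty_{s'}(\dot B^{s}_{2,1})}$ applies; the resulting bound is $\|\rho\|_{\widetilde L^\infty_t(\dot B^{s}_{2,1})}\leq e^{C\|\nabla u\|_{L^1_t(\dot B^{d/2}_{2,1})}}\|\rho_0\|_{\dot B^{s}_{2,1}}$. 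Since $\|\nabla u\|_{L^1_t(\dot B^{d/2}_{2,1})} = \|u\|_{L^1_t(\dot B^{d/2+1}_{2,1})} \leq \mathcal D(t)$ by the definition \eqref{priorieg}, this is exactly \eqref{rhoes} with the constant $C$ independent of $\tau$ and $T$.

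The one mild subtlety worth spelling out is the use of $\widetilde L^\infty$ (Chemin--Lerner) norms rather than $L^\infty_t \dot B^s_{2,1}$: the transport estimate must be applied at the level of each dyadic block $\dot\Delta_j\rho$, producing a commutator $[\,u\cdot\nabla,\dot\Delta_j\,]\rho$ whose $L^1_t(L^2)$ norm is controlled, after multiplying by $2^{js}$ and summing, by $\|u\|_{L^1_t(\dot B^{d/2+1}_{2,1})}\|\rho\|_{\widetilde L^\infty_t(\dot B^{s}_{2,1})}$ via the commutator estimate of Lemma \ref{commutator} (here again $s\in(-\frac d2,\frac d2+1]$ is used). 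Collecting the block estimates with the weight $2^{js}$ and summing over $j\in\mathbb Z$ gives the $\widetilde L^\infty_t$ bound directly, so no extra loss occurs. I do not anticipate a genuine obstacle here — this lemma is the ``easy'' transport step that was deliberately isolated so that the hard analysis (the singular $\frac1\tau\rho(u-v)$ term) lives entirely in Lemmas \ref{lemav}--\ref{lemau}; the only thing to be careful about is keeping track that every Besov index invoked stays within the admissibility window of the transport and product estimates, which it does for all $d\geq 2$.
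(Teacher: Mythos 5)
Your proposal is correct and follows essentially the same route as the paper: treat $\eqref{pENS1}_1$ as a pure transport equation in $u$ with source $-\rho\operatorname{div}u$, localize with $\dot\Delta_j$, absorb the commutator and the product $\rho\operatorname{div}u$ via Lemma \ref{commutator} and \eqref{uv2}, sum against $2^{js'}$ for $s'\in\{\frac d2-1,\frac d2\}$, and close with Gr\"onwall to produce the factor $e^{C\|u\|_{L^1_t(\dot B^{d/2+1}_{2,1})}}\le e^{C\mathcal D(t)}$. The only cosmetic difference is that you invoke the packaged transport estimate (Lemma \ref{estrans}) whereas the paper re-derives it inline at the dyadic-block level; your characteristics argument for the positivity of $\rho$ is the intended filling-in of the paper's one-line remark.
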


\begin{proof}
The positivity of $\rho(t,x)$ for $(t,x)\in [0,T]\times\mathbb{R}^{d}$ can be directly justified by the transport nature of $\eqref{pENS1}_{1}$. For any $j\in\mathbb{Z}$, applying the operator $\dot{\Delta}_{j}$ to $\eqref{pENS1}_{1}$, we have
\begin{equation}\label{rho1}
\partial_{t}\dot{\Delta}_{j}\rho+u\cdot\nabla\dot{\Delta}_{j}\rho-[u\cdot\nabla,\dot{\Delta}_{j}]\rho+\dot{\Delta}_{j}(\rho\operatorname{div}u)=0.
\end{equation}
Multiplying \eqref{rho1} by $\dot{\Delta}_{j}\rho$ and integrating the resulting equation over $\mathbb{R}^d$, we deduce that
\begin{equation}\label{rrr1}
\begin{aligned}
\frac{1}{2}\frac{{\rm{d}}}{{\rm{d}}t}\|\dot{\Delta}_{j}\rho\|_{L^{2}}^{2}&=\frac{1}{2}\int_{\mathbb{R}^d}{\rm div}\,u|\dot{\Delta}_{j}\rho|^{2}\,{\rm{d}}x+\int_{\mathbb{R}^d}[u\cdot\nabla,\dot{\Delta}_{j}]\rho\dot{\Delta}_{j}\rho \,{\rm{d}}x-\int_{\mathbb{R}^d}\dot{\Delta}_{j}(\rho{\rm div}\,u)\dot{\Delta}_{j}\rho \,{\rm{d}}x\\
&\leq\Big(\frac{1}{2}\|\operatorname{div} u\|_{L^{\infty}}\|\dot{\Delta}_{j}\rho\|_{L^{2}}+\|[u\cdot\nabla,\dot{\Delta}_{j}]\rho\|_{L^{2}}+\|\dot{\Delta}_{j}(\rho\operatorname{div} u)\|_{L^{2}}\Big)\|\dot{\Delta}_{j}\rho\|_{L^{2}}.
\end{aligned}
\end{equation}
This, combined with the embedding property $\dot{B}^{\frac{d}{2}}_{2,1}\hookrightarrow L^{\infty}$, the product law in \eqref{uv2}, and the commutator estimates in Lemma \ref{commutator}, gives rise to
\begin{equation}\label{sfgmgg}
\begin{aligned}
\|\rho\|_{\widetilde{L}^{\infty}_{t}(\dot{B}^{s'}_{2,1})}&\lesssim \|\rho_{0}\|_{\dot{B}^{s'}_{2,1}}+\int_0^t\|u\|_{\dot{B}^{\frac{d}{2}+1}_{2,1}}\|\rho\|_{\dot{B}^{s'}_{2,1}} {\rm{d}}s,\quad \text{for}~~s'=\frac{d}{2}-1,\, \frac{d}{2}.
\end{aligned}
\end{equation}
Since $\|\rho(t)\|_{\dot{B}^{\frac{d}{2}-1}_{2,1}\cap\dot{B}^{\frac{d}{2}}_{2,1}}$ is continuous on $t\in [0,T)$,  employing Gr\"onwall's inequality for \eqref{sfgmgg}, we arrive at \eqref{rhoes} and the proof is completed.
\end{proof}
	
\begin{proof}[\indent\textbf{Proof of Proposition \ref{proppriori}}]
It follows from Lemmas \ref{lemav}-\ref{lemau} that there exists a uniform positive constant $C_*>0$ such that 
\begin{align*}
&\|(u,v)\|_{\widetilde{L}^{\infty}_{t}(\dot{B}^{\frac{d}{2}-1}_{2,1})}+\tau\|u\|_{\widetilde{L}^{\infty}_{t}(\dot{B}^{\frac{d}{2}+1}_{2,1})}+\|a\|_{\widetilde{L}^{\infty}_{t}(\dot{B}^{\frac{d}{2}-1}_{2,1}\cap\dot{B}^{\frac{d}{2}}_{2,1})}+\mathcal{D}(t)\leq C_*	\mathcal{X}_{0}+C_*\beta \mathcal{D}(t).
\end{align*}
Choosing $\beta\leq\frac{1}{2C_*}$, we have
\begin{align*}
&\|(u,v)\|_{\widetilde{L}^{\infty}_{t}(\dot{B}^{\frac{d}{2}-1}_{2,1})}+\tau\|u\|_{\widetilde{L}^{\infty}_{t}(\dot{B}^{\frac{d}{2}+1}_{2,1})}+\|a\|_{\widetilde{L}^{\infty}_{t}(\dot{B}^{\frac{d}{2}-1}_{2,1}\cap\dot{B}^{\frac{d}{2}}_{2,1})}+\mathcal{D}(t)\leq 2 C_*\mathcal{X}_0,
\end{align*}
and this, together with Lemma \ref{lemrho}, implies
\begin{equation*}
\begin{aligned}
\|\rho\|_{\widetilde{L}^{\infty}_{t}(\dot{B}^{\frac{d}{2}-1}_{2,1})}\leq e^{2C C_*\mathcal{X}_0} \|\rho_0\|_{\dot{B}^{\frac{d}{2}-1}_{2,1}}\quad\text{and}\quad \|\rho\|_{\widetilde{L}^{\infty}_{t}(\dot{B}^{\frac{d}{2}}_{2,1})}\leq e^{2 CC_*\mathcal{X}_0} \|\rho_0\|_{\dot{B}^{\frac{d}{2}}_{2,1}}.
\end{aligned}
\end{equation*}
We thus obtain the desired estimates \eqref{3.40}-\eqref{3.4} and finish the proof of Proposition \ref{proppriori}.
\end{proof}

\subsection{Proof of Theorem {\rm\ref{theorem1}}}\label{subsection32}
At the end of this section, we construct a local Friedrichs approximation (refer to \cite{bahourietal11}). With the uniform-in-time {\emph{a priori}} estimates established in Sections \ref{sub1}-\ref{sub2} at hand, we extend the local approximate sequence to a global one and show its convergence to the expected global solution of the Cauchy problem \eqref{pENS1}.

Let $\dot{\mathbb{E}}_{q}:L^{2}\rightarrow L^{2}_{q}$ be the Friedrichs projector, defined by $\dot{\mathbb{E}}_{q}f:=\mathcal{F}^{-1}\big({\rm\bf{1}}_{\mathcal{C}_{q}}(\xi)\mathcal{F}f(\xi) \big)$. Here $L^{2}_{q}$ is the set of $L^{2}$ functions spectrally supported in the annulus $\mathcal{C}_{q}=\{\xi\in\mathbb{R}^{d}~|~\frac{1}{q}\leq|\xi|\leq q\}$ endowed with the standard $L^{2}$ topology, and ${\rm\bf{1}}_{\mathcal{C}_{q}}$ is the characteristic function on the annulus $\mathcal{C}_{q}$. We aim to solve the following approximate problem for $q\geq 1$:
\begin{equation}\label{apENS}
\left\{
\begin{aligned}
& \partial_{t}\rho^{q}+\operatorname{div} \dot{\mathbb{E}}_{q}(\rho^{q} u^{q})=0,\\
& \partial_{t}u^{q}+\dot{\mathbb{E}}_{q}(u^{q}\cdot\nabla u^{q})+\frac{1}{\tau}\dot{\mathbb{E}}_{q}(u^{q}-v^{q})=0,\\
&\partial_{t}a^{q}+\operatorname{div} \dot{\mathbb{E}}_{q}v^{q}+\operatorname{div}\dot{\mathbb{E}}_{q}(a^{q}v^{q})=0,\\
&\partial_{t}v^{q}+\dot{\mathbb{E}}_{q}(v^{q}\cdot\nabla v^{q})+\nabla \dot{\mathbb{E}}_{q}a^{q}-\mu\Delta \dot{\mathbb{E}}_{q}v^{q}-(\mu+\lambda)\nabla\operatorname{div} \dot{\mathbb{E}}_{q}v^{q}\\
&\quad\quad=\frac{1}{\tau} \dot{\mathbb{E}}_{q}\big(\rho^{q}(u^{q}-v^{q})\big)+\dot{\mathbb{E}}_{q}F_{1}(a^{q},v^{q})+\dot{\mathbb{E}}_{q}F_{2}(\rho^{q},u^{q},a^{q},v^{q}),\\
&(\rho^{q},u^{q},a^{q},v^{q})(0,x)=(\dot{\mathbb{E}}_{q}\rho_{0},\dot{\mathbb{E}}_{q}u_{0},\dot{\mathbb{E}}_{q}a_{0},\dot{\mathbb{E}}_{q}v_{0})(x).
\end{aligned}
\right.
\end{equation}
Note that the initial data $(\dot{\mathbb{E}}_{q}\rho_{0},\dot{\mathbb{E}}_{q}u_{0},\dot{\mathbb{E}}_{q}a_{0},\dot{\mathbb{E}}_{q}v_{0})$ uniformly satisfy \eqref{idcd1} with respect to $q\geq 1$ and strongly converge to $(\rho_{0},u_{0},a_{0},v_{0})$ in the sense \eqref{idcd1} as $q\rightarrow \infty$. Due to the Bernstein inequality, all the Sobolev norms are equivalent and hence the system \eqref{apENS} is in $L^{2}_{q}\times L^{2}_{q}\times L^{2}_{q}\times L^{2}_{q}$ and locally Lipschitz with respect to the variable $(\rho^{q},u^{q},a^{q},v^{q})$ for every $q\geq 1$. It follows from the Cauchy-Lipschitz theorem (refer to \cite{bahourietal11}) that there exists a maximal time $T_{q}^{*}>0$ such that the problem \eqref{apENS} has a unique solution $(\rho^{q},u^{q},a^{q},v^{q})$ in the space $\mathcal{C}([0,T_{q}^{*});L^{2}_{q})$.

Define the maximal time
\begin{equation*}
\begin{aligned}
T_{q}:=\sup\Big\{t\geq 0~|~\|(\rho^{q},a^{q})\|_{\widetilde{L}^{\infty}_{t}(\dot{B}^{\frac{d}{2}-1}_{2,1})}+\|(u^{q},v^{q})\|_{\widetilde{L}^{\infty}_{t}(\dot{B}^{\frac{d}{2}-1}_{2,1})}+\tau \|u^{q}\|_{\widetilde{L}^{\infty}_{t}(\dot{B}^{\frac{d}{2}+1}_{2,1})}\leq \beta\Big\}.
\end{aligned}
\end{equation*}
It is easy to verify that $0<T_{q}\leq T_{q}^{*}$. We claim $T_{q}=T_{q}^{*}$ and prove it by a contradiction argument. Assuming that $T_{q}<T_{q}^{*}$, one can observe that the $L^{2}$ orthogonal projector $\dot{\mathbb{E}}_{q}$ has no effect on the energy estimates in the proof of Proposition \ref{proppriori}. Denote $\mathcal{D}^{q}(t)=\mathcal{D}(u^{q},a^{q},v^{q})$ by \eqref{priorieg}, and let $\mathcal{X}_0$ be given by \eqref{priorieg0}. It follows from Proposition \ref{proppriori} that
\begin{align}
\|\rho^{q}\|_{\widetilde{L}^{\infty}_{t}(\dot{B}^{\frac{d}{2}-1}_{2,1})}\leq C_{0}\|\rho_0\|_{\dot{B}^{\frac{d}{2}-1}_{2,1}} ,\quad \|\rho^{q}\|_{\widetilde{L}^{\infty}_{t}(\dot{B}^{\frac{d}{2}}_{2,1})}\leq C_{0}\|\rho_0\|_{\dot{B}^{\frac{d}{2}}_{2,1}},
\end{align}
and
\begin{align}
&\|(u^{q},v^{q})\|_{\widetilde{L}^{\infty}_{t}(\dot{B}^{\frac{d}{2}-1}_{2,1})}+\tau\|u^{q}\|_{\widetilde{L}^{\infty}_{t}(\dot{B}^{\frac{d}{2}+1}_{2,1})}+\|a^{q}\|_{\widetilde{L}^{\infty}_{t}(\dot{B}^{\frac{d}{2}-1}_{2,1}\cap\dot{B}^{\frac{d}{2}}_{2,1})}+\mathcal{D}^{q}(t)\leq C_{0}\mathcal{X}_{0}.
\end{align}
Then, we obtain for any $t\in(0,T_{q})$ that
\begin{equation}\label{eschi}
\|(\rho^{q},a^{q})\|_{\widetilde{L}^{\infty}_{t}(\dot{B}^{\frac{d}{2}-1}_{2,1})}+\|(u^{q},v^{q})\|_{\widetilde{L}^{\infty}_{t}(\dot{B}^{\frac{d}{2}-1}_{2,1})}+\tau \|u^{q}\|_{\widetilde{L}^{\infty}_{t}(\dot{B}^{\frac{d}{2}+1}_{2,1})}\leq \frac{1}{2}\beta,
\end{equation}
provided that
\begin{equation}\notag
\|\rho_0\|_{\dot{B}^{\frac{d}{2}-1}_{2,1}\cap \dot{B}^{\frac{d}{2}}_{2,1}}+\mathcal{X}_{0}\leq\delta_{q}:=\min\Big\{1,\frac{\beta}{4C_0}\Big\}.
\end{equation}

By virtue of \eqref{eschi} and the time continuity property, $T_{q}$ is not the maximal time, which contradicts the definition of $T_{q}$. Thus, one has $T_{q}=T_{q}^{*}$. Furthermore, using the uniform estimate \eqref{eschi}, the Cauchy-Lipschitz theorem and a similar contradiction argument on the maximal time $T_{q}^{*}$, we can further deduce $T_{q}^{*}=\infty$, which implies that $(\rho^{q},u^{q},a^{q},v^{q})$ is a global solution to the Cauchy problem \eqref{apENS}. 

It follows from the equations \eqref{apENS} and the uniform estimates \eqref{eschi} that the time derivatives $(\partial_{t}\rho^{q},\partial_{t}u^{q},\partial_{t}a^{q},\partial_{t}v^{q})$ can be uniformly bounded in a suitable sense with respect to $q\geq 1$. By a standard compactness process \cite[pages 442--444]{bahourietal11}, one can get a limit $(\rho,u,a,v)$ of a subsequence $(\rho^{q_{k}},u^{q_{k}},a^{q_{k}},v^{q_{k}})$ as $q_{k}\rightarrow\infty$, such that the approximate solutions to \eqref{apENS} with $q=q_k$ converge to a solution of the Cauchy problem \eqref{pENS1} in the sense of distributions. Therefore, $(\rho,u,n,v)$ with $n=1+a$ is a global solution to the problem \eqref{pENS}-\eqref{indt}. In view of the uniform estimates \eqref{eschi} and the Fatou property, $(\rho,u,n,v)$ with $\rho>0$ and $n\geq \frac{1}{2}$ is indeed a global strong solution to the problem \eqref{pENS}-\eqref{indt} and satisfies the properties \eqref{soluspce1}-\eqref{uvcon}, where we have noticed that
\begin{equation*}
\begin{aligned}
\|u-v\|_{L^{1}_{t}(\dot{B}^{\frac{d}{2}}_{2,1})}&\lesssim \|u-v\|_{L^{1}_{t}(\dot{B}^{\frac{d}{2}}_{2,1})}^{\ell}+\Big(\|u-v\|_{L^{1}_{t}(\dot{B}^{\frac{d}{2}-1}_{2,1})}^{h}\Big)^{\frac{1}{2}}\Big(\|(u,v)\|_{L^{1}_{t}(\dot{B}^{\frac{d}{2}+1}_{2,1})}^{h}\Big)^{\frac{1}{2}}\lesssim \sqrt{\tau}\mathcal{X}_{0}.
\end{aligned}
\end{equation*}

To complete the proof of Theorem \ref{theorem1}, it remains to show the uniqueness of the solution obtained above. Here we only deal with the case $d\geq 3$, and one can refer to \cite{bahourietal11,danchinCPDE07} with a suitable modification in the two-dimensional case. Note that the uniqueness holds for arbitrarily large initial data. Assume that $(\rho_{1},u_{1},n_{1},v_{1})$ ($n_1=1+a_1$) and $(\rho_{2},u_{2},n_{2},v_{2})$ ($n_2=1+a_2$) are two solutions of the Cauchy problem \eqref{pENS}-\eqref{indt} satisfying \eqref{soluspce1} with the same initial data on $[0,T_{0}]\times\mathbb{R}^d$ for a given time $T_{0}>0$. We shall prove that these two solutions coincide, and hence we aim to analyze  
\begin{equation*}(\widetilde{\rho},\widetilde{u},\widetilde{a},\widetilde{v}):=(\rho_{1}-\rho_{2},u_{1}-u_{2},a_{1}-a_{2},v_{1}-v_{2}),
\end{equation*}
by considering the following system
\begin{equation}\label{difpENS}
\left\{
\begin{aligned}
& \partial_{t}\widetilde{\rho}+u_{1}\cdot\nabla\widetilde{\rho}=-\widetilde{u}\cdot\nabla\rho_{2}-\widetilde{\rho}\operatorname{div}u_{1}-\rho_{2}\operatorname{div}\widetilde{u},\\
& \partial_{t}\widetilde{u}+u_{1}\cdot\nabla\widetilde{u}+\frac{1}{\tau}\widetilde{u}=\frac{1}{\tau}\widetilde{v}-\widetilde{u}\cdot\nabla u_{2},\\
&\partial_{t}\widetilde{a}+v_{1}\cdot\nabla\widetilde{a}=-\widetilde{v}\cdot\nabla a_{2}-\widetilde{a}\operatorname{div}v_{1}-(1+a_{2})\operatorname{div}\widetilde{v},\\
&\partial_{t}\widetilde{v}+v_{1}\cdot\nabla\widetilde{v}+\widetilde{v}\cdot\nabla v_{2}-(1+f(a_{1}))\big(\mu\Delta\widetilde{v}+(\mu+\lambda)\nabla\operatorname{div}\widetilde{v}\big)=\widetilde{F}_{1}+\widetilde{F}_{2},
\end{aligned}
\right.
\end{equation}
where $\widetilde{F}_{1}$ and $\widetilde{F}_{2}$ satisfy
\begin{align*}
&\widetilde{F}_{1}:=-(1-g(a_{1}))\nabla\widetilde{a}+\big(g(a_{1})-g(a_{2})\big)\nabla a_{2}+\big(f(a_{1})-f(a_{2})\big)\big(\mu\Delta v_{2}+(\mu+\lambda)\nabla\operatorname{div}v_{2}\big),\\
&\widetilde{F}_{2}:=\frac{1}{\tau}(1+f(a_{1}))\rho_{1}(\widetilde{u}-\widetilde{v})+\frac{1}{\tau}(1+f(a_{1}))\widetilde{\rho}(u_{2}-v_{2})+\frac{1}{\tau}\big(f(a_{1})-f(a_{2})\big)\rho_{2}(u_{2}-v_{2}).
\end{align*}

First, applying the regularity estimates in Lemma \ref{estrans} to $\eqref{difpENS}_{1}$ and $\eqref{difpENS}_{3}$, respectively, one can verify that
\begin{equation*}
\begin{aligned}			\|\widetilde{\rho}\|_{\dot{B}^{\frac{d}{2}-2}_{2,1}\cap\dot{B}^{\frac{d}{2}-1}_{2,1}}&\lesssim \exp\Big(C\|u_{1}\|_{L^{1}_{t}(\dot{B}^{\frac{d}{2}+1}_{2,1})}\Big)\Big(\int_{0}^{t}\|\widetilde{\rho}\|_{\dot{B}^{\frac{d}{2}-2}_{2,1}\cap\dot{B}^{\frac{d}{2}-1}_{2,1}}\|u_{1}\|_{\dot{B}^{\frac{d}{2}+1}_{2,1}}{\rm d}s+\int_{0}^{t}\|\widetilde{u}\|_{\dot{B}^{\frac{d}{2}}_{2,1}}{\rm d}s\Big),
\end{aligned}
\end{equation*}
and 
\begin{equation*}
\begin{aligned}
\|\widetilde{a}\|_{\dot{B}^{\frac{d}{2}-1}_{2,1}}&\lesssim \exp\Big(C\|v_{1}\|_{L^{1}_{t}(\dot{B}^{\frac{d}{2}+1}_{2,1})}\Big)\Big(\int_{0}^{t}\|\widetilde{a}\|_{\dot{B}^{\frac{d}{2}-1}_{2,1}}\|v_{1}\|_{\dot{B}^{\frac{d}{2}+1}_{2,1}}{\rm d}s+\int_{0}^{t}\|\widetilde{v}\|_{\dot{B}^{\frac{d}{2}}_{2,1}}{\rm d}s\Big).
\end{aligned}
\end{equation*}
These, together with the Gr\"{o}nwall inequality, give rise to
\begin{align}\label{difrho}
&\|\widetilde{\rho}\|_{\dot{B}^{\frac{d}{2}-2}_{2,1}\cap\dot{B}^{\frac{d}{2}-1}_{2,1}}+\|\widetilde{a}\|_{\dot{B}^{\frac{d}{2}-1}_{2,1}}\leq C_{T_{0}}\int_{0}^{t}\|(\widetilde{u},\widetilde{v})\|_{\dot{B}^{\frac{d}{2}}_{2,1}}{\rm d}s,
\end{align}
for $t\in[0,T_{0}]$, where $C_{T_{0}}>0$ is a positive constant depending on $T_{0}$ and the regularities of two solutions.

Next, we apply the operator $\dot{\Delta}_{j}$ to $\eqref{difpENS}_{2}$ and take the $L^{2}$ inner product of the resulting equation with $\dot{\Delta}_{j}\widetilde{u}$ to arrive at
\begin{equation*}
\begin{aligned}
&\frac{1}{2}\frac{{\rm{d}}}{{\rm{d}}t}\|\dot{\Delta}_{j}\widetilde{u}\|_{L^{2}}^{2}+\frac{1}{\tau}\|\dot{\Delta}_{j}\widetilde{u}\|_{L^{2}}^{2}\\
&\quad\leq\Big(\frac{1}{\tau}\|\dot{\Delta}_{j}\widetilde{v}\|_{L^{2}}+\frac{1}{2}\|{\rm div}\,u_{1}\|_{L^\infty}\|\dot{\Delta}_{j}\widetilde{u}\|_{L^{2}}+\|[u_{1}\cdot\nabla,\dot{\Delta}_{j}]\widetilde{u}\|_{L^{2}}+\|\dot{\Delta}_{j}(\widetilde{u}\cdot\nabla u_{2})\|_{L^{2}}\Big)\|\dot{\Delta}_{j}\widetilde{u}\|_{L^{2}},
\end{aligned}
\end{equation*}
which, combined with \eqref{uv2},  Lemma \ref{commutator} and the Gr\"{o}nwall inequality, leads to
\begin{align}\label{difu}
&\tau\|\widetilde{u}\|_{\dot{B}^{\frac{d}{2}-2}_{2,1}\cap \dot{B}^{\frac{d}{2}}_{2,1}}+\|\widetilde{u}\|_{L^{1}_{t}(\dot{B}^{\frac{d}{2}-2}_{2,1}\cap \dot{B}^{\frac{d}{2}}_{2,1})}\leq C_{T_{0}}\int_{0}^{t}\|\widetilde{v}\|_{\dot{B}^{\frac{d}{2}-2}_{2,1}\cap \dot{B}^{\frac{d}{2}}_{2,1}}{\rm d}s,\quad \text{for}~~t\in[0,T_{0}].
\end{align}

Then, applying the regularity estimates for the parabolic equations with variable coefficients (see \cite[Proposition 6]{danchinCPDE07}), we have
\begin{equation}\label{difv}
\begin{aligned}
&\|\widetilde{v}\|_{\dot{B}^{\frac{d}{2}-2}_{2,1}}+\|\widetilde{v}\|_{L^{1}_{t}(\dot{B}^{\frac{d}{2}}_{2,1})}\\
&\quad\leq\exp\Big(C\|(v_{1},v_{2})\|_{L^{1}_{t}(\dot{B}^{\frac{d}{2}+1}_{2,1})}+C\|f(a_{1})\|_{\widetilde{L}^{2}_{t}(\dot{B}^{\frac{d}{2}}_{2,1})}^{2}\Big)\int_{0}^{t}\|(\widetilde{F}_{1},\widetilde{F}_{2})\|_{\dot{B}_{2,1}^{\frac{d}{2}-2}} {\rm d}s\\
&\quad\leq C_{\tau,T_{0}}\int_{0}^{t}\|(\widetilde{u},\widetilde{v})\|_{\dot{B}_{2,1}^{\frac{d}{2}-2}}\Big(1+\|v_{2}\|_{\dot{B}_{2,1}^{\frac{d}{2}+1}}+\|(u_{2},v_{2})\|_{\dot{B}_{2,1}^{\frac{d}{2}}}\Big){\rm d}s+C_{\tau}\int_0^t  \|(\widetilde{\rho},\widetilde{a})\|_{\dot{B}^{\frac{d}{2}-1}_{2,1}}\,{\rm d}s.
\end{aligned}
\end{equation}
Hence, combining  the above estimate \eqref{difv} with \eqref{difrho} and \eqref{difu} and making use of the Gr\"{o}nwall inequality, we finally prove the uniqueness and complete the proof of Theorem \ref{theorem1}.

\section{Large-friction limit of the Euler-NS system to the DF system}\label{sectionlimit}

This section is devoted to the error estimates between global solutions for the Euler-NS system \eqref{pENS} and the DF system \eqref{DF}.

\subsection{Global existence of the DF system}\label{sectionlimit:DFweak}
To begin with, we construct a global solution of the DF system as the large-friction limit of approximate solutions of the Euler-NS system. For $\tau\in(0,1)$, we define the approximate initial data
\begin{equation}\label{app:data}
\left\{
\begin{aligned}
&\rho^\tau_0(x)=\rho_0(x)+\tau e^{-|x|^2}>0,\quad  u_0^\tau(x)=\mathcal{F}^{-1}\Big( \mathbf{I}_{|\xi|\leq \frac{1}{\sqrt{\tau}}}\mathcal{F}(v_0)\Big)(x)\\
& n_0^\tau(x)=n_0(x),\quad v_0^\tau(x)=v_0(x).
\end{aligned}
\right.
\end{equation}
Then one has 
\begin{align}
&\|\rho^\tau_0\|_{\dot{B}^{\frac{d}{2}-1}_{2,1}}\lesssim \|\rho_0\|_{\dot{B}^{\frac{d}{2}-1}_{2,1}}+\tau,\quad \|\rho^\tau_0\|_{\dot{B}^{\frac{d}{2}}_{2,1}}\lesssim \|\rho_0\|_{\dot{B}^{\frac{d}{2}}_{2,1}}+\tau, \label{data1}\\
&\|n_0^\tau-1\|_{\dot{B}^{\frac{d}{2}-1}_{2,1}\cap\dot{B}^{\frac{d}{2}}_{2,1}}\lesssim \|n_0-1\|_{\dot{B}^{\frac{d}{2}-1}_{2,1}\cap\dot{B}^{\frac{d}{2}}_{2,1}},\quad  \|(u_0^\tau, v_0^\tau)\|_{\dot{B}^{\frac{d}{2}-1}_{2,1}}\lesssim  \|v_0\|_{\dot{B}^{\frac{d}{2}-1}_{2,1}},\label{data2}
\end{align}
and
\begin{align}
\tau\|u_0^\tau\|_{\dot{B}^{\frac{d}{2}+1}_{2,1}}\lesssim \tau\sum_{2^j\lesssim \frac{1}{\sqrt{\tau}}} 2^{(\frac{d}{2}+1)j}\|\dot{\Delta}_j u_0^\tau\|_{L^2}\lesssim \|u^\tau_0\|_{\dot{B}^{\frac{d}{2}-1}_{2,1}}\lesssim \|v_0\|_{\dot{B}^{\frac{d}{2}-1}_{2,1}}.\label{data3}
\end{align}
Therefore, in view of Theorem \ref{theorem1}, if $(\rho_0,n_0,v_0)$ satisfies \eqref{idcd4} with $\delta_0^*$ sufficiently small and let $\tau\in(0, \tau_0]$ for some parameter $\tau_0\in(0,1)$, we can obtain an approximate sequence $(\rho^\tau, u^\tau, n^\tau, v^\tau)$ that are global solutions to the Cauchy problem \eqref{pENS}-\eqref{indt} subject to the initial data $(\rho^\tau_0,u_0^\tau,n^\tau_0,v^\tau_0)$ given in \eqref{app:data}, satisfying the uniform regularity estimates \eqref{es:rho}-\eqref{uvcon} with respect to $\tau\in (0,\tau_0)$.

According to \eqref{uvcon}, as $\tau\rightarrow 0$, we have
\begin{align}\label{con1}
&u^\tau-v^\tau\rightarrow 0\quad\text{in}\quad L^1(\mathbb{R}_+;\dot{B}^{\frac{d}{2}}_{2,1})\cap L^2(\mathbb{R}_+;\dot{B}^{\frac{d}{2}-1}_{2,1}).
\end{align}
For any fixed time $T>0$, according to  $\eqref{pENS}_1$ and $\eqref{pENS}_3$, it follows from \eqref{es:rho}-\eqref{result1}, \eqref{idcd4}, and \eqref{data1}-\eqref{data3} that
\begin{equation*}
\begin{aligned}
&\|\partial_t \rho^\tau\|_{L^{\infty}_T(\dot{B}^{\frac{d}{2}-2}_{2,1})}\lesssim \|\rho^\tau u^\tau\|_{L^{\infty}_T(\dot{B}^{\frac{d}{2}-1}_{2,1})}\lesssim \|\rho^\tau\|_{L^{\infty}_T(\dot{B}^{\frac{d}{2}}_{2,1})} \|u^\tau\|_{L^{\infty}_T(\dot{B}^{\frac{d}{2}-1}_{2,1})}\lesssim 1,\\
&\|\partial_t n^\tau\|_{L^{\infty}_T(\dot{B}^{\frac{d}{2}-2}_{2,1})}\lesssim (1+\|n^\tau-1\|_{L^{\infty}_T(\dot{B}^{\frac{d}{2}}_{2,1})})\|v^\tau\|_{L^{\infty}_T(\dot{B}^{\frac{d}{2}-1}_{2,1})}\lesssim 1.
\end{aligned}
\end{equation*}
Similarly, we derive from $\eqref{pENS}_2$, $\eqref{pENS}_4$, \eqref{es:rho}-\eqref{uvcon}, and \eqref{data1}-\eqref{data3} that
\begin{align*}
&\begin{aligned}
\|\partial_t u^\tau\|_{L^1_T(\dot{B}^{\frac{d}{2}}_{2,1})}^{\ell}&\lesssim \|u^\tau\cdot \nabla u^\tau \|_{L^1_T(\dot{B}^{\frac{d}{2}-1}_{2,1})}^{\ell}+\frac{1}{\tau} \|u^\tau-v^\tau\|_{L^1_T(\dot{B}^{\frac{d}{2}}_{2,1})}^{\ell}\\
&\lesssim \|u^\tau\|_{L^{\infty}_T(\dot{B}^{\frac{d}{2}-1}_{2,1})}\| u^\tau\|_{L^1_T(\dot{B}^{\frac{d}{2}+1}_{2,1})}+\frac{1}{\tau} \|u^\tau-v^\tau\|_{L^1_T(\dot{B}^{\frac{d}{2}}_{2,1})}^{\ell}\lesssim 1, 
\end{aligned}\\
&\|\partial_t u^\tau\|_{L^1_T(\dot{B}^{\frac{d}{2}-1}_{2,1})}^{h}\lesssim \|u^\tau\|_{L^{\infty}_T(\dot{B}^{\frac{d}{2}-1}_{2,1})}\| u^\tau\|_{L^1_T(\dot{B}^{\frac{d}{2}+1}_{2,1})}+\frac{1}{\tau} \|u^\tau-v^\tau\|_{L^1_T(\dot{B}^{\frac{d}{2}-1}_{2,1})}^{h}\lesssim 1,
\end{align*}
and
\begin{equation*}
\begin{aligned}
\|\partial_t v^\tau\|_{L^1_T(\dot{B}^{\frac{d}{2}-1}_{2,1})}&\lesssim (1+\|v^\tau\|_{L^{\infty}_T(\dot{B}^{\frac{d}{2}-1}_{2,1})})\| v^\tau\|_{L^1_T(\dot{B}^{\frac{d}{2}+1}_{2,1})}+T\|n^\tau-1\|_{L^{\infty}_T(\dot{B}^{\frac{d}{2}}_{2,1})}\\
&\quad+\|\rho^\tau\|_{L^{\infty}_T(\dot{B}^{\frac{d}{2}-1}_{2,1})}\frac{1}{\tau}\|u^\tau-v^\tau\|_{L^1_T(\dot{B}^{\frac{d}{2}}_{2,1})}^{\ell}+\|\rho^\tau\|_{L^{\infty}_T(\dot{B}^{\frac{d}{2}}_{2,1})}\frac{1}{\tau}\|u^\tau-v^\tau\|_{L^1_T(\dot{B}^{\frac{d}{2}-1}_{2,1})}^{h}\\
&\lesssim 1+T.
\end{aligned}
\end{equation*}
In view of a standard compactness argument based on the Aubin-Lions lemma (cf. \cite{simon87}), we conclude that there exists a limit $(\rho,u,n,v)$ such that, up to a subsequence,
\begin{equation}\label{ccccc}
\left\{
\begin{aligned}
&(\rho^\tau, n^\tau-1)\rightarrow (\rho,n-1) \quad &&\text{strongly in}~~\mathcal{C}([0,T];H_{\rm{loc}}^{\frac{d}{2}-\eta}),\\
&(u^\tau,v^\tau)\rightarrow (u,v) &&\text{strongly in}~~ L^2(0,T;H_{\rm{loc}}^{\frac{d}{2}-\eta}),
\end{aligned}
\right.
\end{equation}
for any $\eta\in(0,1)$ and $T>0$. Consequently, one can show the convergence of the equations $\eqref{pENS}_1$ and $\eqref{pENS}_3$ to $\eqref{DF}_1$ and $\eqref{DF}_2$, respectively, in the distributional sense as $\tau\rightarrow 0$. Combining \eqref{con1} and \eqref{ccccc}, we have $u=v$.  Adding $\eqref{pENS}_2$ and  $\eqref{pENS}_4$ together, we have
\begin{equation*}
\begin{aligned}
&\partial_t(\rho^\tau u^\tau+n^\tau v^\tau)+\operatorname{div} (\rho^\tau u^\tau\otimes u^\tau+n^\tau v^\tau\otimes v^\tau)+\nabla P(n^\tau)=\mu \Delta v^\tau+(\mu+\lambda) \nabla\operatorname{div} v^\tau,
\end{aligned}
\end{equation*}
which converges to the momentum equation $\eqref{DF}_3$ in the distributional sense as $\tau\rightarrow0$. 

Therefore, $(\rho,n,v)$ is indeed a global solution to the Cauchy problem \eqref{DF}-\eqref{DFindt} satisfying \eqref{soluspce2}. Due to the uniform estimates and Fatou's property (see \cite{bahourietal11}), we discover that $(\rho,n,v)$ satisfies \eqref{rhoDF} and \eqref{result5}, and one can prove the uniqueness by similar arguments as in Subsection \ref{subsection32}; the details are omitted. The proof of Corollary \ref{corollary1} is complete.

\subsection{Global-in-time convergence rate}

Let $(\rho^{\tau},u^{\tau},n^{\tau},v^{\tau})$ be the solution to the Euler-NS system \eqref{pENS} with the initial data $(\rho_{0}^{\tau},u_{0}^{\tau},n_{0}^{\tau},v_{0}^{\tau})$, and $(\rho,n,v)$ be the solution to the DF system \eqref{DF} with the initial data $(\rho_{0},n_{0},v_{0})$. We first present the equations of the error variables, defined by
\begin{equation*}
(\widetilde{\rho}^{\tau},\widetilde{u}^{\tau},\widetilde{n}^{\tau},\widetilde{v}^{\tau}):=(\rho^{\tau}-\rho,u^{\tau}-v,n^{\tau}-n,v^{\tau}-v).
\end{equation*}
Then, the convergence estimates with respect to the friction parameter $\tau$ are established, and consequently, we give the proof of Theorem \ref{theorem3}.

\subsubsection{Reformulations}\label{section61}
\noindent \textbf{Effective mixed velocity}. Due to the appearance of the drag force $\frac{1}{\tau}\rho^{\tau}(u^{\tau}-v^{\tau})$, one cannot directly estimate the error equations of $(\widetilde{n}^{\tau},\widetilde{v}^{\tau})$ directly. To this end, we add $\eqref{pENS}_{2}$ and $\eqref{pENS}_{4}$ together to get \eqref{rhousum}, i.e.,
\begin{equation}\tag{\ref{rhousum}}
\begin{aligned}
\rho^{\tau} \partial_{t}u^{\tau}+n^{\tau}\partial_{t}v^{\tau}+\rho^{\tau} u^{\tau}\cdot\nabla u^{\tau}+n^{\tau} v^{\tau}\cdot \nabla v^{\tau}+\nabla P(n^{\tau})= \mu\Delta v^{\tau}+(\mu+\lambda)\nabla\operatorname{div} v^{\tau}.
\end{aligned}
\end{equation}
As analyzed before, the mixed time derivatives and convective terms in \eqref{rhousum} cause the main challenges. Our key observation is the introduction of the {\emph{effective mixed velocity}}:
\begin{equation*}
V^{\tau}:=\frac{\rho^{\tau}}{\rho^{\tau}+n^{\tau}}u^{\tau}+\frac{n^{\tau}}{\rho^{\tau}+n^{\tau}}v^{\tau},
\end{equation*}
which satisfies
\begin{align*}
\frac{\rho^{\tau}}{\rho^{\tau}+n^{\tau}}\partial_{t}u^{\tau}+\frac{n^{\tau}}{\rho^{\tau}+n^{\tau}}\partial_{t}v^{\tau}&=\partial_t v^\tau+\frac{\rho^{\tau}}{\rho^{\tau}+n^{\tau}}\partial_{t}(u^{\tau}-v^{\tau})=\partial_{t} V^\tau-\partial_{t} \Big( \frac{\rho^{\tau}}{\rho^{\tau}+n^{\tau}}\Big) (u^{\tau}-v^{\tau}),
\end{align*}
and
\begin{equation}\nonumber
\begin{aligned}
&\frac{\rho^{\tau}}{\rho^{\tau}+n^{\tau}}u^{\tau}\cdot\nabla u^{\tau}+\frac{n^{\tau}}{\rho^{\tau}+n^{\tau}}v^{\tau}\cdot\nabla v^{\tau}\\
&\quad=v^\tau\cdot \nabla v^\tau+\frac{\rho^{\tau}}{\rho^{\tau}+n^{\tau}}(u^\tau \cdot\nabla u^\tau-v^\tau\cdot \nabla v^\tau)\\
&\quad=v^\tau\cdot \nabla V^\tau+\frac{\rho^{\tau}}{\rho^{\tau}+n^{\tau}}(u^\tau-v^\tau)\cdot \nabla u^{\tau}-v^{\tau}\cdot \nabla\Big(\frac{\rho^{\tau}}{\rho^{\tau}+n^{\tau}}\Big) (u^{\tau}-v^{\tau}).
\end{aligned}
\end{equation}
Therefore, we are able to rewrite \eqref{rhousum} in terms of $V^\tau$ as  
\begin{equation}\label{rhousum1}
\begin{aligned}
&\partial_{t}V^{\tau}+v^\tau\cdot \nabla V^\tau+\frac{1}{\rho^\tau+n^\tau}\nabla P(n^{\tau})\\
&\quad= \frac{\mu}{\rho^\tau+n^\tau}\Delta v^{\tau}+\frac{\mu+\lambda}{\rho^\tau+n^\tau}\nabla\operatorname{div} v^{\tau}\\
&\qquad+\partial_{t} \Big( \frac{\rho^{\tau}}{\rho^{\tau}+n^{\tau}}\Big) (u^{\tau}-v^{\tau})-\frac{\rho^{\tau}}{\rho^{\tau}+n^{\tau}}(u^\tau-v^\tau)\cdot \nabla u^{\tau}+v^{\tau}\cdot \nabla\Big(\frac{\rho^{\tau}}{\rho^{\tau}+n^{\tau}}\Big) (u^{\tau}-v^{\tau}).
\end{aligned}
\end{equation}
Note that the sources involve $u^{\tau}-v^{\tau}$, in which the $L^1_t(\dot{B}^{\frac{d}{2}}_{2,1})$-rate of order $\sqrt{\tau}$ is crucial to establish the $\mathcal{O}(\sqrt{\tau})$ error estimates.

\noindent \textbf{Error equations for $(\widetilde{n}^{\tau},\widetilde{V}^{\tau})$}. 
The above observations motivate us to introduce the {\emph{effective mixed error}} $\widetilde{V}^{\tau}$ as
\begin{align*}
\widetilde{V}^\tau:=\frac{\rho^{\tau}}{\rho^{\tau}+n^{\tau}}\widetilde{u}^\tau+\frac{n^{\tau}}{\rho^{\tau}+n^{\tau}}\widetilde{v}^\tau=V^{\tau}-v,
\end{align*}
and then $(\widetilde{u}^\tau,\widetilde{v}^\tau)$ can be represented by $(\widetilde{V}^\tau,u^\tau-v^\tau)$ as in \eqref{erruvV}. It follows from \eqref{pENS}, \eqref{DF}, and \eqref{rhousum1} that $(\widetilde{n}^{\tau},\widetilde{V}^{\tau})$ satisfies a coupled hyperbolic-parabolic system
\begin{equation}\label{errnVe}
\left\{
\begin{aligned}
&\partial_{t}\widetilde{n}^\tau+v^{\tau}\cdot\nabla\widetilde{n}^\tau+ \operatorname{div} \widetilde{V}^\tau=\widetilde{F}_{1}+\widetilde{F}_{2},\\
&\partial_{t}\widetilde{V}^\tau+v^{\tau}\cdot \nabla \widetilde{V}^\tau+ \nabla \widetilde{n}^\tau- \mu\Delta \widetilde{V}^\tau-(\mu+\lambda)\nabla\operatorname{div} \widetilde{V}^\tau =\widetilde{F}_{3}+\widetilde{F}_{4},
\end{aligned}
\right.
\end{equation}
where the nonlinear terms $\widetilde{F}_{i}$ ($i=1,2,3,4$) are given by
\begin{equation}\label{F1F2F3F4}
\left\{
\begin{aligned}
\widetilde{F}_{1}&:=-\widetilde{V}^{\tau}\cdot\nabla n-(n-1)\operatorname{div} \widetilde{V}^{\tau}-\widetilde{n}^{\tau} \operatorname{div} v^{\tau},\\
\widetilde{F}_{2}&:= \frac{\rho^{\tau}}{\rho^{\tau}+n^{\tau}} (u^\tau-v^\tau)\cdot \nabla n+n\operatorname{div}\Big(\frac{\rho^{\tau}}{\rho^{\tau}+n^{\tau}}(u^{\tau}-v^{\tau})\Big),\\
\widetilde{F}_{3}&:=-\widetilde{V}^\tau\cdot\nabla v-\Big(\frac{P'(n^{\tau})}{\rho^{\tau}+n^{\tau}}-1\Big)\nabla\widetilde{n}^{\tau}-\Big(\frac{P'(n^{\tau})}{\rho^{\tau}+n^{\tau}}-\frac{P'(n)}{\rho+n}\Big) \nabla n\\
&\qquad+\Big(\frac{1}{\rho^\tau+n^\tau}-1\Big)(\mu\Delta\widetilde{V}^\tau+(\mu+\lambda)\nabla\operatorname{div}\widetilde{V}^\tau)\\
&\qquad+\Big(\frac{1}{\rho^{\tau}+n^{\tau}}-\frac{1}{\rho+n}\Big) (\mu\Delta v+(\mu+\lambda)\nabla\operatorname{div}v),\\
\widetilde{F}_{4}&:=\frac{\rho^{\tau}}{\rho^{\tau}+n^{\tau}} (u^\tau-v^\tau)\cdot \nabla v+\partial_{t} \Big(\frac{\rho^{\tau}}{\rho^{\tau}+n^{\tau}}\Big) (u^{\tau}-v^{\tau})\\
&\qquad-\frac{\rho^{\tau}}{\rho^{\tau}+n^{\tau}}(u^\tau-v^\tau)\cdot \nabla u^{\tau}+v^{\tau}\cdot \nabla\Big(\frac{\rho^{\tau}}{\rho^{\tau}+n^{\tau}}\Big) (u^{\tau}-v^{\tau})\\
&\qquad-\frac{\mu}{\rho^{\tau}+n^{\tau}}\Delta\Big( \frac{\rho^{\tau}}{\rho^{\tau}+n^{\tau}}(u^{\tau}-v^{\tau})\Big)-\frac{\mu+\lambda}{\rho^{\tau}+n^{\tau}}\nabla\operatorname{div} \Big( \frac{\rho^{\tau}}{\rho^{\tau}+n^{\tau}}(u^{\tau}-v^{\tau})\Big).
\end{aligned}
\right.
\end{equation}
Given the bounds for the relative velocity $u^{\tau}-v^{\tau}$, we will treat \eqref{errnVe} as a compressible Navier--Stokes type system with $\mathcal{O}(\sqrt{\tau})$ source terms at the $\dot{B}^{\frac{d}{2}-2}_{2,1}$ regularity level for both low and high frequencies. One can obtain the desired estimates for $(\widetilde{u}^{\tau},\widetilde{v}^{\tau})$ via the bounds for $\widetilde{V}^{\tau}$ and $u^{\tau}-v^{\tau}$.

\noindent \textbf{Error equations for $\widetilde{\rho}^{\tau}$}. It follows from $\eqref{pENS}_{1}$, $\eqref{DF}_{1}$, and $\eqref{erruvV}_{1}$ that
\begin{align}\label{errrho}	\partial_{t}\widetilde{\rho}^{\tau}+u^{\tau}\cdot\nabla\widetilde{\rho}^{\tau}=-\widetilde{\rho}^{\tau} \operatorname{div} u^{\tau}-\operatorname{div} (\rho\widetilde{V}^{\tau})+\operatorname{div} \Big(\frac{\rho n^{\tau}}{\rho^{\tau}+n^{\tau}}(u^\tau-v^\tau)\Big).
\end{align}
We mention that the estimate of $\widetilde{\rho}^{\tau}$ is based on the standard transport estimates for \eqref{errrho} and the $L^{1}$ time integrability of $\widetilde{V}^{\tau}$ and $u^\tau-v^\tau$.

\subsubsection{Estimates for \texorpdfstring{$(\widetilde{n}^{\tau},\widetilde{V}^{\tau})$}{(n,V)}}
Next, we establish the convergence estimates of $(\widetilde{n}^{\tau},\widetilde{V}^{\tau})$ for the system \eqref{errnVe}, where the effective unknown $\widetilde{V}^{\tau}$ is defined in \eqref{errV0}. 

\begin{lemma}\label{lemmanV}
Let $(\rho^{\tau},u^{\tau},n^{\tau},v^{\tau})$ and $(\rho,n,v)$ be the global solutions to \eqref{pENS} and \eqref{DF} obtained in Theorem \ref{theorem1} and Corollary \ref{corollary1}, respectively. Then, under the condition \eqref{idcd5}, it holds that
\begin{equation}\label{deltanV}
\begin{aligned}
&\|\widetilde{n}^{\tau}\|_{\widetilde{L}^{\infty}_{t}(\dot{B}^{\frac{d}{2}-2}_{2,1}\cap\dot{B}^{\frac{d}{2}-1}_{2,1} )}+\|\widetilde{V}^{\tau}\|_{\widetilde{L}^{\infty}_{t}(\dot{B}^{\frac{d}{2}-2}_{2,1})}+\|\widetilde{n}^{\tau}\|_{L^{1}_{t}(\dot{B}^{\frac{d}{2}}_{2,1})}^{\ell}+\|\widetilde{n}^{\tau}\|_{L^{1}_{t}(\dot{B}^{\frac{d}{2}-1}_{2,1})}^{h}+\|\widetilde{V}^{\tau}\|_{L^{1}_{t}(\dot{B}^{\frac{d}{2}}_{2,1})}\\
&\quad\lesssim \|(\widetilde{n}^{\tau},\nabla \widetilde{n}^{\tau}, \widetilde{V}^{\tau})(0)\|_{\dot{B}^{\frac{d}{2}-2}_{2,1}}+\delta_{0}^{*}\|\widetilde{\rho}^{\tau}\|_{\widetilde{L}^{\infty}_{t}(\dot{B}^{\frac{d}{2}-2}_{2,1}\cap \dot{B}^{\frac{d}{2}-1}_{2,1})}+\sqrt{\tau}.
\end{aligned}
\end{equation}
\end{lemma}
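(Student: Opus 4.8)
The strategy is to treat \eqref{errnVe} as a perturbed compressible Navier--Stokes system in the variables $(\widetilde n^\tau,\widetilde V^\tau)$ with a transport drift $v^\tau$ and right-hand sides $\widetilde F_1+\widetilde F_2$ and $\widetilde F_3+\widetilde F_4$, and to run the now-standard low-frequency/high-frequency energy argument (as in Lemma \ref{lemav}, with the Lyapunov functional $\mathcal{E}_j$ built from $\|\dot\Delta_j(\widetilde n^\tau,\nabla\widetilde n^\tau,\widetilde V^\tau)\|_{L^2}^2$ plus the $\eta_*$-cross term) to get, at the regularity level $\dot B^{\frac d2-2}_{2,1}$ in low frequency and $\dot B^{\frac d2-1}_{2,1}$ for $\nabla\widetilde n^\tau$ in high frequency,
\begin{equation}\notag
\begin{aligned}
&\|\widetilde n^\tau\|_{\widetilde L^\infty_t(\dot B^{\frac d2-2}_{2,1}\cap\dot B^{\frac d2-1}_{2,1})}+\|\widetilde V^\tau\|_{\widetilde L^\infty_t(\dot B^{\frac d2-2}_{2,1})}+\|\widetilde n^\tau\|^\ell_{L^1_t(\dot B^{\frac d2}_{2,1})}+\|\widetilde n^\tau\|^h_{L^1_t(\dot B^{\frac d2-1}_{2,1})}+\|\widetilde V^\tau\|_{L^1_t(\dot B^{\frac d2}_{2,1})}\\
&\quad\lesssim \|(\widetilde n^\tau,\nabla\widetilde n^\tau,\widetilde V^\tau)(0)\|_{\dot B^{\frac d2-2}_{2,1}}+\|v^\tau\|_{L^1_t(\dot B^{\frac d2+1}_{2,1})}\big(\|\widetilde n^\tau\|_{\widetilde L^\infty_t(\dot B^{\frac d2-2}_{2,1}\cap\dot B^{\frac d2-1}_{2,1})}+\|\widetilde V^\tau\|_{\widetilde L^\infty_t(\dot B^{\frac d2-2}_{2,1})}\big)\\
&\qquad+\textstyle\sum_{i=1}^4\|\widetilde F_i\|_{L^1_t(\dot B^{\frac d2-2}_{2,1})},
\end{aligned}
\end{equation}
the commutator $[v^\tau\cdot\nabla,\dot\Delta_j]$ terms being absorbed via Lemma \ref{commutator} and the $L^1_t(\dot B^{\frac d2+1}_{2,1})$-bound on $v^\tau$ from Theorem \ref{theorem1}. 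Since that bound is $\mathcal O(\delta_0^*)$, the drift/commutator contribution is absorbed into the left-hand side for $\delta_0^*$ small.

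\textbf{Estimating the $\widetilde F_i$.} The bulk of the work is bounding $\sum_i\|\widetilde F_i\|_{L^1_t(\dot B^{\frac d2-2}_{2,1})}$. These split into two families. First, the genuinely ``quadratic'' terms involving $\widetilde V^\tau$, $\widetilde n^\tau$, $\widetilde\rho^\tau$ paired against the background quantities $\nabla n$, $\nabla v$, $\operatorname{div} v^\tau$, $\mu\Delta v$, etc., and the composition-function differences like $\frac{P'(n^\tau)}{\rho^\tau+n^\tau}-\frac{P'(n)}{\rho+n}$ and $\frac{1}{\rho^\tau+n^\tau}-\frac{1}{\rho+n}$: using the product law \eqref{uv2}, the composition estimates of Lemmas \ref{lemmaB5}--\ref{multicompo}, and the uniform smallness bounds \eqref{es:rho}--\eqref{result1} and \eqref{rhoDF}--\eqref{result5}, each is bounded by $\delta_0^*$ times one of the error norms on the left (or by $\delta_0^*\|\widetilde\rho^\tau\|_{\widetilde L^\infty_t(\dot B^{\frac d2-2}_{2,1}\cap\dot B^{\frac d2-1}_{2,1})}$), hence again absorbed. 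Second, every term carrying a factor $u^\tau-v^\tau$ (all of $\widetilde F_2$, and the last several lines of $\widetilde F_4$, including the worst one $\frac{\mu}{\rho^\tau+n^\tau}\Delta\big(\frac{\rho^\tau}{\rho^\tau+n^\tau}(u^\tau-v^\tau)\big)$ and the $\partial_t\big(\frac{\rho^\tau}{\rho^\tau+n^\tau}\big)(u^\tau-v^\tau)$ term): here I use \eqref{uvcon}$_1$, namely $\|u^\tau-v^\tau\|_{L^1_t(\dot B^{\frac d2}_{2,1})}\lesssim\sqrt\tau$, together with $\|\rho^\tau\|_{\widetilde L^\infty_t(\dot B^{\frac d2}_{2,1})}\lesssim\delta_0^*$, $\partial_t\rho^\tau=-\operatorname{div}(\rho^\tau u^\tau)$ to rewrite the time derivative, and the identity $\widetilde u^\tau=\widetilde V^\tau+\tfrac{n^\tau}{\rho^\tau+n^\tau}(u^\tau-v^\tau)$ (so $\nabla u^\tau$ is controlled through $\nabla v+\nabla\widetilde V^\tau+\nabla(u^\tau-v^\tau)$, with the last piece again $\mathcal O(\sqrt\tau)$ in $L^1_t(\dot B^{\frac d2}_{2,1})$); since $\dot B^{\frac d2}_{2,1}\hookrightarrow \dot B^{\frac d2-2}_{2,1}$ is false globally, I in fact work with the product law that gains two derivatives of smoothness from the small factor, writing $\|(u^\tau-v^\tau)\,\text{(smooth)}\|_{\dot B^{\frac d2-2}_{2,1}}\lesssim \|u^\tau-v^\tau\|_{\dot B^{\frac d2-1}_{2,1}}\|\text{(smooth)}\|_{\dot B^{\frac d2-1}_{2,1}}$ in low frequency and handling high frequency separately using $\|u^\tau-v^\tau\|^h_{L^1_t(\dot B^{\frac d2-1}_{2,1})}\lesssim\tau$ from \eqref{uvcon}$_2$. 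The net effect is that all $u^\tau-v^\tau$-terms contribute $\lesssim\sqrt\tau$.

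\textbf{Closing and main obstacle.} Combining the two families: the drift term and the $\delta_0^*$-small quadratic terms are absorbed into the left-hand side; the $\widetilde\rho^\tau$-dependent terms produce the stated $\delta_0^*\|\widetilde\rho^\tau\|_{\widetilde L^\infty_t(\dot B^{\frac d2-2}_{2,1}\cap\dot B^{\frac d2-1}_{2,1})}$; and the $u^\tau-v^\tau$-terms produce $\sqrt\tau$, which together with the data term $\|(\widetilde n^\tau,\nabla\widetilde n^\tau,\widetilde V^\tau)(0)\|_{\dot B^{\frac d2-2}_{2,1}}$ yields \eqref{deltanV}. (Note that $\widetilde n^\tau(0)=n_0^\tau-n_0$ and $\widetilde V^\tau(0)=V_0^\tau-v_0=\frac{\rho_0^\tau}{\rho_0^\tau+n_0^\tau}u_0^\tau+\frac{n_0^\tau}{\rho_0^\tau+n_0^\tau}v_0^\tau-v_0$, matching the initial-data norms appearing in \eqref{result6}.) The main obstacle — and the reason the effective mixed velocity is introduced at all — is the term $\frac{\mu}{\rho^\tau+n^\tau}\Delta\big(\frac{\rho^\tau}{\rho^\tau+n^\tau}(u^\tau-v^\tau)\big)$ in $\widetilde F_4$: it involves two spatial derivatives of the relative velocity, so naive bounds would require $\|u^\tau-v^\tau\|_{L^1_t(\dot B^{\frac d2}_{2,1})}$ paired against the full-order viscosity and cost a derivative we cannot afford at the $\dot B^{\frac d2-2}_{2,1}$ level; the resolution is that it carries the small coefficient $\frac{\rho^\tau}{\rho^\tau+n^\tau}$ (one power of $\rho^\tau$, hence $\lesssim\delta_0^*$), so one spends $\|\rho^\tau\|_{\widetilde L^\infty_t(\dot B^{\frac d2}_{2,1})}$ on smoothing and is left needing exactly $\|u^\tau-v^\tau\|_{L^1_t(\dot B^{\frac d2}_{2,1})}\lesssim\sqrt\tau$ from \eqref{uvcon}$_1$ — which is precisely why Theorem \ref{theorem1} was set up to deliver that sharp rate. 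Careful frequency-localized bookkeeping (treating $\frac{\rho^\tau}{\rho^\tau+n^\tau}$ as a smooth-function composition of $\rho^\tau,a^\tau$ via Lemma \ref{multicompo} and splitting every product into low/high pieces according to which factor is small) is the technical heart of the argument.
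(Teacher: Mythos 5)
Your proposal is essentially correct and follows the same route as the paper: the frequency-localized Lyapunov energy argument of Lemma~\ref{lemav} applied to the coupled hyperbolic--parabolic error system \eqref{errnVe} at the $\dot B^{d/2-2}_{2,1}$ level, absorption of the small-coefficient quadratic terms into the left-hand side, use of $\partial_t\rho^\tau=-\operatorname{div}(\rho^\tau u^\tau)$ to control the time-derivative coefficient, and the $\mathcal{O}(\sqrt\tau)$-rate for the relative velocity from \eqref{uvcon}$_1$ to handle every $u^\tau-v^\tau$ source (together with the parabolic maximal regularity of Lemma~\ref{esheat} for the high-frequency gain on $\widetilde V^\tau$) --- this is exactly \eqref{nVeineq}--\eqref{nVhigh}. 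One conceptual correction to your ``main obstacle'' paragraph: the derivative count for $\frac{\mu}{\rho^\tau+n^\tau}\Delta\big(\frac{\rho^\tau}{\rho^\tau+n^\tau}(u^\tau-v^\tau)\big)$ closes without any smallness of $\rho^\tau$, since the product law $\dot B^{d/2}_{2,1}\times\dot B^{d/2-2}_{2,1}\hookrightarrow\dot B^{d/2-2}_{2,1}$ (valid for $d\geq3$) sends the Laplacian of a $\dot B^{d/2}_{2,1}$-function directly into $\dot B^{d/2-2}_{2,1}$ with the coefficient acting only as a bounded multiplier, not a ``smoothing'' one, and the effective mixed velocity is introduced to cancel the $\tau^{-1}$ drag singularity from the error equations (by summing the two momentum equations), not to tame this viscous cross-term, which is merely a benign byproduct of that reformulation.
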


\begin{proof}
Arguing as in the proof of Lemma \ref{lemav}, we can verify that
\begin{equation}\nonumber
\begin{aligned}
&\|\dot{\Delta}_{j}(\widetilde{n}^{\tau},\nabla \widetilde{n}^{\tau}, \widetilde{V}^{\tau})\|_{L^{\infty}_{t}(L^{2})}+\min\{1,2^{2j}\} \|\dot{\Delta}_{j}(\widetilde{n}^{\tau},\nabla \widetilde{n}^{\tau}, \widetilde{V}^{\tau})\|_{L^{1}_{t}(L^{2})}\\[2mm]
&\quad\lesssim \|\dot{\Delta}_{j}(\widetilde{n}^{\tau},\nabla \widetilde{n}^{\tau}, \widetilde{V}^{\tau})(0)\|_{L^{2}}+\|\operatorname{div}v^{\tau}\|_{L_{t}^{1}(L^{\infty})}\|\dot{\Delta}_{j}(\widetilde{n}^{\tau},\nabla \widetilde{n}^{\tau}, \widetilde{V}^{\tau})\|_{L^{\infty}_{t}(L^{2})}+\|[v^{\tau}\cdot\nabla,\dot{\Delta}_{j}]\widetilde{n}^{\tau}\|_{L_{t}^{1}(L^{2})}\\
&\qquad+\|[v^{\tau}\cdot\nabla,\dot{\Delta}_{j}]\widetilde{V}^{\tau}\|_{L_{t}^{1}(L^{2})}+\sum_{k=1}^{d}\|[v^{\tau}\cdot\nabla,\partial_{x_k}\dot{\Delta}_{j}]\widetilde{n}^{\tau}\|_{L_{t}^{1}(L^{2})}+\sum_{i=1}^{4}\|\dot{\Delta}_{j}\widetilde{F}_{i}\|_{L_{t}^{1}(L^{2})}+\sum_{i=1}^{2}\|\dot{\Delta}_{j}\nabla\widetilde{F}_{i}\|_{L_{t}^{1}(L^{2})},
\end{aligned}
\end{equation}
from which and the commutator estimates in Lemma \ref{commutator}, we have
\begin{equation}\label{nVeineq}
\begin{aligned}
&\|(\widetilde{n}^{\tau},\nabla \widetilde{n}^{\tau}, \widetilde{V}^{\tau})\|_{\widetilde{L}^{\infty}_{t}(\dot{B}^{\frac{d}{2}-2}_{2,1})}+\|(\widetilde{n}^{\tau},\nabla \widetilde{n}^{\tau}, \widetilde{V}^{\tau})\|_{L^{1}_{t}(\dot{B}^{\frac{d}{2}}_{2,1})}^{\ell}+\|(\widetilde{n}^{\tau},\nabla \widetilde{n}^{\tau}, \widetilde{V}^{\tau})\|_{L^{1}_{t}(\dot{B}^{\frac{d}{2}-2}_{2,1})}^{h}\\
&\quad\lesssim \|(\widetilde{n}^{\tau},\nabla \widetilde{n}^{\tau}, \widetilde{V}^{\tau})(0)\|_{\dot{B}^{\frac{d}{2}-2}_{2,1}}+\| v^{\tau}\|_{L^{1}_{t}(\dot{B}^{\frac{d}{2}+1}_{2,1})}\|(\widetilde{n}^{\tau},\nabla \widetilde{n}^{\tau}, \widetilde{V}^{\tau})\|_{L^{\infty}_{t}(\dot{B}^{\frac{d}{2}-2}_{2,1})}\\
&\qquad+\|(\widetilde{F}_{1},\widetilde{F}_{2})\|_{L^{1}_{t}(\dot{B}^{\frac{d}{2}-2}_{2,1}\cap\dot{B}^{\frac{d}{2}-1}_{2,1})}+\|(\widetilde{F}_{3},\widetilde{F}_{4})\|_{L^{1}_{t}(\dot{B}^{\frac{d}{2}-2}_{2,1})}.
\end{aligned}
\end{equation}  
We are now in a position to analyze the terms on the right-hand side of \eqref{nVeineq}. Since $-\frac{d}{2}<\frac{d}{2}-2$ for $d\geq 3$, it follows from the uniform estimates \eqref{result1} and \eqref{result5} as well as the product law \eqref{uv2} that
\begin{equation}\label{nVF1}       
\begin{aligned}
&\|\widetilde{F}_{1}\|_{L^{1}_{t}(\dot{B}^{\frac{d}{2}-2}_{2,1}\cap \dot{B}^{\frac{d}{2}-1}_{2,1})}\\
&\quad\lesssim \|\widetilde{V}^{\tau}\|_{L^{1}_{t}(\dot{B}^{\frac{d}{2}}_{2,1})}\|(n-1,\nabla n)\|_{L^{\infty}_{t}(\dot{B}^{\frac{d}{2}-2}_{2,1}\cap\dot{B}^{\frac{d}{2}-1}_{2,1})}+\|\widetilde{n}^{\tau}\|_{L^{\infty}_{t}(\dot{B}^{\frac{d}{2}-2}_{2,1}\cap\dot{B}^{\frac{d}{2}-1}_{2,1})}\|\operatorname{div} v^{\tau}\|_{L^{1}_{t}(\dot{B}^{\frac{d}{2}}_{2,1})}\\
&\quad\lesssim \delta_{0}\|\widetilde{n}^{\tau}\|_{L^{\infty}_{t}(\dot{B}^{\frac{d}{2}-2}_{2,1}\cap\dot{B}^{\frac{d}{2}-1}_{2,1})}+\delta_{0}^{*}\|\widetilde{V}^{\tau}\|_{L^{1}_{t}(\dot{B}^{\frac{d}{2}}_{2,1})}.
\end{aligned}
\end{equation} 
Similarly, owing to Lemma \ref{multicompo} for the multi-component function $\frac{\rho^{\tau}}{\rho^{\tau}+n^{\tau}}$ and the $\mathcal{O}(\sqrt{\tau})$-bound of $u^{\tau}-v^{\tau}$ in $L^{1}_{t}(\dot{B}^{\frac{d}{2}}_{2,1})$ obtained in \eqref{uvcon}, one has
\begin{equation}\label{nVF20}
\begin{aligned}
&\|\widetilde{F}_{2}\|_{L^{1}_{t}(\dot{B}^{\frac{d}{2}-2}_{2,1}\cap \dot{B}^{\frac{d}{2}-1}_{2,1})}\\
&\quad\lesssim \Big\|\frac{\rho^\tau}{\rho^\tau+n^\tau}\Big\|_{L^{\infty}_{t}(\dot{B}^{\frac{d}{2}-1}_{2,1}\cap \dot{B}^{\frac{d}{2}}_{2,1})}\|\nabla n\|_{L^{\infty}_{t}(\dot{B}^{\frac{d}{2}-1}_{2,1})}\|u^\tau-v^\tau\|_{L^{1}_{t}(\dot{B}^{\frac{d}{2}}_{2,1})}\\
&\qquad+\Big(1+\|n-1\|_{L^{\infty}_{t}(\dot{B}^{\frac{d}{2}}_{2,1})}\Big)\Big\|\frac{\rho^\tau}{\rho^\tau+n^\tau}\Big\|_{L^{\infty}_{t}(\dot{B}^{\frac{d}{2}-1}_{2,1}\cap \dot{B}^{\frac{d}{2}}_{2,1})} \|u^\tau-v^\tau\|_{L^{1}_{t}(\dot{B}^{\frac{d}{2}}_{2,1})}
\lesssim \sqrt{\tau}.
\end{aligned}
\end{equation}
Regarding $\widetilde{F}_3$, noticing the following equality:
\begin{equation*}
\frac{P'(n^{\tau})}{\rho^{\tau}+n^{\tau}}-\frac{P'(n)}{\rho+n}=\frac{P'(n^{\tau})-P'(n)}{\rho^{\tau}+n^{\tau}}-\frac{P'(n)}{(\rho^{\tau}+n^{\tau})(\rho+n)}(\widetilde{\rho}^{\tau}+\widetilde{n}^{\tau}),
\end{equation*}
from which we easily deduce that
\begin{equation}\label{F3wide}
\begin{aligned}
&\|\widetilde{F}_3\|_{L^{1}_{t}(\dot{B}^{\frac{d}{2}-2}_{2,1})}\\
&\quad\lesssim \|\widetilde{V}^\tau\|_{L^{1}_{t}(\dot{B}^{\frac{d}{2}}_{2,1})}\|\nabla v\|_{L^{\infty}_{t}(\dot{B}^{\frac{d}{2}-2}_{2,1})}+\Big\|\frac{P'(n^{\tau})}{\rho^{\tau}+n^{\tau}}-1 \Big\|_{L^{\infty}_{t}(\dot{B}^{\frac{d}{2}-1}_{2,1})} \|(\nabla \widetilde{n}^{\tau})^{\ell}\|_{L^{1}_{t}(\dot{B}^{\frac{d}{2}-1}_{2,1})}\\
&\qquad+\Big\|\frac{P'(n^{\tau})}{\rho^{\tau}+n^{\tau}}-1 \Big\|_{L^{\infty}_{t}(\dot{B}^{\frac{d}{2}}_{2,1})} \|(\nabla \widetilde{n}^{\tau})^h\|_{L^{1}_{t}(\dot{B}^{\frac{d}{2}-2}_{2,1})} \\
&\qquad+\Big\|\frac{P'(n^{\tau})-P'(n)}{\rho^{\tau}+n^{\tau}}-\frac{P'(n)}{(\rho^{\tau}+n^{\tau})(\rho+n)}(\widetilde{\rho}^{\tau}+\widetilde{n}^{\tau})\Big\|_{L^{\infty}_{t}(\dot{B}^{\frac{d}{2}-2}_{2,1})}\|(\nabla n)^{\ell}\|_{L^1_t(\dot{B}^{\frac{d}{2}}_{2,1})}\\
&\qquad+\Big\|\frac{P'(n^{\tau})-P'(n)}{\rho^{\tau}+n^{\tau}}-\frac{P'(n)}{(\rho^{\tau}+n^{\tau})(\rho+n)}(\widetilde{\rho}^{\tau}+\widetilde{n}^{\tau})\Big\|_{L^{\infty}_{t}(\dot{B}^{\frac{d}{2}-1}_{2,1})}\|(\nabla n)^{h}\|_{L^1_t(\dot{B}^{\frac{d}{2}-1}_{2,1})}\\
&\qquad+\Big\|\frac{1}{\rho^{\tau}+n^{\tau}}-1\Big\|_{L^{\infty}_{t}(\dot{B}^{\frac{d}{2}}_{2,1})}\|\nabla^{2}\widetilde{V}^\tau\|_{L^{1}_{t}(\dot{B}^{\frac{d}{2}-2}_{2,1})}+\Big\|\frac{1}{\rho^{\tau}+n^{\tau}}-\frac{1}{\rho+n}\Big\|_{L^{\infty}_{t}(\dot{B}^{\frac{d}{2}-1}_{2,1})}\|\nabla^2 v\|_{L^{1}_{t}(\dot{B}^{\frac{d}{2}-1}_{2,1})}\\
&\quad\lesssim \delta_0^* \|(\widetilde{\rho}^\tau,\widetilde{n}^\tau)\|_{L^{\infty}_t(\dot{B}^{\frac{d}{2}-2}_{2,1}\cap\dot{B}^{\frac{d}{2}-1}_{2,1})}+\delta_0\Big(\|\widetilde{n}^\tau\|_{L^1_t(\dot{B}^{\frac{d}{2}}_{2,1})}^{\ell}+\|\widetilde{n}^\tau\|_{L^1_t(\dot{B}^{\frac{d}{2}-1}_{2,1})}^{h}\Big)+(\delta_{0}+\delta_0^*)\|\widetilde{V}^\tau\|_{L^{1}_{t}(\dot{B}^{\frac{d}{2}}_{2,1})}.
\end{aligned}
\end{equation}
Before bounding $\widetilde{F}_{4}$, we need to handle the time derivatives for $\rho^{\tau}$ and $n^{\tau}$. In fact, according to $\eqref{pENS}_{1}$, $\eqref{pENS}_{3}$, and the embedding property $L^{\infty}_t(\dot{B}^{\frac{d}{2}-1}_{2,1})\cap L^1_t(\dot{B}^{\frac{d}{2}+1}_{2,1})\hookrightarrow L^2_t(\dot{B}^{\frac{d}{2}}_{2,1})$, we infer
\begin{equation}\nonumber
\begin{aligned}
&\|\partial_{t}\rho^{\tau}\|_{L^{2}_{t}(\dot{B}^{\frac{d}{2}-1}_{2,1})}\lesssim \|\rho^{\tau}\|_{L^{\infty}_{t}(\dot{B}^{\frac{d}{2}}_{2,1})} \|u^{\tau}\|_{L^{2}_{t}(\dot{B}^{\frac{d}{2}}_{2,1})}\lesssim \delta_{0}^{2},\\
&\|\partial_{t}n^{\tau}\|_{L^{2}_{t}(\dot{B}^{\frac{d}{2}-1}_{2,1})}\lesssim (1+\|n^{\tau}-1\|_{L^{\infty}_{t}(\dot{B}^{\frac{d}{2}}_{2,1})})\|v^{\tau}\|_{L^{2}_{t}(\dot{B}^{\frac{d}{2}}_{2,1})}\lesssim (1+\delta_{0})\delta_{0},
\end{aligned}
\end{equation}
which, together with \eqref{uv2}, \eqref{F1}, and the $\mathcal{O}(\sqrt{\tau})$-bound of $u^{\tau}-v^{\tau}$ in \eqref{uvcon}, gives rise to
\begin{equation}\label{nVF3}
\begin{aligned}
&\|\widetilde{F}_{4}\|_{L^{1}_{t}(\dot{B}^{\frac{d}{2}-2}_{2,1})}\\
&\quad\lesssim(1+\delta_{0})\|u^{\tau}-v^{\tau}\|_{L^{2}_{t}(\dot{B}^{\frac{d}{2}-1}_{2,1})}\Big(\|(\partial_{t}\rho^{\tau},\partial_{t}n^{\tau})\|_{L^{2}_{t}(\dot{B}^{\frac{d}{2}-1}_{2,1})}+\|(v,u^\tau,v^{\tau})\|_{L^{2}_{t}(\dot{B}^{\frac{d}{2}}_{2,1})}\Big) \\
&\qquad+(1+\delta_{0})^2\|u^\tau-v^\tau\|_{L^1_t(\dot{B}^{\frac{d}{2}}_{2,1})}\lesssim \sqrt{\tau}.
\end{aligned}
\end{equation} 
Under the condition \eqref{idcd5}, one can check that $\|\widetilde{V}^{\tau}(0)\|_{\dot{B}^{\frac{d}{2}-2}_{2,1}}\lesssim \sqrt{\tau}$. Substituting the above estimates \eqref{nVF1}-\eqref{nVF3} into \eqref{nVeineq}, we arrive at
\begin{equation}\label{nVineq}
\begin{aligned}
&\|(\widetilde{n}^{\tau},\nabla \widetilde{n}^{\tau}, \widetilde{V}^{\tau})\|_{\widetilde{L}^{\infty}_{t}(\dot{B}^{\frac{d}{2}-2}_{2,1})}+\|(\widetilde{n}^{\tau},\nabla \widetilde{n}^{\tau}, \widetilde{V}^{\tau})\|_{L^{1}_{t}(\dot{B}^{\frac{d}{2}}_{2,1})}^{\ell}+\|(\widetilde{n}^{\tau},\nabla \widetilde{n}^{\tau}, \widetilde{V}^{\tau})\|_{L^{1}_{t}(\dot{B}^{\frac{d}{2}-2}_{2,1})}^{h}\\
&\quad\lesssim\|(\widetilde{n}^{\tau},\nabla \widetilde{n}^{\tau}, \widetilde{V}^{\tau})(0)\|_{\dot{B}^{\frac{d}{2}-2}_{2,1}}+\delta_{0}^{*}\|\widetilde{\rho}^{\tau}\|_{L^{\infty}_{t}(\dot{B}^{\frac{d}{2}-2}_{2,1}\cap\dot{B}^{\frac{d}{2}-1}_{2,1})}+(\delta_{0}+\delta_{0}^{*})\|(\widetilde{n}^{\tau},\nabla \widetilde{n}^{\tau}, \widetilde{V}^{\tau})\|_{\widetilde{L}^{\infty}_{t}(\dot{B}^{\frac{d}{2}-2}_{2,1})}\\
&\qquad+\delta_0\Big(\|\widetilde{n}^\tau\|_{L^1_t(\dot{B}^{\frac{d}{2}}_{2,1})}^{\ell}+\|\widetilde{n}^\tau\|_{L^1_t(\dot{B}^{\frac{d}{2}-1}_{2,1})}^{h}\Big)+(\delta_{0}+\delta_{0}^{*})\|\widetilde{V}^{\tau}\|_{L^{1}_{t}(\dot{B}^{\frac{d}{2}}_{2,1})}+\sqrt{\tau}.
\end{aligned}
\end{equation}  
Furthermore, applying the maximal regularity estimates in Lemma \ref{esheat} to the equation $\eqref{errnVe}_{2}$ and making use of  \eqref{F3wide}-\eqref{nVF3}, we have
\begin{equation}\label{nVhigh}
\begin{aligned}
\|\widetilde{V}^{\tau}\|_{L^{1}_{t}(\dot{B}^{\frac{d}{2}}_{2,1})}^{h}&\lesssim \|\widetilde{V}^{\tau}(0)\|_{\dot{B}^{\frac{d}{2}-2}_{2,1}}^{h}+\|\nabla \widetilde{n}^{\tau}\|_{L^{1}_{t}(\dot{B}^{\frac{d}{2}-2}_{2,1})}^{h}+\|v^{\tau}\cdot\nabla\widetilde{V}^{\tau}\|_{L^{1}_{t}(\dot{B}^{\frac{d}{2}-2}_{2,1})}^{h}+\|(\widetilde{F}_{3},\widetilde{F}_{4})\|_{L^{1}_{t}(\dot{B}^{\frac{d}{2}-2}_{2,1})}^{h}\\
&\lesssim \|\widetilde{V}^{\tau}(0)\|_{\dot{B}^{\frac{d}{2}-2}_{2,1}}+\|\nabla \widetilde{n}^{\tau}\|_{L^{1}_{t}(\dot{B}^{\frac{d}{2}-2}_{2,1})}^{h}+\delta_{0}^{*}\|(\widetilde{\rho}^{\tau},\widetilde{n}^{\tau})\|_{L^{\infty}_{t}(\dot{B}^{\frac{d}{2}-2}_{2,1}\cap\dot{B}^{\frac{d}{2}-1}_{2,1})}\\
&\quad+\delta_0\Big(\|\widetilde{n}^\tau\|_{L^1_t(\dot{B}^{\frac{d}{2}}_{2,1})}^{\ell}+\|\widetilde{n}^\tau\|_{L^1_t(\dot{B}^{\frac{d}{2}-1}_{2,1})}^{h}\Big)+(\delta_{0}+\delta_{0}^{*})\|\widetilde{V}^{\tau}\|_{L^{1}_{t}(\dot{B}^{\frac{d}{2}}_{2,1})}+\sqrt{\tau}.
\end{aligned}
\end{equation}
Combining \eqref{nVineq} and \eqref{nVhigh}, together with the smallness of $\delta_{0}$ and $\delta_{0}^{*}$, we obtain the desired convergence estimate \eqref{deltanV}, and hence we complete the proof of Lemma \ref{lemmanV}.
\end{proof}

\subsubsection{Estimates for \texorpdfstring{$\widetilde{\rho}^{\tau}$}{rho-tau} }
Then, we state the following lemma about the estimates of the error variable $\widetilde{\rho}^{\tau}$ corresponding to the non-dissipated variables. 

\begin{lemma}\label{lemmawiderho}
Let $(\rho^{\tau},u^{\tau},n^{\tau},v^{\tau})$ and $(\rho,n,v)$ be the global solutions to \eqref{pENS} and \eqref{DF} obtained in Theorem \ref{theorem1} and Corollary \ref{corollary1}, respectively. Then, under the condition \eqref{idcd5}, it holds that
\begin{equation}\label{deltarho}
\begin{aligned}
&\|\widetilde{\rho}^{\tau}\|_{\widetilde{L}^{\infty}_{t}(\dot{B}^{\frac{d}{2}-2}_{2,1}\cap \dot{B}^{\frac{d}{2}-1}_{2,1})}\lesssim \|\widetilde{\rho}^{\tau}(0)\|_{\dot{B}^{\frac{d}{2}-2}_{2,1}\cap \dot{B}^{\frac{d}{2}-1}_{2,1}}+\delta_{0}^{*}\|\widetilde{V}^{\tau}\|_{L^{1}_{t}(\dot{B}^{\frac{d}{2}}_{2,1})}+\sqrt{\tau}.
\end{aligned}
\end{equation}
\end{lemma}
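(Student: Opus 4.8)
The plan is to treat \eqref{errrho} as a linear transport equation for $\widetilde{\rho}^{\tau}$ along the velocity field $u^{\tau}$, with the right-hand side regarded as a forcing term, and then to apply the standard transport estimates (Lemma \ref{estrans}) at the two regularity levels $\dot{B}^{\frac{d}{2}-2}_{2,1}$ and $\dot{B}^{\frac{d}{2}-1}_{2,1}$. Recall
\begin{equation*}
\partial_{t}\widetilde{\rho}^{\tau}+u^{\tau}\cdot\nabla\widetilde{\rho}^{\tau}=-\widetilde{\rho}^{\tau}\operatorname{div} u^{\tau}-\operatorname{div}(\rho\widetilde{V}^{\tau})+\operatorname{div}\Big(\frac{\rho n^{\tau}}{\rho^{\tau}+n^{\tau}}(u^{\tau}-v^{\tau})\Big).
\end{equation*}
Applying Lemma \ref{estrans} with $s'=\frac{d}{2}-2$ and $s'=\frac{d}{2}-1$ yields, after summing over frequencies and using Grönwall's inequality,
\begin{equation*}
\|\widetilde{\rho}^{\tau}\|_{\widetilde{L}^{\infty}_{t}(\dot{B}^{\frac{d}{2}-2}_{2,1}\cap\dot{B}^{\frac{d}{2}-1}_{2,1})}
\lesssim e^{C\|u^{\tau}\|_{L^{1}_{t}(\dot{B}^{\frac{d}{2}+1}_{2,1})}}\Big(\|\widetilde{\rho}^{\tau}(0)\|_{\dot{B}^{\frac{d}{2}-2}_{2,1}\cap\dot{B}^{\frac{d}{2}-1}_{2,1}}+\int_{0}^{t}\big\|-\operatorname{div}(\rho\widetilde{V}^{\tau})+\operatorname{div}\big(\tfrac{\rho n^{\tau}}{\rho^{\tau}+n^{\tau}}(u^{\tau}-v^{\tau})\big)\big\|_{\dot{B}^{\frac{d}{2}-2}_{2,1}\cap\dot{B}^{\frac{d}{2}-1}_{2,1}}\,{\rm d}s\Big),
\end{equation*}
where the factor $\|u^{\tau}\|_{L^{1}_{t}(\dot{B}^{\frac{d}{2}+1}_{2,1})}\lesssim\delta_{0}$ by \eqref{result1}, so $e^{C\|u^{\tau}\|_{L^{1}_{t}(\dot{B}^{\frac{d}{2}+1}_{2,1})}}\lesssim 1$; the term $-\widetilde{\rho}^{\tau}\operatorname{div} u^{\tau}$ is absorbed into the transport estimate's standard right-hand side together with the commutator.

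Next I would estimate the two forcing integrals by the product and composition laws in \hyperref[sectionpre]{Appendix B}. For the first, since $\operatorname{div}(\rho\widetilde{V}^{\tau})$ loses one derivative, I bound it in $\dot{B}^{\frac{d}{2}-2}_{2,1}\cap\dot{B}^{\frac{d}{2}-1}_{2,1}$ by $\|\rho\|_{L^{\infty}_{t}(\dot{B}^{\frac{d}{2}-1}_{2,1}\cap\dot{B}^{\frac{d}{2}}_{2,1})}\|\widetilde{V}^{\tau}\|_{\dot{B}^{\frac{d}{2}}_{2,1}}$ via \eqref{uv2}, and the time integral then contributes $\lesssim\|\rho\|_{\widetilde{L}^{\infty}_{t}(\dot{B}^{\frac{d}{2}-1}_{2,1}\cap\dot{B}^{\frac{d}{2}}_{2,1})}\|\widetilde{V}^{\tau}\|_{L^{1}_{t}(\dot{B}^{\frac{d}{2}}_{2,1})}\lesssim\delta_{0}^{*}\|\widetilde{V}^{\tau}\|_{L^{1}_{t}(\dot{B}^{\frac{d}{2}}_{2,1})}$, using the uniform bound \eqref{rhoDF} for $\rho$. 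For the second, Lemma \ref{multicompo} handles the composite function $\frac{\rho n^{\tau}}{\rho^{\tau}+n^{\tau}}$ (bounded uniformly in $\dot{B}^{\frac{d}{2}-1}_{2,1}\cap\dot{B}^{\frac{d}{2}}_{2,1}$ thanks to $\rho^{\tau}>0$, $n^{\tau}\geq\frac12$, and the smallness of the perturbations), and multiplying by $u^{\tau}-v^{\tau}$ and taking the divergence gives a bound $\lesssim\|u^{\tau}-v^{\tau}\|_{L^{1}_{t}(\dot{B}^{\frac{d}{2}}_{2,1})}\lesssim\sqrt{\tau}$ by the convergence rate $\eqref{uvcon}_{1}$. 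Collecting these bounds produces exactly \eqref{deltarho}.

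The main technical point — though not really an obstacle — is bookkeeping the two regularity indices simultaneously: the divergence structure forces the loss of one derivative, so to control $\operatorname{div}(\rho\widetilde{V}^{\tau})$ in $\dot{B}^{\frac{d}{2}-1}_{2,1}$ one needs $\widetilde{V}^{\tau}$ at the $\dot{B}^{\frac{d}{2}}_{2,1}$ level, which is exactly the quantity appearing on the right of \eqref{deltarho} and which Lemma \ref{lemmanV} controls; likewise for the low regularity level one uses the $\dot{B}^{\frac{d}{2}-2}_{2,1}$-norm of $\rho$-related factors, consistent with $d\geq3$ ensuring $-\frac{d}{2}<\frac{d}{2}-2$ so the product laws \eqref{uv2} apply. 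Since there is no dissipation or time integrability for $\widetilde{\rho}^{\tau}$ itself, it is essential that every forcing term carries an $L^{1}$-in-time norm of an already-controlled quantity (either $\widetilde{V}^{\tau}$ or $u^{\tau}-v^{\tau}$), which is precisely why the reformulation via the effective mixed velocity in Subsection \ref{section61} was set up this way. Estimate \eqref{deltarho} together with Lemma \ref{lemmanV} then closes, for $\delta_{0},\delta_{0}^{*}$ small, into the full error estimate \eqref{result6}.
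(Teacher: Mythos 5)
Your proposal matches the paper's proof: both apply the transport estimate of Lemma \ref{estrans} to \eqref{errrho} at the levels $\dot{B}^{\frac{d}{2}-2}_{2,1}$ and $\dot{B}^{\frac{d}{2}-1}_{2,1}$, bound $\operatorname{div}(\rho\widetilde{V}^{\tau})$ by $\delta_0^*\|\widetilde{V}^{\tau}\|_{L^1_t(\dot{B}^{\frac{d}{2}}_{2,1})}$ via \eqref{uv2} and \eqref{rhoDF}, and bound the drag-force remainder by $\sqrt{\tau}$ using Lemma \ref{multicompo} and $\eqref{uvcon}_1$. The only cosmetic difference is that the paper keeps $\widetilde{\rho}^{\tau}\operatorname{div}u^{\tau}$ as an explicit source absorbed by the smallness of $\delta_0$, whereas you fold it into the Grönwall exponential; both are valid.
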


\begin{proof}
Applying Lemma \ref{estrans} to the transport equation \eqref{errrho}, we have
\begin{equation}\nonumber
\begin{aligned}
&\|\widetilde{\rho}^{\tau}\|_{\widetilde{L}^{\infty}_{t}(\dot{B}^{\frac{d}{2}-2}_{2,1}\cap \dot{B}^{\frac{d}{2}-1}_{2,1})}\\
&\quad\leq\exp\Big(C\|u^{\tau}\|_{L^{1}_{t}(\dot{B}^{\frac{d}{2}+1}_{2,1})}\Big)\Big( \|\widetilde{\rho}^{\tau}(0)\|_{\dot{B}^{\frac{d}{2}-2}_{2,1}\cap \dot{B}^{\frac{d}{2}-1}_{2,1}}+\|\widetilde{\rho}^{\tau}\operatorname{div} u^{\tau}\|_{L^{1}_{t}(\dot{B}^{\frac{d}{2}-2}_{2,1}\cap\dot{B}^{\frac{d}{2}-1}_{2,1})}\\
&\qquad\qquad\qquad\qquad\qquad\qquad\quad+ \|\rho\widetilde{V}^{\tau}\|_{L^{1}_{t}(\dot{B}^{\frac{d}{2}-1}_{2,1}\cap\dot{B}^{\frac{d}{2}}_{2,1})}+ \Big\|\frac{\rho n^{\tau}}{\rho^{\tau}+n^{\tau}}(u^\tau-v^\tau)\Big\|_{L^{1}_{t}(\dot{B}^{\frac{d}{2}-1}_{2,1}\cap\dot{B}^{\frac{d}{2}}_{2,1})}\Big).
\end{aligned}
\end{equation}
Noticing that $-\frac{d}{2}<\frac{d}{2}-2$ for $d\geq3$, we make use of \eqref{result1} and the product estimates in \eqref{uv2} to derive
\begin{equation}\nonumber
\begin{aligned}
\|\widetilde{\rho}^{\tau}\operatorname{div} u^{\tau}\|_{L^{1}_{t}(\dot{B}^{\frac{d}{2}-2}_{2,1}\cap\dot{B}^{\frac{d}{2}-1}_{2,1})}\lesssim \|\widetilde{\rho}^{\tau}\|_{L^{\infty}_{t}(\dot{B}^{\frac{d}{2}-2}_{2,1}\cap\dot{B}^{\frac{d}{2}-1}_{2,1})}\|u^{\tau}\|_{L^{1}_{t}(\dot{B}^{\frac{d}{2}+1}_{2,1})}\lesssim\delta_{0}\|\widetilde{\rho}^{\tau}\|_{L^{\infty}_{t}(\dot{B}^{\frac{d}{2}-2}_{2,1}\cap\dot{B}^{\frac{d}{2}-1}_{2,1})}.
\end{aligned}
\end{equation}
Similarly, by \eqref{result1}, \eqref{result5},  \eqref{uv2}, and estimates for composite functions, we have
\begin{equation}\nonumber
\begin{aligned}
\|\rho\widetilde{V}^{\tau}\|_{L^{1}_{t}(\dot{B}^{\frac{d}{2}-1}_{2,1}\cap\dot{B}^{\frac{d}{2}}_{2,1})}&\lesssim \|\rho\|_{L^{\infty}_{t}(\dot{B}^{\frac{d}{2}-1}_{2,1}\cap\dot{B}^{\frac{d}{2}}_{2,1})}\|\widetilde{V}^{\tau}\|_{L^{1}_{t}(\dot{B}^{\frac{d}{2}}_{2,1})}\lesssim\delta_{0}^{*}\|\widetilde{V}^{\tau}\|_{L^{1}_{t}(\dot{B}^{\frac{d}{2}}_{2,1})},
\end{aligned}
\end{equation}
and
\begin{equation}\nonumber
\begin{aligned}
&\Big\|\frac{\rho n^{\tau}}{\rho^{\tau}+n^{\tau}}(u^\tau-v^\tau)\Big\|_{L^{1}_{t}(\dot{B}^{\frac{d}{2}-1}_{2,1}\cap\dot{B}^{\frac{d}{2}}_{2,1})}\\
&\quad\lesssim \Big(1+\|(\rho^{\tau},n^\tau-1)\|_{L^{\infty}_{t}(\dot{B}^{\frac{d}{2}}_{2,1})}\Big)\|\rho\|_{L^{\infty}_{t}(\dot{B}^{\frac{d}{2}-1}_{2,1}\cap\dot{B}^{\frac{d}{2}}_{2,1})} \|u^\tau-v^\tau\|_{L^{1}_{t}(\dot{B}^{\frac{d}{2}}_{2,1})}\lesssim \sqrt{\tau}.
\end{aligned}
\end{equation}
Under the condition \eqref{idcd5}, the combination of the above estimates as well as the smallness of $\delta_{0}$ leads to the desired estimate \eqref{deltarho}, and hence the proof is completed.
\end{proof}

\subsection{Proof of Theorem \ref{theorem3}}
We are in a position to show Theorem \ref{theorem3} based on Lemmas \ref{lemmanV} and \ref{lemmawiderho}. Since $\delta_0^*$ can be uniformly small, it follows from \eqref{deltanV} and \eqref{deltarho} that
\begin{equation}\label{ddghhb}
\begin{aligned}		&\|\widetilde{\rho}^{\tau}\|_{\widetilde{L}^{\infty}_{t}(\dot{B}^{\frac{d}{2}-2}_{2,1}\cap \dot{B}^{\frac{d}{2}-1}_{2,1})}+\|\widetilde{n}^{\tau}\|_{\widetilde{L}^{\infty}_{t}(\dot{B}^{\frac{d}{2}-2}_{2,1}\cap\dot{B}^{\frac{d}{2}-1}_{2,1} )}+\|\widetilde{V}^{\tau}\|_{\widetilde{L}^{\infty}_{t}(\dot{B}^{\frac{d}{2}-2}_{2,1})}\\
&\quad+\|\widetilde{n}^{\tau}\|_{L^{1}_{t}(\dot{B}^{\frac{d}{2}}_{2,1})}^{\ell}+\|\widetilde{n}^{\tau}\|_{L^{1}_{t}(\dot{B}^{\frac{d}{2}-1}_{2,1})}^{h}+\|\widetilde{V}^{\tau}\|_{L^{1}_{t}(\dot{B}^{\frac{d}{2}}_{2,1})}\lesssim \sqrt{\tau}.
\end{aligned}
\end{equation}
The convergence rates of $(\widetilde{u}^{\tau},\widetilde{v}^{\tau})$ can be derived from the estimates of $\widetilde{V}^{\tau}$ and $u^{\tau}-v^{\tau}$. Indeed, based on \eqref{erruvV} and \eqref{ddghhb}, we arrive at
\begin{equation}\label{deltauv}
\begin{aligned}
&\|(\widetilde{u}^{\tau},\widetilde{v}^{\tau})\|_{L^{1}_{t}(\dot{B}^{\frac{d}{2}}_{2,1})}\lesssim\|\widetilde{V}^{\tau}\|_{L^{1}_{t}(\dot{B}^{\frac{d}{2}}_{2,1})}+\Big(1+\|(\rho^{\tau},n^{\tau}-1)\|_{L^{\infty}_{t}(\dot{B}^{\frac{d}{2}}_{2,1})}\Big)\|u^{\tau}-v^{\tau}\|_{L^{1}_{t}(\dot{B}^{\frac{d}{2}}_{2,1})}\lesssim \sqrt{\tau}.
\end{aligned}
\end{equation}
Combining \eqref{deltanV}, \eqref{deltarho}, and \eqref{deltauv}, we conclude the convergence estimates \eqref{result6}. Furthermore, for any $\eta\in(0,1]$, taking full advantage of the uniform bounds and the interpolation inequality \eqref{inter}, we deduce that
\begin{equation*}
\begin{aligned}
&\|(\widetilde{\rho}^{\tau},\widetilde{n}^{\tau})\|_{L^{\infty}_{t}(\dot{B}^{\frac{d}{2}-\eta}_{2,1})}+\|(\widetilde{u}^{\tau},\widetilde{v}^{\tau})\|_{ L^1_t(\dot{B}^{\frac{d}{2}+1-\eta}_{2,1})}\lesssim \tau^{\frac{\eta}{2}},
\end{aligned}
\end{equation*}
which implies \eqref{conver} and the convergence of the Euler-NS system \eqref{pENS} to the limiting DF system \eqref{DF} in $\mathcal{S}'(\mathbb{R}_+\times\mathbb{R}^d)$. Therefore, we complete the proof of Theorem \ref{theorem3}.

\section{Optimal time-decay estimates}\label{sectiondecay}

In this section, we shall establish the uniform time-decay rates of the strong solution to the Cauchy problem \eqref{pENS1} so as to prove Theorem \ref{theorem2}. As in Section \ref{sectionexistence}, we omit the superscript $^\tau$ from the solution and initial data throughout this section.

We refine the time-weighted inequalities in \cite{danchinxu17,lishouSIAMJMA23} to handle the presence of the singular parameter $\tau$. To this end, we define the initial energy functional by
\begin{align}\label{Z_0}
&\mathcal{Y}_0:=\|(\rho_{0},u_0,a_{0},v_{0})\|_{\dot{B}^{\sigma_{1}}_{2,\infty}}^{\ell}+\|(\rho_0,a_{0})\|_{\dot{B}^{\frac{d}{2}-1}_{2,1}\cap\dot{B}^{\frac{d}{2}}_{2,1}}+\|(u_{0},v_{0})\|_{\dot{B}^{\frac{d}{2}-1}_{2,1}}+\tau\|u_{0}\|_{\dot{B}^{\frac{d}{2}+1}_{2,1}},
\end{align}
and the time-weighted energy functional as
\begin{equation}\label{prioridc}
\begin{aligned}
\mathcal{Y}(t)&:=\sup_{\sigma_{1}<\sigma\leq \frac{d}{2}+1}\|\langle s\rangle^{\frac{1}{2}(\sigma-\sigma_{1})}(u,a,v)\|_{L^{\infty}_{t}(\dot{B}^{\sigma}_{2,1})}^{\ell}+\frac{1}{\tau}\|\langle s\rangle^{\frac{1}{2}}(u-v)\|_{L^{\infty}_{t}(1,t;\,\dot{B}^{\sigma_{1}}_{2,\infty})}^{\ell}\\
&\qquad+\frac{1}{\tau}\sup_{\sigma_{1}<\sigma\leq\frac{d}{2}}\|\langle s\rangle^{\frac{1}{2}(\sigma-\sigma_{1}+1)}(u-v)\|_{L^{\infty}_{t}(1,t;\,\dot{B}^{\sigma}_{2,1})}^{\ell}\\
&\qquad+\|\langle s\rangle^{\alpha}(u,v)\|_{\widetilde{L}^{\infty}_{t}(1,t;\dot{B}^{\frac{d}{2}+1}_{2,1})}^{h}+\|\langle s\rangle^{\alpha}a\|_{\widetilde{L}^{\infty}_{t}(\dot{B}^{\frac{d}{2}}_{2,1})}^{h}+\frac{1}{\tau}\|\langle s\rangle^{\alpha}(u-v)\|_{\widetilde{L}^{\infty}_{t}(1,t;\dot{B}^{\frac{d}{2}-1}_{2,1})}^{h},
\end{aligned}
\end{equation}
for $t>1$, where $\langle s\rangle=\sqrt{1+s^2}$, $\sigma_{1}\in[-\frac{d}{2},\frac{d}{2}-1)$, and $\alpha:=\frac{1}{2}(\frac{d}{2}+1-\sigma_{1})$. The above weighted norms on $[1,t]$ enable us to effectively capture the maximal decay properties derived from the $L^1$ time integrability estimates in \eqref{result1}. 

Compared with the decay for the compressible Navier-Stokes equations \cite{danchinxu17} or the Euler-NS system with pressure \cite{lishouSIAMJMA23}, we obtain the uniform-in-$\tau$ decay estimates; however, the rate $\langle t\rangle^{-\alpha}$ in the high-frequency regime is slower, due to the influence of the non-decaying component $\rho$ on the drag force term $\frac{1}{\tau}\rho(u-v)$. In this case, the improved decay estimates for the relative velocity $u-v$ are crucial to ensure sufficient decay and overcome the singularity arising from the factor $\frac{1}{\tau}$.

Before showing the uniform-in-time bound of $\mathcal{Y}(t)$, we derive the uniform regularity evolution of $\rho$ under the additional condition $\rho_0^\ell\in\dot{B}^{\sigma_1}_{2,\infty}$. Recalling $\mathcal{X}_0$ given in \eqref{priorieg0}, it follows from the regularity estimates in Lemma \ref{estrans} for $\eqref{pENS1}_{1}$ that
\begin{equation*}
\begin{aligned}
\|\rho\|_{\widetilde{L}^{\infty}_{t}(\dot{B}^{\sigma_{1}}_{2,\infty})}
&\leq \exp\Big(C\|u\|_{L^{1}_{t}(\dot{B}^{\frac{d}{2}+1}_{2,1})}\Big)\Big(\|\rho_{0}\|_{\dot{B}^{\sigma_{1}}_{2,\infty}}+\|\rho\operatorname{div}u\|_{L^{1}_{t}(\dot{B}^{\sigma_{1}}_{2,\infty})}\Big)\\
&\lesssim \|\rho_{0}\|_{\dot{B}^{\sigma_{1}}_{2,\infty}}+\|\rho\|_{\widetilde{L}^{\infty}_{t}(\dot{B}^{\sigma_{1}}_{2,\infty})}\|\operatorname{div}u\|_{L^{1}_{t}(\dot{B}^{\frac{d}{2}}_{2,1})}\\
&\lesssim\|\rho_{0}^{\ell}\|_{\dot{B}^{\sigma_{1}}_{2,\infty}}+\|\rho_{0}^{h}\|_{\dot{B}^{\frac{d}{2}-1}_{2,1}}+\mathcal{X}_{0}\|\rho\|_{\widetilde{L}^{\infty}_{t}(\dot{B}^{\sigma_{1}}_{2,\infty})}.
\end{aligned}
\end{equation*}
Then, in terms of the smallness of $\mathcal{X}_{0}$, we can verify for $\sigma_{1}\in[-\frac{d}{2},\frac{d}{2}-1)$ that
\begin{equation}\label{rhosgm1}
\|\rho\|_{\widetilde{L}^{\infty}_{t}(\dot{B}^{\sigma_{1}}_{2,\infty})}\lesssim\mathcal{Y}_0,\quad t\in\mathbb{R}_{+}.
\end{equation}
This, together with the interpolation property \eqref{inter}, implies for $\sigma\in(\sigma_{1},\frac{d}{2}]$ that
\begin{equation}\label{rhosgm2}
\|\rho\|_{\widetilde{L}^{\infty}_{t}(\dot{B}^{\sigma}_{2,1})}\lesssim \|\rho\|_{\widetilde{L}^{\infty}_{t}(\dot{B}^{\sigma_{1}}_{2,\infty})}+\|\rho\|_{\widetilde{L}^{\infty}_{t}(\dot{B}^{\frac{d}{2}}_{2,1})}\lesssim \mathcal{Y}_0,\quad t\in\mathbb{R}_{+}.
\end{equation}

\subsection{Low-frequency analysis}
Following the same line for the proof of the functional inequality \eqref{ns6} but without using commutators, for $j\leq0$ we have
\begin{equation}\label{ns6:low}
\begin{aligned}
&\frac{{\rm{d}}}{{\rm{d}}t}\mathcal{E}_{j}(t)+c 2^{2j}\mathcal{E}_{j}(t)\lesssim \|\dot{\Delta}_{j}(\operatorname{div}(av),v\cdot\nabla v,\tau^{-1}\rho(u-v),F_{1},F_{2})\|_{L^{2}} \|\dot{\Delta}_{j}(a,\nabla a, v)\|_{L^{2}},
\end{aligned}
\end{equation}
for some uniform constant $c>0$. Applying the Gr\"onwall inequality to \eqref{ns6:low} and recalling the relation $\mathcal{E}_{j}(t)\sim \|\dot{\Delta}_j(a,\nabla a,v)\|_{L^2}^2$, we get
\begin{equation}\label{nsgm1}
\begin{aligned}
\|\dot{\Delta}_{j}(a,\nabla a, v)\|_{L^{2}}&\lesssim e^{-c 2^{2j} t}\|\dot{\Delta}_{j}(a_{0},\nabla a_{0},v_{0})\|_{L^{2}}\\
&\quad+\int_{0}^{t}e^{-c 2^{2j}(t-s')}\|\dot{\Delta}_{j}(\operatorname{div}(av),v\cdot\nabla v,\tau^{-1}\rho(u-v),F_{1},F_{2})\|_{L^{2}}{\rm{d}}s'.
\end{aligned}
\end{equation}
For any $\sigma>\sigma_{1}$, we notice that
\begin{equation*}
t^{\frac{\sigma-\sigma_{1}}{2}}\sum_{j\in\mathbb{Z}}2^{j(\sigma-\sigma_{1})}e^{-c 2^{2j}t}\lesssim 1,
\end{equation*}
which implies for any function $f$ that
\begin{equation*}
\sum_{j\leq 0} 2^{j\sigma} e^{-c 2^{2j}t}\|\dot{\Delta}_jf\|_{L^2} \lesssim t^{-\frac{\sigma-\sigma_{1}}{2}}\|f\|_{\dot{B}^{\sigma_{1}}_{2,\infty}}^{\ell}.
\end{equation*}
Consequently, due to the low-frequency cut-off property, we directly deduce from \eqref{nsgm1} that
\begin{equation}\label{nsgm2}
\begin{aligned}
\|(a,v)\|_{\dot{B}^{\sigma}_{2,1}}^{\ell}&\lesssim \langle t\rangle^{-\frac{\sigma-\sigma_{1}}{2}}\|(a_{0},v_{0})\|_{\dot{B}^{\sigma_{1}}_{2,\infty}}^{\ell}\\
&\quad+\int_{0}^{t}\langle t-s'\rangle^{-\frac{\sigma-\sigma_{1}}{2}}\|(\operatorname{div}(av),v\cdot\nabla v,\tau^{-1}\rho(u-v),F_{1},F_{2})\|_{\dot{B}^{\sigma_1}_{2,\infty}}^\ell{\rm{d}}s'.
\end{aligned}
\end{equation}
We start with the key estimates of the singular drag force term $\frac{1}{\tau}\rho(u-v)$ on the right-hand side of \eqref{nsgm2}. Note that the non-dissipative variable $\rho$ does not have any time-decay rate or time integrability. For $\sigma_{1}\in[-\frac{d}{2},\frac{d}{2}-1)$, the product law $\dot{B}^{\sigma_1}_{2,\infty}\times \dot{B}^{\frac{d}{2}}_{2,1}\hookrightarrow \dot{B}^{\sigma_1}_{2,\infty}$ is valid due to \eqref{uv3}, and then it holds
\begin{equation}\label{nsgm3}
\begin{aligned}
\frac{1}{\tau}\|\rho(u-v)\|_{\dot{B}^{\sigma_{1}}_{2,\infty}}^{\ell}&\lesssim\frac{1}{\tau}\Big(\|\rho(u-v)^{\ell}\|_{\dot{B}^{\sigma_{1}}_{2,\infty}}+\|\rho(u-v)^{h}\|_{\dot{B}^{\sigma_{1}}_{2,\infty}}\Big)\\
&\lesssim\|\rho\|_{\dot{B}^{\sigma_{1}}_{2,\infty}}\frac{1}{\tau}\|u-v\|_{\dot{B}^{\frac{d}{2}}_{2,1}}^{\ell}+\|\rho\|_{\dot{B}^{\sigma_{1}+1}_{2,\infty}}\frac{1}{\tau}\|u-v\|_{\dot{B}^{\frac{d}{2}-1}_{2,1}}^{h}.
\end{aligned}
\end{equation}
For any $t>1$, we split the integration into two parts:
\begin{equation*}
\int_{0}^{t}\langle t-s'\rangle^{-\frac{\sigma-\sigma_{1}}{2}}\frac{1}{\tau}\|\rho(u-v)\|_{\dot{B}^{\sigma_{1}}_{2,\infty}}^{\ell}{\rm{d}}s'=\bigg\{\int_{0}^{1}+\int_{1}^{t}\bigg\}\langle t-s'\rangle^{-\frac{\sigma-\sigma_{1}}{2}}\frac{1}{\tau}\|\rho(u-v)\|_{\dot{B}^{\sigma_{1}}_{2,\infty}}^{\ell}{\rm{d}}s'.
\end{equation*}
It follows from \eqref{uvcon}, \eqref{rhosgm1}-\eqref{rhosgm2}, \eqref{nsgm3}, and the fact $\langle t-s'\rangle\sim\langle t\rangle$ for $s'\in[0,1]$ that
\begin{equation}\label{nsgm5}
\begin{aligned}
&\int_{0}^{1}\langle t-s'\rangle^{-\frac{\sigma-\sigma_{1}}{2}}\frac{1}{\tau}\|\rho(u-v)\|_{\dot{B}^{\sigma_{1}}_{2,\infty}}^{\ell}{\rm{d}}s'\\
&\quad\lesssim \langle t\rangle^{-\frac{\sigma-\sigma_1}{2}}\Big(\|\rho\|_{L^{\infty}_{t}(\dot{B}^{\sigma_{1}}_{2,\infty})}\frac{1}{\tau}\|u-v\|_{L^{1}_{t}(\dot{B}^{\frac{d}{2}}_{2,1})}^{\ell}+\|\rho\|_{L^{\infty}_{t}(\dot{B}^{\sigma_{1}+1}_{2,\infty})}\frac{1}{\tau}\|u-v\|_{L^{1}_{t}(\dot{B}^{\frac{d}{2}-1}_{2,1})}^{h}\Big)\\
&\quad\lesssim\mathcal{Y}_0\langle t\rangle^{-\frac{\sigma-\sigma_1}{2}}\Big(\frac{1}{\tau}\|u-v\|_{L^{1}_{t}(\dot{B}^{\frac{d}{2}}_{2,1})}^{\ell}+\frac{1}{\tau}\|u-v\|_{L^{1}_{t}(\dot{B}^{\frac{d}{2}-1}_{2,1})}^{h}\Big)\lesssim \mathcal{Y}_0\langle t\rangle^{-\frac{\sigma-\sigma_{1}}{2}}.
\end{aligned}
\end{equation}
Regarding the time integration on $[1,t]$, we have
\begin{equation}\label{nsgm6}
\begin{aligned}
&\int_{1}^{t}\langle t-s'\rangle^{-\frac{\sigma-\sigma_{1}}{2}}\frac{1}{\tau}\|\rho(u-v)\|_{\dot{B}^{\sigma_{1}}_{2,\infty}}^{\ell}{\rm{d}}s'\\
&\quad\lesssim\int_{1}^{t}\langle t-s'\rangle^{-\frac{\sigma-\sigma_{1}}{2}}\Big(\|\rho\|_{\dot{B}^{\sigma_{1}}_{2,\infty}}\frac{1}{\tau}\|u-v\|_{\dot{B}^{\frac{d}{2}}_{2,1}}^{\ell}+\|\rho\|_{\dot{B}^{\sigma_{1}+1}_{2,\infty}}\frac{1}{\tau}\|u-v\|_{\dot{B}^{\frac{d}{2}-1}_{2,1}}^{h}\Big){\rm{d}}s'\\
&\quad\lesssim \mathcal{Y}_0\int_{1}^{t}\langle t-s'\rangle^{-\frac{\sigma-\sigma_{1}}{2}}\Big(\langle s'\rangle^{-\frac{1}{2}(\frac{d}{2}-\sigma_{1}+1)}+\langle s'\rangle^{-\alpha}\Big)\mathcal{Y}(t){\rm{d}}s'\\
&\quad\lesssim\mathcal{Y}_0\langle t \rangle^{-\frac{\sigma-\sigma_{1}}{2}}\mathcal{Y}(t),
\end{aligned}
\end{equation}
for $\sigma\in(\sigma_1,\frac{d}{2}+1]$ and $\frac{1}{2}(\frac{d}{2}-\sigma_1+1)>1$. Here, we used the fact that
\begin{equation*}
\int_0^t \langle t-s'\rangle^{-\alpha_1} \langle s'\rangle^{-\alpha_2}{\rm d}s'\lesssim \langle t\rangle^{-\alpha_1},\quad 0<\alpha_1\leq \alpha_2,\quad \alpha_2>1.
\end{equation*}
In a similar argument, one can verify that
\begin{equation}
\int_{0}^{t}\langle t-s'\rangle^{-\frac{\sigma-\sigma_{1}}{2}}\|F_{2}\|_{\dot{B}^{\sigma_{1}}_{2,\infty}}^{\ell}{\rm{d}}s'\lesssim(\mathcal{Y}_0+\mathcal{Y}_0\mathcal{Y}(t))\langle t \rangle^{-\frac{\sigma-\sigma_{1}}{2}}.
\end{equation}
The remaining terms on the right-hand side of \eqref{nsgm2} can be handled similarly to those in \cite{lishouSIAMJMA23}, and here we only provide the result:
\begin{equation}\label{nsgm7}
\int_{0}^{t}\langle t-s'\rangle^{-\frac{\sigma-\sigma_{1}}{2}}\|(\operatorname{div}(av),v\cdot\nabla v,F_{1})\|_{\dot{B}^{\sigma_{1}}_{2,\infty}}^{\ell}{\rm{d}}s'\lesssim(\mathcal{Y}_0+\mathcal{Y}(t)^{2})\langle t \rangle^{-\frac{\sigma-\sigma_{1}}{2}}.
\end{equation}
Thus, substituting \eqref{nsgm3}-\eqref{nsgm7} into \eqref{nsgm2}, we obtain for $\sigma\in(\sigma_{1},\frac{d}{2}+1]$ that
\begin{equation}\label{nsgm8}
\|\langle s \rangle^{\frac{\sigma-\sigma_{1}}{2}}(a,v)\|_{L^{\infty}_{t}(\dot{B}^{\sigma}_{2,1})}^{\ell}\lesssim \mathcal{Y}_{0}+\mathcal{Y}_0\mathcal{Y}(t)+\mathcal{Y}(t)^{2}.
\end{equation}

Next, we investigate the time-decay estimates for the relative velocity $u-v$, which are essential for enclosing the time-weighted estimates. It can be derived from the Gr\"onwall inequality to \eqref{rv1} that
\begin{equation}\label{rvgm1}
\begin{aligned}
\|\dot{\Delta}_{j}(u-v)\|_{L^{2}}&\lesssim e^{-\frac{1}{\tau}t}\|\dot{\Delta}_{j}(u_{0}-v_{0})\|_{L^{2}}+\int_{0}^{t}e^{-\frac{1}{\tau}(t-s')}\|\dot{\Delta}_{j}(\nabla a,\nabla^2 v)\|_{L^{2}}{\rm{d}}s'\\
&\quad+\int_{0}^{t}e^{-\frac{1}{\tau}(t-s')}\|\dot{\Delta}_{j}(u\cdot\nabla u,v\cdot\nabla v,\tau^{-1}\rho(u-v),F_{1},F_{2})\|_{L^{2}}{\rm{d}}s'.
\end{aligned}
\end{equation}
Taking the low-frequency $\dot{B}_{2,1}^{\sigma}$-norm for $\sigma\in(\sigma_{1},\frac{d}{2}]$, together with \eqref{lh} and \eqref{inter}, we have
\begin{equation}\label{rvgm2}
\begin{aligned}
\|u-v\|_{\dot{B}^{\sigma}_{2,1}}^{\ell}&\lesssim e^{-\frac{1}{\tau}t}\|u_{0}-v_{0}\|_{\dot{B}^{\sigma_{1}}_{2,\infty}}^{\ell}+\int_{0}^{t}e^{-\frac{1}{\tau}(t-s')}\|(a,v)\|_{\dot{B}^{\sigma+1}_{2,1}}^{\ell}{\rm{d}}s'\\
&\quad+\int_{0}^{t}e^{-\frac{1}{\tau}(t-s')}\|(u\cdot\nabla u,v\cdot\nabla v,\tau^{-1}\rho(u-v),F_{1},F_{2})\|_{\dot{B}^{\sigma_{1}}_{2,\infty}}^{\ell}{\rm{d}}s'.
\end{aligned}
\end{equation}
By an argument similar to that of \eqref{nsgm6} and noting the estimate $e^{-\frac{1}{\tau}t} \lesssim \langle t \rangle^{-\frac{1}{2}(\sigma-\sigma_1+1)}$ for $\tau\leq 1$, one can easily verify for $\sigma\in(\sigma_{1},\frac{d}{2}]$ that
\begin{equation}\label{uvtall}
\begin{aligned}
&\|\langle s\rangle^{\frac{1}{2}(\sigma-\sigma_{1}+1)}(u-v)\|_{L_{t}^{\infty}(\dot{B}^{\sigma}_{2,1})}^{\ell}\lesssim \mathcal{Y}_{0}+\mathcal{Y}_0\mathcal{Y}(t)+\mathcal{Y}(t)^{2},
\end{aligned}
\end{equation}
where we have used \eqref{nsgm8}. The combination of \eqref{nsgm8}, \eqref{uvtall}, and the low-frequency cut-off property leads to the expected bound of $u$ that holds for $\sigma_1<\sigma\leq \frac{d}{2}+1$,
\begin{equation}\label{veuegm}
\begin{aligned}
\|\langle s \rangle^{\frac{\sigma-\sigma_{1}}{2}}u\|_{L^{\infty}_{t}(\dot{B}^{\sigma}_{2,1})}^{\ell}&\lesssim \|\langle s \rangle^{\frac{1}{2}\big(\min\{\sigma,\frac{d}{2}\}-\sigma_{1}+1\big)}(u-v)\|_{L^{\infty}_{t}(\dot{B}^{\min\{\sigma,\frac{d}{2}\}}_{2,1})}^{\ell}+\|\langle s \rangle^{\frac{\sigma-\sigma_{1}}{2}}v\|_{L^{\infty}_{t}(\dot{B}^{\sigma}_{2,1})}^{\ell}\\
&\lesssim \mathcal{Y}_{0}+\mathcal{Y}_0\mathcal{Y}(t)+\mathcal{Y}(t)^{2}.
\end{aligned}
\end{equation}

In addition, we establish the improved uniform-in-$\tau$ time-decay estimates of the relative velocity $u-v$ on $[1,t]$ compared with \eqref{uvtall}. The key observation lies in that for $\alpha_3>1$ and $t\geq 1$, we have
\begin{equation}\label{epntep1}
e^{-\frac{1}{\tau}t}\lesssim\tau^{\alpha_3}t^{-\alpha_3}\lesssim\tau\langle t\rangle^{-\alpha_3},
\end{equation}
and thus it holds for $\alpha_{4}>0$ that
\begin{align}\label{expvar}
\int_0^t e^{-\frac{1}{\tau}(t-s')} \langle s'\rangle^{-\alpha_4}{\rm d}s'\lesssim e^{-\frac{t}{2\tau}}\int_0^{\frac{t}{2}}  \langle s'\rangle^{-\alpha_4}{\rm d}s'+\langle\frac{t}{2}\rangle^{-\alpha_4}\int_{\frac{t}{2}}^{t} e^{-\frac{1}{\tau}(t-s')}{\rm d}s'\lesssim \tau\langle t\rangle^{-\alpha_4}.
\end{align}
The above two estimates \eqref{epntep1} and \eqref{expvar} allow us to capture an additional rate of $\tau$ for $u$ and $u-v$. Using \eqref{epntep1} and \eqref{expvar} in the right-hand side of \eqref{rvgm2}, we deduce for $\sigma\in(\sigma_{1},\frac{d}{2}]$ that
\begin{equation}\label{rvgm3}
\frac{1}{\tau}\|\langle s\rangle^{\frac{1}{2}(\sigma-\sigma_{1}+1)}(u-v)\|_{L^{\infty}_{t}(1,t;\dot{B}^{\sigma}_{2,1})}^{\ell}\lesssim \mathcal{Y}_{0}+\mathcal{Y}_0\mathcal{Y}(t)+\mathcal{Y}(t)^{2}.
\end{equation}
Similarly, taking the low-frequency $\dot{B}_{2,\infty}^{\sigma_{1}}$-norm of \eqref{rvgm1} and using \eqref{epntep1} and \eqref{expvar}, we obtain
\begin{equation}\label{rvgm4}
\begin{aligned}
\frac{1}{\tau}\| \langle s\rangle^{\frac{1}{2}}(u-v)\|_{L^{\infty}_{t}(1,t;\dot{B}^{\sigma_{1}}_{2,\infty})}^{\ell}\lesssim \mathcal{Y}_{0}+\mathcal{Y}_0\mathcal{Y}(t)+\mathcal{Y}(t)^{2}.
\end{aligned}
\end{equation}
The combination of \eqref{nsgm8}, \eqref{veuegm}, \eqref{rvgm3}, and \eqref{rvgm4} gives rise to 
\begin{equation}\label{lowtime}
\begin{aligned}
&\sup_{\sigma_{1}<\sigma\leq \frac{d}{2}+1}\|\langle s\rangle^{\frac{1}{2}(\sigma-\sigma_{1})}(u,a,v)\|_{L^{\infty}_{t}(\dot{B}^{\sigma}_{2,1})}^{\ell}+\frac{1}{\tau}\|\langle s\rangle^{\frac{1}{2}}(u-v)\|_{L^{\infty}_{t}(1,t;\,\dot{B}^{\sigma_{1}}_{2,\infty})}^{\ell}\\
&\quad+\frac{1}{\tau}\sup_{\sigma_{1}<\sigma\leq\frac{d}{2}}\|\langle s\rangle^{\frac{1}{2}(\sigma-\sigma_{1}+1)}(u-v)\|_{L^{\infty}_{t}(1,t;\,\dot{B}^{\sigma}_{2,1})}^{\ell}\lesssim \mathcal{Y}_{0}+\mathcal{Y}_0\mathcal{Y}(t)+\mathcal{Y}(t)^{2}.
\end{aligned}
\end{equation}

\subsection{High-frequency analysis}
We further perform the uniform time-weighted estimates of solutions in high frequencies. It follows from \eqref{ns6} and Gr\"onwall's inequality that
\begin{equation*}
\begin{aligned}
&\|\dot{\Delta}_j(\nabla a,v)\|_{L^2}\\
&\quad\lesssim e^{-ct}\|\dot{\Delta}_j(\nabla a_0,v_0)\|_{L^2}+\int_0^t e^{-c(t-s')}\Big(\|\operatorname{div}v\|_{L^{\infty}}\|\dot{\Delta}_{j}(a,\nabla a,v)\|_{L^{2}}\\
&\qquad+\|[v\cdot\nabla,\dot{\Delta}_{j}]a\|_{L^{2}}+\|[v\cdot\nabla,\dot{\Delta}_{j}]v\|_{L^{2}}+\sum_{k=1}^{d}\|[v\cdot\nabla,\partial_{x_k}\dot{\Delta}_{j}]a\|_{L^{2}}\\
&\qquad+\|\nabla\dot{\Delta}_{j}(a\operatorname{div}v)\|_{L^{2}}+\frac{1}{\tau}\|\dot{\Delta}_{j}(\rho(u-v))\|_{L^{2}}+\|\dot{\Delta}_{j}(F_{1},F_{2})\|_{L^{2}}\Big){\rm d}s',
\end{aligned}
\end{equation*}
where $c>0$ is a uniform constant. As a direct consequence, it holds for any $j\geq -1$ that
\begin{equation}\label{nsgmm1}
\begin{aligned}
&\|\langle s\rangle^{\alpha}(\nabla a,v)\|_{\widetilde{L}^{\infty}_{t}(\dot{B}^{\frac{d}{2}-1}_{2,1})}^{h}\\
&\quad\lesssim \sum_{j\geq -1}\sup_{s\in[0,t]}\langle s\rangle^{\alpha}e^{-cs}2^{j(\frac{d}{2}-1)}\|\dot{\Delta}_{j}(\nabla a_{0},v_{0})\|_{L^{2}}\\
&\qquad+\sum_{j\geq -1}\sup_{s\in[0,t]}\langle s\rangle^{\alpha}\int_{0}^{s}e^{-c(s-s')}2^{j(\frac{d}{2}-1)}\Big(\|\operatorname{div}v\|_{L^{\infty}}\|\dot{\Delta}_{j}(a,\nabla a, v)\|_{L^{2}}\\
&\qquad+\|[v\cdot\nabla,\dot{\Delta}_{j}]a\|_{L^{2}}+\|[v\cdot\nabla,\dot{\Delta}_{j}]v\|_{L^{2}}+\sum_{k=1}^{d}\|[v\cdot\nabla,\partial_{x_k}\dot{\Delta}_{j}]a\|_{L^{2}}\\
&\qquad+\|\nabla\dot{\Delta}_{j}(a\operatorname{div}v)\|_{L^{2}}+\frac{1}{\tau}\|\dot{\Delta}_{j}(\rho(u-v))\|_{L^{2}}+\|\dot{\Delta}_{j}(F_{1},F_{2})\|_{L^{2}}\Big){\rm{d}}s'.
\end{aligned}
\end{equation}
One can easily verify that
\begin{equation}
\sum_{j\geq -1}\sup_{s\in[0,t]}\langle s\rangle^{\alpha}e^{-cs}2^{j(\frac{d}{2}-1)}\|\dot{\Delta}_{j}(\nabla a_{0},v_{0})\|_{L^{2}}\lesssim \|(\nabla a_{0},v_{0})\|_{\dot{B}^{\frac{d}{2}-1}_{2,1}}^{h}\lesssim\mathcal{Y}_{0}.
\end{equation}
Then, we analyze the key estimates of the singular term $\frac{1}{\tau}\rho(u-v)$ on the right-hand side of \eqref{nsgmm1}, which involves the non-dissipative variable $\rho$. For the supremum on $[0,1]$, we have
\begin{equation}\label{nsgmm2}
\begin{aligned}
&\sum_{j\geq -1}\sup_{s\in[0,1]}\langle s\rangle^{\alpha} \int_{0}^{s} e^{-c(s-s')}2^{j(\frac{d}{2}-1)}\frac{1}{\tau}\|\dot{\Delta}_{j}(\rho(u-v))\|_{L^{2}}{\rm{d}}s'\\
&\quad\lesssim\|\rho\|_{\widetilde{L}^{\infty}_{t}(\dot{B}^{\frac{d}{2}-1}_{2,1})}\frac{1}{\tau}\|u-v\|_{L^{1}_{t}(\dot{B}^{\frac{d}{2}}_{2,1})}^{\ell}+\|\rho\|_{\widetilde{L}^{\infty}_{t}(\dot{B}^{\frac{d}{2}}_{2,1})}\frac{1}{\tau}\|u-v\|_{L^{1}_{t}(\dot{B}^{\frac{d}{2}-1}_{2,1})}^{h}\lesssim\mathcal{Y}_{0}.
\end{aligned}
\end{equation}
The part of the supremum over $[1,t]$ is split into two parts: the integration on $[0,1]$ and $[1,t]$. For the former, we also have
\begin{equation}\label{nsgmm3}
\sum_{j\geq -1}\sup_{s\in[1,t]}\langle s\rangle^{\alpha} \int_{0}^{1}e^{-c(s-s')}2^{j(\frac{d}{2}-1)}\frac{1}{\tau}\|\dot{\Delta}_{j}(\rho(u-v))\|_{L^{2}}{\rm{d}}s'\lesssim\mathcal{Y}_{0}.
\end{equation}
It remains to control the integration on $[1,s]$. In terms of \eqref{result1} and \eqref{prioridc}, one can show that
\begin{equation}\label{nsgmm4}
\begin{aligned}
&\sum_{j\geq -1}\sup_{s\in[1,t]}\langle s\rangle^{\alpha} \int_{1}^{s} e^{-c(s-s')}2^{j(\frac{d}{2}-1)}\frac{1}{\tau}\|\dot{\Delta}_{j}(\rho(u-v))\|_{L^{2}}{\rm{d}}s'\\
&\quad\lesssim\sup_{s\in[1,t]}\langle s\rangle^{\alpha} \int_{1}^{s} e^{-c(s-s')}(s')^{-\alpha}{\rm{d}}s'\frac{1}{\tau}\Big(\|s^{\alpha}\rho(u-v)^{\ell}\|_{L^{\infty}_{t}(\dot{B}^{\frac{d}{2}}_{2,1})}^{h}+\|s^{\alpha}\rho(u-v)^{h}\|_{\widetilde{L}^{\infty}_{t}(\dot{B}^{\frac{d}{2}-1}_{2,1})}^{h}\Big)\\
&\quad\lesssim\|\rho\|_{\widetilde{L}^{\infty}_{t}(\dot{B}^{\frac{d}{2}}_{2,1})}\Big(\frac{1}{\tau}\|\langle s\rangle^{\alpha}(u-v)\|_{L^{\infty}_{t}(1,t;\,\dot{B}^{\frac{d}{2}}_{2,1})}^{\ell}+\frac{1}{\tau}\|\langle s\rangle^{\alpha}(u-v)\|_{\widetilde{L}^{\infty}_{t}(1,t;\dot{B}^{\frac{d}{2}-1}_{2,1})}^{h}\Big)\lesssim\mathcal{Y}_{0}\mathcal{Y}(t),
\end{aligned}
\end{equation}
where we have noticed that
\begin{equation*}
\|s^{\alpha}\rho(u-v)^{\ell}\|_{\widetilde{L}^{\infty}_{t}(\dot{B}^{\frac{d}{2}-1}_{2,1})}^{h}\lesssim\|s^{\alpha}\rho(u-v)^{\ell}\|_{L^{\infty}_{t}(\dot{B}^{\frac{d}{2}}_{2,\infty})}^{h}\sum_{j\geq-1}2^{-j}\lesssim\|s^{\alpha}\rho(u-v)^{\ell}\|_{L^{\infty}_{t}(\dot{B}^{\frac{d}{2}}_{2,1})}^{h}.
\end{equation*}
This is used to overcome the difficulty that in low frequencies we only have the norm  $L^{\infty}_t(\dot{B}^{\sigma}_{2,1})$, which is weaker than the Chemin-Lerner norm $\widetilde{L}^{\infty}_t(\dot{B}^{\sigma}_{2,1})$. Similarly, it holds
\begin{equation}
\sum_{j\geq -1}\sup_{s\in[0,t]}\langle s\rangle^{\alpha} \int_{0}^{s} e^{-c(s-s')}2^{j(\frac{d}{2}-1)}\|\dot{\Delta}_{j}F_{2}\|_{L^{2}}{\rm{d}}s'\lesssim\mathcal{Y}_{0}+\mathcal{Y}_{0}\mathcal{Y}(t).
\end{equation}
Since the other remaining terms on the right-hand side of \eqref{nsgmm1} can be treated similarly as in \cite{lishouSIAMJMA23}, we omit the details and give the result:
\begin{equation}\label{nsgmm5}
\begin{aligned}
&\sum_{j\geq -1}\sup_{s\in[0,t]}\langle s\rangle^{\alpha}\int_{0}^{s}e^{-c(s-s')}2^{j(\frac{d}{2}-1)}\Big(\|\operatorname{div}v\|_{L^{\infty}}\|\dot{\Delta}_{j}(a,\nabla a, v)\|_{L^{2}}+\|[v\cdot\nabla,\dot{\Delta}_{j}]a\|_{L^{2}}\\
&\quad+\|[v\cdot\nabla,\dot{\Delta}_{j}]v\|_{L^{2}}+\sum_{k=1}^{d}\|[v\cdot\nabla,\partial_{x_k}\dot{\Delta}_{j}]a\|_{L^{2}}+\|\nabla\dot{\Delta}_{j}(a\operatorname{div}v)\|_{L^{2}}+\|\dot{\Delta}_{j}F_{1}\|_{L^{2}}\Big){\rm{d}}s'\\
&\quad\lesssim\mathcal{Y}_{0}+\mathcal{Y}(t)^{2}.
\end{aligned}
\end{equation}
Combining \eqref{nsgmm1}-\eqref{nsgmm5}, we conclude that
\begin{equation}\label{nsgmm6}
\|\langle s \rangle^{\alpha}(\nabla a,v)\|_{\widetilde{L}^{\infty}_{t}(\dot{B}^{\frac{d}{2}-1}_{2,1})}^{h}\lesssim\mathcal{Y}_{0}+\mathcal{Y}_0\mathcal{Y}(t)+\mathcal{Y}(t)^{2}.
\end{equation}

Meanwhile, we establish the higher-order decay estimates of $v$. Multiplying $\eqref{vheat}$ by $t^{\alpha}$, we get
\begin{equation}\label{nsvgm1}
\left\{\begin{aligned}
&\partial_{t}(t^{\alpha}v)-\Delta (t^{\alpha}v)=\alpha t^{\alpha-1} v - t^{\alpha}\nabla a-t^{\alpha}v\cdot\nabla v+\frac{1}{\tau}t^{\alpha}\rho(u-v)+t^{\alpha}F_{1}+t^{\alpha}F_{2},\\
&(t^{\alpha}v)|_{t=0}=0.
\end{aligned}\right.
\end{equation}
The maximal regularity estimate in Lemma \ref{esheat} for \eqref{nsvgm1} leads to
\begin{equation}\label{mm}
\begin{aligned}
&\|s^{\alpha}v\|_{\widetilde{L}^{\infty}_{t}(\dot{B}^{\frac{d}{2}+1}_{2,1})}^{h}\\
&\quad\lesssim \|s^{\alpha-1}v\|_{\widetilde{L}^{\infty}_{t}(\dot{B}^{\frac{d}{2}-1}_{2,1})}^{h}+\|s^{\alpha}\nabla a\|_{\widetilde{L}^{\infty}_{t}(\dot{B}^{\frac{d}{2}-1}_{2,1})}^{h}+\|s^{\alpha}(v\cdot\nabla v,\tau^{-1}\rho(u-v),F_{1},F_{2})\|_{\widetilde{L}^{\infty}_{t}(\dot{B}^{\frac{d}{2}-1}_{2,1})}^{h}\\
&\quad\lesssim \|\langle s\rangle^{\alpha}(\nabla a,v)\|_{\widetilde{L}^{\infty}_{t}(\dot{B}^{\frac{d}{2}-1}_{2,1})}^{h}+\|s^{\alpha}(v\cdot\nabla v,\tau^{-1}\rho(u-v),F_{1},F_{2})\|_{\widetilde{L}^{\infty}_{t}(\dot{B}^{\frac{d}{2}-1}_{2,1})}^{h},
\end{aligned}
\end{equation}
where we have noticed the fact $\alpha>1$. Substituting \eqref{nsgmm6} into \eqref{mm}, we can verify that
\begin{equation}\label{nsvgm2}
\|s^{\alpha}v\|_{\widetilde{L}^{\infty}_{t}(\dot{B}^{\frac{d}{2}+1}_{2,1})}^{h}\lesssim\mathcal{Y}_{0}+\mathcal{Y}_0\mathcal{Y}(t)+\mathcal{Y}(t)^{2}.
\end{equation}

With the aid of \eqref{nsgmm6} and \eqref{nsvgm2}, we establish the decay estimates of $u$ in high frequencies. Employing the Gr\"onwall inequality to \eqref{veu2}, we obtain
\begin{equation}
\begin{aligned}\label{ujfff}
\|\dot{\Delta}_ju\|_{L^2}&\lesssim e^{-\frac{1}{\tau}t}\|\dot{\Delta}_j u_0\|_{L^2}\\
&\quad+\int_0^te^{-\frac{1}{\tau}(t-s')} \Big( \frac{1}{\tau}\|\dot{\Delta}_jv\|_{L^2}+\|\operatorname{div} u\|_{L^{\infty}}\|\dot{\Delta}_ju\|_{L^2}+\|[u\cdot \nabla,\dot{\Delta}_j]u\|_{L^2}\Big){\rm{d}}s'.
\end{aligned}
\end{equation}
Multiplying \eqref{ujfff} by $\langle s\rangle^{\alpha}$, taking the supremum over $[1,t]$ and then summing it over $j\geq -1$ with the weight $2^{j(\frac{d}{2}+1)}$, we arrive at
\begin{equation}
\begin{aligned}
&\|\langle s\rangle^{\alpha}u\|_{\widetilde{L}^{\infty}_{t}(1,t;\dot{B}^{\frac{d}{2}+1}_{2,1})}^h\\
&\quad\lesssim \sum_{j\geq-1}\sup_{s\in[1,t]}\langle s\rangle^{\alpha}e^{-\frac{1}{\tau}s}2^{j(\frac{d}{2}+1)}\|\dot{\Delta}_{j}u_{0}\|_{L^{2}}\\
&\qquad+\sum_{j\geq-1}\sup_{s\in[1,t]}\langle s\rangle^{\alpha}\int_{0}^{s}e^{-\frac{1}{\tau}(s-s')}2^{j(\frac{d}{2}+1)}\frac{1}{\tau}\|\dot{\Delta}_{j} v\|_{L^{2}}{\rm{d}}s'\\
&\qquad+\sum_{j\geq-1}\sup_{s\in[1,t]}\langle s\rangle^{\alpha}\int_{0}^{s}e^{-\frac{1}{\tau}(s-s')}2^{j(\frac{d}{2}+1)}\Big( \|\operatorname{div} u\|_{L^{\infty}}\|\dot{\Delta}_ju\|_{L^2}+\|[u\cdot \nabla,\dot{\Delta}_j]u\|_{L^2}\Big){\rm{d}}s'.
\end{aligned}
\end{equation}
Recalling the key observations in \eqref{epntep1}-\eqref{expvar}, we derive from the weighted estimate \eqref{nsvgm2} and the standard commutator estimate (see Lemma \ref{commutator}) that
\begin{equation}\label{uh}
\begin{aligned}
\|\langle s\rangle^{\alpha}u\|_{\widetilde{L}^{\infty}_{t}(1,t;\dot{B}^{\frac{d}{2}+1}_{2,1})}^h&\lesssim \tau\|u_0\|_{\dot{B}^{\frac{d}{2}+1}_{2,1}}^h+\|\langle s\rangle^{\alpha}v\|_{\widetilde{L}^{\infty}_{t}(1,t;\dot{B}^{\frac{d}{2}+1}_{2,1})}^h+\mathcal{Y}^2(t)\\
&\lesssim \mathcal{Y}_{0}+\mathcal{Y}_0\mathcal{Y}(t)+\mathcal{Y}(t)^{2}.
\end{aligned}
\end{equation}

Then, we address the relative velocity $u-v$ in high frequencies. By taking the $\dot{B}^{\frac{d}{2}-1}_{2,1}$-norm of \eqref{rvgm1} with the weight $\langle s\rangle^{\alpha}$ for $s\in[1,t]$, we have
\begin{align}\label{veugm1}
\begin{aligned}
&\|\langle s\rangle^{\alpha}(u-v)\|_{\widetilde{L}^{\infty}_{t}(1,t;\dot{B}^{\frac{d}{2}-1}_{2,1})}^{h}\\
&\quad \lesssim\sum_{j\geq-1}\sup_{s\in[1,t]}\langle s\rangle^{\alpha}e^{-\frac{1}{\tau}s}2^{j(\frac{d}{2}-1)}\|\dot{\Delta}_{j}(u_{0}-v_{0})\|_{L^{2}}\\
&\quad+\sum_{j\geq-1}\sup_{s\in[1,t]}\langle s\rangle^{\alpha}\int_{0}^{s}e^{-\frac{1}{\tau}(s-s')}2^{j(\frac{d}{2}-1)}\|\dot{\Delta}_{j}(\nabla a,\nabla^2 v)\|_{L^{2}}{\rm{d}}s'\\
&\quad+\sum_{j\geq-1}\sup_{s\in[1,t]}\langle s\rangle^{\alpha}\int_{0}^{s}e^{-\frac{1}{\tau}(s-s')}2^{j(\frac{d}{2}-1)}\|\dot{\Delta}_{j}(u\cdot\nabla u,v\cdot\nabla v,\tau^{-1}\rho(u-v),F_{1},F_{2})\|_{L^{2}}{\rm{d}}s'.
\end{aligned}
\end{align}
It follows from \eqref{epntep1}-\eqref{expvar} that
\begin{equation*}
\sum_{j\geq-1}\sup_{s\in[1,t]}\langle s\rangle^{\alpha}e^{-\frac{1}{\tau}s}2^{j(\frac{d}{2}-1)}\|\dot{\Delta}_{j}(u_{0}-v_{0})\|_{L^{2}}\lesssim \tau\|(u_{0},v_{0})\|_{\dot{B}^{\frac{d}{2}-1}_{2,1}}^h\lesssim \tau\mathcal{Y}_{0}.
\end{equation*}
Due to \eqref{nsgmm6} and \eqref{nsvgm2}, we also have the following estimate:
\begin{equation*}
\begin{aligned}
&\sum_{j\geq-1}\sup_{s\in[1,t]}\langle s\rangle^{\alpha}\int_{0}^{s}e^{-\frac{1}{\tau}(s-s')}2^{j(\frac{d}{2}-1)}\|\dot{\Delta}_{j}(\nabla a,\nabla^2 v)\|_{L^{2}}{\rm{d}}s'	\\
&\quad\lesssim \tau\|\langle s\rangle^{\alpha} a\|_{\widetilde{L}^{\infty}_t(\dot{B}^{\frac{d}{2}}_{2,1})}^h+\tau\|\langle s\rangle^{\alpha} v\|_{\widetilde{L}^{\infty}_{t}(1,t;\dot{B}^{\frac{d}{2}+1}_{2,1})}^h\lesssim \tau\mathcal{Y}_{0}+\tau\mathcal{Y}_0\mathcal{Y}(t)+\tau\mathcal{Y}(t)^{2}.
\end{aligned}
\end{equation*}
Since $\tau<1$, the remaining terms in \eqref{veugm1} can be estimated by a similar argument as \eqref{nsgmm2}-\eqref{nsgmm4}, and thus we end up with
\begin{equation}
\frac{1}{\tau}\|\langle s\rangle^{\alpha}(u-v)\|_{\widetilde{L}^{\infty}_{t}(1,t;\dot{B}^{\frac{d}{2}-1}_{2,1})}^{h}\lesssim \mathcal{Y}_{0}+\mathcal{Y}_0\mathcal{Y}(t)+\mathcal{Y}(t)^{2}.\label{relativehighdecay}
\end{equation}
Collecting \eqref{nsgmm6}, \eqref{nsvgm2}, \eqref{uh}, and \eqref{relativehighdecay} together, we obtain
\begin{equation}\label{highdecay}
\begin{aligned}
&\|\langle s\rangle^{\alpha}(u,v)\|_{\widetilde{L}^{\infty}_{t}(1,t;\dot{B}^{\frac{d}{2}+1}_{2,1})}^{h}+\|\langle s\rangle^{\alpha}a\|_{\widetilde{L}^{\infty}_{t}(\dot{B}^{\frac{d}{2}}_{2,1})}^{h}+\frac{1}{\tau}\|\langle s\rangle^{\alpha}(u-v)\|_{\widetilde{L}^{\infty}_{t}(1,t;\dot{B}^{\frac{d}{2}-1}_{2,1})}^{h}\\
&\quad\lesssim \mathcal{Y}_{0}+\mathcal{Y}_0\mathcal{Y}(t)+\mathcal{Y}(t)^{2}.
\end{aligned}
\end{equation}

\subsection{Proof of Theorem \ref{theorem2}}
In conclusion, under the definition  \eqref{prioridc} of $\mathcal{Y}(t)$, the combination of \eqref{lowtime} and \eqref{highdecay} gives rise to
\begin{equation}
\mathcal{Y}(t)\lesssim \mathcal{Y}_{0}+\mathcal{Y}_{0}\mathcal{Y}(t)+\mathcal{Y}(t)^{2}.\label{sff}
\end{equation}
According to \eqref{idcd1}, \eqref{idcd2},  \eqref{sff}, and the standard bootstrap argument, we are able to deduce that
\begin{equation}\label{eclsyt}
\mathcal{Y}(t)\leq C_{0}\mathcal{Y}_0,
\end{equation}
where $C_{0}>0$ is a positive constant independent of the time and friction parameter $\tau$. Consequently, the estimate \eqref{eclsyt} implies for $t\geq1$ that
\begin{equation}\label{ammmf1}
\begin{aligned}
\|(u,v)\|_{\dot{B}^{\sigma}_{2,1}}\lesssim \|(u,v)\|_{\dot{B}^{\sigma}_{2,1}}^\ell+\|(u,v)\|_{\dot{B}^{\frac{d}{2}+1}_{2,1}}^h\lesssim \langle t\rangle^{-\frac{1}{2}(\sigma-\sigma_1)}\mathcal{Y}_0,\quad \sigma\in(\sigma_1,\frac{d}{2}+1],
\end{aligned}
\end{equation}
and
\begin{equation}\label{ammmf}
\begin{aligned}
\|a\|_{\dot{B}^{\sigma}_{2,1}}\lesssim \|a\|_{\dot{B}^{\sigma}_{2,1}}^\ell+\|a\|_{\dot{B}^{\frac{d}{2}}_{2,1}}^h\lesssim \langle t\rangle^{-\frac{1}{2}(\sigma-\sigma_1)}\mathcal{Y}_0,\quad \sigma\in(\sigma_1,\frac{d}{2}].
\end{aligned}
\end{equation}
In addition, the relative velocity $u-v$ has improved estimates as
\begin{align}
&\|u-v\|_{\dot{B}^{\sigma_1}_{2,\infty}}\lesssim \|u-v\|_{\dot{B}^{\sigma_1}_{2,\infty}}^\ell+\|u-v\|_{\dot{B}^{\frac{d}{2}-1}_{2,1}}^h\lesssim \tau\langle t\rangle^{-\frac{1}{2}}\mathcal{Y}_0,\label{uv1f}\\
&\|u-v\|_{\dot{B}^{\sigma}_{2,1}}\lesssim \|u-v\|_{\dot{B}^{\sigma}_{2,1}}^\ell+\|u-v\|_{\dot{B}^{\frac{d}{2}-1}_{2,1}}^h\lesssim \tau\langle t\rangle^{-\frac{1}{2}(\sigma-\sigma_1+1)}\mathcal{Y}_0,\quad \sigma\in(\sigma_1,\frac{d}{2}-1],\label{uv2f}
\end{align}
and
\begin{equation}
\|u-v\|_{\dot{B}^{\sigma}_{2,1}}\lesssim \|u-v\|_{\dot{B}^{\sigma}_{2,1}}^\ell+\|(u,v)\|_{\dot{B}^{\frac{d}{2}+1}_{2,1}}^h\lesssim \langle t\rangle^{-\frac{1}{2}(\sigma-\sigma_1+1)}\mathcal{Y}_0,\quad \sigma\in(\frac{d}{2}-1,\frac{d}{2}].\label{uv3f}
\end{equation}
Hence, the combination of \eqref{ammmf1}-\eqref{uv3f} yields that the desired estimates in \eqref{result2} hold.

Finally, we show the convergence of $\rho(t,x)$ to its equilibrium state $\rho_\infty(x)$ for $d\geq 3$. Define the asymptotic profile $\rho_{\infty}(x)$ by \eqref{aspf}. In view of $\eqref{pENS}_1$ and \eqref{aspf}, $\rho(t,x)$ can be represented by \eqref{rhominusrhoinfty}, and thus, applying the product law \eqref{uv2} since $\sigma+1\leq\frac{d}{2}$, it holds for $\sigma\in(\sigma_{1}+1,\frac{d}{2}-1]$ that
\begin{equation*}
\|\rho-\rho_{\infty}\|_{\dot{B}^{\sigma}_{2,1}}\leq\int_{t}^{\infty}\|\rho u\|_{\dot{B}^{\sigma+1}_{2,1}}{\rm d}s\lesssim \|\rho \|_{L^{\infty}(t,\infty;\dot{B}^{\frac{d}{2}}_{2,1})} \int_{t}^{\infty}  \|u\|_{\dot{B}^{\sigma+1}_{2,1}}{\rm d}s.
\end{equation*}
This, together with the uniform bound for $\rho$ in \eqref{result1} and the decay estimates of $u$ in \eqref{result2}, leads to
\begin{equation*}
\begin{aligned}
\|\rho-\rho_{\infty}\|_{\dot{B}^{\sigma}_{2,1}}&\leq\int_{t}^{\infty} \langle s\rangle^{-\frac{1}{2}(\sigma-\sigma_1+1)}{\rm d}s\lesssim \langle t\rangle^{-\frac{1}{2}(\sigma-\sigma_1-1)},\quad \sigma\in(\sigma_{1}+1,\frac{d}{2}-1],
\end{aligned}
\end{equation*}
where we have used the fact $\frac{1}{2}(\sigma+1-\sigma_1)>1$ for $\sigma>\sigma_1+1$, and the desired estimate \eqref{result3} is obtained. The proof of Theorem \ref{theorem2} is complete.

\section{Incompressible limits}\label{section:incompressible} 
In this section, we prove Theorems \ref{Theoremincompressible1} and \ref{Theoremincompressible2} concerning the incompressible limits from the DF model \eqref{DF} and the Euler-NS system \eqref{pENS} to the TNS system \eqref{TNS}.

\subsection{Convergence of the DF system to the TNS system}\label{subsection:incompressible1}

In the case $\var=1$, let $(\rho,n,v)$ be the global solution to the DF system \eqref{DF} subject to the initial data $(\rho_0,n_0,v_0)$ (replacing the coefficients $\mu$ and $\lambda$ by $\bar{\mu}$ and $\bar{\lambda}$, respectively). Such a solution exists and satisfies \eqref{result5}, which has been addressed in Corollary \ref{corollary1}. Inspired by \cite{danchinMA16}, we perform the change of the unknowns
\begin{align}\label{scaling2}
(\rho,n-1,v)(t,x)=\var(\rho^\var,a^\var,v^\var)(\var^2 t,\var x),
\end{align}
and the change of the initial data
\begin{align}\label{scaling20}
(\rho_0,n_0-1,v_0)(x)=\var(\rho^\var_0,a^\var_0,v_0^\var)(\var x).
\end{align}
Then, one can verify that $(\rho^\var,a^\var,v^\var)$ solves the scaled DF system  \eqref{DFvar} for $\var\in(0,1)$. Indeed, with the scaling properties of Besov norms (see Lemma \ref{lember}), we have
\begin{align*}
\|(\rho,n-1,v) \|_{\dot{B}^{\frac{d}{2}-1}_{2,1}}&\sim \|(\rho^\var,a^\var,v^\var)\|_{\dot{B}^{\frac{d}{2}-1}_{2,1}},\\
\|(\rho,n-1) \|_{\dot{B}^{\frac{d}{2}}_{2,1}}&\sim \var \|(\rho^\var,a^\var)\|_{\dot{B}^{\frac{d}{2}}_{2,1}},\\
\|n-1\|_{L^{1}(\mathbb{R}_+;\dot{B}^{\frac{d}{2}+1}_{2,1}+\dot{B}^{\frac{d}{2}}_{2,1})}
&\sim\|a^\var\|_{L^{1}(\mathbb{R}_+;\dot{B}^{\frac{d}{2}+1}_{2,1})}^{\var,\ell}+\frac{1}{\var}\|a^\var\|_{L^{1}(\mathbb{R}_+;\dot{B}^{\frac{d}{2}}_{2,1})}^{\var,h},\\
\|v\|_{L^{1}(\mathbb{R}_+;\dot{B}^{\frac{d}{2}+1}_{2,1})}&\sim \|v^\var\|_{L^{1}(\mathbb{R}_+;\dot{B}^{\frac{d}{2}+1}_{2,1})}.
\end{align*}
Thus, under the condition \eqref{Incompress:a1}, the global existence and uniform regularity estimates \eqref{Incompress:r1}-\eqref{Incompress:r11} are direct consequences of Corollary \ref{corollary1} and the transformations \eqref{scaling2}-\eqref{scaling20}.

We now justify the incompressible limit by a compactness argument based on \eqref{Incompress:r1}-\eqref{Incompress:r11}. The strong convergence of $n^\var$ is clear: since $a^\var$ is uniformly bounded in $L^{\infty}(\mathbb{R}_+;\dot{B}^{\frac{d}{2}-1}_{2,1})\cap L^2(\mathbb{R}_+;\dot{B}^{\frac{d}{2}}_{2,1})$, which implies
\begin{align}
\|n^\var-1\|_{L^{\infty}(\mathbb{R}_+;\dot{B}^{\frac{d}{2}-1}_{2,1})\cap L^2(\mathbb{R}_+;\dot{B}^{\frac{d}{2}}_{2,1})}\lesssim \var.
\end{align}
To justify the convergence of nonlinear terms, we need some strong compactness properties of $\rho^\var$ and $v^\var$. Since $v^\var$ is uniformly bounded in $L^2_t(\dot{B}^{\frac{d}{2}}_{2,1})\hookrightarrow L^2_t(L^{\infty})$, we can find a limit $w$ such that, up to a subsequence, as $\var\rightarrow0$, it holds
\begin{equation}\label{limit1:in}
\left\{
\begin{aligned}
&v^\var \rightharpoonup w && \text{weakly$^*$ in}~~ L^{2}(\mathbb{R}_{+};L^{\infty})\quad\text{for}~~d\geq3,\\
&v^\var \rightharpoonup w && \text{weakly\,\,\, in}~~ L^{2}(\mathbb{R}_{+};\dot{H}^{1})\quad\,\text{for}~~d=2.
\end{aligned}
\right.
\end{equation}
Obviously, $\rho^\var$ and $\partial_t \rho^\var=-\operatorname{div}(\rho^\var v^\var)$ are uniformly bounded in $L^{\infty}_t(\dot{B}^{\frac{d}{2}-1}_{2,1})$ and $L^2_t(\dot{B}^{\frac{d}{2}-2}_{2,1})$, respectively, and we have $\dot{B}^{\frac{d}{2}-1}_{2,1}\hookrightarrow\!\!\!\!\rightarrow  L^2_{{\rm{loc}}}$ for $d\geq3$ and $\dot{B}^{\frac{d}{2}-1}_{2,1}\hookrightarrow\!\!\!\!\rightarrow  H^{-1}_{{\rm{loc}}}$ for $d=2$. Then the standard compactness techniques (the Aubin-Lions lemma and the Cantor diagonal argument) imply that, as $\var\rightarrow0$, a limit $\varrho$ exists such that, up to a subsequence,
\begin{equation}
\left\{
\begin{aligned}\label{limit2:in}
&\rho^\var\rightarrow \varrho && \text{strongly in}~~ C([0,T];L^2_{{\rm{loc}}})\quad\, \text{for}~~ d\geq 3,\\
&\rho^\var\rightarrow \varrho && \text{strongly in}~~ C([0,T];H^{-1}_{{\rm{loc}}})\quad \text{for}~~ d=2,
\end{aligned}
\right.
\end{equation}
for any finite time $T>0$. In view of $\eqref{DFvar}_1$, \eqref{limit1:in}, and \eqref{limit2:in}, the equation $\eqref{TNS}_1$ holds in the distributional sense. According to $\eqref{DFvar}_2$, i.e., $\operatorname{div} v^\var=-\var \partial_t a^\var-\var\operatorname{div}(a^\var v^\var)$, we derive that, up to a subsequence, $\operatorname{div} v^\var\rightharpoonup 0$, or equivalently, $\mathcal{Q}v^\var\rightarrow0$ in $\mathcal{D}'(\mathbb{R}_+\times\mathbb{R}^d)$ as $\var\rightarrow0$.

Let us prove the convergence of $\eqref{DFvar}_3$ toward $\eqref{TNS}_2$. Compared with the incompressible limit of the isentropic compressible flows (cf. \cite{danchin02,danchinMA16}), the main difficulty lies in the presence of $\rho^\var$. Indeed, we cannot eliminate the pressure term after applying the projector $\mathcal{P}$ to the equation of velocity. To overcome this difficulty, we focus on the momentum equation of $m^\var:=(1+\var \rho^\var+\var a^\var)v^\var$ and obtain
\begin{align}\label{6.6}
&\partial_t \mathcal{P}m^\var+\mathcal{P}\operatorname{div}\big( (1+\var \rho^\var+\var a^\var)v^\var\otimes v^\var\big)=\bar\mu\Delta \mathcal{P}v^\var.
\end{align} 
Note that $\mathcal{P}m^\var$ is uniformly bounded in $L^2(\mathbb{R}_+;\dot{B}^{\frac{d}{2}}_{2,1})$, and one can check that $\partial_t \mathcal{P}m^\var$ is uniformly bounded in $L^1(\mathbb{R}_+;\dot{B}^{\frac{d}{2}-1}_{2,1})$. Consequently, $\mathcal{P}m^\var$ is strongly compact in $L^2(0,T;H^{\frac{d}{2}- \eta_*}_{\rm{loc}})$ for any $T>0$ and $\eta_*\in(0,1)$. Since $\mathcal{P}m^\var=\mathcal{P}v^\var+\var\mathcal{P}(\rho^\var v^\var+a^\var v^\var)$ and noticing that $\rho^\var v^\var+a^\var v^\var$ is uniformly bounded in $L^2(0,T;\dot{B}^{\frac{d}{2}-1}_{2,1})$, after extraction, we have
\begin{align}\label{limit3:in}
\mathcal{P}m^\var \rightarrow w \quad\text{and}\quad \mathcal{P}v^\var \rightarrow w \qquad \text{strongly in}~~ L^2(0,T;H^{\frac{d}{2}- \eta_*}_{\rm{loc}}).
\end{align}
The remainder on the left-hand side of \eqref{6.6} holds
\begin{align*}
\mathcal{P}\operatorname{div}\big( (1+\var \rho^\var+\var a^\var)v^\var\otimes v^\var\big) \rightharpoonup w\cdot\nabla w \quad  \text{in}~~ \mathcal{D}'(\mathbb{R}_+\times\mathbb{R}^d).
\end{align*}
To address this, we observe that
\begin{equation}\label{sfgv}
\mathcal{P}\operatorname{div}\big((1+\var \rho^\var+\var a^\var)v^\var\otimes v^\var\big)=\var \mathcal{P}\operatorname{div} \big( (\rho^\var+a^\var)v^\var\otimes v^\var\big)+\mathcal{P}(v^\var\cdot\nabla v^\var)+\mathcal{P}(\operatorname{div} v^\var v^\var),
\end{equation}
where the first term on the right-hand side converges to $0$ in the distributional sense due to the uniform regularity estimates of $(\rho^\var, a^\var,v^\var)$. For the second term, one has
\begin{align*}
v^\var\cdot \nabla v^\var =\frac{1}{2}\nabla |\mathcal{Q}v^\var|^2+\mathcal{P} v^\var \cdot \nabla v^\var+\mathcal{Q}v^\var \cdot \nabla \mathcal{P} v^\var,
\end{align*}
which implies $\mathcal{P}(v^\var\cdot\nabla v^\var)=\mathcal{P}(\mathcal{P} v^\var \cdot \nabla v^\var)+\mathcal{P}(\mathcal{Q}v^\var \cdot \nabla \mathcal{P} v^\var)$. Thus, using \eqref{limit2:in} and \eqref{limit3:in}, we have 
\begin{align}\label{6.11}
\mathcal{P}(v^\var\cdot\nabla v^\var)\rightharpoonup w\cdot\nabla w\quad  \text{in}~~ \mathcal{D}'(\mathbb{R}_+\times\mathbb{R}^d).
\end{align}
In order to prove $\mathcal{P}(\operatorname{div}  v^\var v^\var)\rightharpoonup 0$ in \eqref{sfgv}, we need to establish the strong convergence of $\operatorname{div}v^\var$ to $0$. Hence, we aim to establish the dispersive estimates for $(a^\var,\mathcal{Q}v^\var)$. Denote $\mathbf{v}^\var=\Lambda^{-1}\operatorname{div} \mathcal{Q}v^\var$, we rewrite $\eqref{DFvar}_2$ and $\eqref{DFvar}_3$ as
\begin{equation}\label{ad}
\left\{
\begin{aligned}
&\partial_t a^\var+\frac{1}{\var} \Lambda \mathbf{v}^\var=\mathbf{F}_1,\\
&\partial_t \mathbf{v}^\var-\frac{1}{\var}\Lambda a^\var=\mathbf{F}_2,\\
&(a^\var, \mathbf{v}^\var)(0,x)=(a_0^\var,\Lambda^{-1}\operatorname{div} \mathcal{Q}v_0^\var), 
\end{aligned}
\right.
\end{equation}
where $\mathbf{F}_1$ and $\mathbf{F}_2$ satisfy
\begin{align*}
\mathbf{F}_1&:=-\operatorname{div} (a^\var v^\var),\\
\mathbf{F}_2&:=-\Lambda^{-1}\operatorname{div}\Big( v^\var\cdot\nabla v^\var+\frac{\var^{-1}(P'(1+\var a^\var)-1)-\rho^\var-a^\var}{1+\var\rho^\var+\var a^\var}\nabla a^\var\\
&\qquad\qquad\qquad\qquad+\frac{\var \rho^\var+\var a^\var}{1+\var \rho^\var+\var a^\var}(\bar\mu\Delta v^\var+(\bar\mu+\bar\lambda)\nabla\operatorname{div} v^\var)\Big)+(2\bar\mu+\bar\lambda)\Delta v^\var.
\end{align*}

In the case $d\geq3$, for any $q\in(2,\infty)$, applying the dispersive inequalities presented in Lemma \ref{lemmadispersive} to \eqref{ad}, we deduce that
\begin{equation}\label{ssfbbb}
\begin{aligned}
\|(a^\var, \mathbf{v}^\var)\|_{\widetilde{L}^\frac{2q}{q-2}(\mathbb{R}_+;\dot{B}^{\frac{d-1}{q}-\frac{1}{2}}_{q,1})}\lesssim \var^{\frac{1}{2}-\frac{1}{q}} \|(a_0^\var, \mathbf{v}_0^\var)\|_{\dot{B}^{\frac{d}{2}-1}_{2,1}}+\var^{\frac{1}{2}-\frac{1}{q}}\|(\mathbf{F}_1,\mathbf{F}_2)\|_{L^{1}(\mathbb{R}_+;\dot{B}^{\frac{d}{2}-1}_{2,1})}.
\end{aligned} 
\end{equation}
By similar computations used in the proof of Lemma \ref{lemav}, the nonlinear terms have the following estimates
\begin{equation*}
\begin{aligned}
\|(\mathbf{F}_1,\mathbf{F}_2)\|_{L^{1}(\mathbb{R}_+;\dot{B}^{\frac{d}{2}-1}_{2,1})}\lesssim 1.
\end{aligned} 
\end{equation*}
Since $\Lambda^{-1}\operatorname{div}$ is a pseudo-differential operator of degree $0$, we arrive at
\begin{equation}\label{converLq}
\begin{aligned}
\|(a^\var, \mathcal{Q}v^\var)\|_{\widetilde{L}^\frac{2q}{q-2}(\mathbb{R}_+;\dot{B}^{\frac{d-1}{q}-\frac{1}{2}}_{q,1})}\lesssim \var^{\frac{1}{2}-\frac{1}{q}}.
\end{aligned} 
\end{equation}
It then follows from \eqref{Incompress:r11}, \eqref{converLq}, and \eqref{inter} that
\begin{equation*}
\begin{aligned}
&\|(a^\var,\mathcal{Q}v^\var)\|_{\widetilde{L}^{2}(\mathbb{R}_+;\dot{B}^{\frac{d+1}{p}-\frac{1}{2}}_{p,1})}^{\var,\ell}\\
&\quad\lesssim \Big(\|(a^\var, \mathcal{Q}v^\var)\|_{\widetilde{L}^\frac{2q}{q-2}(\mathbb{R}_+;\dot{B}^{\frac{d-1}{q}-\frac{1}{2}}_{q,1})}^{\var,\ell}\Big)^{\frac{q}{q+2}} \Big(\|(a^\var, \mathcal{Q}v^\var)\|_{L^1(\mathbb{R}_+;\dot{B}^{\frac{d}{2}+1}_{2,1})}^{\var,\ell}\Big)^{\frac{2}{q+2}}\lesssim  \var^{\frac{1}{2}-\frac{1}{p}},
\end{aligned} 
\end{equation*}
for $p=\frac{q+2}{2}\in(2,\infty)$, which gives rise to
\begin{equation*}
\begin{aligned}
&\|(a^\var, \mathcal{Q}v^\var)\|_{\widetilde{L}^{2}(\mathbb{R}_+;\dot{B}^{\frac{d+1}{p}-\frac{1}{2}}_{p,1})}\\
&\quad\lesssim \|(a^\var, \mathcal{Q}v^\var)\|_{\widetilde{L}^{2}(\mathbb{R}_+;\dot{B}^{\frac{d+1}{p}-\frac{1}{2}}_{p,1})}^{\var,\ell}+\|(a^\var, \mathcal{Q}v^\var)\|_{\widetilde{L}^{2}(\mathbb{R}_+;\dot{B}^{\frac{d+1}{p}-\frac{1}{2}}_{p,1})}^{\var,h}\\
&\quad\lesssim \|(a^\var, \mathcal{Q}v^\var)\|_{\widetilde{L}^{2}(\mathbb{R}_+;\dot{B}^{\frac{d+1}{p}-\frac{1}{2}}_{p,1})}^{\var,\ell}+\var^{\frac{1}{2}-\frac{1}{p}}\|(a^\var, \mathcal{Q}v^\var)\|_{\widetilde{L}^{2}(\mathbb{R}_+;\dot{B}^{\frac{d}{p}}_{p,1})}^{\var,h}\lesssim \var^{\frac{1}{2}-\frac{1}{p}}.
\end{aligned} 
\end{equation*}

In the case $d=2$, similarly, for $q\in(2,\infty]$, we have
\begin{align*}
&\|(a^\var,\mathbf{v}^\var)\|_{\widetilde{L}^\frac{4q}{q-2}(\mathbb{R}_+;\dot{B}^{\frac{3}{2q}-\frac{3}{4}}_{q,1})} \lesssim \var^{\frac{1}{4}-\frac{1}{2q}} \|(a_0^\var, \mathbf{v}_0^\var)\|_{\dot{B}^{\frac{d}{2}-1}_{2,1}}+\var^{\frac{1}{4}-\frac{1}{2q}}\|(\mathbf{F}_1,\mathbf{F}_2)\|_{L^{1}(\mathbb{R}_+;\dot{B}^{\frac{d}{2}-1}_{2,1})}\lesssim \var^{\frac{1}{4}-\frac{1}{2q}}.
\end{align*}
Adapting the interpolation argument for $p=\frac{6q+4}{q+6}\in(2,6]$, we can verify that
\begin{equation*}
\begin{aligned}
&\|(a^\var, \mathcal{Q}v^\var)\|_{\widetilde{L}^{2}(\mathbb{R}_+;\dot{B}^{\frac{5}{2p}-\frac{1}{4}}_{p,1})}^{\var,\ell}\\
&\quad\lesssim\Big(\|(a^\var, \mathcal{Q}v^\var)\|_{\widetilde{L}^\frac{4q}{q-2}(\mathbb{R}_+;\dot{B}^{\frac{3}{2q}-\frac{3}{4}}_{q,1})}^{\var,\ell}\Big)^{\frac{2q}{3q+2}} \Big(\|(a^\var, \mathcal{Q}v^\var)\|_{L^1(\mathbb{R}_+;\dot{B}^{\frac{d}{2}+1}_{2,1})}^{\var,\ell}\Big)^{\frac{q+2}{3q+2}}\lesssim \var^{\frac{1}{4}-\frac{1}{2p}}.
\end{aligned} 
\end{equation*}
Consequently, it holds that
\begin{equation*}
\begin{aligned}
&\|(a^\var, \mathcal{Q}v^\var)\|_{\widetilde{L}^{2}(\mathbb{R}_+;\dot{B}^{\frac{5}{2p}-\frac{1}{4}}_{p,1})}\\
&\quad\lesssim \|(a^\var, \mathcal{Q}v^\var)\|_{\widetilde{L}^{2}(\mathbb{R}_+;\dot{B}^{\frac{5}{2p}-\frac{1}{4}}_{p,1})}^{\var,\ell}+\var^{\frac{1}{4}-\frac{1}{2p}} \|(a^\var, \mathcal{Q}v^\var)\|_{\widetilde{L}^{2}(\mathbb{R}_+;\dot{B}^{\frac{2}{p}}_{p,1})}^{\var,h}\lesssim \var^{\frac{1}{4}-\frac{1}{2p}}.
\end{aligned} 
\end{equation*}
Combining the above properties, we complete the proof of Theorem \ref{Theoremincompressible1}.

\subsection{Convergence of the Euler-NS system to the TNS system}\label{subsection:incompressible2}

Then, we study the singular limit of the Euler-NS system \eqref{pENSvar} toward the TNS system \eqref{TNS} as $\tau=\var\rightarrow0$. In the case $\tau=\var=1$, it follows from Theorem \ref{theorem1} that if
\begin{equation*}
\begin{aligned}
&\|(\rho_{0},n_{0}-1)\|_{\dot{B}^{\frac{d}{2}-1}_{2,1}\cap\dot{B}^{\frac{d}{2}}_{2,1}}
+\|(u_{0},v_{0})\|_{\dot{B}^{\frac{d}{2}-1}_{2,1}}
+\|u_{0}\|_{\dot{B}^{\frac{d}{2}+1}_{2,1}}
\leq \delta_{0}\leq \delta_{0},
\end{aligned}
\end{equation*}
then the original Cauchy problem \eqref{pENS}-\eqref{indt} admits a unique global strong solution $(\rho,u,n,v)$ that satisfies  
\begin{align}
&\|\rho\|_{\widetilde{L}^{\infty}(\mathbb{R}_{+};\dot{B}^{\frac{d}{2}-1}_{2,1})}\leq C_1\|\rho_0\|_{\dot{B}^{\frac{d}{2}-1}_{2,1}},\quad \|\rho\|_{\widetilde{L}^{\infty}(\mathbb{R}_{+};\dot{B}^{\frac{d}{2}}_{2,1})}\leq C_1\|\rho_0\|_{\dot{B}^{\frac{d}{2}}_{2,1}},\label{sfgg}\\
&\begin{aligned}
&\|(u,v)\|_{\widetilde{L}^{\infty}(\mathbb{R}_+;\dot{B}^{\frac{d}{2}-1}_{2,1})}+\|u\|_{\widetilde{L}^{\infty}(\mathbb{R}_+;\dot{B}^{\frac{d}{2}+1}_{2,1})}+\|n-1\|_{\widetilde{L}^{\infty}(\mathbb{R}_+;\dot{B}^{\frac{d}{2}-1}_{2,1}\cap\dot{B}^{\frac{d}{2}}_{2,1})}\\
&\quad+\|(u,v)\|_{L^{1}(\mathbb{R}_+;\dot{B}^{\frac{d}{2}+1}_{2,1})}+\|n-1\|_{L^{1}(\mathbb{R}_+;\dot{B}^{\frac{d}{2}+1}_{2,1}+\dot{B}^{\frac{d}{2}}_{2,1})}\leq C_{1}\delta_{0},\label{sfgg1}
\end{aligned}
\end{align}
and
\begin{equation}\label{sfgg2}
\begin{aligned}
&\|u-v\|_{L^{1}(\mathbb{R}_+;\dot{B}^{\frac{d}{2}}_{2,1})}+\|u-v\|_{\widetilde{L}^{2}(\mathbb{R}_+;\dot{B}^{\frac{d}{2}-1}_{2,1})}\leq C_{1}\delta_{0}.
\end{aligned}
\end{equation}
We perform a similar rescaling to recover the uniform regularity of solutions to the Euler-NS system \eqref{pENS}. For the global solution $(\rho,u,n,v)$ to \eqref{pENS} subject to the initial data $(\rho_0,u_0,n_0,v_0)$, with $\mu$ and $\lambda$ replaced by $\bar\mu$ and $\bar\lambda$, respectively, we take the change of unknowns
\begin{align}\label{scaling3}
(\rho,u,n-1,v)(t,x)=\var(\rho^\var,u^\var,a^\var,v^\var)(\var^2 t, \var x),
\end{align}
and the change of the initial data
\begin{align}\label{scaling30}
(\rho_0,u_0,n_0-1,v_0)(x)=\var(\rho^\var_0,u_0^\var,a_0^\var,v_0^\var)(\var x).
\end{align}
Then, one can check that $(\rho^\var,u^\var,a^\var,v^\var)$ solves the scaled Euler-NS system \eqref{pENSvar} for $\var\in(0,1)$. Based on \eqref{sfgg}-\eqref{scaling30} and the scaling properties of Besov norms (see Lemma \ref{lember}), we can recover the uniform regularity estimates \eqref{Incompress:r2}-\eqref{Incompress:r22}.

The proof of the convergence is similar to that in Subsection \ref{subsection:incompressible1}. Here we only give a little explanation for key differences. In view of \eqref{Incompress:r22}, the velocities $u^\var$ and $v^\var$ must align with each other as $\var\rightarrow 0$. Hence, we derive from $\eqref{pENSvar}$ that
\begin{equation}\label{rhov123}
\left\{
\begin{aligned}
&\partial_{t}\rho^\var+\operatorname{div} (\rho^\var v^\var)=-\operatorname{div} \big(\rho^\var (u^\var-v^\var) \big),\\
&\partial_{t}\big(\var \rho^\var u^\var+(1+\var a^\var) v^\var\big)+\operatorname{div}\big(\var \rho^\var u^\var \otimes u^\var+(1+\var a^\var)v^\var\otimes v^\var\big)+\frac{1}{\var^2}\nabla P(1+\var a^\var)\\
&\qquad=\bar\mu\Delta v^\var+(\bar\mu+\bar\lambda)\nabla \operatorname{div}v^\var.
\end{aligned}
\right.
\end{equation}
By carrying out a similar argument as in \eqref{limit1:in}-\eqref{limit2:in} and using the alignment property of two velocities, as well as the convergence of the relative velocity $u^\var-v^\var$, one can show that $\eqref{rhov123}_1$ converges to $\eqref{TNS}_1$ in the distributional sense as $\var\rightarrow0$. 

Then, we justify the convergence of $\eqref{rhov123}_2$ toward $\eqref{TNS}_2$. Following the same line as in \eqref{6.6}-\eqref{6.11}, we apply the operator $\mathcal{P}$ to $\eqref{rhov123}_2$ and get
\begin{equation}\label{6.27}
\begin{aligned}
&\partial_{t}\mathcal{P}M^\var+\mathcal{P}\operatorname{div} \big(\var \rho^\var u^\var \otimes u^\var+(1+\var a^\var)v^\var\otimes v^\var\big)=\bar\mu \Delta \mathcal{P} v^\var,
\end{aligned}
\end{equation}
where $M^\var:=\var \rho^\var u^\var+(1+\var a^\var) v^\var$. By \eqref{6.27} and the bounds \eqref{Incompress:r2}-\eqref{Incompress:r22}, we have the uniform estimate of the time derivative term $\partial_t\mathcal{P}M^\var$. Therefore, there exists a limit $w$ such that, up to a subsequence, $\mathcal{P}M^\var$ converges to $w$ strongly in $L^2(0,T;H^{\frac{d}{2}- \eta_*}_{\rm{loc}})$ for any $T>0$ and $\eta_*\in(0,1)$. Due to $M^\var-v^\var=\var(\rho^\var u^\var+a^\var v^\var)=\mathcal{O}(\var)$ and $u^\var-v^\var=\mathcal{O}(\var)$ derived from \eqref{Incompress:r22}, it holds
\begin{align*}
& \mathcal{P} u^\var\rightarrow w\quad\text{and}\quad \mathcal{P} v^\var\rightarrow w\qquad\text{strongly in}~~ L^2(0,T;H^{\frac{d}{2}- \eta_*}_{\rm{loc}}).
\end{align*}
To analyze the second term in \eqref{6.27}, one needs to have the strong convergence of $\mathcal{Q}u^\var$ and $\mathcal{Q}v^\var$. The main challenge is to establish the estimates in \eqref{Incompress:limit2}, which, in particular, implies the convergence of the convection form in \eqref{6.27}. To overcome this, we introduce the {\emph{effective mixed velocity}}:
\begin{align}\label{V000}
V^\var:=\frac{\var \rho^\var}{1+\var \rho^\var+\var a^\var}u^\var+\frac{1+\var a^\var}{1+\var \rho^\var+\var a^\var}v^\var,
\end{align}
which yields that
\begin{align*}
&\partial_{t}\big(\var \rho^\var u^\var+(1+\var a^\var) v^\var\big)+\operatorname{div}\big(\var \rho^\var u^\var \otimes u^\var+(1+\var a^\var)v^\var\otimes v^\var\big)\\
&\quad= \partial_{t}\big( (1+\var \rho^\var+\var a^\var) V^\var\big)+ \operatorname{div} \big( (1+\var \rho^\var+\var a^\var) v^\var \otimes V^\var \big)+\var\operatorname{div}\big(  \rho^\var (u^\var-v^\var) \otimes u^\var\big) \\
&\quad=(1+\var \rho^\var+\var a^\var) (\partial_t V^\var+v^\var \cdot \nabla V^\var)-\var\operatorname{div} \big(\rho^\var (u^\var-v^\var) \big) V^\var+\var\operatorname{div}\big(  \rho^\var (u^\var-v^\var) \otimes u^\var\big).
\end{align*}
This enables us to rewrite \eqref{pENSvar} as
\begin{equation}\label{628}
\left\{
\begin{aligned}
&\partial_t a^\var+\frac{1}{\var} \operatorname{div} V^\var=\mathbf{G}_1,\\
& \partial_t V^\var+\frac{1}{\var}\nabla a^\var=\mathbf{G}_2,
\end{aligned}
\right.
\end{equation}
where $\mathbf{G}_1$ and $\mathbf{G}_2$ satisfy
\begin{align*}
&\mathbf{G}_1:=\operatorname{div}\Big(\frac{\rho^\var (u^\var-v^\var)}{1+\var \rho^\var+\var a^\var}\Big)-\operatorname{div} (a^\var v^\var) ,\\
&\mathbf{G}_2:=- v^\var\cdot\nabla V^\var-\frac{\var^{-1}(P'(1+\var a^\var)-1)-\rho^\var-a^\var}{1+\var \rho^\var+\var a^\var}\nabla a^\var+\frac{1}{1+\var a^\var+\var \rho^\var} (\mu \Delta v^\var+(\mu+\lambda)\nabla\operatorname{div} v^\var)\\
&\qquad\quad+\frac{1}{1+\var \rho^\var+\var a^\var}\Big(\var\operatorname{div}\big(\rho^\var (u^\var-v^\var) \big) V^\var-\var\operatorname{div}\big(  \rho^\var (u^\var-v^\var) \otimes u^\var\big)\Big).
\end{align*}

By direct computations based on the uniform estimates \eqref{Incompress:r2}-\eqref{Incompress:r22}, both $\mathbf{G}_1$ and $\mathbf{G}_2$ are uniformly bounded in $L^1(\mathbb{R}_+;\dot{B}^{\frac{d}{2}-1}_{2,1})$. Then, one can repeat the calculations as those in Subsection \ref{subsection:incompressible1} to obtain the estimates of $(a^\var,\mathcal{Q}V^\var)$:
\begin{equation*}
\left\{
\begin{aligned}
&\|(a^\var,\mathcal{Q}V^\var)\|_{\widetilde{L}^{2}(\mathbb{R}_+;\dot{B}^{\frac{d+1}{p}-\frac{1}{2}}_{p,1})}\leq C\var^{\frac{1}{2}-\frac{1}{p}}&&\text{if}~~ d\geq 3,\\
&\|(a^\var, \mathcal{Q}V^\var)\|_{\widetilde{L}^{2}(\mathbb{R}_+;\dot{B}^{\frac{5}{2p}-\frac{1}{4}}_{p,1})}\leq C\var^{\frac{1}{4}-\frac{1}{2p}}&& \text{if}~~ d=2.
\end{aligned}
\right.
\end{equation*}
For brevity, we omit the details. These, together with the uniform bound of $\rho^\var$ in \eqref{Incompress:r2}, lead to the estimates of $\mathcal{Q}v^\var$ in \eqref{Incompress:limit2}. Indeed, we notice that
$$
v^\var=V^\var-\frac{\var\rho^\var(u^\var-v^\var)}{1+\var \rho^\var+\var a^\var}.
$$
In the case $d\geq3$, for any $p\in(2,\infty)$, one can verify
\begin{align*}
\|\mathcal{Q}v^\var\|_{\widetilde{L}^{2}(\mathbb{R}_+;\dot{B}^{\frac{d+1}{p}-\frac{1}{2}}_{p,1})}^{\var,\ell}&\lesssim \|\mathcal{Q}V^\var\|_{\widetilde{L}^{2}(\mathbb{R}_+;\dot{B}^{\frac{d+1}{p}-\frac{1}{2}}_{p,1})}^{\var,\ell}+\var^{\frac{1}{2}-\frac{1}{p}} \Big\|\frac{\rho^\var(u^\var-v^\var)}{1+\var a^\var+\var\rho^\var}\Big\|_{\widetilde{L}^{2}(\mathbb{R}_+;\dot{B}^{\frac{d}{p}-1}_{p,1})}^{\var,\ell}\\
&\lesssim \|\mathcal{Q}V^\var\|_{\widetilde{L}^{2}(\mathbb{R}_+;\dot{B}^{\frac{d+1}{p}-\frac{1}{2}}_{p,1})}^{\var,\ell}+\var^{\frac{1}{2}-\frac{1}{p}}\|\rho^\var\|_{\widetilde{L}^{\infty}(\mathbb{R}_{+};\dot{B}^{\frac{d}{2}}_{2,1})}\|u^\var-v^\var\|_{\widetilde{L}^{2}(\mathbb{R}_+;\dot{B}^{\frac{d}{p}-1}_{p,1})}\lesssim \var^{\frac{1}{2}-\frac{1}{p}}.
\end{align*}
Therefore, with the high-frequency bounds, it holds
\begin{align*}
\|\mathcal{Q}v^\var\|_{\widetilde{L}^{2}(\mathbb{R}_+;\dot{B}^{\frac{d+1}{p}-\frac{1}{2}}_{p,1})}&\lesssim \|\mathcal{Q}v^\var\|_{\widetilde{L}^{2}(\mathbb{R}_+;\dot{B}^{\frac{d+1}{p}-\frac{1}{2}}_{p,1})}^{\var,\ell}+\var^{\frac{1}{2}-\frac{1}{p}}\|\mathcal{Q}v^\var\|_{\widetilde{L}^{2}(\mathbb{R}_+;\dot{B}^{\frac{d}{p}}_{p,1})}^{\var,h}\lesssim \var^{\frac{1}{2}-\frac{1}{p}}.
\end{align*}
In the case $d=2$, one can perform a similar computation and achieve the desired convergence rate. 

The bounds of $\mathcal{Q}u^\var$ then follow from the estimates of $\mathcal{Q}v^\var$ and $u^\var-v^\var$. In conclusion, we finally complete the proof of Theorem \ref{Theoremincompressible2}.

\appendix
\renewcommand{\thesection}{\Alph{section}}
\renewcommand{\theequation}{\Alph{section}.\arabic{equation}} 

\section*{Appendix A: {Linear analysis}}\label{sectionlinear}
\addcontentsline{toc}{section}{Appendix A}
\setcounter{section}{1}
\setcounter{equation}{0}
We analyze the spectral structure and the Green's function for the linear system of \eqref{pENS}, and then establish the optimal time-decay rates of the linear problem uniformly with respect to the parameter $\tau>0$. It is obvious that the linear part of density $\rho$ is decoupled from the other variables. Hence, we perform a spectral analysis of the following linear problem
\begin{equation}\label{LpENS1}
\left\{
\begin{aligned}
&\partial_{t}u+\frac{1}{\tau}(u-v)=0,\\
&\partial_{t}a+\operatorname{div} v=0,\\
&\partial_{t}v+\nabla a-\Delta v=0,\\
&(u,a,v)(0,x)=(u_{0},a_{0},v_{0})(x).
\end{aligned}
\right.
\end{equation}
By the Hodge decomposition, we denote the "compressible part"  $\phi:=\Lambda^{-1}\operatorname{div}u$ and the "incompressible part" $\psi:=\Lambda^{-1}\operatorname{div}v$,  respectively, subject to the initial values $(\phi_{0},\psi_{0}):=(\Lambda^{-1}\operatorname{div}u_{0},\Lambda^{-1}\operatorname{div}v_{0})$ and $(\Phi_{0},\Psi_{0}):=(\Lambda^{-1}\nabla\times u_{0},\Lambda^{-1}\nabla\times v_{0})$. Then, the system \eqref{LpENS1} translates into
\begin{equation}\notag
\left\{
\begin{aligned}
&\partial_{t}\phi+\frac{1}{\tau}(\phi-\psi)=0,\\
&\partial_{t}a+\Lambda\psi=0,\\
&\partial_{t}\psi-\Lambda a+\Lambda^{2}\psi=0,\\
&(\phi,a,\psi)(0,x)=(\phi_{0},a_{0},\psi_{0})(x),
\end{aligned}
\right.\quad\text{and}\quad
\left\{
\begin{aligned}
&\partial_{t}\Phi+\frac{1}{\tau}(\Phi-\Psi)=0,\\
&\partial_{t}\Psi+\Lambda^{2}\Psi=0,\\
&(\Phi,\Psi)(0,x)=(\Phi_{0},\Psi_{0})(x).
\end{aligned}
\right.
\end{equation}
Taking the Fourier transform with respect to the space variable, we have
\begin{equation}\notag
\partial_{t}\left(\begin{array}{c}
\widehat{\phi}  \\
\widehat{a}  \\
\widehat{\psi}
\end{array}\right)=\mathcal{A}\left(\begin{array}{c}
\widehat{\phi}  \\
\widehat{a}  \\
\widehat{\psi}
\end{array}\right)\quad\text{and}\quad\partial_{t}\left(\begin{array}{c}
\widehat{\Phi}  \\
\widehat{\Psi}
\end{array}\right)=\mathcal{B}\left(\begin{array}{c}
\widehat{\Phi}  \\
\widehat{\Psi}
\end{array}\right),
\end{equation}
where the linear operators $\mathcal{A}$ and $\mathcal{B}$ are given by
\begin{equation}\notag
\mathcal{A}=\left(\begin{array}{ccc}
-\frac{1}{\tau} & 0 & \frac{1}{\tau}  \\
0 & 0 & -|\xi|  \\
0 & |\xi| & -|\xi|^{2}
\end{array}\right)\quad\text{and}\quad\mathcal{B}=\left(\begin{array}{cc}
-\frac{1}{\tau} & \frac{1}{\tau}  \\
0 & -|\xi|^{2}
\end{array}\right).
\end{equation}
Note that the eigenvalues of the matrix $\mathcal{A}$ and $\mathcal{B}$ satisfy
\begin{equation}\notag
\lambda_{1}=-\frac{1}{\tau},\quad\lambda_{2,3}=-\frac{1}{2}|\xi|^{2}\pm\frac{1}{2}\sqrt{|\xi|^{4}-4|\xi|^{2}},
\end{equation}
and
\begin{equation}\notag
\lambda_{4}=-\frac{1}{\tau},\quad\lambda_{5}=-|\xi|^{2},
\end{equation}
respectively. These eigenvalues reveal different behaviors of the solution in low and high frequencies. Indeed, we have the following frequency-localized estimates of \eqref{LpENS1}.

\begin{lemma}\label{lemma21}
Let $\tau\in(0,\frac{1}{4})$. The solution $(u,a,v)$ to the problem \eqref{LpENS1} has the following properties{\rm{:}}
\begin{itemize}
\item {\rm(}Low frequencies{\rm)}: For any $j\leq 0$ and $t\geq 0$, there exists a generic constant $r_{*}>0$ such that
\begin{equation}\label{lowj}
\begin{aligned}
\|\dot{\Delta}_j (u,a,v)(t)\|_{L^{2}}&\lesssim e^{-r_{*}2^{2j}t}\|\dot{\Delta}_j (u_{0},a_{0},v_{0})\|_{L^{2}}.
\end{aligned}
\end{equation}

More precisely, it holds
\begin{equation}\label{lowj-1}
\left\{
\begin{aligned}
&\|\dot{\Delta}_j u(t)\|_{L^{2}}\lesssim e^{-\frac{1}{\tau}t}\Big(\|\dot{\Delta}_{j}(u_{0},v_{0})\|_{L^{2}}+\tau 2^{j}\|\dot{\Delta}_{j}a_{0}\|_{L^{2}}\Big)+e^{-r_{*}2^{2j}t}\|\dot{\Delta}_{j}(a_{0},v_{0})\|_{L^{2}},\\
&\|\dot{\Delta}_j (a,v)(t)\|_{L^{2}}\lesssim e^{-r_{*}2^{2j}t}\|\dot{\Delta}_{j}(a_{0},v_{0})\|_{L^{2}},
\end{aligned}
\right.
\end{equation}
and
\begin{equation}\label{relativej-1}
\begin{aligned}
\|\dot{\Delta}_{j}(u-v)(t)\|_{L^{2}}\lesssim e^{-\frac{1}{\tau}t}\Big(\|\dot{\Delta}_{j}(u_{0},v_{0})\|_{L^{2}}+\tau 2^{j}\|\dot{\Delta}_{j}a_{0}\|_{L^{2}}\Big)+\tau 2^{j}e^{-r_{*}2^{2j}t}\|\dot{\Delta}_{j}(a_{0},v_{0})\|_{L^{2}}.
\end{aligned}
\end{equation}

\item {\rm(}High frequencies{\rm)}:  For any $j\geq 1$ and $t\geq 0$, there exists a generic constant $R_{*}>0$ such that
\begin{equation}\label{highj}
\|\dot{\Delta}_{j}(u,a,v)(t)\|_{L^{2}}\lesssim e^{-R_{*}t}\|\dot{\Delta}_{j}(u_{0},a_{0},v_{0})\|_{L^{2}}.
\end{equation}
More precisely, it holds for $t\geq 1$ that
\begin{equation}\label{highj-1}
\left\{
\begin{aligned}
&\|\dot{\Delta}_{j}u(t)\|_{L^{2}}\lesssim e^{-R_{*}t}\Big(\tau\|\dot{\Delta}_{j}u_{0}\|_{L^{2}}+2^{-j}\|\dot{\Delta}_{j}a_{0}\|_{L^{2}}+2^{-2j}\|\dot{\Delta}_{j}v_{0}\|_{L^{2}}\Big),\\
&\|\dot{\Delta}_{j}a(t)\|_{L^{2}}\lesssim e^{-R_{*}t}\Big(\|\dot{\Delta}_{j}a_{0}\|_{L^{2}}+2^{-j}\|\dot{\Delta}_{j}v_{0}\|_{L^{2}}\Big),\\
&\|\dot{\Delta}_{j}v(t)\|_{L^{2}}\lesssim e^{-R_{*}t}\Big(2^{-j}\|\dot{\Delta}_{j}a_{0}\|_{L^{2}}+2^{-2j}\|\dot{\Delta}_{j}v_{0}\|_{L^{2}}\Big),
\end{aligned}
\right.
\end{equation}
and
\begin{equation}\label{relativej-2}
\|\dot{\Delta}_{j}(u-v)(t)\|_{L^{2}}\lesssim \tau e^{-R_{*}t}\Big(\|\dot{\Delta}_{j}(u_{0},v_{0})\|_{L^{2}}+2^{-j}\|\dot{\Delta}_{j}a_{0}\|_{L^{2}}\Big).
\end{equation}
\end{itemize}
\end{lemma}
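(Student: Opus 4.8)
\textbf{Plan of proof for Lemma \ref{lemma21}.}
The strategy is to work directly at the level of each dyadic block $\dot{\Delta}_j$ and exploit the explicit diagonalizations of the $3\times 3$ matrix $\mathcal{A}$ (for the potential part $(\phi,a,\psi)$) and the $2\times 2$ matrix $\mathcal{B}$ (for the solenoidal part $(\Phi,\Psi)$) displayed above. Since both sub-systems are decoupled and the solenoidal one is triangular, the whole analysis reduces to tracking the three eigenvalues $\lambda_1=-\frac1\tau$, $\lambda_{2,3}=-\frac12|\xi|^2\pm\frac12\sqrt{|\xi|^4-4|\xi|^2}$ over the two frequency ranges, together with the associated eigenprojectors. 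For $|\xi|\lesssim 1$ (i.e.\ $j\le 1$) one has $|\xi|^4-4|\xi|^2<0$, so $\lambda_{2,3}=-\frac12|\xi|^2\pm\frac i2|\xi|\sqrt{4-|\xi|^2}$ are complex conjugates with real part $-\frac12|\xi|^2\sim -2^{2j}$; this gives the heat-type factor $e^{-r_*2^{2j}t}$ for $(a,v)$. For $|\xi|\gtrsim 1$ (i.e.\ $j\ge 1$), $\lambda_{2,3}$ are real, negative, and bounded away from $0$ uniformly (for $|\xi|$ large one is $\sim -|\xi|^2$, the other $\sim -1$), and $\lambda_1=-\frac1\tau\le -4$ since $\tau<\frac14$; hence everything decays like $e^{-R_*t}$ for a uniform $R_*>0$. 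The crude bounds \eqref{lowj} and \eqref{highj} follow immediately from these sign/size facts for the eigenvalues and the boundedness of the change-of-basis matrices on the relevant frequency annuli.

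First I would treat the solenoidal part, which is the easy warm-up: from $\partial_t\widehat\Psi+|\xi|^2\widehat\Psi=0$ one gets $\widehat\Psi=e^{-|\xi|^2t}\widehat\Psi_0$, and then Duhamel in the $\Phi$-equation gives $\widehat\Phi=e^{-t/\tau}\widehat\Phi_0+\frac1\tau\int_0^t e^{-(t-s)/\tau}e^{-|\xi|^2 s}\,ds\,\widehat\Psi_0$. The convolution integral is $\le \min\{1,\frac{1}{\tau|\xi|^2}\}$ type, which in low frequencies one bounds by $e^{-r_*|\xi|^2 t}$ (after noting $|\xi|^2\le 1/\tau$ is false in general, so one splits $e^{-(t-s)/\tau}\le e^{-r_*|\xi|^2(t-s)}$ using $1/\tau\ge 4\ge 4|\xi|^2\cdot\frac14$ — actually $1/\tau> 4> |\xi|^2$ on $|\xi|\le 1$, giving $e^{-t/\tau}\le e^{-|\xi|^2 t}$ directly) and in high frequencies by a $\tau$ factor times $e^{-R_*t}$, using $\int_0^t e^{-(t-s)/\tau}ds=\tau(1-e^{-t/\tau})\le\tau$ together with $e^{-|\xi|^2 s}$ to absorb the $|\xi|^2$ and produce the gradient loss $2^{-j}$ in \eqref{relativej-2}. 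Then I would handle the potential part $(\phi,a,\psi)$: the $(a,\psi)$ sub-block is itself the $2\times2$ compressible-Navier-Stokes acoustic matrix $\begin{pmatrix}0&-|\xi|\\ |\xi|&-|\xi|^2\end{pmatrix}$, whose semigroup estimates are classical (cf.\ \cite{bahourietal11}), giving $\|\dot\Delta_j(a,\psi)\|\lesssim e^{-r_*2^{2j}t}\|\dot\Delta_j(a_0,\psi_0)\|$ in low frequencies and, in high frequencies for $t\ge 1$, the rates in \eqref{highj-1} (here the eigenvector structure is responsible for the $2^{-j}$ smoothing of the $a_0$-to-$v$ and $v_0$-to-$a$ channels, exactly as for barotropic Navier-Stokes). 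Finally $\phi$ is recovered by Duhamel from $\partial_t\phi+\frac1\tau(\phi-\psi)=0$, i.e.\ $\widehat\phi(t)=e^{-t/\tau}\widehat\phi_0+\frac1\tau\int_0^t e^{-(t-s)/\tau}\widehat\psi(s)\,ds$, inserting the already-obtained bounds on $\psi$.

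The estimates \eqref{lowj-1}, \eqref{highj-1} for $u$ and \eqref{relativej-1}, \eqref{relativej-2} for $u-v$ are the refined parts and require a little care with the Duhamel convolution $\frac1\tau\int_0^t e^{-(t-s)/\tau}\,g(s)\,ds$ where $g$ carries the heat/damped decay of $\psi$ (or of $\nabla a,\nabla^2 v$ for the relative velocity equation $\partial_t(u-v)+\frac1\tau(u-v)=\nabla a-\Delta v$ localized). For $u-v$ one writes, on the frequency block, $\widehat{(u-v)}(t)=e^{-t/\tau}\widehat{(u_0-v_0)}_j+\int_0^t e^{-(t-s)/\tau}(i\xi\widehat a+|\xi|^2\widehat v)(s)\,ds$; in low frequencies the source is $\sim 2^j$ times the $(a,v)$-size, and $\int_0^t e^{-(t-s)/\tau}e^{-r_*2^{2j}s}ds\le \tau e^{-r_*2^{2j}t}$ because $1/\tau\ge 4>2^{2j}\ge r_*2^{2j}$ on $j\le 1$, yielding the $\tau 2^j$ prefactor in \eqref{relativej-1}; in high frequencies $\int_0^t e^{-(t-s)/\tau}e^{-R_* s}ds\le \tau e^{-\min\{1/\tau,R_*\}t/?}$ — more precisely one bounds it by $C\tau e^{-R_* t}$ after lowering $R_*$, since $1/\tau> R_*$, giving the clean $\tau$ in \eqref{relativej-2}. \textbf{The main obstacle} I anticipate is the bookkeeping of the eigenprojector entries for the $3\times 3$ matrix $\mathcal{A}$ near the frequency $|\xi|=2$ where the discriminant $|\xi|^4-4|\xi|^2$ changes sign: there the two eigenvalues $\lambda_{2,3}$ collide, the eigenbasis degenerates, and one must either argue by a direct energy/Lyapunov estimate uniformly across $j\sim 1$ (as in the Lyapunov functional $\mathcal{E}_j$ of Lemma \ref{lemav}) or carefully verify that the projector norms stay bounded; fortunately, since the crude $e^{-r_*2^{2j}t}$/$e^{-R_*t}$ bounds only need the real parts of the eigenvalues and a finite-dimensional spectral argument is harmless on any fixed annulus, this degeneracy is cosmetic rather than substantive, and the refined $2^{-j}$ gains are needed only strictly inside the low- and high-frequency regimes where the spectral gap is genuine.
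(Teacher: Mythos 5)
Your overall route — explicit eigenvalues plus Duhamel on the damped equations for $\phi$, $\Phi$ and $u-v$, with the $(a,\psi)$ block treated as the classical acoustic system and $\Psi$ as a pure heat mode — is viable and close in spirit to the paper, which instead writes out all Green's functions and performs three separate expansions ($|\xi|\to 0$, $|\xi|\to\infty$, and $|\xi|\to 1/\sqrt{\tau}$). One genuine advantage of your organization is that the convolution bounds (splitting the Duhamel integral at $s=t/2$) automatically tame the near-resonance $\tau|\xi|^2\approx 1$, where the closed-form kernels carry the apparent singularity $1/(1-\tau|\xi|^2)$ and where the paper is forced into its extra case $2<|\xi|<1/\sqrt{\tau}$. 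By contrast, the obstacle you single out — the eigenvalue collision of $\lambda_2,\lambda_3$ at $|\xi|=2$ — is, as you suspect, cosmetic: the divided differences $(e^{\lambda_2 t}-e^{\lambda_3 t})/(\lambda_2-\lambda_3)$ stay bounded there.

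The genuine gap is in your derivation of \eqref{relativej-2}. For the potential part the Duhamel source for $u-v$ is $-\Lambda a+\Lambda^2\psi=-\partial_t\psi$, and if you bound it by the triangle inequality using the separate decay of $a$ and $\psi$ from \eqref{highj-1}, the $a_0$-channel contributes $|\xi|\,|\widehat{a}|+|\xi|^2\,|\widehat{\psi}|\gtrsim |\xi|e^{\lambda_2 s}|\widehat{a_0}|$, and the convolution then yields $\tau\,2^{j}\|\dot{\Delta}_j a_0\|_{L^2}$ — off by a factor $2^{2j}$ from the claimed $\tau\,2^{-j}\|\dot{\Delta}_j a_0\|_{L^2}$. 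The correct coefficient relies on an exact cancellation in the slowly decaying $\lambda_2$-mode: using $\lambda_2+\lambda_3=-|\xi|^2$ and $\lambda_2\lambda_3=|\xi|^2$, one finds that the $\widehat{a_0}$-coefficient of $-|\xi|\widehat{a}+|\xi|^2\widehat{\psi}$ equals $|\xi|\bigl(-\lambda_2 e^{\lambda_2 t}+\lambda_3 e^{\lambda_3 t}\bigr)/(\lambda_2-\lambda_3)\sim |\xi|^{-1}e^{\lambda_2 t}+|\xi|\,e^{\lambda_3 t}$, i.e.\ the $O(|\xi|)e^{\lambda_2 t}$ terms cancel identically. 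So you must compute the mode decomposition of the combination $-\Lambda a+\Lambda^2\psi$ (equivalently, read off $u-v$ from the explicit Green's function, as the paper does), not insert the individual bounds on $a$ and $\psi$. A related but fixable point: for the $\phi$-equation in high frequencies you need the two-mode bound $|\widehat{\psi}(s)|\lesssim e^{\lambda_2 s}(\cdots)+e^{\lambda_3 s}(\cdots)$ valid for \emph{all} $s\ge 0$, since using only the crude bound on $s\in[0,1]$ and the refined one on $s\ge 1$ leaves an uncontrolled $\tau^{-1}$ near $t=1$; the saving there comes from $\int_0^1 e^{\lambda_3 s}\,{\rm d}s\lesssim |\xi|^{-2}$.
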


\begin{remark}
Lemma {\rm\ref{lemma21}} shows that the solution $(u,a,v)$ is coupled with each other and the sum has the same behavior as the heat equation. However, different components and relative velocity may have better dissipative structures. In particular, the estimates \eqref{relativej-1} and \eqref{relativej-2} for the relative velocity exhibit higher-order decay and the convergence rate of $\mathcal{O}(\tau)$. 
\end{remark}

\begin{proof}[Proof of Lemma {\rm\ref{lemma21}}]
After direct computations on the project operators associated with the eigenvalues, we have the Green's function expressions:
\begin{equation}\notag
\begin{aligned}
\widehat{\phi}&=e^{-\frac{1}{\tau}t}\widehat{\phi_{0}}+\Big(\frac{\tau|\xi|e^{-\frac{1}{\tau}t}}{1-\tau|\xi|^{2}+\tau^{2}|\xi|^{2}}+\frac{|\xi|e^{\lambda_{2}t}}{(1+\tau\lambda_{2})(\lambda_{2}-\lambda_{3})}-\frac{|\xi|e^{\lambda_{3}t}}{(1+\tau\lambda_{3})(\lambda_{2}-\lambda_{3})}\Big)\widehat{a_{0}}\\
&\quad+\Big(-\frac{e^{-\frac{1}{\tau}t}}{1-\tau|\xi|^{2}+\tau^{2}|\xi|^{2}}+\frac{\lambda_{2}e^{\lambda_{2}t}}{(1+\tau\lambda_{2})(\lambda_{2}-\lambda_{3})}-\frac{\lambda_{3}e^{\lambda_{3}t}}{(1+\tau\lambda_{3})(\lambda_{2}-\lambda_{3})}\Big)\widehat{\psi_{0}},\\[2mm]
\widehat{a}&=\frac{\lambda_{2}e^{\lambda_{3}t}-\lambda_{3}e^{\lambda_{2}t}}{\lambda_{2}-\lambda_{3}}\widehat{a_{0}}+\frac{|\xi|e^{\lambda_{3}t}-|\xi|e^{\lambda_{2}t}}{\lambda_{2}-\lambda_{3}}\widehat{\psi_{0}},\qquad \widehat{\psi}=\frac{|\xi|e^{\lambda_{2}t}-|\xi|e^{\lambda_{3}t}}{\lambda_{2}-\lambda_{3}}\widehat{a_{0}}+\frac{\lambda_{2}e^{\lambda_{2}t}-\lambda_{3}e^{\lambda_{3}t}}{\lambda_{2}-\lambda_{3}}\widehat{\psi_{0}},
\end{aligned}
\end{equation}
and
\begin{equation}\notag
\widehat{\Phi}=e^{-\frac{1}{\tau}t}\widehat{\Phi_{0}}+\frac{e^{-|\xi|^{2}t}-e^{-\frac{1}{\tau}t}}{1-\tau|\xi|^{2}}\widehat{\Psi_{0}},\qquad\widehat{\Psi}=e^{-|\xi|^{2}t}\widehat{\Psi_{0}}.
\end{equation}
In addition, the relative velocity $\widehat{\phi}-\widehat{\psi}$ and $\widehat{\Phi}-\widehat{\Psi}$ are given by
\begin{equation}\notag
\begin{aligned}
\widehat{\phi}-\widehat{\psi}&=e^{-\frac{1}{\tau}t}\widehat{\phi_{0}}+\Big(\frac{\tau|\xi|e^{-\frac{1}{\tau}t}}{1-\tau|\xi|^{2}+\tau^{2}|\xi|^{2}}-\frac{\tau|\xi|\lambda_{2}e^{\lambda_{2}t}}{(1+\tau\lambda_{2})(\lambda_{2}-\lambda_{3})}+\frac{\tau|\xi|\lambda_{3}e^{\lambda_{3}t}}{(1+\tau\lambda_{3})(\lambda_{2}-\lambda_{3})}\Big)\widehat{a_{0}}\\
&\quad+\Big(-\frac{e^{-\frac{1}{\tau}t}}{1-\tau|\xi|^{2}+\tau^{2}|\xi|^{2}}-\frac{\tau\lambda_{2}^{2}e^{\lambda_{2}t}}{(1+\tau\lambda_{2})(\lambda_{2}-\lambda_{3})}+\frac{\tau\lambda_{3}^{2}e^{\lambda_{3}t}}{(1+\tau\lambda_{3})(\lambda_{2}-\lambda_{3})}\Big)\widehat{\psi_{0}},\\[2mm]
\widehat{\Phi}-\widehat{\Psi}&=e^{-\frac{1}{\tau}t}\widehat{\Phi_{0}}+\frac{\tau|\xi|^{2}e^{-|\xi|^{2}t}-e^{-\frac{1}{\tau}t}}{1-\tau|\xi|^{2}}\widehat{\Psi_{0}}.
\end{aligned}
\end{equation}

In the low frequencies ($|\xi|<2$), the distinct complex conjugated eigenvalues $\lambda_{2}$ and $\lambda_{3}$ satisfy
\begin{equation}\notag
\lambda_{2,3}=-\frac{1}{2}|\xi|^{2}(1\mp iL(\xi))\quad\text{with}\quad L(\xi):=\sqrt{\frac{4}{|\xi|^{2}}-1}.
\end{equation}
Since $\tau<1$, there exists a constant $r_{*}>0$ such that
\begin{equation}\label{Lcharaclf}
\max\big\{\lambda_{1},\,{\rm{Re}}\lambda_{2},\,{\rm{Re}}\lambda_{3},\,\lambda_{4},\,\lambda_{5}\big\}\leq-r_{*}|\xi|^{2},
\end{equation}
from which we expect the linearized system \eqref{LpENS1} behaves like a heat flow in low frequencies. More precisely, as $|\xi|\rightarrow 0$, It follows from the Taylor's expansion that the leading term of each variable can be derived by
\begin{equation}\notag
\begin{aligned}
&\widehat{\phi}\sim e^{-\frac{1}{\tau}t}\Big(\widehat{\phi_{0}}-\widehat{\psi_{0}}+\tau|\xi|\widehat{a_{0}}\Big)+\frac{1}{2}e^{\lambda_{2}t}\Big(\widehat{\psi_{0}}-i\widehat{a_{0}}\Big)+\frac{1}{2}e^{\lambda_{3}t}\Big(\widehat{\psi_{0}}+i\widehat{a_{0}}\Big),\\
&\widehat{a}\sim\frac{1}{2}e^{\lambda_{2}t}\Big(\widehat{a_{0}}+i\widehat{\psi_{0}}\Big)+\frac{1}{2}e^{\lambda_{3}t}\Big(\widehat{a_{0}}-i\widehat{\psi_{0}}\Big),\qquad \widehat{\psi}\sim\frac{1}{2}e^{\lambda_{2}t}\Big(\widehat{\psi_{0}}-i\widehat{a_{0}}\Big)+\frac{1}{2}e^{\lambda_{3}t}\Big(\widehat{\psi_{0}}+i\widehat{a_{0}}\Big),\\
&\widehat{\Phi}\sim e^{-\frac{1}{\tau}t}\Big(\widehat{\Phi_{0}}-\widehat{\Psi_{0}}\Big)+e^{-|\xi|^{2}t}\widehat{\Psi_{0}},\qquad \widehat{\Psi}\sim e^{-|\xi|^{2}t}\widehat{\Psi_{0}},
\end{aligned}
\end{equation}
and further, the leading terms of the relative velocity hold
\begin{equation}\notag
\begin{aligned}
&\widehat{\phi}-\widehat{\psi}\sim e^{-\frac{1}{\tau}t}\Big(\widehat{\phi_{0}}-\widehat{\psi_{0}}+\tau|\xi|\widehat{a_{0}}\Big)-\frac{1}{2}\tau|\xi|e^{\lambda_{2}t}\Big(\widehat{a_{0}}+i\widehat{\psi_{0}}\Big)-\frac{1}{2}\tau|\xi|e^{\lambda_{3}t}\Big(\widehat{a_{0}}-i\widehat{\psi_{0}}\Big),\\
&\widehat{\Phi}-\widehat{\Psi}\sim e^{-\frac{1}{\tau}t}\Big(\widehat{\Phi_{0}}-\widehat{\Psi_{0}}\Big)+\tau|\xi|^{2}e^{-|\xi|^{2}t}\widehat{\Psi_{0}}.
\end{aligned}
\end{equation}
The above approximation holds up to higher–order terms. According to the fact $|\widehat{f}|\sim|\widehat{\Lambda^{-1}\operatorname{div} f}|+|\widehat{\Lambda^{-1}\nabla\times f}|$, \eqref{Lcharaclf}, and Parseval's formula, we obtain for any $j\leq 0$ that
\begin{align}
\label{Lulf}&\|\dot{\Delta}_{j}u\|_{L^{2}}\lesssim e^{-\frac{1}{\tau}t}\Big(\|\dot{\Delta}_{j}(u_{0},v_{0})\|_{L^{2}}+\tau 2^{j}\|\dot{\Delta}_{j}a_{0}\|_{L^{2}}\Big)+e^{-r_{*}2^{2j}t}\|\dot{\Delta}_{j}(a_{0},v_{0})\|_{L^{2}},\\
\label{Lavlf}&\|\dot{\Delta}_{j}(a,v)\|_{L^{2}}\lesssim e^{-r_{*}2^{2j}t}\|\dot{\Delta}_{j}(a_{0},v_{0})\|_{L^{2}},\\
\label{Lrvlf}&\|\dot{\Delta}_{j}(u-v)\|_{L^{2}}\lesssim e^{-\frac{1}{\tau}t}\Big(\|\dot{\Delta}_{j}(u_{0},v_{0})\|_{L^{2}}+\tau 2^{j}\|\dot{\Delta}_{j}a_{0}\|_{L^{2}}\Big)+\tau 2^{j}e^{-r_{*}2^{2j}t}\|\dot{\Delta}_{j}(a_{0},v_{0})\|_{L^{2}}.
\end{align}

On the other hand, for $|\xi|>2$, the two distinct real eigenvalues $\lambda_{2}$ and $\lambda_{3}$ are, respectively, given by
\begin{equation}\notag
\lambda_{2}=-\frac{1}{2}|\xi|^{2}(1- H(\xi)) \quad\text{and}\quad \lambda_{3}=-\frac{1}{2}|\xi|^{2}(1+ H(\xi))\quad\text{with}\quad H(\xi):=\sqrt{1-\frac{4}{|\xi|^{2}}},
\end{equation}
which, noting again $\tau<1$, yields that there exist some constants $r^{*}>0$ and $R^{*}>0$ such that
\begin{equation}\label{Lcharachf}
\max\big\{\lambda_{3},\lambda_{5}\big\}\leq-r^{*}|\xi|^{2}\quad \text{and}\quad \max\big\{\lambda_{1},\lambda_{2},\lambda_{3},\lambda_{4},\lambda_{5}\big\}\leq-R^{*},
\end{equation}
and it holds for any $\alpha>0$ that
\begin{equation}\label{epvart1}
e^{-\frac{1}{\tau}t}\lesssim \tau^{\alpha}e^{-\frac{1}{2\tau}t},\qquad t\geq1.
\end{equation}
These allow us to expect the exponentially damped behavior in high frequencies. We first consider the regime $|\xi|\geq\frac{1}{\sqrt{\tau}}$, which implies
\begin{align}\label{epvart2}
e^{-r^{*}|\xi|^{2}t}\lesssim \tau e^{-\frac{r^{*}}{2}|\xi|^{2}t},\qquad t\geq1.
\end{align}
The asymptotic behavior of each variable for $\sqrt{\tau}|\xi|\rightarrow\infty$ is given by
\begin{equation}\notag
\begin{aligned}
&\widehat{\phi}\sim e^{-\frac{1}{\tau}t}\Big(\widehat{\phi_{0}}-\frac{\widehat{a_{0}}}{|\xi|}+\frac{\widehat{\psi_{0}}}{\tau|\xi|^{2}}\Big)+e^{\lambda_{2}t}\Big(\frac{\widehat{a_{0}}}{|\xi|}-\frac{\widehat{\psi_{0}}}{|\xi|^{2}}\Big)+\frac{1}{\tau}e^{\lambda_{3}t}\Big(-\frac{\widehat{\psi_{0}}}{|\xi|^{2}}+\frac{\widehat{a_{0}}}{|\xi|^{3}}\Big),\\
&\widehat{a}\sim e^{\lambda_{2}t}\Big(\widehat{a_{0}}-\frac{\widehat{\psi_{0}}}{|\xi|}\Big)+e^{\lambda_{3}t}\Big(\frac{\widehat{\psi_{0}}}{|\xi|}-\frac{\widehat{a_{0}}}{|\xi|^{2}}\Big),\qquad \widehat{\psi}\sim e^{\lambda_{2}t}\Big(\frac{\widehat{a_{0}}}{|\xi|}-\frac{\widehat{\psi_{0}}}{|\xi|^{2}}\Big)+e^{\lambda_{3}t}\Big(\widehat{\psi_{0}}-\frac{\widehat{a_{0}}}{|\xi|}\Big),\\
&\widehat{\Phi}\sim e^{-\frac{1}{\tau}t}\Big(\widehat{\Phi_{0}}+\frac{\widehat{\Psi_{0}}}{\tau|\xi|^{2}}\Big)-e^{-|\xi|^{2}t}\frac{\widehat{\Psi_{0}}}{\tau|\xi|^{2}},\qquad \widehat{\Psi}\sim e^{-|\xi|^{2}t}\widehat{\Psi_{0}},
\end{aligned}
\end{equation}
and
\begin{equation}\notag
\begin{aligned}
&\widehat{\phi}-\widehat{\psi}\sim e^{-\frac{1}{\tau}t}\Big(\widehat{\phi_{0}}-\frac{\widehat{a_{0}}}{|\xi|}+\frac{\widehat{\psi_{0}}}{\tau|\xi|^{2}}\Big)+\tau e^{\lambda_{2}t}\Big(\frac{\widehat{a_{0}}}{|\xi|}-\frac{\widehat{\psi_{0}}}{|\xi|^{2}}\Big)+e^{\lambda_{3}t}\Big(-\widehat{\psi_{0}}+\frac{\widehat{a_{0}}}{|\xi|}\Big),\\
&\widehat{\Phi}-\widehat{\Psi}\sim e^{-\frac{1}{\tau}t}\Big(\widehat{\Phi_{0}}+\frac{\widehat{\Psi_{0}}}{\tau|\xi|^{2}}\Big)-e^{-|\xi|^{2}t}\widehat{\Psi_{0}},
\end{aligned}
\end{equation}
up to higher-order terms. Making use of Parseval's formula and \eqref{Lcharachf}-\eqref{epvart2}, we deduce for any $j\geq -\frac{1}{2}\log_{2}\tau$ and $t\geq1$ that
\begin{align}
\label{Luhf}& \|\dot{\Delta}_{j}u\|_{L^{2}}\lesssim e^{-\frac{1}{2}R^{*}t}\Big(\tau\|\dot{\Delta}_{j}u_{0}\|_{L^{2}}+2^{-j}\|\dot{\Delta}_{j}a_{0}\|_{L^{2}}+2^{-2j}\|\dot{\Delta}_{j}v_{0}\|_{L^{2}}\Big),\\
\label{Lahf}& \|\dot{\Delta}_{j}a\|_{L^{2}}\lesssim e^{-R^{*}t}\Big(\|\dot{\Delta}_{j}a_{0}\|_{L^{2}}+2^{-j}\|\dot{\Delta}_{j}v_{0}\|_{L^{2}}\Big),\\
\label{Lvhf}&\|\dot{\Delta}_{j}v\|_{L^{2}}\lesssim e^{-R^{*}t}\Big(2^{-j}\|\dot{\Delta}_{j}a_{0}\|_{L^{2}}+2^{-2j}\|\dot{\Delta}_{j}v_{0}\|_{L^{2}}\Big)+e^{-r^{*}2^{2j}t}\|\dot{\Delta}_{j}v_{0}\|_{L^{2}},\\
\label{Lrvhf}&
\begin{aligned}
\|\dot{\Delta}_{j}(u-v)\|_{L^{2}}&\lesssim \tau e^{-\frac{1}{2}R^{*}t}\Big(\|\dot{\Delta}_{j}u_{0}\|_{L^{2}}+2^{-j}\|\dot{\Delta}_{j}a_{0}\|_{L^{2}}+\|\dot{\Delta}_{j}v_{0}\|_{L^{2}}\Big).
\end{aligned}
\end{align}
Furthermore, for the case $|\xi|<\frac{1}{\sqrt{\tau}}$, the asymptotic behaviors of the eigenvalues as $|\xi|\rightarrow\frac{1}{\sqrt{\tau}}$ gives rise to
\begin{equation}\label{Lcharacmf}
\max\big\{\lambda_{1},\lambda_{3},\lambda_{4},\lambda_{5}\big\}\leq -\frac{1}{2\tau} \quad\text{and}\quad \lambda_{2}\leq -1.
\end{equation}
The leading term of each variable for $|\xi|\rightarrow\frac{1}{\sqrt{\tau}}$ has the following form:
\begin{align*}
&\widehat{\phi}\sim e^{-\frac{1}{\tau}t}\Big(\widehat{\phi_{0}}+\frac{1}{\sqrt{\tau}}\widehat{a_{0}}-\frac{1}{\tau}\widehat{\psi_{0}}\Big)+e^{\lambda_{2}t}\Big(\frac{2\sqrt{\tau}}{1-4\tau+\sqrt{1-4\tau}}\widehat{a_{0}}-\frac{1-\sqrt{1-4\tau}}{1-4\tau+\sqrt{1-4\tau}}\widehat{\psi_{0}}\Big)\\
&\qquad+e^{\lambda_{3}t}\Big(\frac{2\sqrt{\tau}}{1-4\tau-\sqrt{1-4\tau}}\widehat{a_{0}}-\frac{1+\sqrt{1-4\tau}}{1-4\tau-\sqrt{1-4\tau}}\widehat{\psi_{0}}\Big),\\
&\widehat{a}\sim e^{\lambda_{2}t}\Big(\frac{1+\sqrt{1-4\tau}}{2\sqrt{1-4\tau}}\widehat{a_{0}}-\frac{\sqrt{\tau}}{\sqrt{1-4\tau}}\widehat{\psi_{0}}\Big)+e^{\lambda_{3}t}\Big(-\frac{1-\sqrt{1-4\tau}}{2\sqrt{1-4\tau}}\widehat{a_{0}}+\frac{\sqrt{\tau}}{\sqrt{1-4\tau}}\widehat{\psi_{0}}\Big),\\
&\widehat{\psi}\sim e^{\lambda_{2}t}\Big(\frac{\sqrt{\tau}}{\sqrt{1-4\tau}}\widehat{a_{0}}-\frac{1-\sqrt{1-4\tau}}{2\sqrt{1-4\tau}}\widehat{\psi_{0}}\Big)+e^{\lambda_{3}t}\Big(-\frac{\sqrt{\tau}}{\sqrt{1-4\tau}}\widehat{a_{0}}+\frac{1+\sqrt{1-4\tau}}{2\sqrt{1-4\tau}}\widehat{\psi_{0}}\Big),\\
&\widehat{\Phi}\sim e^{-\frac{1}{\tau}t}\Big(\widehat{\Phi_{0}}+\frac{1}{2\sqrt{\tau}(|\xi|-\frac{1}{\sqrt{\tau}})}\widehat{\Psi_{0}}\Big)-e^{-|\xi|^{2}t}\frac{1}{2\sqrt{\tau}(|\xi|-\frac{1}{\sqrt{\tau}})}\widehat{\Psi_{0}},\qquad\widehat{\Psi}\sim e^{-|\xi|^{2}t}\widehat{\Psi_{0}},
\end{align*}
and
\begin{equation}\notag
\begin{aligned}
&\widehat{\phi}-\widehat{\psi}\sim e^{-\frac{1}{\tau}t}\Big(\widehat{\phi_{0}}+\frac{1}{\sqrt{\tau}}\widehat{a_{0}}-\frac{1}{\tau}\widehat{\psi_{0}}\Big)+e^{\lambda_{2}t}\Big(\frac{\sqrt{\tau}(1-\sqrt{1-4\tau})}{1-4\tau+\sqrt{1-4\tau}}\widehat{a_{0}}-\frac{1-2\tau-\sqrt{1-4\tau}}{1-4\tau+\sqrt{1-4\tau}}\widehat{\psi_{0}}\Big)\\
&\qquad\qquad+e^{\lambda_{3}t}\Big(\frac{\sqrt{\tau}(1+\sqrt{1-4\tau})}{1-4\tau-\sqrt{1-4\tau}}\widehat{a_{0}}-\frac{2-4\tau+2\sqrt{1-4\tau}}{2-8\tau-2\sqrt{1-4\tau}}\widehat{\psi_{0}}\Big),\\
&\widehat{\Phi}-\widehat{\Psi}\sim e^{-\frac{1}{\tau}t}\Big(\widehat{\Phi_{0}}+\frac{1}{2\sqrt{\tau}(|\xi|-\frac{1}{\sqrt{\tau}})}\widehat{\Psi_{0}}\Big)-e^{-|\xi|^{2}t}\frac{1}{2\sqrt{\tau}(|\xi|-\frac{1}{\sqrt{\tau}})}\widehat{\Psi_{0}}.
\end{aligned}
\end{equation}
Noticing \eqref{Lcharachf}-\eqref{epvart1}, \eqref{Lcharacmf} and the fact $1-\sqrt{1-4\tau}\lesssim\tau$ for $\tau\in(0,\frac{1}{4})$, we obtain for $2^j\in(2,\frac{1}{\sqrt{\tau}})$ and $t\geq1$ that
\begin{align}
\label{Lumf} &\|\dot{\Delta}_{j}u\|_{L^{2}}\lesssim e^{-\frac{1}{4}R^{*}t}\Big(\sqrt{\tau}\|\dot{\Delta}_{j}a_{0}\|_{L^{2}}+\tau\|\dot{\Delta}_{j}(u_{0},v_{0})\|_{L^{2}}\Big),\\
\label{Lamf} &\|\dot{\Delta}_{j}a\|_{L^{2}}\lesssim e^{-\frac{1}{2}R^{*}t}\Big(\|\dot{\Delta}_{j}a_{0}\|_{L^{2}}+\sqrt{\tau}\|\dot{\Delta}_{j}v_{0}\|_{L^{2}}\Big),\\
\label{Lvmf} &\|\dot{\Delta}_{j}v\|_{L^{2}}\lesssim e^{-\frac{1}{4}R^{*}t}\Big(\sqrt{\tau}\|\dot{\Delta}_{j}a_{0}\|_{L^{2}}+\tau\|\dot{\Delta}_{j}v_{0}\|_{L^{2}}\Big),\\
\label{Lrvmf} &\|\dot{\Delta}_{j}(u-v)\|_{L^{2}}\lesssim \tau e^{-\frac{1}{4}R^{*}t}\Big(\|\dot{\Delta}_{j}(u_{0},v_{0})\|_{L^{2}}+\sqrt{\tau}\|\dot{\Delta}_{j}a_{0}\|_{L^{2}}\Big).
\end{align}
Substituting the relation $\sqrt{\tau}\leq 2^{-j}$ into \eqref{Lumf}-\eqref{Lrvmf} and combining the resultant inequalities with \eqref{Luhf}-\eqref{Lrvhf}, we obtain the desired estimates \eqref{highj}-\eqref{relativej-2} by setting $R_{*}=\frac{1}{4}R^{*}$.
\end{proof}

Then, we establish the uniform decay estimates of solutions to the problem \eqref{LpENS1}.
\begin{lemma}\label{lemmalinearoptimal}
Let $\tau\in(0,\frac{1}{4})$, $\sigma_{0}>\sigma_{1}$, and $(u,a,v)$ be the solution to the problem \eqref{LpENS1}. Assume that the initial data $(u_{0},a_{0},v_{0})$ satisfies
\begin{equation}
\bar{\mathcal{X}}_{0}:=\|(u_{0},\nabla a_{0},v_{0})\|_{\dot{B}^{\sigma_{0}}_{2,1}}+\tau\|u_{0}\|_{\dot{B}^{\sigma_{0}+2}_{2,1}}<\infty.
\end{equation}

\begin{itemize}
\item {\rm(}Upper bounds{\rm)}. If the initial data further satisfies $(u_{0},a_{0},v_{0})^{\ell}\in \dot{B}^{\sigma_{1}}_{2,\infty}$ such that
\begin{equation}
\bar{\mathcal{Y}}_{0}=\|(u_{0},a_{0},v_{0})\|_{\dot{B}^{\sigma_{1}}_{2,\infty}}<\infty,
\end{equation}
then it holds for some uniform constant $C>0$ that
\begin{equation}\label{Luavtdes}
\left\{
\begin{aligned}
&\|(u,v)(t)\|_{\dot{B}^{\sigma}_{2,1}}\leq C(\bar{\mathcal{X}}_{0}+\bar{\mathcal{Y}}_{0})(1+t)^{-\frac{1}{2}(\sigma-\sigma_{1})}, \quad&& \sigma\in(\sigma_{1},\sigma_{0}+2],\\
&\|a(t)\|_{\dot{B}^{\sigma}_{2,1}}\leq C(\bar{\mathcal{X}}_{0}+\bar{\mathcal{Y}}_{0})(1+t)^{-\frac{1}{2}(\sigma-\sigma_{1})}, && \sigma\in(\sigma_{1},\sigma_{0}+1],
\end{aligned}
\right.
\end{equation}
and
\begin{equation}\label{Lruvtdes}
\left\{
\begin{aligned}
&\|(u-v)(t)\|_{\dot{B}^{\sigma}_{2,1}}\leq C(\bar{\mathcal{X}}_{0}+\bar{\mathcal{Y}}_{0})(1+t)^{-\frac{1}{2}(\sigma-\sigma_{1}+1)}, && \sigma\in(\sigma_{1},\sigma_{0}+1],\\
&\|(u-v)(t)\|_{\dot{B}^{\sigma}_{2,1}}\leq C\tau(\bar{\mathcal{X}}_{0}+\bar{\mathcal{Y}}_{0})(1+t)^{-\frac{1}{2}(\sigma-\sigma_{1}+1)}, \quad && \sigma\in(\sigma_{1},\sigma_{0}].
\end{aligned}
\right.
\end{equation}

\item {\rm(}Upper and Lower bounds{\rm)}. If the Fourier transform $(\widehat{a_{0}}, \widehat{v_{0}})$ further satisfies
\begin{equation}\label{idcd3}
\liminf_{|\xi|\rightarrow 0}|\xi|^{\sigma_{1}+\frac{d}{2}}|\widehat{a_{0}}(\xi)|>0,\quad  \widehat{v_{0}}(0)=\int_{\mathbb{R}^{d}}v_{0}(x)\,{\rm{d}}x=0,
\end{equation}
then it holds for some uniform constants $c,C>0$ that 
\begin{equation}
c(1+t)^{-\frac{1}{2}(\sigma-\sigma_{1})}\leq \|(u,a,v)(t)\|_{\dot{B}^{\sigma}_{2,1}}\leq C(1+t)^{-\frac{1}{2}(\sigma-\sigma_{1})},\qquad \sigma\in(\sigma_{1},\sigma_{0}+1].
\end{equation}
\end{itemize}
\end{lemma}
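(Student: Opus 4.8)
\textbf{Proof proposal for Lemma \ref{lemmalinearoptimal}.}

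The plan is to derive everything from the frequency-localized Green's function estimates in Lemma \ref{lemma21}, splitting the analysis into the low-frequency and high-frequency regimes and then summing with the appropriate Besov weights. For the upper bounds, I would first treat the low frequencies ($j\le 1$): using \eqref{lowj-1} and \eqref{relativej-1}, the dissipative factor $e^{-r_*2^{2j}t}$ acting on the $\dot B^{\sigma_1}_{2,\infty}$-bounded data $(a_0,v_0)^\ell$ produces, by the elementary inequality $\sum_{j\le 0}2^{j\sigma}e^{-r_*2^{2j}t}\lesssim \langle t\rangle^{-\frac{1}{2}(\sigma-\sigma_1)}$ for $\sigma>\sigma_1$ (exactly as used around \eqref{nsgm2} in Section \ref{sectiondecay}), the claimed rate $(1+t)^{-\frac12(\sigma-\sigma_1)}$ for $(a,v)$, the rate $(1+t)^{-\frac12(\sigma-\sigma_1)}$ for $u$ (noting the contribution of $u-v$ carries the extra factor $2^j$, hence decays faster and only improves matters), and the rate $(1+t)^{-\frac12(\sigma-\sigma_1+1)}$ for $u-v$ coming from the extra $\tau 2^j$ prefactor in \eqref{relativej-1}. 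The pieces of \eqref{lowj-1}, \eqref{relativej-1} carrying $e^{-t/\tau}$ are handled by $e^{-t/\tau}\lesssim \langle t\rangle^{-N}$ for any $N$ (since $\tau<\tfrac14$), and — for the $u-v$ estimate over $\sigma\le\sigma_0$ — by extracting one factor of $\tau$ via $e^{-t/\tau}\lesssim\tau\langle t\rangle^{-N}$, cf. \eqref{epntep1}; here the data norms $\|(u_0,v_0)\|^\ell_{\dot B^{\sigma_1}_{2,\infty}}$ and $\tau\|u_0\|^\ell_{\dot B^{\sigma_1+1}_{2,\infty}}\lesssim\tau\|u_0\|_{\dot B^{\sigma_1}_{2,\infty}}$ are all bounded by $\bar{\mathcal Y}_0$.

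Next I would handle the high frequencies ($j\ge 1$). Here there is no polynomial decay in the Green's function, only $e^{-R_*t}$ (or $e^{-R_*2^{2j}t}$ for the genuinely parabolic $v$-piece in \eqref{Lvhf}); since $e^{-R_*t}\lesssim\langle t\rangle^{-N}$ for any $N$, the high-frequency tails decay faster than any algebraic rate, provided the relevant high-frequency data norms are finite. By \eqref{highj-1}, the $u$-tail needs $\tau\|u_0\|^h_{\dot B^{\sigma+2}_{2,1}}$ at regularity $\sigma\le\sigma_0+2$, the $a$-tail needs $\|a_0\|^h_{\dot B^{\sigma}_{2,1}}$ and $\|v_0\|^h_{\dot B^{\sigma-1}_{2,1}}$ at $\sigma\le\sigma_0+1$ (equivalently $\|\nabla a_0\|_{\dot B^{\sigma-1}_{2,1}}$, $\|v_0\|_{\dot B^{\sigma-1}_{2,1}}$, both controlled by $\bar{\mathcal X}_0$), the $v$-tail needs one fewer derivative still, and the $u-v$ tail in \eqref{relativej-2} carries a clean prefactor $\tau$, which yields the $\mathcal O(\tau)$ rate on $\sigma\le\sigma_0$. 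Summing these with weights $2^{j\sigma}$ against the (finite) data norms and combining with the low-frequency estimates gives \eqref{Luavtdes}–\eqref{Lruvtdes}. I should be a little careful with the ranges: the $\tau\|u_0\|_{\dot B^{\sigma_0+2}_{2,1}}$ term in $\bar{\mathcal X}_0$ is exactly what licenses $\sigma$ up to $\sigma_0+2$ for $u$ (and $v$), while $a$ and $u-v$ lose one and two derivatives respectively, matching the stated ranges.

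For the lower bound, the strategy is the standard one: show that for small $|\xi|$ the solution is, up to a perturbation of higher order in $|\xi|$, a genuine heat-type flow whose $\widehat a$-component does not decay to zero faster than $e^{-c|\xi|^2t}$. Concretely, from the Green's function expansions in the low-frequency regime (the $\widehat a\sim \tfrac12 e^{\lambda_2 t}(\widehat{a_0}+i\widehat{\psi_0})+\tfrac12 e^{\lambda_3 t}(\widehat{a_0}-i\widehat{\psi_0})$ asymptotics, with $\operatorname{Re}\lambda_{2,3}=-\tfrac12|\xi|^2$), the condition $\widehat{v_0}(0)=0$ forces $\widehat{\psi_0}(\xi)=o(1)$ as $\xi\to0$, so that $|\widehat a(t,\xi)|\gtrsim e^{-\frac12|\xi|^2 t}|\widehat{a_0}(\xi)|$ on a fixed ball $|\xi|\le r_0$ for $r_0$ small. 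Together with $\liminf_{|\xi|\to0}|\xi|^{\sigma_1+d/2}|\widehat{a_0}(\xi)|>0$, the standard Fourier lower-bound argument (as in Danchin–Xu or the references in the excerpt) gives $\|a(t)\|_{\dot B^\sigma_{2,1}}\ge \|a(t)\|^\ell_{\dot B^\sigma_{2,\infty}}\gtrsim (1+t)^{-\frac12(\sigma-\sigma_1)}$, and $\|(u,a,v)\|_{\dot B^\sigma_{2,1}}\ge\|a\|_{\dot B^\sigma_{2,1}}$ closes the lower bound; the upper bound is the already-proven \eqref{Luavtdes}. The main obstacle I anticipate is bookkeeping rather than conceptual: keeping the $\tau$-dependence explicit and uniform through all the cases of Lemma \ref{lemma21} (low, $|\xi|\sim\tau^{-1/2}$ intermediate, and high frequencies), and making sure the exponential-in-$\tau^{-1}$ terms are always absorbed with the correct power of $\tau$ so that the bounds in \eqref{Lruvtdes} genuinely carry the factor $\tau$ on the stated ranges and nowhere lose uniformity; the lower-bound step is routine once $\widehat{\psi_0}(0)=0$ is exploited.
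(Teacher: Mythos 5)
Your approach tracks the paper's essentially step by step: for the upper bounds you sum the frequency-localized estimates of Lemma~\ref{lemma21} with the appropriate Besov weights (low frequencies via $\sum_{j\le 0}2^{j(\sigma-\sigma_1)}e^{-r_*2^{2j}t}\lesssim\langle t\rangle^{-(\sigma-\sigma_1)/2}$, high frequencies via $e^{-R_*t}\lesssim\langle t\rangle^{-N}$ with the data norms absorbed into $\bar{\mathcal X}_0$), and for the extra factor $\tau$ on $u-v$ you invoke $e^{-t/\tau}\lesssim\tau\langle t\rangle^{-N}$ as in \eqref{epntep1} together with the explicit $\tau 2^j$ and $\tau$ prefactors in \eqref{relativej-1}, \eqref{relativej-2}. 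Your bookkeeping of the derivative budget (why $u,v$ reach $\sigma_0+2$ while $a$ only $\sigma_0+1$ and the $\tau$-improved $u-v$ bound only $\sigma_0$) also matches the paper's \eqref{Lutdes}--\eqref{Lvtdes}. This part is correct and is the paper's argument.

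For the lower bound, however, the pointwise claim $|\widehat a(t,\xi)|\gtrsim e^{-\frac12|\xi|^2t}|\widehat{a_0}(\xi)|$ \emph{on a fixed ball $|\xi|\le r_0$} is not correct, and this is a step that would fail as written. The Green's function gives
\[
\widehat a(\xi,t)=e^{-\frac12|\xi|^2t}\Big\{\Big(\cos(bt)+\tfrac{\sin(bt)}{2b}|\xi|^2\Big)\widehat{a_0}-\tfrac{\sin(bt)}{b}|\xi|\,\widehat{\psi_0}\Big\},\qquad b=|\xi|+\mathcal O(|\xi|^3),
\]
and the factor $\cos(bt)$ in front of $\widehat{a_0}$ vanishes whenever $bt\in\tfrac\pi2+\pi\mathbb Z$. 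So for any fixed $r_0$ and any large $t$, there are annuli $|\xi|\sim\tfrac{\pi/2+k\pi}{t}\le r_0$ on which the leading $\widehat{a_0}$-contribution is killed and $|\widehat a|$ is controlled only by the $\mathcal O(|\xi|)$ corrections and by $\widehat{\psi_0}$ (which, under $\widehat{v_0}(0)=0$, is small, not large). The smallness of $\widehat{\psi_0}$ does not rescue the estimate; it is the oscillation of the coefficient of $\widehat{a_0}$ that breaks the fixed-ball bound. (For the record, the paper's own intermediate display $|\widehat a|^2\ge e^{-|\xi|^2t}|\widehat{a_0}|^2-C|\xi|^2(|\widehat{a_0}|^2+1)$ suffers from the same defect; you have faithfully reproduced the paper's route including this imprecision.)

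The way to repair it is to restrict the frequency region to a \emph{$t$-dependent} ball $|\xi|\le \varepsilon_0/t$ with $\varepsilon_0$ a small fixed constant: there $bt\le\varepsilon_0$, so $\cos(bt)\ge 1-\varepsilon_0$ and $\tfrac{\sin(bt)}{b}|\xi|\le \varepsilon_0$, while $\widehat{v_0}(0)=0$ guarantees $|\widehat{\psi_0}(\xi)|\le\tfrac12|\widehat{a_0}(\xi)|$ for $|\xi|\le r_0$, and then $|\widehat a(\xi,t)|\gtrsim e^{-\frac12|\xi|^2t}|\widehat{a_0}(\xi)|\gtrsim e^{-\frac12|\xi|^2t}|\xi|^{-\sigma_1-d/2}$ does hold on $\{|\xi|\le\min(\varepsilon_0/t,r_0)\}$; integrating $|\xi|^{2\sigma}|\widehat a|^2$ over this shrinking region still yields $\gtrsim t^{-(\sigma-\sigma_1)}$ for $t$ large, which closes the lower bound after $\dot B^\sigma_{2,1}\hookrightarrow\dot H^\sigma$. (An alternative, cleaner route is to work with $|\widehat a|^2+|\widehat\psi|^2$, which the rotation structure makes nearly conserved modulo $e^{-|\xi|^2t}$ and $\mathcal O(|\xi|)$; since the Lemma's conclusion is a lower bound on the \emph{sum} $\|(u,a,v)\|_{\dot B^\sigma_{2,1}}$, that would suffice and avoids the $t$-dependent cutoff.) You should also double-check, as you note, that the $\mathcal O(\tau)$ bound in \eqref{Lruvtdes} is being proved only for $t\ge1$, since \eqref{epntep1} fails at $t=0$ and at $t=0$ one obviously cannot have $\|u_0-v_0\|\le C\tau(\bar{\mathcal X}_0+\bar{\mathcal Y}_0)$.
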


\begin{proof}
It follows from \eqref{lowj} and $\eqref{highj-1}_1$ that
\begin{equation}\label{Lutdes}
\begin{aligned}
\|u\|_{\dot{B}^{\sigma}_{2,1}}&\lesssim\sum_{j\leq 0}2^{j(\sigma-\sigma_{1})}e^{-r_{*}2^{2j}t}\sup_{j\leq 0}2^{j\sigma_{1}}\|\dot{\Delta}_{j}(u_{0},a_{0},v_{0})\|_{L^{2}}\\
&\qquad+\sum_{j\geq -1}2^{j\sigma}e^{-R_{*}t}\Big(\tau\|\dot{\Delta}_{j}u_{0}\|_{L^{2}}+2^{-j}\|\dot{\Delta}_{j}a_{0}\|_{L^{2}}+2^{-2j}\|\dot{\Delta}_{j}v_{0}\|_{L^{2}}\Big)\\
&\lesssim\langle t \rangle^{-\frac{1}{2}(\sigma-\sigma_{1})}\Big(\|(u_{0},a_{0},v_{0})^{\ell}\|_{\dot{B}^{\sigma_{1}}_{2,\infty}}+\tau\|u_{0}^{h}\|_{\dot{B}^{\sigma_{0}+2}_{2,1}}+\|a_{0}^{h}\|_{\dot{B}^{\sigma_{0}+1}_{2,1}}+\|v_{0}^{h}\|_{\dot{B}^{\sigma_{0}}_{2,1}}\Big),
\end{aligned}
\end{equation}
for $\sigma\in(\sigma_{1},\sigma_{0}+2]$. In a similar argument, we deduce from $\eqref{lowj-1}_2$ and $\eqref{highj-1}_{2,3}$ that
\begin{equation}\label{Lvtdes}
\left\{
\begin{aligned}
&\|a\|_{\dot{B}^{\sigma}_{2,1}}\lesssim\langle t \rangle^{-\frac{1}{2}(\sigma-\sigma_{1})}\Big(\|(a_{0},v_{0})^{\ell}\|_{\dot{B}^{\sigma_{1}}_{2,\infty}}+\|a_{0}^{h}\|_{\dot{B}^{\sigma_{0}+1}_{2,1}}+\|v_{0}^{h}\|_{\dot{B}^{\sigma_{0}}_{2,1}}\Big),&& \sigma\in(\sigma_{1},\sigma_{0}+1],\\
&\|v\|_{\dot{B}^{\sigma}_{2,1}}\lesssim\langle t \rangle^{-\frac{1}{2}(\sigma-\sigma_{1})}\Big(\|(a_{0},v_{0})^{\ell}\|_{\dot{B}^{\sigma_{1}}_{2,\infty}}+\|a_{0}^{h}\|_{\dot{B}^{\sigma_{0}+1}_{2,1}}+\|v_{0}^{h}\|_{\dot{B}^{\sigma_{0}}_{2,1}}\Big),&& \sigma\in(\sigma_{1},\sigma_{0}+2],
\end{aligned}
\right.
\end{equation}
The desired estimates \eqref{Luavtdes} and $\eqref{Lruvtdes}_1$ can be easily verified from \eqref{Lutdes} and \eqref{Lvtdes}. To prove the second estimate in \eqref{Lruvtdes}, we combine \eqref{relativej-1}, \eqref{relativej-2}, and \eqref{epvart1} to obtain
\begin{equation*}
\begin{aligned}
\|u-v\|_{\dot{B}^{\sigma}_{2,1}}&\lesssim\tau\sum_{j\leq 0}\Big(2^{j\sigma}e^{-\frac{1}{2\tau}t}\|\dot{\Delta}_{j}(u_{0},a_{0},v_{0})\|_{L^{2}}+2^{j(\sigma-\sigma_{1}+1)}e^{-r_{*}2^{2j}t}\sup_{j\leq 0}2^{j\sigma_{1}}\|\dot{\Delta}_{j}(a_{0},v_{0})\|_{L^{2}}\Big)\\
&\qquad+\tau\sum_{j\geq -1}2^{j\sigma}e^{-R_{*}t}\|\dot{\Delta}_{j}(u_{0},a_{0},v_{0})\|_{L^{2}}\\
&\lesssim\tau\langle t\rangle^{-\frac{1}{2}(\sigma-\sigma_{1}+1)}\Big(\|(u_{0},a_{0},v_{0})^{\ell}\|_{\dot{B}^{\sigma_{1}}_{2,\infty}}+\|(u_{0},a_{0},v_{0})^{h}\|_{\dot{B}^{\sigma_{0}}_{2,1}}\Big),\qquad\sigma\in(\sigma_{1},\sigma_{0}].
\end{aligned}
\end{equation*}

In order to derive the lower bounds of the solution, we recall the Green's function expressions of $(a,v)$ in low frequencies that
\begin{equation*}
\begin{aligned}
\widehat{a}(\xi,t)&=\frac{\lambda_{2}e^{\lambda_{3}t}-\lambda_{3}e^{\lambda_{2}t}}{\lambda_{2}-\lambda_{3}}\widehat{a_{0}}+\frac{|\xi|e^{\lambda_{3}t}-|\xi|e^{\lambda_{2}t}}{\lambda_{2}-\lambda_{3}}\widehat{\psi_{0}}\\
&=e^{-\frac{1}{2}|\xi|^{2}t}\bigg\{\Big(\cos(bt)+\frac{\sin(bt)}{2b}|\xi|^{2}\Big)\widehat{a_{0}}-\frac{\sin(bt)}{b}|\xi|\widehat{\psi_{0}}\bigg\},
\end{aligned}
\end{equation*}
where $b=\frac{1}{2}|\xi|^{2}L(\xi)=|\xi|+\mathcal{O}(|\xi|^{3})$ as $|\xi|\rightarrow 0$, and similarly, one has
\begin{equation*}
\begin{aligned}
\widehat{\psi}(\xi,t)=e^{-\frac{1}{2}|\xi|^{2}t}\bigg\{\frac{\sin(bt)}{b}|\xi|\widehat{a_{0}}+\Big(\cos(bt)-\frac{\sin(bt)}{2b}|\xi|^{2}\Big)\widehat{\psi_{0}}\bigg\},\qquad\widehat{\Psi}(\xi,t)=e^{-|\xi|^{2}t}\widehat{\Psi_{0}}.
\end{aligned}
\end{equation*}
Under the assumption \eqref{idcd3}, there exists some constant $c_{1}>0$ such that
\begin{equation*}
\begin{aligned}
|\widehat{a}(\xi,t)|^{2}&=e^{-|\xi|^{2}t}\bigg\{\Big(\cos(bt)+\frac{\sin(bt)}{2b}|\xi|^{2}\Big)^{2}|\widehat{a_{0}}(\xi)|^{2}+\frac{|\sin(bt)|^{2}}{b^{2}}|\xi|^{2}|\widehat{\psi_{0}}(\xi)|^{2}\\
&\qquad\qquad\quad-\Big(\cos(bt)+\frac{\sin(bt)}{2b}|\xi|^{2}\Big)\frac{\sin(bt)}{b}|\xi|\Big(\widehat{a_{0}}(\xi)\overline{\widehat{\psi_{0}}(\xi)}+\overline{\widehat{a_{0}}(\xi)}\widehat{\psi_{0}}(\xi)\Big)\bigg\}\\
&\geq e^{-|\xi|^{2}t}|\widehat{a_{0}}(\xi)|^{2}-C|\xi|^{2}(|\widehat{a_{0}}(\xi)|^{2}+1)\\
&\geq c_{1}e^{-|\xi|^{2}t}|\xi|^{-2\sigma_{1}-d}|\cos(|\xi|t)|^2,
\end{aligned}
\end{equation*}
which, together with the embedding property $\dot{B}^{\sigma}_{2,1}\hookrightarrow \dot{H}^{\sigma}$, yields for sufficiently large time $t$ that
\begin{equation*}
\begin{aligned}
\|a(t)\|_{\dot{B}^{\sigma}_{2,1}}
&\gtrsim
\left(
\int_{\substack{\langle t\rangle^{-\frac12}\leq|\xi|\leq2\langle t\rangle^{-\frac12}\\
|\cos(bt)|\geq\frac12}}
e^{-|\xi|^2t}|\xi|^{2(\sigma-\sigma_1)-d}\,{\rm d}\xi
\right)^{\frac12}\\
&\gtrsim
\left(
\int_{\langle t\rangle^{-\frac12}}^{2\langle t\rangle^{-\frac12}}
r^{2(\sigma-\sigma_1)-1}\,{\rm d}r
\right)^{\frac12}
\gtrsim
\langle t\rangle^{-\frac12(\sigma-\sigma_1)},\qquad \sigma>\sigma_1 .
\end{aligned}
\end{equation*}
One can repeat a similar argument and apply the equivalent relation $|\widehat{v}(\xi,t)|^{2}\sim|\widehat{\psi}(\xi,t)|^{2}+|\widehat{\Psi}(\xi,t)|^{2}$ to obtain, for some constant $c_{2}>0$, that
\begin{equation*}
|\widehat{v}(\xi,t)|^{2}\geq c_{2}te^{-|\xi|^{2}t}|\xi|^{2-2\sigma_{1}-d}.
\end{equation*}
Hence, it holds for sufficiently large time $t$ that
\begin{equation*}
\|v\|_{\dot{B}^{\sigma}_{2,1}}\gtrsim\Big(\int_{|\xi|\ll 1}te^{-|\xi|^{2}t}|\xi|^{2(\sigma-\sigma_{1}+1)-d}{\rm{d}}\xi\Big)^{\frac{1}{2}}\gtrsim\langle t\rangle^{-\frac{1}{2}(\sigma-\sigma_{1})},\quad\sigma>\sigma_{1},
\end{equation*}
and this, combined with \eqref{Lruvtdes}, gives rise to
\begin{equation*}
\|u\|_{\dot{B}^{\sigma}_{2,1}}\gtrsim\|v\|_{\dot{B}^{\sigma}_{2,1}}-\|u-v\|_{\dot{B}^{\sigma}_{2,1}}\gtrsim\langle t\rangle^{-\frac{1}{2}(\sigma-\sigma_{1})}-\langle t\rangle^{-\frac{1}{2}(\sigma-\sigma_{1}+1)}\gtrsim\langle t\rangle^{-\frac{1}{2}(\sigma-\sigma_{1})},
\end{equation*}
for $t\gg 1$. The proof of Lemma \ref{lemmalinearoptimal} is completed.
\end{proof}

\section*{Appendix B: Notations and technical lemmas}\label{sectionpre}
\addcontentsline{toc}{section}{Appendix B}
\setcounter{section}{2}
\setcounter{equation}{0}

The notations and technical lemmas used throughout this paper are explained as follows. We denote by $c>0$ or $C>0$ the constant independent of time and the parameter $\tau$. $A\lesssim B(A\gtrsim B)$ means $A\leq CB(A\geq CB)$, and $A\sim B$ stands for $A\lesssim B$ and $A\gtrsim B$. For any Banach space $X$ and the functions $g,h\in X$, let $\|(g,h)\|_{X}:=\|g\|_{X}+\|h\|_{X}$. For any $T>0$ and $1\leq \varrho\leq\infty$, we denote by $L^{\varrho}(0,T;X)$ the set of measurable functions $g:[0,T]\rightarrow X$ such that $t\mapsto \|g(t)\|_{X}$ is in $L^{\varrho}(0,T)$ and write $\|\cdot\|_{L_{T}^{\varrho}(X)}:=\|\cdot\|_{L^{\varrho}(0,T;X)}$. Let $\mathcal{F}(f)=\widehat{f}$ and $\mathcal{F}^{-1}(f)=\check{f}$ be the Fourier transform of $f$ and its inverse, respectively, along with $\Lambda^{s}f=(-\Delta)^{\frac{s}{2}}f:= \mathcal{F}^{-1}(|\xi|^{s}\mathcal{F}(f))$ and $s \in \mathbb{R}$.

The Littlewood-Paley decomposition, Besov spaces, and related analysis tools (e.g., see  \cite{bahourietal11,danchinMA16,danchinxu17}) are collected below. Let $\chi(\xi)$ be a smooth, radial, non-increasing function compactly supported in $B(0,\frac{4}{3})$, satisfying $\chi(\xi)=1$ in $B(0,\frac{3}{4})$. Then $\varphi(\xi):=\chi(\frac{\xi}{2})-\chi(\xi)$ satisfies
\begin{equation*}
\sum_{j\in \mathbb{Z}}\varphi(2^{-j}\cdot)=1\quad\text{and}\quad \text{{\rm{Supp}}}~ \varphi\subset \Big\{\xi\in \mathbb{R}^{d}~|~\frac{3}{4}\leq |\xi|\leq \frac{8}{3}\Big\}.
\end{equation*}
For any $j\in \mathbb{Z}$, define the homogeneous dyadic blocks $\dot{\Delta}_{j}$ by
\begin{equation*}
\dot{\Delta}_{j}u:=\mathcal{F}^{-1}\big{(} \varphi(2^{-j}\cdot )\mathcal{F}(u) \big{)}=2^{jd}h(2^{j}\cdot)\star u\quad\text{with}\quad h:=\mathcal{F}^{-1}\varphi.
\end{equation*}
Then the Littlewood-Paley decomposition
\begin{equation*}
u=\sum_{j\in \mathbb{Z}}\dot{\Delta}_{j}u
\end{equation*}
holds in $\mathcal{S}_h'$, i.e.,  tempered distributions $\mathcal{S}'$ such that $\lim\limits_{j\rightarrow-\infty}\|\dot{S}_ju\|_{L^{\infty}}=0$ holds for any $u\in \mathcal{S}_h'$, where $\dot{S}_j$ denotes the low frequency cut-off $\dot{S}_j:=\chi(2^{-j}D)$.

\vspace{2mm}
With the help of those dyadic blocks, we give the definition of homogeneous Besov spaces as follows.
\begin{defn}\label{defnbesov}
For $s\in \mathbb{R}$ and $1\leq p,r\leq \infty$, the  homogeneous Besov space $\dot{B}^{s}_{p,r}$ is defined by
\begin{equation*}
\dot{B}^{s}_{p,r}:=\Big{\{} u\in \mathcal{S}_{h}'~|~\|u\|_{\dot{B}^{s}_{p,r}}:=\|\{2^{js}\|\dot{\Delta}_{j}u\|_{L^{p}}\}_{j\in\mathbb{Z}}\|_{l^{r}}<\infty \Big{\}} .
\end{equation*}
\end{defn}

Next, we state a class of mixed space-time Besov spaces introduced by Chemin-Lerner \cite{cheminlerner95}.
\begin{defn}\label{defntimespace}
For $T>0$, $s\in\mathbb{R}$, and $1\leq \varrho,r, q \leq \infty$, the space $\widetilde{L}^{\varrho}(0,T;\dot{B}^{s}_{p,r})$ is defined as
\begin{equation*}
\widetilde{L}^{\varrho}(0,T;\dot{B}^{s}_{p,r}):= \Big{\{} u\in L^{\varrho}(0,T;\mathcal{S}'_{h})~|~ \|u\|_{\widetilde{L}^{\varrho}_{T}(\dot{B}^{s}_{p,r})}:=\|\{2^{js}\|\dot{\Delta}_{j}u\|_{L^{\varrho}_{T}(L^{p})}\}_{j\in\mathbb{Z}}\|_{l^{r}}<\infty \Big{\}}.
\end{equation*}
By the Minkowski inequality, it holds 
\begin{equation*}
\|u\|_{\widetilde{L}^{\varrho}_{T}(\dot{B}^{s}_{p,r})}\leq(\geq) \|u\|_{L^{\varrho}_{T}(\dot{B}^{s}_{p,r})}\quad\text{if}~r\geq(\leq)\varrho,
\end{equation*}
where $\|\cdot\|_{L^{\varrho}_{T}(\dot{B}^{s}_{p,r})}$ is the usual Lebesgue-Besov norm.
\end{defn}

To restrict Besov norms to the low-frequency part and the high-frequency part, we often use the following notations for any $s\in\mathbb{R}$ and $p\in[1,\infty]$:
\begin{equation}\nonumber
\left\{
\begin{aligned}
&\|u\|_{\dot{B}^{s}_{p,r}}^{\ell}:=\|\{2^{js}\|\dot{\Delta}_{j}u\|_{L^{p}}\}_{j\leq 0}\|_{l^r}, && \|u\|_{\dot{B}^{s}_{p,r}}^{h}:=\|\{2^{js}\|\dot{\Delta}_{j}u\|_{L^{p}}\}_{j\geq-1}\|_{l^r},\\
&\|u\|_{\widetilde{L}^{\varrho}_{T}(\dot{B}^{s}_{p,r})}^{\ell}:=\|\{2^{js}\|\dot{\Delta}_{j}u\|_{L^{\varrho}_{T}(L^{p})}\}_{j\leq 0}\|_{l^r}, && \|u\|_{\widetilde{L}^{\varrho}_{T}(\dot{B}^{s}_{p,r})}^{h}:=\|\{2^{js}\|\dot{\Delta}_{j}u\|_{L_{T}^{\varrho}(L^{p})}\}_{j\geq-1}\|_{l^r}.
\end{aligned}
\right.
\end{equation}
Define
\begin{equation*}
u^{\ell}:=\sum_{j\leq -1}\dot{\Delta}_{j}u,\qquad u^{h}:=u-u^{\ell}=\sum_{j\geq0}\dot{\Delta}_{j}u.
\end{equation*}
It is easy to check for any $s'>0$ that
\begin{equation}\label{lh}
\left\{
\begin{aligned}
&\|u^{\ell}\|_{\dot{B}^{s}_{p,r}}\lesssim \|u\|_{\dot{B}^{s}_{p,r}}^{\ell}\lesssim \|u\|_{\dot{B}^{s-s'}_{p,r}}^{\ell}, && \|u^{h}\|_{\dot{B}^{s}_{p,1}}\lesssim \|u\|_{\dot{B}^{s}_{p,r}}^{h}\lesssim \|u\|_{\dot{B}^{s+s'}_{p,r}}^{h},\\
&\|u^{\ell}\|_{\widetilde{L}^{\varrho}_{T}(\dot{B}^{s}_{p,r})}\lesssim \|u\|_{\widetilde{L}^{\varrho}_{T}(\dot{B}^{s}_{p,r})}^{\ell}\lesssim \|u\|_{\widetilde{L}^{\varrho}_{T}(\dot{B}^{s-s'}_{p,r})}^{\ell}, && \|u^{h}\|_{\widetilde{L}^{\varrho}_{T}(\dot{B}^{s}_{p,r})}\lesssim \|u\|_{\widetilde{L}^{\varrho}_{T}(\dot{B}^{s}_{p,r})}^{h}\lesssim \|u\|_{\widetilde{L}^{\varrho}_{T}(\dot{B}^{s+s'}_{p,r})}^{h},
\end{aligned}
\right.
\end{equation}
Furthermore, one has
\begin{equation}\label{lhl}
\left\{
\begin{aligned}
&\|u\|_{\dot{B}^{s}_{p,r}\cap\dot{B}^{s'}_{p,r}}=\|u\|_{\dot{B}^{s}_{p,r}}^{\ell}+\|u\|_{\dot{B}^{s'}_{p,r}}^{h}, && s<s',\\
&\|u\|_{\dot{B}^{s}_{p,r}+\dot{B}^{s'}_{p,r}}=\|u\|_{\dot{B}^{s}_{p,r}}^{\ell}+\|u\|_{\dot{B}^{s'}_{p,r}}^{h}, && s>s'.
\end{aligned}
\right.
\end{equation}

We recall some basic properties of Besov spaces and product estimates which will be used repeatedly in this paper. Remark that all the properties remain true for the Chemin-Lerner type spaces whose time exponent has to behave according to the H${\rm{\ddot{o}}}$lder inequality for the time variable.

The first lemma is devoted to the Bernstein inequalities, which in particular imply that $\dot{\Delta}_{j}u$ is smooth for every $u$ in any Besov spaces so that one can take direct calculations on linear equations after applying the operator $\dot{\Delta}_{j}$.
\begin{lemma}\label{Bernstein}
Let $0<r<R$, $1\leq p\leq q\leq \infty$, and $k\in \mathbb{N}$. For any $u\in L^p$ and $\lambda>0$, it holds
\begin{equation}\notag
\left\{
\begin{aligned}
&{\rm{Supp}}~ \mathcal{F}(u) \subset \{\xi\in\mathbb{R}^{d}~| ~|\xi|\leq \lambda R\}\Rightarrow \|D^{k}u\|_{L^q}\lesssim\lambda^{k+d(\frac{1}{p}-\frac{1}{q})}\|u\|_{L^p},\\
&{\rm{Supp}}~ \mathcal{F}(u) \subset \{\xi\in\mathbb{R}^{d}~|~ \lambda r\leq |\xi|\leq \lambda R\}\Rightarrow \|D^{k}u\|_{L^{p}}\sim\lambda^{k}\|u\|_{L^{p}}.
\end{aligned}
\right.
\end{equation}
\end{lemma}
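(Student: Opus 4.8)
The plan is to normalise to $\lambda = 1$ by a scaling argument and then read off each inequality as a convolution estimate against an $L^s$ kernel obtained by inverting a smooth, compactly supported Fourier multiplier.

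For the first inequality I would fix a radial $\theta \in C_c^\infty(\mathbb{R}^d)$ with $\theta \equiv 1$ on $\overline{B(0,R)}$. If $\mathrm{Supp}\,\mathcal{F}(u) \subset B(0,\lambda R)$, then $\mathcal{F}(u) = \theta(\cdot/\lambda)\,\mathcal{F}(u)$, hence $D^k u = \bigl(D^k \mathcal{F}^{-1}[\theta(\cdot/\lambda)]\bigr)\star u$. Writing $g := \mathcal{F}^{-1}\theta \in \mathcal{S}(\mathbb{R}^d)$, one has $\mathcal{F}^{-1}[\theta(\cdot/\lambda)](x) = \lambda^d g(\lambda x)$, so $D^k \mathcal{F}^{-1}[\theta(\cdot/\lambda)](x) = \lambda^{d+k}(D^k g)(\lambda x)$ and $\bigl\|D^k \mathcal{F}^{-1}[\theta(\cdot/\lambda)]\bigr\|_{L^s} = \lambda^{k+d(1-1/s)}\|D^k g\|_{L^s} < \infty$ for every $s \in [1,\infty]$. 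Young's convolution inequality with $1 + \tfrac1q = \tfrac1s + \tfrac1p$ (so that $1 - \tfrac1s = \tfrac1p - \tfrac1q$) then gives $\|D^k u\|_{L^q} \le \bigl\|D^k \mathcal{F}^{-1}[\theta(\cdot/\lambda)]\bigr\|_{L^s}\|u\|_{L^p} \lesssim \lambda^{k+d(1/p-1/q)}\|u\|_{L^p}$.

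For the second inequality, the upper bound $\|D^k u\|_{L^p} \lesssim \lambda^k \|u\|_{L^p}$ is the case $q = p$ of the first one, since the annulus is contained in a ball; only the lower bound $\lambda^k \|u\|_{L^p} \lesssim \|D^k u\|_{L^p}$ needs work. I would first treat $k=1$: pick $\widetilde\varphi \in C_c^\infty(\mathbb{R}^d \setminus \{0\})$ equal to $1$ on $\{r \le |\xi| \le R\}$, and use $\sum_{j=1}^d (-i\xi_j)(i\xi_j) = |\xi|^2$ to write, on the spectrum of $u$,
\begin{equation*}
\mathcal{F}(u)(\xi) = \sum_{j=1}^{d} \frac{-i\xi_j}{|\xi|^2}\,\widetilde\varphi(\xi/\lambda)\,\mathcal{F}(\partial_j u)(\xi),
\end{equation*}
i.e.\ $u = \sum_j h_{j,\lambda}\star \partial_j u$ with $\mathcal{F}(h_{j,\lambda})(\xi) = \tfrac{-i\xi_j}{|\xi|^2}\widetilde\varphi(\xi/\lambda)$. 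This multiplier is smooth (it vanishes near the origin) and compactly supported, so $h_{j,1} \in \mathcal{S}(\mathbb{R}^d)$, and scaling gives $h_{j,\lambda}(x) = \lambda^{d-1} h_{j,1}(\lambda x)$, whence $\|h_{j,\lambda}\|_{L^1} = \lambda^{-1}\|h_{j,1}\|_{L^1} < \infty$. Young's inequality then yields $\|u\|_{L^p} \lesssim \lambda^{-1}\|\nabla u\|_{L^p}$; since each $\partial_j u$ remains spectrally supported in the same annulus, iterating $k$ times produces $\lambda^k\|u\|_{L^p} \lesssim \|D^k u\|_{L^p}$, which together with the upper bound gives $\|D^k u\|_{L^p} \sim \lambda^k \|u\|_{L^p}$.

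There is no genuine obstacle here; this is the classical Bernstein lemma and the argument is self-contained. The only points demanding a little care are checking that the multipliers $\theta(\cdot/\lambda)$ and $\tfrac{-i\xi_j}{|\xi|^2}\widetilde\varphi(\cdot/\lambda)$ are really smooth (for the latter, that $\widetilde\varphi$ is supported away from $0$, so that division by $|\xi|^2$ is harmless) and tracking the scaling exponents so that the powers of $\lambda$ come out exactly as stated.
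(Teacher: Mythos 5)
The paper states Lemma B.1 without proof, treating it as a standard result from the cited literature (e.g.\ Bahouri--Chemin--Danchin); your argument is correct and is precisely the textbook proof one would find there, reducing both estimates to Young's inequality after inverting suitable compactly supported Fourier multipliers and tracking the scaling in $\lambda$. All the details check out: the exponent $k+d(1-\tfrac1s)=k+d(\tfrac1p-\tfrac1q)$ comes out right under the Young relation, and for the lower bound the multiplier $\tfrac{-i\xi_j}{|\xi|^2}\widetilde\varphi(\xi/\lambda)$ is indeed smooth and compactly supported with $\|h_{j,\lambda}\|_{L^1}=\lambda^{-1}\|h_{j,1}\|_{L^1}$, and the iteration is legitimate because differentiation does not enlarge the spectral support.
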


Due to the Bernstein inequalities, the Besov spaces have the following properties.
\begin{lemma}\label{lember}
The following properties hold{\rm{:}}
\begin{itemize}
\item{} For $s\in\mathbb{R}$, $1\leq p_{1}\leq p_{2}\leq \infty$, and $1\leq r_{1}\leq r_{2}\leq \infty$, it holds
\begin{equation*}
\dot{B}^{s}_{p_{1},r_{1}}\hookrightarrow \dot{B}^{s-d(\frac{1}{p_{1}}-\frac{1}{p_{2}})}_{p_{2},r_{2}}.
\end{equation*}
\item{} For $1\leq p\leq q<\infty$, we have the following chain of continuous embedding:
\begin{equation*}
\dot{B}^{0}_{p,1}\hookrightarrow L^{p}\hookrightarrow \dot{B}^{0}_{p,\infty}\hookrightarrow \dot{B}^{\sigma}_{q,\infty},\quad \sigma=-d(\frac{1}{p}-\frac{1}{q})<0.
\end{equation*}
\item {} For all $\lambda>0$, $s\in\mathbb{R}$, and $1\leq p,r\leq\infty$, we have the scaling property
\begin{equation*}
\lambda^{s-\frac{d}{p}}\|u\|_{\dot{B}^{s}_{p,r}}\lesssim \|u(\lambda\cdot)\|_{\dot{B}^{s}_{p,r}}\lesssim \lambda^{s-\frac{d}{p}}\|u\|_{\dot{B}^{s}_{p,r}}.
\end{equation*}
\item{} If $p<\infty$, then $\dot{B}^{\frac{d}{p}}_{p,1}$ is continuously embedded in the set of continuous functions decaying to 0 at infinity.
\item{} The following real interpolation property is satisfied for $1\leq p\leq\infty$, $s_{1}<s_{2}$, and $\theta\in(0,1)$:
\begin{equation}\label{inter}
\|u\|_{\dot{B}^{\theta s_{1}+(1-\theta)s_{2}}_{p,1}}\lesssim \frac{1}{\theta(1-\theta)(s_{2}-s_{1})}\|u\|_{\dot{B}^{ s_{1}}_{p,\infty}}^{\theta}\|u\|_{\dot{B}^{s_{2}}_{p,\infty}}^{1-\theta},
\end{equation}
which in particular implies for any $s'>0$ that
\begin{equation*}
H^{s+s'}\hookrightarrow \dot{B}^{s}_{2,1}\hookrightarrow \dot{H}^{s}.
\end{equation*}
\item{} $\Lambda^{\sigma}$ is an isomorphism from $\dot{B}^{s}_{p,r}$ to $\dot{B}^{s-\sigma}_{p,r}$.
\item{} Let $1\leq p_{1},p_{2},r_{1},r_{2}\leq \infty$, $s_{1}\in\mathbb{R}$, and $s_{2}\in\mathbb{R}$ satisfy
\begin{equation*}
s_{2}<\frac{d}{p_{2}}\quad\text{\text{or}}\quad s_{2}=\frac{d}{p_{2}}~\text{and}~r_{2}=1.
\end{equation*}
The space $\dot{B}^{s_{1}}_{p_{1},r_{1}}\cap \dot{B}^{s_{2}}_{p_{2},r_{2}}$ endowed with the norm $\|\cdot \|_{\dot{B}^{s_{1}}_{p_{1},r_{1}}}+\|\cdot\|_{\dot{B}^{s_{2}}_{p_{2},r_{2}}}$ is a Banach space and has the weak compact and Fatou properties{\rm{:}} If $u^{q}$ is a uniformly bounded sequence of $\dot{B}^{s_{1}}_{p_{1},r_{1}}\cap \dot{B}^{s_{2}}_{p_{2},r_{2}}$, then an element $u$ of $\dot{B}^{s_{1}}_{p_{1},r_{1}}\cap \dot{B}^{s_{2}}_{p_{2},r_{2}}$ and a subsequence $u^{q_{k}}$ exist such that $u^{q_{k}}\rightarrow u $ in $\mathcal{S}'$ and
\begin{equation*}
\|u\|_{\dot{B}^{s_{1}}_{p_{1},r_{1}}\cap \dot{B}^{s_{2}}_{p_{2},r_{2}}}\lesssim \liminf_{q_{k}\rightarrow \infty} \|u_{q_{k}}\|_{\dot{B}^{s_{1}}_{p_{1},r_{1}}\cap \dot{B}^{s_{2}}_{p_{2},r_{2}}}.
\end{equation*}
\end{itemize}
\end{lemma}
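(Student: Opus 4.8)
The plan is to verify each item by reducing it to the Bernstein inequalities of Lemma~\ref{Bernstein}, Plancherel's theorem, and elementary summation of geometric series over the Littlewood--Paley blocks; all of these facts are classical (see \cite{bahourietal11}), so I will only indicate the mechanism rather than grind through the dyadic bookkeeping. For the embeddings, given $\dot{\Delta}_j u$ one applies Bernstein to get $\|\dot{\Delta}_j u\|_{L^{p_2}}\lesssim 2^{jd(\frac1{p_1}-\frac1{p_2})}\|\dot{\Delta}_j u\|_{L^{p_1}}$; multiplying by $2^{j(s-d(\frac1{p_1}-\frac1{p_2}))}$, taking the $\ell^{r_2}$ norm in $j$, and using $\ell^{r_1}\hookrightarrow\ell^{r_2}$ gives $\dot{B}^{s}_{p_1,r_1}\hookrightarrow\dot{B}^{s-d(\frac1{p_1}-\frac1{p_2})}_{p_2,r_2}$. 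The chain $\dot{B}^0_{p,1}\hookrightarrow L^p\hookrightarrow\dot{B}^0_{p,\infty}\hookrightarrow\dot{B}^\sigma_{q,\infty}$ follows from $\|u\|_{L^p}\le\sum_j\|\dot{\Delta}_j u\|_{L^p}$, from $\|\dot{\Delta}_j u\|_{L^p}\lesssim\|u\|_{L^p}$ (Young's inequality, the kernel $2^{jd}h(2^j\cdot)$ having $L^1$-norm independent of $j$), and from the first embedding with $r_1=r_2=\infty$, $s=0$. The statement $\dot{B}^{d/p}_{p,1}\hookrightarrow C_0$ for $p<\infty$ combines $\dot{B}^{d/p}_{p,1}\hookrightarrow\dot{B}^0_{\infty,1}\hookrightarrow L^\infty$ (so each $\dot{\Delta}_j u$ is continuous and the series converges uniformly, giving continuity of $u$) with the fact that each $\dot{\Delta}_j u$ is the convolution of an $L^p$ function with a Schwartz kernel, hence vanishes at infinity, a property that the uniform convergence transfers to $u$.

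For the scaling property I would use $\widehat{u(\lambda\cdot)}(\xi)=\lambda^{-d}\widehat{u}(\xi/\lambda)$, which shows $\dot{\Delta}_j\big(u(\lambda\cdot)\big)$ coincides, up to a bounded overlap of finitely many neighbouring frequency bands, with $\big(\dot{\Delta}_{j'}u\big)(\lambda\cdot)$ for $j'\sim j+\log_2\lambda$; hence $\|\dot{\Delta}_j(u(\lambda\cdot))\|_{L^p}\sim\lambda^{-d/p}\|\dot{\Delta}_{j'}u\|_{L^p}$, and re-indexing the $\ell^r$-sum of $2^{js}(\cdot)$ produces the factor $\lambda^{s-d/p}$. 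For the isomorphism property of $\Lambda^\sigma$, I would write $\varphi(2^{-j}\xi)|\xi|^\sigma=2^{j\sigma}\Theta(2^{-j}\xi)$ with $\Theta(\xi):=|\xi|^\sigma\varphi(\xi)$ smooth and compactly supported away from the origin, so $\mathcal{F}^{-1}\Theta\in L^1$ with norm independent of $j$; then $\|\dot{\Delta}_j\Lambda^\sigma u\|_{L^p}\sim 2^{j\sigma}\|\dot{\Delta}_j u\|_{L^p}$, whence $\|\Lambda^\sigma u\|_{\dot{B}^{s-\sigma}_{p,r}}\sim\|u\|_{\dot{B}^s_{p,r}}$ with inverse $\Lambda^{-\sigma}$.

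The real interpolation inequality \eqref{inter} is obtained by splitting $u=\sum_{j\le N}\dot{\Delta}_j u+\sum_{j>N}\dot{\Delta}_j u$: in the first sum $2^{j(\theta s_1+(1-\theta)s_2)}\|\dot{\Delta}_j u\|_{L^p}\le 2^{j(1-\theta)(s_2-s_1)}\|u\|_{\dot{B}^{s_1}_{p,\infty}}$ and the geometric series contributes $\sim\frac{2^{N(1-\theta)(s_2-s_1)}}{(1-\theta)(s_2-s_1)}$, while the second sum contributes $\sim\frac{2^{-N\theta(s_2-s_1)}}{\theta(s_2-s_1)}\|u\|_{\dot{B}^{s_2}_{p,\infty}}$; choosing the integer $N$ closest to the value balancing the two terms yields the stated bound with constant $\frac{1}{\theta(1-\theta)(s_2-s_1)}$. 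The consequence $H^{s+s'}\hookrightarrow\dot{B}^s_{2,1}\hookrightarrow\dot{H}^s$ then follows by the same low/high-frequency splitting (the low-frequency blocks controlled by the $L^2$-component of $H^{s+s'}$, the high-frequency ones by Cauchy--Schwarz against its $\dot{H}^{s+s'}$-seminorm, the two auxiliary geometric series converging because $s'>0$), together with $\dot{B}^s_{2,1}\hookrightarrow\dot{H}^s$, which is just $\ell^1\hookrightarrow\ell^2$ combined with Plancherel.

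The last item---the Banach space structure together with weak compactness and the Fatou property---is where the argument requires genuine care, because the hypothesis $s_2<\tfrac{d}{p_2}$ (or $s_2=\tfrac{d}{p_2}$ with $r_2=1$) is precisely what makes $\dot{B}^{s_1}_{p_1,r_1}\cap\dot{B}^{s_2}_{p_2,r_2}$ continuously embed into $\mathcal{S}_h'$, so that the Littlewood--Paley series of its elements converge in $\mathcal{S}'$. Given a uniformly bounded sequence $(u^q)$, for each fixed $j$ the functions $\dot{\Delta}_j u^q$ are bounded in $L^{p_1}\cap L^{p_2}$ with Fourier support in a fixed annulus, hence bounded in every $L^p_{\rm loc}$ and every Sobolev norm over that annulus; Banach--Alaoglu plus a Cantor diagonal extraction over $j\in\mathbb{Z}$ yields a subsequence along which $\dot{\Delta}_j u^{q_k}$ converges weakly-$*$ to some $v_j$ for every $j$. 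One sets $u:=\sum_j v_j$, checks using the uniform bound and the hypothesis on $(s_2,p_2,r_2)$ that this series converges in $\mathcal{S}'$ and that $u^{q_k}\to u$ in $\mathcal{S}'$ (so $\dot{\Delta}_j u=v_j$), and then lower semicontinuity of $v\mapsto\|\dot{\Delta}_j v\|_{L^{p_i}}$ under weak-$*$ limits of frequency-localised functions, combined with Fatou's lemma on the $\ell^{r_i}$-sums, gives $\|u\|_{\dot{B}^{s_1}_{p_1,r_1}\cap\dot{B}^{s_2}_{p_2,r_2}}\lesssim\liminf_{k\to\infty}\|u^{q_k}\|_{\dot{B}^{s_1}_{p_1,r_1}\cap\dot{B}^{s_2}_{p_2,r_2}}$; completeness is obtained by running the same reasoning on Cauchy sequences. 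I expect this final point---and the routine check that all of the above transfers to the Chemin--Lerner spaces $\widetilde{L}^\varrho_T(\dot{B}^s_{p,r})$ via Hölder in time---to be the only places demanding real attention; everything else is a standard manipulation of dyadic sums.
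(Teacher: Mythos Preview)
The paper does not supply its own proof of this lemma: it is stated in Appendix~B as a collection of well-known properties of homogeneous Besov spaces, with the reader referred to \cite{bahourietal11,danchinMA16,danchinxu17} for details. Your sketches are correct and follow precisely the standard arguments one finds in those references (Bernstein for the embeddings, dyadic re-indexing for scaling, geometric splitting for the interpolation inequality, and Banach--Alaoglu plus diagonal extraction for the Fatou property), so there is nothing to compare against and nothing to correct.
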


The Moser-type product estimates in Besov spaces play a fundamental role in the analysis of nonlinear terms:
\begin{lemma}\label{lemmaB5}
The following statements hold:
\begin{itemize}
\item{} Let $s>0$ and $1\leq p,r\leq \infty$. Then $\dot{B}^{s}_{p,r}\cap L^{\infty}$ is a algebra and
\begin{equation}\label{uv1}
\|uv\|_{\dot{B}^{s}_{p,r}}\lesssim \|u\|_{L^{\infty}}\|v\|_{\dot{B}^{s}_{p,r}}+ \|v\|_{L^{\infty}}\|u\|_{\dot{B}^{s}_{p,r}}.
\end{equation}
\item{} Let $s_{1}$, $s_{2}$, and $p$ satisfy $2\leq p\leq \infty$, $s_{1}\leq \frac{d}{p}$, $s_{2}\leq \frac{d}{p}$, and $s_{1}+s_{2}>0$. Then we have
\begin{equation}\label{uv2}
\|uv\|_{\dot{B}^{s_{1}+s_{2}-\frac{d}{p}}_{p,1}}\lesssim \|u\|_{\dot{B}^{s_{1}}_{p,1}}\|v\|_{\dot{B}^{s_{2}}_{p,1}}.
\end{equation}
\item{} Assume that $s_{1}$, $s_{2}$, and $p$ satisfy $2\leq p\leq \infty$, $s_{1}\leq \frac{d}{p}$, $s_{2}<\frac{d}{p}$, and $s_{1}+s_{2}\geq0$. Then it holds 
\begin{equation}\label{uv3}
\|uv\|_{\dot{B}^{s_{1}+s_{2}-\frac{d}{p}}_{p,\infty}}\lesssim \|u\|_{\dot{B}^{s_{1}}_{p,1}}\|v\|_{\dot{B}^{s_{2}}_{p,\infty}}.
\end{equation}
\end{itemize}
\end{lemma}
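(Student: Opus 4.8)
\textbf{Proof proposal for Lemma \ref{lemmaB5}.}

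The plan is to treat the three product estimates by the standard Bony paraproduct decomposition, writing $uv = \dot{T}_u v + \dot{T}_v u + \dot{R}(u,v)$, where $\dot{T}_u v = \sum_j \dot{S}_{j-1}u\,\dot{\Delta}_j v$ and $\dot{R}(u,v) = \sum_{|j-j'|\le 1}\dot{\Delta}_j u\,\dot{\Delta}_{j'}v$, and estimating each of the three pieces separately using the Bernstein inequalities of Lemma \ref{Bernstein} together with the basic embeddings and the quasi-orthogonality of the Littlewood-Paley blocks. For \eqref{uv1}, the paraproduct terms $\dot{T}_uv$ and $\dot{T}_vu$ are controlled by $\|u\|_{L^\infty}\|v\|_{\dot B^s_{p,r}}$ and $\|v\|_{L^\infty}\|u\|_{\dot B^s_{p,r}}$ respectively, since $\|\dot{S}_{j-1}u\|_{L^\infty}\lesssim \|u\|_{L^\infty}$; for the remainder, because $s>0$ the series $\sum 2^{js}2^{-(j-j')\varepsilon}$ converges and one recovers a bound of the same form. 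This is the classical "$\dot B^s_{p,r}\cap L^\infty$ is an algebra" statement, so I would simply invoke the decomposition and sum.

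For \eqref{uv2}, I would again split into the three pieces, this time using the condition $2\le p\le\infty$, $s_1,s_2\le\frac dp$, $s_1+s_2>0$. The key Bernstein step is: for the paraproduct $\dot{T}_uv$, use $\|\dot S_{j-1}u\|_{L^\infty}\lesssim \sum_{j'\le j-2}2^{j'd/p}2^{-j's_1}\cdot 2^{j's_1}\|\dot\Delta_{j'}u\|_{L^p}\lesssim 2^{j(\frac dp - s_1)}\|u\|_{\dot B^{s_1}_{p,1}}$, which needs $s_1\le\frac dp$ for the geometric series to converge (with a harmless $\log$ loss absorbed at the endpoint because $r=1$), and then $\dot\Delta_j(uv)$ inherits spectral support in an annulus of size $2^j$ so the weight works out to $s_1+s_2-\frac dp$. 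The remainder term $\dot R(u,v)$ is where the hypothesis $s_1+s_2>0$ is essential: the blocks $\dot\Delta_j u\,\dot\Delta_{j'}v$ with $|j-j'|\le 1$ are spectrally supported in a ball of radius $\lesssim 2^j$, so one applies the first Bernstein inequality (gaining $d(\frac1p-\frac1{p'})$ when passing from $L^{p/2}$ to $L^p$, using $p\ge 2$) and sums $\sum_j 2^{j(s_1+s_2)}(\cdots)$, which converges precisely because $s_1+s_2>0$. Collecting the three bounds gives \eqref{uv2}.

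Finally, \eqref{uv3} is proved by exactly the same decomposition, but now $v$ is only measured in $\dot B^{s_2}_{p,\infty}$, so everywhere a sum over $\dot\Delta_{j'}v$ appears one replaces it by a supremum times a convergent geometric factor; this is why the hypothesis on $v$'s index is the strict inequality $s_2<\frac dp$ (rather than allowing $s_2=\frac dp$ with $r=1$) and why the target index carries $\infty$ as its third exponent. The paraproduct $\dot T_v u$ and the remainder are handled as before with $\ell^\infty$ in place of $\ell^1$ for the $v$-sums; the condition $s_1+s_2\ge 0$ (now allowing equality, since the remainder only needs to be bounded in $\dot B^{\cdot}_{p,\infty}$, not summable) suffices for $\dot R(u,v)$. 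The main obstacle — really the only delicate point — is bookkeeping the endpoint cases $s_1=\frac dp$ or $s_2=\frac dp$ and making sure the Lebesgue exponent arithmetic $\frac2p - \frac1p = \frac1p$ (from $p\ge 2$) is used in the right place for the remainder term; everything else is routine summation of geometric series. These are standard results (see \cite{bahourietal11}), so I would keep the exposition brief and refer there for the details omitted.
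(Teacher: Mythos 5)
Your proposal is correct. The paper does not prove Lemma~\ref{lemmaB5}; it is stated in Appendix~B as a standard fact with \cite{bahourietal11} as the implicit reference, and your Bony paraproduct argument is precisely the textbook proof those references use: $\dot T_u v$ and $\dot T_v u$ handled via $\|\dot S_{j-1}u\|_{L^\infty}\lesssim 2^{j(d/p-s_1)}\|u\|_{\dot B^{s_1}_{p,1}}$ (geometric sum requiring $s_1\leq d/p$, with equality permitted only when the $u$-index is $\ell^1$), and $\dot R(u,v)$ handled by the Bernstein step $L^{p/2}\to L^p$ (where $p\geq 2$ is used) together with the convergence condition on $s_1+s_2$. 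One small slip: you wrote the Bernstein gain as $d(\tfrac1p-\tfrac1{p'})$, which has the wrong sign; the gain going from $L^{p/2}$ to $L^p$ is $d(\tfrac2p-\tfrac1p)=\tfrac dp$, which you in fact state correctly a line later, so this is only a typographical inconsistency and does not affect the argument.
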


We state the following result about the continuity for composition functions:
\begin{lemma}\label{composition}
Let $G:I\rightarrow \mathbb{R}$ be a smooth function satisfying $G(0)=0$. For any $1\leq p\leq \infty$, $s>0$, and $g\in\dot{B}^{s}_{2,1}\cap L^{\infty}$, there exists a constant $C_{g}>0$ depending only on $\|g\|_{L^{\infty}}$, $G'$, $s$, and $d$ such that $G(g)\in \dot{B}^{s}_{p,r}\cap L^{\infty}$ and
\begin{equation}
\|G(g)\|_{\dot{B}^{s}_{p,r}}\leq C_{g}\|g\|_{\dot{B}^{s}_{p,r}}.\label{F0}
\end{equation}
In addition, if $g_{1}, g_{2}\in \dot{B}^{s}_{p,1}\cap L^{\infty}$, then it holds
\begin{align}
&\|G(g_{1})-G(g_{2})\|_{\dot{B}^{s}_{p,1}}\leq C_{g_{1},g_{2}}\Big(1+\|(g_{1},g_{2})\|_{\dot{B}^{\frac{d}{p}}_{p,1}}\Big)\|g_{1}-g_{2}\|_{\dot{B}^{s}_{p,1}},\quad ~s\in (-\frac{d}{p},\frac{d}{p}],\label{F2}
\end{align}
where the constant $C_{g_{1},g_{2}}>0$ depends only on $\|(g_{1},g_{2})\|_{L^{\infty}}$, $G'$, $s$, $p$, and $d$.
\end{lemma}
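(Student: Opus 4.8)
The plan is to prove this via the classical machinery for composition operators in homogeneous Besov spaces (see, e.g., \cite{bahourietal11}, Chapter~2): establish \eqref{F0} first by a telescoping argument combined with the mean value theorem and the Bernstein inequalities, and then deduce the difference estimate \eqref{F2} from \eqref{F0} (applied to $G'$) together with the product laws of Lemma \ref{lemmaB5}.

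For \eqref{F0}, I would use that $G(0)=0$ and $g\in\mathcal{S}_h'\cap L^\infty$, so that $\dot S_j g\to 0$ in $L^\infty$ as $j\to-\infty$ (by the definition of $\mathcal{S}_h'$) and $\dot S_j g\to g$ in $L^\infty$ as $j\to+\infty$; hence the telescoping identity
\[
G(g)=\sum_{j\in\mathbb{Z}}\bigl(G(\dot S_{j+1}g)-G(\dot S_j g)\bigr)
\]
holds in $\mathcal{S}_h'$. By the mean value theorem, each summand equals $\dot\Delta_j g\cdot m_j$ with $m_j:=\int_0^1 G'\bigl(\dot S_j g+t\,\dot\Delta_j g\bigr)\,{\rm d}t$, and $\|m_j\|_{L^\infty}\le \sup_{|y|\le\|g\|_{L^\infty}}|G'(y)|=:C_g$. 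Then I would apply $\dot\Delta_{j'}$ to the sum, exploit the spectral localization of each $\dot\Delta_j g$ together with the Bernstein inequalities of Lemma \ref{Bernstein} and a Bony para-product splitting of the products $m_j\dot\Delta_j g$ to gain almost-orthogonality, and finally sum over $j'$ with the weight $2^{j's}$; the hypothesis $s>0$ is precisely what makes the resulting geometric series converge, giving $\|G(g)\|_{\dot B^{s}_{p,r}}\le C_g\|g\|_{\dot B^{s}_{p,r}}$. The bound $G(g)\in L^\infty$ is immediate from $\|G(g)\|_{L^\infty}\le C_g\|g\|_{L^\infty}$.

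For \eqref{F2}, I would write
\[
G(g_1)-G(g_2)=(g_1-g_2)\,\widetilde G(g_1,g_2),\qquad \widetilde G(a,b):=\int_0^1 G'\bigl(b+t(a-b)\bigr)\,{\rm d}t,
\]
and split $\widetilde G(g_1,g_2)=G'(0)+H$ with $H:=\widetilde G(g_1,g_2)-G'(0)$. The contribution of $G'(0)(g_1-g_2)$ is bounded by $|G'(0)|\,\|g_1-g_2\|_{\dot B^{s}_{p,1}}$, which produces the constant $1$ in the factor $(1+\|(g_1,g_2)\|_{\dot B^{d/p}_{p,1}})$. For the product $(g_1-g_2)H$, since $s$ may be nonpositive the algebra property \eqref{uv1} is unavailable, so I would instead use \eqref{uv2} in the endpoint form $\|uv\|_{\dot B^{s}_{p,1}}\lesssim\|u\|_{\dot B^{s}_{p,1}}\|v\|_{\dot B^{d/p}_{p,1}}$ (valid for $-d/p<s\le d/p$, obtained by taking $s_1=s,\ s_2=d/p$ in \eqref{uv2}), which yields $\|(g_1-g_2)H\|_{\dot B^{s}_{p,1}}\lesssim\|g_1-g_2\|_{\dot B^{s}_{p,1}}\|H\|_{\dot B^{d/p}_{p,1}}$; finally $\|H\|_{\dot B^{d/p}_{p,1}}$ is controlled by a two-variable composition estimate applied to $(a,b)\mapsto\widetilde G(a,b)-G'(0)$ (which vanishes at the origin), proved by the same telescoping/mean value argument as for \eqref{F0}, giving $\|H\|_{\dot B^{d/p}_{p,1}}\le C_{g_1,g_2}\|(g_1,g_2)\|_{\dot B^{d/p}_{p,1}}$ with $C_{g_1,g_2}$ depending only on $\|(g_1,g_2)\|_{L^\infty}$, $G'$, $p$, $d$. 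Combining the two contributions gives \eqref{F2}.

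The main obstacle is the negative-regularity range $s\in(-d/p,d/p]$ in \eqref{F2}: there $\dot B^{s}_{p,1}\cap L^\infty$ is not an algebra, so one cannot simply invoke \eqref{F0}, and the estimate of $(g_1-g_2)H$ must be routed through the refined product law \eqref{uv2} and a careful Bony decomposition; moreover, the two-variable composition bound for $H$ requires its own telescoping argument rather than a direct reduction to the scalar case. A secondary technical point, needed in both \eqref{F0} and \eqref{F2}, is justifying the telescoping representation of $G(g)$ and its convergence in $\mathcal{S}_h'$, which rests on $g$ being a tempered distribution with $\dot S_j g\to 0$ in $L^\infty$.
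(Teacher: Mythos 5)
Your proposal is correct, and it follows the canonical Meyer telescoping argument found in the standard references (e.g.\ Bahouri--Chemin--Danchin, Theorems~2.61 and~2.87, and Runst--Sickel); the paper itself does not prove this lemma but simply records it as a known technical fact in Appendix~B, so there is no ``paper proof'' to compare against.

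Two remarks on the details. First, the paper's statement contains a typo (``$g\in\dot B^s_{2,1}\cap L^\infty$'' should read ``$g\in\dot B^s_{p,r}\cap L^\infty$''), and you have implicitly corrected it. Second, the step you summarize as ``exploit the spectral localization\ldots to gain almost-orthogonality'' is the heart of \eqref{F0} and deserves to be made explicit: writing $G(g)=\sum_j m_j\,\dot\Delta_j g$, the product $m_j\dot\Delta_j g$ is \emph{not} spectrally localized in an annulus because $m_j=\int_0^1 G'(\dot S_j g+t\dot\Delta_j g)\,{\rm d}t$ has full Fourier support. The usual remedy is to split according to whether $j'\le j$ or $j'>j$. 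For $j'\le j$ one uses $\|\dot\Delta_{j'}(m_j\dot\Delta_j g)\|_{L^p}\le\|m_j\|_{L^\infty}\|\dot\Delta_j g\|_{L^p}$ and sums geometrically thanks to $s>0$. For $j'>j$ one needs a gain of $2^{-N(j'-j)}$, which comes from showing $\|\nabla^N(m_j\dot\Delta_j g)\|_{L^p}\lesssim 2^{jN}\|\dot\Delta_j g\|_{L^p}$ via the Faà di Bruno formula and the Bernstein bound $\|\nabla^k\dot S_j g\|_{L^\infty}\lesssim 2^{jk}\|g\|_{L^\infty}$; this is where the full smoothness of $G$ (not merely $G'$ bounded) enters, and why $C_g$ in fact depends on derivatives of $G$ of order up to $\lceil s\rceil+1$ rather than on $G'$ alone. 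Your factorization of $\widetilde G=G'(0)+H$ and routing of $(g_1-g_2)H$ through the endpoint product law obtained from \eqref{uv2} with $s_1=s$, $s_2=d/p$ is exactly right, as is the observation that $p<\infty$ is forced by requiring $(-d/p,d/p]$ to be nonempty, so that the two-variable composition estimate for $H$ operates at positive regularity $d/p>0$.
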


The following lemma is about the continuity for the composition of multi-component functions, which can be found in \cite[pages 387--388]{runstsickel96}.
\begin{lemma}\label{multicompo}
Let $m\in \mathbb{N}$, $s>0$, $1\leq p,r \leq \infty$, and $\mathbb{G}\in C^{\infty}(\mathbb{R}^{m})$ satisfy $\mathbb{G}(0,...,0)=0$. Then, for any $f_{i}\in\dot{B}_{p,r}^{s}\cap L^{\infty}$ $(i=1,...,m)$, there exists a constant $C_{f}>0$ depending on $\sum_{i=1}^{m}\|f_{i}\|_{L^{\infty}}$, $\mathbb{G}$, $s$, $m$, $p$, $r$, and $d$ such that
\begin{equation}\label{F1}
\begin{aligned}
\|\mathbb{G}(f_{1},...,f_{m})\|_{\dot{B}^{s}_{p,r} }\leq C_{f}\sum_{i=1}^{m}\|f_{i}\|_{\dot{B}^{s}_{p,r} }.
\end{aligned}
\end{equation}	
\end{lemma}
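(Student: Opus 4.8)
The plan is to prove Lemma~\ref{multicompo} by a telescoping decomposition along the low-frequency cut-offs combined with a multivariate first-order Taylor expansion, in the spirit of the scalar composition estimate \eqref{F0}. First I would use that each $f_i\in\dot{B}^s_{p,r}\cap L^\infty\subset\mathcal{S}'_h$, so that $\dot S_j f_i\to f_i$ boundedly almost everywhere as $j\to\infty$ and $\|\dot S_j f_i\|_{L^\infty}\to 0$ as $j\to-\infty$; together with $\mathbb{G}(0,\dots,0)=0$ and the continuity and local boundedness of $\mathbb{G}$, this gives
\[
\mathbb{G}(f_1,\dots,f_m)=\sum_{j\in\mathbb{Z}}\Big(\mathbb{G}(\dot S_{j+1}f_1,\dots,\dot S_{j+1}f_m)-\mathbb{G}(\dot S_{j}f_1,\dots,\dot S_{j}f_m)\Big)\qquad\text{in }\mathcal{S}'.
\]
Since $\dot S_{j+1}=\dot S_j+\dot\Delta_j$, the fundamental theorem of calculus applied to $t\mapsto \mathbb{G}(\dot S_j\vec f+t\,\dot\Delta_j\vec f)$ turns the generic summand into $\sum_{i=1}^m (\dot\Delta_j f_i)\, g_{i,j}$ with $g_{i,j}:=\int_0^1 \partial_i\mathbb{G}(\dot S_j\vec f+t\,\dot\Delta_j\vec f)\,dt$. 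It therefore suffices to show, for each fixed $i$, that $\big\|\sum_{j}(\dot\Delta_j f_i)\,g_{i,j}\big\|_{\dot B^s_{p,r}}\lesssim C_f\|f_i\|_{\dot B^s_{p,r}}$ and then sum over $i=1,\dots,m$.

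The key analytic input I would establish next is a family of smoothing bounds on the multipliers $g_{i,j}$. Writing $R:=C\sum_i\|f_i\|_{L^\infty}$ for a bound on the range of the arguments (which uses $\|\dot S_j f_i\|_{L^\infty}+\|\dot\Delta_j f_i\|_{L^\infty}\lesssim\|f_i\|_{L^\infty}$), one has $\|g_{i,j}\|_{L^\infty}\le \sup_{|y|\le R}|\partial_i\mathbb{G}(y)|$ and, for every $N\ge 1$,
\[
\|D^N g_{i,j}\|_{L^\infty}\le C_{N}\,2^{Nj},
\]
with $C_N$ depending only on $R$, on the derivatives of $\mathbb{G}$ up to order $N+1$, and on $N,m,d$. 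The second bound comes from the Fa\`a di Bruno chain rule: each spatial derivative falling on $\dot S_j\vec f$ or $\dot\Delta_j\vec f$ produces a factor $2^{j}$ by the Bernstein inequality (Lemma~\ref{Bernstein}). Fixing an integer $N>s$, I would then estimate $\dot\Delta_{j'}\big((\dot\Delta_j f_i)g_{i,j}\big)$ in two regimes: for $j'\le j+N_0$ one uses $\|\dot\Delta_{j'}((\dot\Delta_j f_i)g_{i,j})\|_{L^p}\lesssim\|g_{i,j}\|_{L^\infty}\|\dot\Delta_j f_i\|_{L^p}$, while for $j'> j+N_0$ one uses that $\dot\Delta_j f_i$ is spectrally supported in a ball of radius $\lesssim 2^{j}\ll 2^{j'}$, so that $\dot\Delta_{j'}((\dot\Delta_j f_i)g_{i,j})=\dot\Delta_{j'}\big((\dot\Delta_j f_i)\,\widetilde{\dot\Delta}_{j'}g_{i,j}\big)$ for a fattened block $\widetilde{\dot\Delta}_{j'}$, together with $\|\widetilde{\dot\Delta}_{j'}g_{i,j}\|_{L^\infty}\lesssim 2^{-Nj'}\|D^N g_{i,j}\|_{L^\infty}\lesssim C_f\,2^{N(j-j')}$. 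In both cases this yields $2^{j's}\|\dot\Delta_{j'}((\dot\Delta_j f_i)g_{i,j})\|_{L^p}\lesssim C_f\,b_{j'-j}\,c_j$, where $c_j:=2^{js}\|\dot\Delta_j f_i\|_{L^p}\in\ell^r$ and $b_k:=\min\{2^{ks},2^{k(s-N)}\}\in\ell^1$ (here $s>0$ handles $k\to-\infty$ and $N>s$ handles $k\to+\infty$). Young's inequality for the $\ell^1\ast\ell^r$ convolution then closes the estimate. This is of course precisely the classical theorem of Runst and Sickel \cite[pp.~387--388]{runstsickel96}; the above merely records the self-contained route I would take.

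I expect the main obstacle to be this second step: obtaining the uniform derivative bounds $\|D^N g_{i,j}\|_{L^\infty}\lesssim 2^{Nj}$ with constants of exactly the advertised dependence. It requires care with the combinatorics of the multivariate Fa\`a di Bruno formula, and with tracking which derivatives of $\mathbb{G}$ (up to order roughly $\lceil s\rceil+1$, after fixing $N$) and which Bernstein constants enter — so that smoothness of $\mathbb{G}$ to finite order, rather than genuinely $C^\infty$, would in fact be enough. The remaining pieces — the convergence of the telescoping series in $\mathcal{S}'$ and the $\ell^1$-convolution summation — are routine once the smoothing kernel $b_k$ is identified, the only mild caveat being the endpoints $p=\infty$ or $r=\infty$, handled exactly as in the scalar bound \eqref{F0}.
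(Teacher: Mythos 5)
The paper does not prove this lemma; it simply cites Runst and Sickel \cite[pp.~387--388]{runstsickel96}, so there is no ``paper proof'' to compare against. Your argument is a correct, self-contained account of the standard Meyer first-linearization proof that underlies that reference: telescope $\mathbb{G}(\vec f)=\sum_j\big(\mathbb{G}(\dot S_{j+1}\vec f)-\mathbb{G}(\dot S_j\vec f)\big)$, apply the fundamental theorem of calculus to write each increment as $\sum_i(\dot\Delta_j f_i)\,g_{i,j}$, derive the smoothing bounds $\|g_{i,j}\|_{L^\infty}\lesssim 1$ and $\|D^N g_{i,j}\|_{L^\infty}\lesssim 2^{Nj}$ from Fa\`a di Bruno and Bernstein, split the frequency interaction at $j'\lessgtr j+N_0$, and close with Young's inequality in $\ell^1\ast\ell^r$. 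The accounting is right: the kernel $b_k=\min\{2^{ks},2^{k(s-N)}\}$ is summable because $s>0$ and $N>s$, which is exactly where both hypotheses are used, and the constant's advertised dependence on $\sum_i\|f_i\|_{L^\infty}$, $\mathbb{G}$, $s$, $m$, $p$, $r$, $d$ comes out as claimed.

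Two small points worth flagging. First, you correctly observe that $C^\infty$ regularity of $\mathbb{G}$ is an overstatement: only $C^{N+1}$ is used, with $N$ the smallest integer $>s$, which matches the sharp statement in Runst--Sickel. Second, the telescoping convergence in $\mathcal{S}'$ as $j\to-\infty$ relies on $\|\dot S_j f_i\|_{L^\infty}\to 0$, which is precisely the $\mathcal{S}'_h$ condition built into Definition \ref{defnbesov}; it is worth keeping this explicit, as it is the step that would fail in the inhomogeneous setting without the vanishing of $\mathbb{G}$ at the origin. Both caveats are already noted in your sketch, so there is nothing to fix; the proposal is a faithful proof of the cited result.
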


We present the following commutator estimates that are useful for controlling the nonlinear terms in high frequencies:
\begin{lemma}\label{commutator}
Let $1\leq p\leq \infty$ and $-\frac{d}{p}< s\leq 1+\frac{d}{p}$. Then it holds
\begin{align*}
&\sum_{j\in\mathbb{Z}}2^{js}\|[u\cdot\nabla,\dot{\Delta}_{j}] \rho\|_{L^{p}}\lesssim\|u\|_{\dot{B}^{\frac{d}{p}+1}_{p,1}}\|\rho\|_{\dot{B}^{s}_{p,1}},\\
&\sum_{j\in\mathbb{Z}}2^{j(s-1)}\|[u\cdot\nabla,\partial_{x_k}\dot{\Delta}_{j}]a\|_{L^{p}}\lesssim\|u\|_{\dot{B}^{\frac{d}{p}+1}_{p,1}}\|a\|_{\dot{B}^{s}_{p,1}},~~k=1,2,...,d,
\end{align*}
with the commutator $[A,B]:=AB-BA$.
\end{lemma}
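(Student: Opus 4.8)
\textbf{Proof strategy for Lemma \ref{commutator}.}
The plan is to reduce both estimates to the classical Besov commutator bound obtained via the Bony paraproduct decomposition. First I would write, for fixed $j\in\mathbb{Z}$,
\[
[u\cdot\nabla,\dot{\Delta}_{j}]\rho
=\sum_{k=1}^{d}\big([u^{k},\dot{\Delta}_{j}]\partial_{x_k}\rho\big),
\]
and decompose each product $u^{k}\,\partial_{x_k}\rho$ by Bony's decomposition into paraproducts $\dot{T}_{u^k}\partial_{x_k}\rho$, $\dot{T}_{\partial_{x_k}\rho}u^k$, and a remainder $\dot{R}(u^k,\partial_{x_k}\rho)$. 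The key point is that the commutator kills the "bad" paraproduct term: for $\dot{T}_{u^k}\partial_{x_k}\rho$ one uses the standard identity
\[
[\dot{S}_{j'-1}u^k,\dot{\Delta}_{j}]\partial_{x_k}\dot{\Delta}_{j'}\rho
=2^{-jd}\!\int_{\mathbb{R}^d}\! \widetilde{h}(2^j y)\,\big(u^k(x-y)-u^k(x)\big)\,\partial_{x_k}\dot{\Delta}_{j}\dot{\Delta}_{j'}\rho(x-y)\,{\rm d}y,
\]
where $\widetilde h=\mathcal{F}^{-1}\varphi$, so that a first-order Taylor expansion of $u^k(x-y)-u^k(x)$ gains one derivative on $u$ and a factor $2^{-j}$, compensating the $\partial_{x_k}$ falling on $\rho$. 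The Bernstein inequality in Lemma \ref{Bernstein} then converts the $2^{-j}$ gain and the $\nabla u$ factor into the $\dot{B}^{\frac{d}{p}+1}_{p,1}$ norm, while the remaining pieces $\dot{T}_{\partial_{x_k}\rho}u^k$ and $\dot{R}$ are estimated directly since they already carry a derivative on $u$. Summing over the spectral supports (using almost-orthogonality so that only finitely many $j'$ interact with a given $j$) and then over $j\in\mathbb{Z}$ with the weight $2^{js}$ gives the first inequality, valid precisely in the range $-\frac{d}{p}<s\le 1+\frac{d}{p}$ where the paraproduct and remainder operators are bounded on $\dot{B}^{s}_{p,1}$ with the stated loss.

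For the second estimate I would proceed in the same way, observing that
\[
[u\cdot\nabla,\partial_{x_k}\dot{\Delta}_{j}]a
=\partial_{x_k}\big([u\cdot\nabla,\dot{\Delta}_{j}]a\big)-[\partial_{x_k}(u\cdot\nabla),\dot{\Delta}_{j}]a+[\,u\cdot\nabla,\dot{\Delta}_{j}](\partial_{x_k}a)-\text{(terms that cancel)},
\]
or, more cleanly, by commuting $\partial_{x_k}$ past $\dot{\Delta}_j$ (which is exact, since $\partial_{x_k}$ and $\dot{\Delta}_j$ are Fourier multipliers) and then writing $[u\cdot\nabla,\dot{\Delta}_j]$ acting on $\partial_{x_k}a$ plus a term $(\partial_{x_k}u)\cdot\nabla\dot{\Delta}_j a$ minus $\dot{\Delta}_j((\partial_{x_k}u)\cdot\nabla a)$ coming from differentiating the commutator. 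The commutator part is handled by the first estimate applied with $a$ replaced by $\partial_{x_k}a$, noting that the spectral localization of $\dot{\Delta}_j a$ makes $2^{j(s-1)}\|\partial_{x_k}\dot{\Delta}_j(\cdot)\|_{L^p}\sim 2^{js}\|\dot{\Delta}_j(\cdot)\|_{L^p}$ by Bernstein, so the net weight is again $2^{js}$ on $a$; the two non-commutator pieces are bounded by the product law \eqref{uv1}–\eqref{uv2}, putting $\nabla u$ in $\dot{B}^{\frac{d}{p}}_{p,1}$ and $\nabla a$ in $\dot{B}^{s-1}_{p,1}$. Summation over $j$ then closes the estimate in the same range of $s$.

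The main obstacle is bookkeeping the endpoint cases $s=1+\frac{d}{p}$ and $s$ near $-\frac{d}{p}$: at these edges the remainder term $\dot{R}(u,\nabla\rho)$ and the low-paraproduct $\dot{T}_{\nabla\rho}u$ are only borderline summable, and one must be careful that the index `$1$' in $\dot{B}^{s}_{p,1}$ (and the `$1$' in $\dot{B}^{\frac{d}{p}+1}_{p,1}$) is exactly what makes the $\ell^1$ sum over frequencies converge—there is no room to spare. This is, however, a routine adaptation of the commutator lemmas in \cite{bahourietal11,danchinxu17}, and I would simply cite those references for the detailed frequency-by-frequency estimates once the reduction above is in place, only spelling out the Taylor-expansion gain that produces the key $2^{-j}\|\nabla u\|$ factor. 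Note also that, as remarked in the text, the same proof works verbatim for the Chemin–Lerner norms $\widetilde{L}^{\varrho}_T(\dot{B}^{s}_{p,1})$ provided the time exponents are combined by Hölder's inequality, which is what is actually used in Sections \ref{sectionexistence}–\ref{sectiondecay}.
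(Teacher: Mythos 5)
The paper does not actually prove Lemma~\ref{commutator}; it is stated as a known commutator estimate and implicitly referred to \cite{bahourietal11,danchinxu17,danchinCPDE07}. Your strategy for the first inequality is precisely the standard one: Bony decomposition, the commutator identity $[\dot S_{j'-1}u^k,\dot\Delta_j]\partial_{x_k}\dot\Delta_{j'}\rho$ written as a convolution, a first-order Taylor expansion trading a power $2^{-j}$ on $u$ for the extra derivative on $\rho$, and the remaining paraproduct/remainder pieces estimated directly. This is correct and is the argument one finds in the cited references.

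For the second inequality, however, your reduction introduces a genuine range gap. Using your cleaner identity
$[u\cdot\nabla,\partial_{x_k}\dot{\Delta}_{j}]a=[u\cdot\nabla,\dot{\Delta}_{j}](\partial_{x_k}a)-\dot{\Delta}_{j}\bigl((\partial_{x_k}u)\cdot\nabla a\bigr)$,
the first term requires the first estimate at the shifted regularity $s-1$, hence $-\frac{d}{p}<s-1$, i.e.\ $s>1-\frac{d}{p}$. The second term needs the product law \eqref{uv2} with $s_{1}=\frac{d}{p}$ on $\partial_{x_k}u$ and $s_{2}=s-1$ on $\nabla a$, which again forces $s_{1}+s_{2}=\frac{d}{p}+s-1>0$, i.e.\ $s>1-\frac{d}{p}$. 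Since $1-\frac{d}{p}>-\frac{d}{p}$ always, the subinterval $-\frac{d}{p}<s\le 1-\frac{d}{p}$ in the statement is not covered by your argument as written. The fix is to not peel $\partial_{x_k}$ off at all: treat $\partial_{x_k}\dot{\Delta}_{j}$ directly as a Fourier multiplier with kernel $2^{j(d+1)}(\partial_{k}h)(2^{j}\cdot)$ and run the same Bony-plus-Taylor cancellation on $[\dot S_{j'-1}u^{m},\partial_{x_k}\dot{\Delta}_{j}]\partial_{m}\dot{\Delta}_{j'}a$; the kernel then contributes the factor $2^{j}$ (rather than this coming from an extra Bernstein on $a$), and the paraproduct/remainder pieces are estimated at level $s$ rather than $s-1$, recovering the full range $-\frac{d}{p}<s\le 1+\frac{d}{p}$. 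Also note that your displayed identity preceding \emph{``or, more cleanly''} is not balanced as written (it double-counts the $\partial_{x_k}$). None of this affects the paper's use of the lemma, since there one always has $p=2$ and $s=\frac{d}{2}>1-\frac{d}{2}$, but as a proof of the lemma \emph{as stated} the reduction is incomplete.
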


We consider the estimates for the linear transport equation
\begin{equation}\label{trans}
\left\{
\begin{aligned}
&\partial_{t}\rho+u\cdot\nabla\rho=F,\qquad x\in\mathbb{R}^{d},\quad t>0,\\
&\rho(0,x)=\rho_{0}(x),\qquad x\in\mathbb{R}^{d}.
\end{aligned}
\right.
\end{equation}

\begin{lemma}[\!\!\cite{bahourietal11}]\label{estrans}
Let $T>0$, $-\frac{d}{2}<s\leq \frac{d}{2}+1$, $1\leq r\leq \infty$, $\rho_{0}\in\dot{B}^{s}_{2,1}$, $u\in L^{1}(0,T;\dot{B}^{\frac{d}{2}+1}_{2,1})$, and $F\in L^{1}(0,T;\dot{B}^{s}_{2,1})$. Then there exists a constant $C>0$ independent of $T$ and $\rho_{0}$ such that the solution $\rho$ to \eqref{trans} satisfies for any $t\in[0,T]$ that
\begin{equation*}
\|\rho\|_{\widetilde{L}^{\infty}_{t}(\dot{B}^{s}_{2,r})}\leq \exp\Big(C\|\nabla u\|_{L^{1}_{t}(\dot{B}^{\frac{d}{2}}_{2,1})}\Big)\Big((\|\rho_{0}\|_{\dot{B}^{s}_{2,r}}+\int_{0}^{t}\|F\|_{\dot{B}^{s}_{2,r}}{\rm{d}}s\Big).
\end{equation*}
In addition, if $r$ is finite, then the solution $\rho$ belongs to $\mathcal{C}([0,T];\dot{B}^{s}_{2,r})$.
\end{lemma}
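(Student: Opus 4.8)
\textbf{Proof plan for Lemma \ref{estrans}.} The plan is to estimate the transport equation \eqref{trans} block by block in the Littlewood--Paley decomposition and then sum with the appropriate Besov weights. First I would apply $\dot{\Delta}_j$ to \eqref{trans} to obtain
\begin{equation*}
\partial_t \dot{\Delta}_j \rho + u\cdot\nabla \dot{\Delta}_j\rho = \dot{\Delta}_j F + [u\cdot\nabla,\dot{\Delta}_j]\rho ,
\end{equation*}
then take the $L^2$ inner product with $\dot{\Delta}_j\rho$. Using the divergence term $\tfrac12\int \operatorname{div}u\,|\dot{\Delta}_j\rho|^2\,{\rm d}x$ arising from the convection, we get the differential inequality
\begin{equation*}
\frac{\rm d}{{\rm d}t}\|\dot{\Delta}_j\rho\|_{L^2}
\lesssim \|\operatorname{div}u\|_{L^\infty}\|\dot{\Delta}_j\rho\|_{L^2}+\|\dot{\Delta}_j F\|_{L^2}+\|[u\cdot\nabla,\dot{\Delta}_j]\rho\|_{L^2}.
\end{equation*}
Integrating in time and multiplying by $2^{js}$ then summing over $j\in\mathbb{Z}$ in the $\ell^r$ sense, the first term on the right is absorbed after invoking Gr\"onwall's inequality, the second term yields $\int_0^t\|F\|_{\dot B^s_{2,r}}\,{\rm d}s$, and the commutator term is controlled by the commutator estimate in Lemma \ref{commutator}, which requires precisely the range $-\frac d2<s\le 1+\frac d2$ and gives $\sum_j 2^{js}\|[u\cdot\nabla,\dot{\Delta}_j]\rho\|_{L^2}\lesssim \|u\|_{\dot B^{\frac d2+1}_{2,1}}\|\rho\|_{\dot B^s_{2,r}}$, with $\|u\|_{\dot B^{\frac d2+1}_{2,1}}\sim\|\nabla u\|_{\dot B^{\frac d2}_{2,1}}$.

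Carrying out Gr\"onwall, with the accumulated factor $\|\operatorname{div}u\|_{L^\infty}+\|u\|_{\dot B^{\frac d2+1}_{2,1}}\lesssim \|\nabla u\|_{\dot B^{\frac d2}_{2,1}}$ (using $\dot B^{\frac d2}_{2,1}\hookrightarrow L^\infty$), produces the claimed exponential factor $\exp(C\|\nabla u\|_{L^1_t(\dot B^{\frac d2}_{2,1})})$ multiplying $\|\rho_0\|_{\dot B^s_{2,r}}+\int_0^t\|F\|_{\dot B^s_{2,r}}\,{\rm d}s$. A mild technical point is that one should first derive the estimate for smooth solutions (or regularized data) and then pass to the limit; the time-continuity statement $\rho\in\mathcal C([0,T];\dot B^s_{2,r})$ for finite $r$ follows from the uniform summability of the dyadic pieces together with the continuity of each $\dot{\Delta}_j\rho$ in time, which is standard. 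For completeness, the main obstacle is purely the commutator bound: one needs the endpoint-friendly estimate of Lemma \ref{commutator} to close the argument at the critical regularity $s=\frac d2+1$ without losing derivatives, and to handle the borderline $s=-\frac d2$ case one would instead argue with $\dot B^{s}_{2,\infty}$-type bounds (as is done in the decay section), but that refinement is not needed here since the hypothesis is the strict inequality $s>-\frac d2$.

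Since this lemma is quoted directly from \cite{bahourietal11}, I would simply cite it rather than reproduce the proof; the sketch above indicates how it is obtained should a self-contained argument be desired.
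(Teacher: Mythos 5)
Your sketch is the standard Besov a priori estimate for transport equations and is correct; note that the paper does not prove this lemma but simply cites it from Bahouri--Chemin--Danchin, so there is no in-paper argument to compare against. One small imprecision worth flagging: Lemma~\ref{commutator} as stated in the paper gives the commutator bound only in $\ell^1$ form (i.e.\ $\sum_j 2^{js}\|[u\cdot\nabla,\dot\Delta_j]\rho\|_{L^2}\lesssim \|u\|_{\dot B^{d/2+1}_{2,1}}\|\rho\|_{\dot B^s_{2,1}}$), whereas the lemma you are proving concerns $\dot B^s_{2,r}$ for arbitrary $r\in[1,\infty]$; to close the Gr\"onwall step in $\ell^r$ you should invoke the pointwise-in-$j$ version $\|[u\cdot\nabla,\dot\Delta_j]\rho\|_{L^2}\lesssim c_j\,2^{-js}\|\nabla u\|_{\dot B^{d/2}_{2,1}}\|\rho\|_{\dot B^s_{2,r}}$ with $(c_j)\in\ell^r$ of unit norm, which is the form actually proved in the cited reference and specializes to Lemma~\ref{commutator} when $r=1$.
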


We also state the optimal regularity estimates for the Lam\'e system
\begin{equation}\label{heat}
\left\{
\begin{aligned}
&\partial_{t}u-\mu\Delta u-(\mu+\lambda)\nabla \operatorname{div} u=G,\qquad x\in\mathbb{R}^{d},\quad t>0,\\
&u(0,x)=u_{0}(x),\qquad x\in\mathbb{R}^{d}.
\end{aligned}
\right.
\end{equation}

\begin{lemma}[\!\!\cite{danchinCPDE07}]\label{esheat}
Let $T>0$, $\mu>0$, $2\mu+\lambda>0$, $s\in\mathbb{R}$, $1\leq p,r\leq\infty$, and $1\leq\varrho_{2}\leq\varrho_{1}\leq\infty$. Assume that $u_{0}\in\dot{B}^{s}_{p,r}$ and $G\in\widetilde{L}^{\varrho_{2}}(0,T;\dot{B}^{s-2+\frac{2}{\varrho_{2}}}_{p,r})$. Then the solution $u$ to \eqref{heat} satisfies for any $t\in[0,T]$ that
\begin{equation*}
\min\{\mu,2\mu+\lambda\}^{\frac{1}{\varrho_{1}}}\|u\|_{\widetilde{L}^{\varrho_{1}}_{t}(\dot{B}^{s+\frac{2}{\varrho_{1}}}_{p,r})}\lesssim \|u_{0}\|_{\dot{B}^{s}_{p,r}}+\min\{\mu,2\mu+\lambda\}^{\frac{1}{\varrho_{2}}-1}\|G\|_{\widetilde{L}^{\varrho_{2}}_{t}(\dot{B}^{s-2+\frac{2}{\varrho_{2}}}_{p,r})}.
\end{equation*}
\end{lemma}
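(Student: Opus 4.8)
The plan is to diagonalize the Lamé operator by the Hodge projectors, reducing \eqref{heat} to two decoupled vectorial heat equations with constant diffusion coefficients, and then to run a frequency-localized Duhamel argument combined with Young's inequality in the time variable, carefully tracking the dependence on the viscosities.

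First I would apply $\mathcal{P}$ and $\mathcal{Q}$ to \eqref{heat}. Since $\mathcal{P},\mathcal{Q}$ are Fourier multipliers homogeneous of degree $0$ and smooth away from the origin, they commute with $\partial_t$, $\Delta$ and every dyadic block $\dot{\Delta}_j$, and they act boundedly on each $\dot{B}^s_{p,r}$ and $\widetilde{L}^{\varrho}_t(\dot{B}^s_{p,r})$; moreover $\mathcal{P}\nabla\operatorname{div}=0$, $\operatorname{div}u=\operatorname{div}\mathcal{Q}u$, and $\nabla\operatorname{div}\mathcal{Q}u=\Delta\mathcal{Q}u$. Hence $\mathcal{P}u$ solves $\partial_t\mathcal{P}u-\mu\Delta\mathcal{P}u=\mathcal{P}G$ and $\mathcal{Q}u$ solves $\partial_t\mathcal{Q}u-(2\mu+\lambda)\Delta\mathcal{Q}u=\mathcal{Q}G$, with $u=\mathcal{P}u+\mathcal{Q}u$ and both diffusion coefficients at least $\nu:=\min\{\mu,2\mu+\lambda\}>0$. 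It therefore suffices to establish, for a generic vectorial heat equation $\partial_t w-\kappa\Delta w=g$ with $\kappa>0$, $w(0)=w_0$, the $\kappa$- and $t$-uniform bound
\[
\kappa^{1/\varrho_1}\|w\|_{\widetilde{L}^{\varrho_1}_t(\dot{B}^{s+2/\varrho_1}_{p,r})}\lesssim \|w_0\|_{\dot{B}^s_{p,r}}+\kappa^{1/\varrho_2-1}\|g\|_{\widetilde{L}^{\varrho_2}_t(\dot{B}^{s-2+2/\varrho_2}_{p,r})},
\]
and then to specialize $(\kappa,w,g)=(\mu,\mathcal{P}u,\mathcal{P}G)$ and $(\kappa,w,g)=(2\mu+\lambda,\mathcal{Q}u,\mathcal{Q}G)$.

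For the scalar estimate I would set $w_j:=\dot{\Delta}_j w$, which solves $\partial_t w_j-\kappa\Delta w_j=\dot{\Delta}_j g$. Using the classical gain-of-regularity and exponential-decay property of the heat semigroup on functions spectrally localized in the annulus $\{|\xi|\sim 2^j\}$ (see \cite{bahourietal11}, which together with Lemma \ref{Bernstein} gives $\|e^{\kappa t\Delta}\dot{\Delta}_j f\|_{L^p}\lesssim e^{-c\kappa 2^{2j}t}\|\dot{\Delta}_j f\|_{L^p}$, uniformly in $j$), Duhamel's formula yields
\[
\|w_j(t)\|_{L^p}\lesssim e^{-c\kappa 2^{2j}t}\|\dot{\Delta}_j w_0\|_{L^p}+\int_0^t e^{-c\kappa 2^{2j}(t-\tau)}\|\dot{\Delta}_j g(\tau)\|_{L^p}\,{\rm d}\tau.
\]
Taking the $L^{\varrho_1}(0,t)$ norm, the first term is $\lesssim(\kappa 2^{2j})^{-1/\varrho_1}\|\dot{\Delta}_j w_0\|_{L^p}$, while by Young's convolution inequality with the exponent $m$ defined by $1+\frac{1}{\varrho_1}=\frac{1}{m}+\frac{1}{\varrho_2}$ — which satisfies $m\ge1$ precisely because $\varrho_2\le\varrho_1$ — the second term is $\lesssim(\kappa 2^{2j})^{-1/m}\|\dot{\Delta}_j g\|_{L^{\varrho_2}_t(L^p)}$. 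Multiplying by $2^{j(s+2/\varrho_1)}$, the power of $2^j$ becomes $s$ in the initial-data term and $s+2/\varrho_1-2/m=s-2+2/\varrho_2$ in the forcing term (by the choice of $m$); taking the $\ell^r(\mathbb{Z})$ norm over $j$ and noting $\kappa^{-1/m}=\kappa^{-1-1/\varrho_1+1/\varrho_2}$, multiplication through by $\kappa^{1/\varrho_1}$ produces exactly the scalar estimate. (If one prefers to avoid citing the semigroup decay directly, the localized bound follows by testing the equation for $w_j$ against $|w_j|^{p-2}w_j$ and using the Bernstein-type lower bound $\int(-\Delta w_j)|w_j|^{p-2}w_j\,{\rm d}x\gtrsim 2^{2j}\|w_j\|_{L^p}^p$, then Grönwall.)

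Finally, for the two pieces with $\kappa=\mu$ and $\kappa=2\mu+\lambda$, I would exploit the signs of the exponents: since $1/\varrho_1\ge0$ and $\nu\le\mu$, $\nu\le2\mu+\lambda$, one has $\nu^{1/\varrho_1}\|\mathcal{P}u\|\le\mu^{1/\varrho_1}\|\mathcal{P}u\|$ and $\nu^{1/\varrho_1}\|\mathcal{Q}u\|\le(2\mu+\lambda)^{1/\varrho_1}\|\mathcal{Q}u\|$; since $1/\varrho_2-1\le0$, one has $\mu^{1/\varrho_2-1}\le\nu^{1/\varrho_2-1}$ and $(2\mu+\lambda)^{1/\varrho_2-1}\le\nu^{1/\varrho_2-1}$. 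Combining with the boundedness of $\mathcal{P}$ and $\mathcal{Q}$ on $\dot{B}^s_{p,r}$ and $\widetilde{L}^{\varrho_2}_t(\dot{B}^{s-2+2/\varrho_2}_{p,r})$, and summing the $\mathcal{P}$- and $\mathcal{Q}$-contributions, gives the claimed inequality. The main technical obstacle is the sharp $\kappa$- and $j$-uniform heat estimate on dyadic blocks in the $L^p$ scale for $p\neq2$, together with the bookkeeping of the exponents $m,\varrho_1,\varrho_2$; once these are in place, the rest is a routine use of Duhamel's formula, Young's inequality, and the mapping properties of the Hodge projectors.
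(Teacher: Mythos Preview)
Your proof is correct and follows the standard approach to this maximal regularity estimate. Note, however, that the paper does not supply its own proof of this lemma: it is stated in Appendix~B as a technical tool and attributed to \cite{danchinCPDE07}. Your argument---Hodge decomposition into two scalar heat equations, followed by the localized semigroup bound $\|e^{\kappa t\Delta}\dot{\Delta}_j f\|_{L^p}\lesssim e^{-c\kappa 2^{2j}t}\|\dot{\Delta}_j f\|_{L^p}$, Duhamel, and Young in time with the exponent $1+\tfrac{1}{\varrho_1}=\tfrac{1}{m}+\tfrac{1}{\varrho_2}$---is precisely the route taken in that reference (and in \cite{bahourietal11}); the exponent bookkeeping and the final reduction to $\nu=\min\{\mu,2\mu+\lambda\}$ via the signs of $\tfrac{1}{\varrho_1}$ and $\tfrac{1}{\varrho_2}-1$ are handled correctly.
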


Finally, when studying the incompressible limits, we need the following Strichartz estimates.

\begin{lemma}[\!\!\cite{danchin02}]\label{lemmadispersive}
Assume $(\mathbf{b}_0, \mathbf{d}_0)\in \dot{B}^{s}_{2,1}$ and $(\mathbf{F}, \mathbf{G})\in L^1(0,T;\dot{B}^{s}_{2,1})$ with $s\in\mathbb{R}$ and $T>0$. Let $(\mathbf{b},\mathbf{d})$ be a solution on $[0,T]$ to the following system with $\var>0${\rm{:}}
\begin{equation}
\left\{
\begin{aligned}
&\partial_t \mathbf{b}+\frac{1}{\var}\Lambda \mathbf{d}=\mathbf{F},\\
&\partial_t \mathbf{d}-\frac{1}{\var}\Lambda \mathbf{d}=\mathbf{G},\\
&(\mathbf{b}, \mathbf{d})(0,x)=(\mathbf{b}_0, \mathbf{d}_0)(x).
\end{aligned}
\right.
\end{equation}
Then for $p\geq2$, $\frac{2}{r}\leq \min\{1,(d-1)(\frac{1}{2}-\frac{1}{p})\}$, and $(r,p,d)\neq (2,\infty,3)$, the following estimate holds{\rm{:}}
\begin{align*}
\|(\mathbf{b}, \mathbf{d})\|_{\widetilde{L}^r_T(\dot{B}^{s+d(\frac{1}{p}-\frac{1}{2})+\frac{1}{r}}_{p,1})}\lesssim \var^{\frac{1}{r}}\Big(\|(\mathbf{b}_0, \mathbf{d}_0)\|_{\dot{B}^{s}_{2,1}}+\|(\mathbf{F}, \mathbf{G})\|_{L^1_T(\dot{B}^{s}_{2,1})}\Big).
\end{align*}
\end{lemma}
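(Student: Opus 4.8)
\textbf{Proof proposal for Lemma \ref{lemmadispersive} (dispersive/Strichartz estimates for the scaled acoustic system).}

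The plan is to reduce the coupled first-order system to a decoupled half-wave problem and then invoke the classical Strichartz estimates for the (rescaled) wave propagator. First I would diagonalize the principal part: setting $\mathbf{z}^{\pm}:=\mathbf{b}\pm i\mathbf{d}$, the system decouples into two scalar half-wave equations of the form $\partial_t \mathbf{z}^{\pm}\mp \frac{i}{\var}\Lambda \mathbf{z}^{\pm}=\mathbf{F}\pm i\mathbf{G}$, since the operator $\frac{1}{\var}\Lambda$ acting in the off-diagonal entries becomes $\mp\frac{i}{\var}\Lambda$ on the diagonal after this change of unknowns. (One should double-check the sign convention in the statement — the second equation as written has $-\frac{1}{\var}\Lambda\mathbf{d}$, so the relevant skew-symmetric generator is $\var^{-1}\begin{pmatrix}0&\Lambda\\ \Lambda&0\end{pmatrix}$ up to a harmless modification; the diagonalization is by the eigenvectors of $\begin{pmatrix}0&1\\1&0\end{pmatrix}$, giving real eigenvalues $\pm1$ and hence propagators $e^{\pm it\Lambda/\var}$.) By Duhamel's formula,
\begin{equation*}
\mathbf{z}^{\pm}(t)=e^{\pm\frac{it}{\var}\Lambda}\mathbf{z}^{\pm}_0+\int_0^t e^{\pm\frac{i(t-s)}{\var}\Lambda}(\mathbf{F}\pm i\mathbf{G})(s)\,{\rm d}s,
\end{equation*}
so it suffices to establish the homogeneous estimate $\|e^{\pm\frac{it}{\var}\Lambda}f\|_{L^r_T(\dot{B}^{s+d(\frac1p-\frac12)+\frac1r}_{p,1})}\lesssim \var^{1/r}\|f\|_{\dot{B}^s_{2,1}}$ and then apply the Christ–Kiselev lemma (valid since $r>1$ under the hypotheses, the endpoint case being excluded) to the Duhamel term.

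The core of the argument is the frequency-localized dispersive bound. On the dyadic block $\dot\Delta_j$, a standard stationary-phase/oscillatory-integral estimate for the half-wave propagator gives $\|e^{\pm it\Lambda}\dot\Delta_j f\|_{L^\infty}\lesssim 2^{jd}(1+2^j|t|)^{-\frac{d-1}{2}}\|\dot\Delta_j f\|_{L^1}$; rescaling time $t\mapsto t/\var$ and interpolating with the trivial $L^2\to L^2$ bound yields
\begin{equation*}
\|e^{\pm\frac{it}{\var}\Lambda}\dot\Delta_j f\|_{L^p}\lesssim 2^{jd(1-\frac2p)}\Bigl(1+\tfrac{2^j|t|}{\var}\Bigr)^{-(d-1)(\frac12-\frac1p)}\|\dot\Delta_j f\|_{L^{p'}}
\end{equation*}
for $p\ge2$. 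Feeding this into the $TT^*$ argument (with $T$ the map $f\mapsto e^{\pm it\Lambda/\var}\dot\Delta_j f$) and using the admissibility condition $\frac2r\le(d-1)(\frac12-\frac1p)$ to control the time integral $\int (1+\frac{2^j|t|}\var)^{-\frac2r\cdot\frac{(d-1)(1/2-1/p)}{2/r}}$, one obtains the single-block estimate
\begin{equation*}
\|e^{\pm\frac{it}{\var}\Lambda}\dot\Delta_j f\|_{L^r_T(L^p)}\lesssim \var^{\frac1r}2^{j(d(\frac1p-\frac12)+\frac1r)}\|\dot\Delta_j f\|_{L^2},
\end{equation*}
the factor $\var^{1/r}$ arising precisely from the time rescaling in the decay integral. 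Multiplying by $2^{j(s+d(1/p-1/2)+1/r)}$, wait — more carefully, multiplying by $2^{js}$, summing in $j$ over $\ell^1$, and recognizing the left side as the $L^r_T(\dot B^{s+d(1/p-1/2)+1/r}_{p,1})$ norm, gives the desired homogeneous estimate. Combining with the Christ–Kiselev lemma for the inhomogeneous term closes the proof; one records that the case $(r,p,d)=(2,\infty,3)$ is excluded precisely because the Christ–Kiselev lemma fails at $r=2$ (equivalently the endpoint Strichartz estimate requires a separate $\ell^1$-Besov refinement, which is why the target space carries summability index $1$).

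The main obstacle — or rather the only nontrivial analytic input — is the frequency-localized dispersive inequality for the half-wave operator together with the bookkeeping of the $\var$-dependence: one must track the rescaling $t\mapsto t/\var$ consistently through the $TT^*$ duality to see that exactly one power $\var^{1/r}$ is produced (and no spurious powers of $\var$ enter through the Bernstein embeddings, since those are $\var$-independent). A secondary technical point is justifying the use of the Christ–Kiselev lemma in the vector-valued ($\dot B^s_{2,1}$-valued) setting, which is routine but should be stated; alternatively one can avoid Christ–Kiselev entirely by a direct Minkowski-inequality argument on the Duhamel integral, at the cost of a slightly more hands-on computation. Since the lemma is quoted verbatim from \cite{danchin02}, in the paper itself it is legitimate to simply cite that reference, but the above is the route one would follow to reconstruct it.
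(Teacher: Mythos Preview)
The paper does not prove this lemma; it is stated in Appendix~B with a direct citation to \cite{danchin02} and no argument given. Your sketch is the standard route (diagonalize to half-wave propagators $e^{\pm it\Lambda/\var}$, frequency-localized dispersive decay, $TT^*$, then Duhamel) and is exactly what one finds in Danchin's paper, so there is nothing substantive to compare.

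One small correction: the exclusion $(r,p,d)=(2,\infty,3)$ is the Keel--Tao endpoint obstruction for the wave equation (the $L^2_tL^\infty_x$ endpoint fails in dimension $d=3$), not a failure of the Christ--Kiselev lemma. The Christ--Kiselev lemma only requires the input time exponent (here $1$, from $L^1_T(\dot B^s_{2,1})$) to be strictly less than the output exponent $r$, which holds throughout the stated range since $r\ge2$. This does not affect the validity of your argument, only the explanation of why that triple is excluded.
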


\noindent \textbf{Acknowledgments} 
Li's research was partially supported by the National Natural Science Foundation of China Grant Nos. 12331007, 12326613, and 11931010, the Beijing Scholar Foundation, and the key research project of Academy for Multidisciplinary Studies, Capital Normal University.
Shou's research was supported by the National Natural Science Foundation of China Grant No. 12301275.

\vspace{2mm}

\noindent \textbf{Conflict of interest.} The authors do not have any possible conflicts of interest.

\vspace{2mm}

\noindent \textbf{Data availability statement.}
Data sharing is not applicable to this article, as no data sets were generated or analyzed during the current study.

\bibliographystyle{abbrv} 
\parskip=0pt
\small	
\bibliography{Refer}

\begin{thebibliography}{10}

\bibitem{Anoshchenko}
O.~Anoshchenko and A.~Boutet~de Monvel-Berthier.
\newblock The existence of the global generalized solution of the system of
  equations describing suspension motion.
\newblock {\em Math. Methods Appl. Sci.}, 20(6):495--519, 1997.

\bibitem{MR3223813}
H.-O. Bae, Y.-P. Choi, S.-Y. Ha, and M.-J. Kang.
\newblock Asymptotic flocking dynamics of {C}ucker-{S}male particles immersed
  in compressible fluids.
\newblock {\em Discrete Contin. Dyn. Syst.}, 34(11):4419--4458, 2014.

\bibitem{bahourietal11}
H.~Bahouri, J.-Y. Chemin, and R.~Danchin.
\newblock {\em {F}ourier {A}nalysis and {N}onlinear {P}artial {D}ifferential
  {E}quations}, volume 343.
\newblock Springer, Grundlehren der Mathematischen Wissenschaften, 2011.

\bibitem{baranger05}
C.~Baranger, L.~Boudin, P.-E. Jabin, and S.~Mancini.
\newblock A modeling of biospray for the upper airways.
\newblock In {\em C{EMRACS} 2004---mathematics and applications to biology and
  medicine}, volume~14 of {\em ESAIM Proc.}, pages 41--47. EDP Sci., Les Ulis,
  2005.

\bibitem{MR2214160}
C.~Baranger and L.~Desvillettes.
\newblock Coupling {E}uler and {V}lasov equations in the context of sprays: the
  local-in-time, classical solutions.
\newblock {\em J. Hyperbolic Differ. Equ.}, 3(1):1--26, 2006.

\bibitem{berres03}
S.~Berres, R.~B\"urger, K.~H. Karlsen, and E.~M. Tory.
\newblock Strongly degenerate parabolic-hyperbolic systems modeling
  polydisperse sedimentation with compression.
\newblock {\em SIAM J. Appl. Math.}, 64(1):41--80, 2003.

\bibitem{bouchut94}
F.~Bouchut.
\newblock On zero pressure gas dynamics.
\newblock In {\em Advances in kinetic theory and computing}, volume~22 of {\em
  Ser. Adv. Math. Appl. Sci.}, pages 171--190. World Sci. Publ., River Edge,
  NJ, 1994.

\bibitem{boudin3}
L.~Boudin, L.~Desvillettes, C.~Grandmont, and A.~Moussa.
\newblock Global existence of solutions for the coupled {V}lasov and
  {N}avier-{S}tokes equations.
\newblock {\em Differ. Integr. Equ.}, 22(11-12):1247--1271, 2009.

\bibitem{boudin03}
L.~Boudin, L.~Desvillettes, and R.~Motte.
\newblock A modeling of compressible droplets in a fluid.
\newblock {\em Commun. Math. Sci.}, 1(4):657--669, 2003.

\bibitem{breniergrenier98}
Y.~Brenier and E.~Grenier.
\newblock Sticky particles and scalar conservation laws.
\newblock {\em SIAM J. Numer. Anal.}, 35(6):2317--2328, 1998.

\bibitem{MR4050096}
K.~Carrapatoso and M.~Hillairet.
\newblock On the derivation of a {S}tokes-{B}rinkman problem from {S}tokes
  equations around a random array of moving spheres.
\newblock {\em Comm. Math. Phys.}, 373(1):265--325, 2020.

\bibitem{cheminlerner95}
J.-Y. Chemin and N.~Lerner.
\newblock Flot de champs de vecteurs non lipschitziens et \'equations de
  {N}avier-{S}tokes.
\newblock {\em J. Differential Equations}, 121(2):314--328, 1995.

\bibitem{MR3518989}
Y.-P. Choi.
\newblock Large-time behavior for the {V}lasov/compressible {N}avier-{S}tokes
  equations.
\newblock {\em J. Math. Phys.}, 57(7):071501, 13, 2016.

\bibitem{choijungM3AS21}
Y.-P. Choi and J.~Jung.
\newblock Asymptotic analysis for a
  {V}lasov-{F}okker-{P}lanck/{N}avier-{S}tokes system in a bounded domain.
\newblock {\em Math. Models Methods Appl. Sci.}, 31(11):2213--2295, 2021.

\bibitem{choijungJMFM21}
Y.-P. Choi and J.~Jung.
\newblock On the {C}auchy problem for the pressureless
  {E}uler-{N}avier-{S}tokes system in the whole space.
\newblock {\em J. Math. Fluid Mech.}, 23(4):Paper No. 99, 16, 2021.

\bibitem{choijungkim24}
Y.-P. Choi, J.~Jung, and J.~Kim.
\newblock A revisit to the pressureless {E}uler--{N}avier-{S}tokes system in
  the whole space and its optimal temporal decay.
\newblock {\em J. Differential Equations}, 401:231--281, 2024.

\bibitem{choikwonJDE16}
Y.-P. Choi and B.~Kwon.
\newblock The {C}auchy problem for the pressureless {E}uler/isentropic
  {N}avier-{S}tokes equations.
\newblock {\em J. Differential Equations}, 261(1):654--711, 2016.

\bibitem{dafermos16}
C.~M. Dafermos.
\newblock {\em Hyperbolic conservation laws in continuum physics}, volume 325
  of {\em Grundlehren der mathematischen Wissenschaften}.
\newblock Springer-Verlag, Berlin, fourth edition, 2016.

\bibitem{danchinIM00}
R.~Danchin.
\newblock Global existence in critical spaces for compressible
  {N}avier-{S}tokes equations.
\newblock {\em Invent. Math.}, 141(3):579--614, 2000.

\bibitem{danchin02}
R.~Danchin.
\newblock Zero {M}ach number limit in critical spaces for compressible
  {N}avier-{S}tokes equations.
\newblock {\em Ann. Sci. \'Ecole Norm. Sup. (4)}, 35(1):27--75, 2002.

\bibitem{danchinCPDE07}
R.~Danchin.
\newblock Well-posedness in critical spaces for barotropic viscous fluids with
  truly not constant density.
\newblock {\em Comm. Partial Differential Equations}, 32(7-9):1373--1397, 2007.

\bibitem{danchin2026elementary}
R.~Danchin.
\newblock An elementary approach to the pressureless {E}uler-{N}avier-{S}tokes
  system.
\newblock {\em arXiv:2602.06821}, 2026.

\bibitem{MR5041103}
R.~Danchin.
\newblock Fujita-{K}ato solutions and optimal time decay for the
  {V}lasov-{N}avier-{S}tokes system in the whole space.
\newblock {\em Arch. Ration. Mech. Anal.}, 250(2):Paper No. 13, 45, 2026.

\bibitem{danchinMA16}
R.~Danchin and L.~He.
\newblock The incompressible limit in {$L^p$} type critical spaces.
\newblock {\em Math. Ann.}, 366(3-4):1365--1402, 2016.

\bibitem{danchinxu17}
R.~Danchin and J.~Xu.
\newblock Optimal time-decay estimates for the compressible {N}avier-{S}tokes
  equations in the critical {$L^p$} framework.
\newblock {\em Arch. Ration. Mech. Anal.}, 224(1):53--90, 2017.

\bibitem{desjardins99}
B.~Desjardins and E.~Grenier.
\newblock Low {M}ach number limit of viscous compressible flows in the whole
  space.
\newblock {\em R. Soc. Lond. Proc. Ser. A Math. Phys. Eng. Sci.},
  455(1986):2271--2279, 1999.

\bibitem{desjardinsJMPA99}
B.~Desjardins, E.~Grenier, P.-L. Lions, and N.~Masmoudi.
\newblock Incompressible limit for solutions of the isentropic
  {N}avier-{S}tokes equations with {D}irichlet boundary conditions.
\newblock {\em J. Math. Pures Appl. (9)}, 78(5):461--471, 1999.

\bibitem{desvillettes10}
L.~Desvillettes.
\newblock Some aspects of the modeling at different scales of multiphase flows.
\newblock {\em Comput. Methods Appl. Mech. Engrg.}, 199(21-22):1265--1267,
  2010.

\bibitem{MR2398959}
L.~Desvillettes, F.~c. Golse, and V.~Ricci.
\newblock The mean-field limit for solid particles in a {N}avier-{S}tokes flow.
\newblock {\em J. Stat. Phys.}, 131(5):941--967, 2008.

\bibitem{dongetal19}
J.~Dong, G.~Lou, and Y.~Yang.
\newblock Blowup of classical solutions to the pressureless {E}uler/isentropic
  {N}avier-{S}tokes equations.
\newblock {\em Math. Methods Appl. Sci.}, 42(18):7425--7431, 2019.

\bibitem{figallikang19}
A.~Figalli and M.-J. Kang.
\newblock A rigorous derivation from the kinetic {C}ucker-{S}male model to the
  pressureless {E}uler system with nonlocal alignment.
\newblock {\em Anal. PDE}, 12(3):843--866, 2019.

\bibitem{guoetal24}
S.~Guo, G.~Wu, and Y.~Zhang.
\newblock Global existence and large time behaviour for the pressureless
  {E}uler-{N}aver-{S}tokes system in {$\mathbb{R}^3$}.
\newblock {\em Proc. Roy. Soc. Edinburgh Sect. A}, 154(1):328--352, 2024.

\bibitem{hamdache1}
K.~Hamdache.
\newblock Global existence and large time behaviour of solutions for the
  {V}lasov-{S}tokes equations.
\newblock {\em Japan J. Indust. Appl. Math.}, 15(1):51--74, 1998.

\bibitem{hankwanmichel24}
D.~Han-Kwan and D.~Michel.
\newblock On hydrodynamic limits of the {V}lasov-{N}avier-{S}tokes system.
\newblock {\em Mem. Amer. Math. Soc.}, 302(1516):v+115, 2024.

\bibitem{han1}
D.~Han-Kwan, A.~Moussa, and I.~Moyano.
\newblock Large time behavior of the {V}lasov-{N}avier-{S}tokes system on the
  torus.
\newblock {\em Arch. Ration. Mech. Anal.}, 236(3):1273--1323, 2020.

\bibitem{MR3984746}
M.~Hillairet, A.~Moussa, and F.~Sueur.
\newblock On the effect of polydispersity and rotation on the {B}rinkman force
  induced by a cloud of particles on a viscous incompressible flow.
\newblock {\em Kinet. Relat. Models}, 12(4):681--701, 2019.

\bibitem{MR3795188}
R.~M. H\"ofer.
\newblock Sedimentation of inertialess particles in {S}tokes flows.
\newblock {\em Comm. Math. Phys.}, 360(1):55--101, 2018.

\bibitem{huangwang01}
F.~Huang and Z.~Wang.
\newblock Well posedness for pressureless flow.
\newblock {\em Comm. Math. Phys.}, 222(1):117--146, 2001.

\bibitem{LiShou2021SciSinMath}
H.-L. Li and L.-Y. Shou.
\newblock Asymptotic behavior of one-dimensional compressible
  {N}avier-{S}tokes-{V}lasov system.
\newblock {\em Sci. Sin. Math.}, 51:985--1002, 2021.

\bibitem{MR4213662}
H.-L. Li and L.-Y. Shou.
\newblock Global well-posedness of one-dimensional compressible
  {N}avier-{S}tokes-{V}lasov system.
\newblock {\em J. Differential Equations}, 280:841--890, 2021.

\bibitem{lishouSIAMJMA23}
H.-L. Li and L.-Y. Shou.
\newblock Global existence and optimal time-decay rates of the compressible
  {N}avier-{S}tokes-{E}uler system.
\newblock {\em SIAM J. Math. Anal.}, 55(3):1810--1846, 2023.

\bibitem{LSZ1}
H.-L. Li, L.-Y. Shou, and Y.~Zhang.
\newblock Exponential stability of the inhomogeneous {N}avier-{S}tokes-{V}lasov
  system in vacuum.
\newblock {\em Kinet. Relat. Models}, 18(2):252--285, 2025.

\bibitem{litangzhang25}
H.-L. Li, H.-Z. Tang, and Y.~Zhang.
\newblock Global existence and large-time behavior of solutions to the
  pressureless {E}uler/{N}avier-{S}tokes system.
\newblock {\em Sci. China Math.}, 68(11):2629--2650, 2025.

\bibitem{liwangzhang25}
H.-L. Li, Y.~Wang, and Y.~Zhang.
\newblock Non-existence of classical solutions to a two-phase flow model with
  vacuum.
\newblock {\em Nonlinearity}, 38(5):Paper No. 055025, 20, 2025.

\bibitem{lionsmasmoudi98}
P.-L. Lions and N.~Masmoudi.
\newblock Incompressible limit for a viscous compressible fluid.
\newblock {\em J. Math. Pures Appl. (9)}, 77(6):585--627, 1998.

\bibitem{melletvasseur08}
A.~Mellet and A.~Vasseur.
\newblock Asymptotic analysis for a {V}lasov-{F}okker-{P}lanck/compressible
  {N}avier-{S}tokes system of equations.
\newblock {\em Comm. Math. Phys.}, 281(3):573--596, 2008.

\bibitem{orourke81}
P.~J. O'Rourke.
\newblock {\em Collective Drop Effects on Vaporizing Liquid Sprays, Ph.D.
  thesis}.
\newblock Princeton University, Princeton, NJ, 1981.

\bibitem{runstsickel96}
T.~Runst and W.~Sickel.
\newblock {\em Sobolev spaces of fractional order, {N}emytskij operators, and
  nonlinear partial differential equations}.
\newblock Nonlinear Analysis and Applications, Walter de Gruyter \& Co.,
  Berlin, 1996.

\bibitem{schochet07}
S.~Schochet.
\newblock The mathematical theory of the incompressible limit in fluid
  dynamics.
\newblock In {\em Handbook of mathematical fluid dynamics. {V}ol. {IV}}, pages
  123--157. Elsevier/North-Holland, Amsterdam, 2007.

\bibitem{simon87}
J.~Simon.
\newblock Compact sets in the space {$L^p(0,T;B)$}.
\newblock {\em Ann. Mat. Pura Appl. (4)}, 146:65--96, 1987.

\bibitem{weinan96}
E.~Weinan, Y.~G. Rykov, and Y.~G. Sinai.
\newblock Generalized variational principles, global weak solutions and
  behavior with random initial data for systems of conservation laws arising in
  adhesion particle dynamics.
\newblock {\em Comm. Math. Phys.}, 177(2):349--380, 1996.

\bibitem{williams85}
F.-A. Williams.
\newblock {\em Combustion theory}.
\newblock Benjamin Cummings, 1985.

\bibitem{Yu1}
C.~Yu.
\newblock Global weak solutions to the incompressible
  {N}avier-{S}tokes-{V}lasov equations.
\newblock {\em J. Math. Pures Appl. (9)}, 100(2):275--293, 2013.

\end{thebibliography}

\bigbreak\bigbreak
\noindent Hai-Liang Li\\
{\scshape School of Mathematical Sciences and Academy for Multidisciplinary Studies,\\
Capital Normal University, Beijing 100048, China\\
{\itshape Email address}: {\tt hailiang.li.math@gmail.com}}

\bigbreak

\noindent Ling-Yun Shou\\
{\scshape School of Mathematical Sciences, Ministry of Education Key Laboratory of NSLSCS,\\
and Key Laboratory of Jiangsu Provincial Universities of FDMTA,\\
Nanjing Normal University, Nanjing 210023, China\\
{\itshape Email address}: {\tt shoulingyun11@gmail.com}}

\bigbreak
\noindent Yue Zhang\\
{\scshape Institute of Applied Physics and Computational Mathematics,\\
Beijing 100088, China\\
{\itshape Email address}: {\tt yuezhangmath@126.com}}

\end{document}